\def\@tocline#1#2#3#4#5#6#7{\relax
\ifnum #1>\c@tocdepth 
  \else
    \par \addpenalty\@secpenalty\addvspace{#2}%
\begingroup \hyphenpenalty\@M
    \@ifempty{#4}{%
      \@tempdima\csname r@tocindent\number#1\endcsname\relax
 }{%
   \@tempdima#4\relax
 }%
 \parindent\z@ \leftskip#3\relax \advance\leftskip\@tempdima\relax
 \rightskip\@pnumwidth plus4em \parfillskip-\@pnumwidth
 #5\leavevmode\hskip-\@tempdima #6\nobreak\relax
 \ifnum#1<0\hfill\else\dotfill\fi\hbox to\@pnumwidth{\@tocpagenum{#7}}\par
 \nobreak
 \endgroup
  \fi}
  \def\l@subsection{\@tocline{1}{0pt}{30pt}{5pc}{}}
\numberwithin{equation}{section}
\newcommand{\punt}{\mathbf{.}}
\DeclareMathAlphabet{\mathbbold}{U}{bbold}{m}{n}
\DeclareSymbolFont{rsfscript}{OMS}{rsfs}{m}{n}
\DeclareSymbolFontAlphabet{\mathrsfs}{rsfscript}
\DeclareFontFamily{OMS}{rsfs}{\skewchar\font'177}
\DeclareFontShape{OMS}{rsfs}{m}{n}{%
      <5> rsfs5
      <6> <7> rsfs7
      <8> <9> <10> rsfs10
      <10.95> <12> <14.4> <17.28> <20.74> <24.88> rsfs10
      }{}
\def\calA{\mathrsfs{A}}
\def\calX{\mathrsfs{X}}
\def\calY{\mathrsfs{Y}}
\theoremstyle{plain}
\newtheorem{theorem}{Theorem}
\newtheorem{corollary}[theorem]{Corollary}
\newtheorem{proposition}[theorem]{Proposition}
\theoremstyle{definition}
\newtheorem{definition}[theorem]{Definition}
\newtheorem{example}[theorem]{Example}
\newtheoremstyle{dotless}{}{}{\itshape}{}{\bfseries}{}{ }{}
\theoremstyle{dotless}
\newtheorem*{exampleno}{Example}
\theoremstyle{remark}
\newtheorem{remark}[theorem]{Remark}
\numberwithin{theorem}{section}
\newcommand{\cop}{\scriptscriptstyle <-1>}
\newcommand{\K}{\bar{\mathfrak{K}}}
\newcommand{\LL}{\mathfrak{L}}
\newcommand{\ab}{\bar{\alpha}_{\scriptscriptstyle D}^{\scriptscriptstyle <-1>}}
\newcommand{\ibs}{\boldsymbol{i}}
\newcommand{\kbs}{\boldsymbol{k}}
\newcommand{\jbs}{\boldsymbol{j}}
\newcommand{\etabs}{\boldsymbol{\eta}}
\newcommand{\tbs}{\boldsymbol{t}}
\newcommand{\xbs}{\boldsymbol{x}}
\newcommand{\Xbs}{\boldsymbol{X}}
\newcommand{\zbs}{\boldsymbol{z}}
\newcommand{\mubs}{\boldsymbol{\mu}}
\newcommand{\gammabs}{\boldsymbol{\gamma}}
\newcommand{\nubs}{\boldsymbol{\nu}}
\newcommand{\lambdabs}{\boldsymbol{\lambda}}
\newcommand{\mmodels}{\boldsymbol{\vdash}}
\newcommand{\X}{\mathbf{X}}
\newcommand{\E}{\mathbb E}
\newcommand{\bio}{\boldsymbol{\iota}}
\newcommand{\ind}{\mathbf{1}}
\newcommand{\mbs}{\boldsymbol{m}}
\newcommand{\pbs}{\boldsymbol{p}}
\newcommand{\vbs}{\boldsymbol{v}}
\begin{document}

\title{Symbolic Calculus in Mathematical Statistics: a review}
\author{Elvira Di Nardo}
\address{Dept. of Mathematics, Computer Science and Economics,
         University of Basilicata,
         Campus Macchia Romana,
         Via dell'Ateneo Lucano 10,
         85100 Potenza, Italy}
\email{elvira.dinardo@unibas.it}

\begin{abstract}
In the last ten years, the employment of symbolic methods has substantially extended both the theory and the applications of statistics and probability. This survey reviews the development of a symbolic technique arising from classical umbral calculus, as introduced by Rota and Taylor in $1994.$ The usefulness of this symbolic technique is twofold. The first is
to show how new algebraic identities drive in discovering insights among topics apparently very far from each other and related to probability and statistics. One of the main tools is a formal generalization of the convolution of identical probability distributions, which allows us to employ compound Poisson random variables in various topics that are only somewhat interrelated. Having got a different and deeper viewpoint, the second goal is to show how to set up algorithmic processes performing efficiently algebraic calculations. In particular, the challenge of finding these symbolic procedures should lead to a new method, and it poses new problems involving both computational and conceptual issues. Evidence of efficiency in applying this symbolic method will be shown within statistical inference, parameter estimation, L\'evy processes, and, more generally, problems involving multivariate functions. The symbolic representation of Sheffer polynomial sequences allows us to carry out a unifying theory of classical, Boolean and free cumulants. Recent connections within random matrices have extended the applications of the symbolic method.
\end{abstract}

\maketitle

\thispagestyle{myheadings}
\font\rms=cmr8
\font\its=cmti8
\font\bfs=cmbx8

\markright{\its S\'eminaire Lotharingien de
Combinatoire \bfs 67 \rms (2015), Article~B67a \hfill}
\def\thepage{}

\tableofcontents

\pagenumbering{arabic}
\addtocounter{page}{1}
\markboth{\SMALL ELVIRA DI NARDO}{\SMALL
SYMBOLIC CALCULUS IN MATHEMATICAL STATISTICS}

\section{Introduction}

Most practical problems do not have exact solutions, but numerical computing helps in finding solutions to desired precision.  Numerical methods give data that can be analyzed to reveal trends or approximations. In contrast to numerical methods, symbolic methods treat objects that are either formal expressions, or are algebraic in nature. In many cases, numerical methods will not give sufficient information about the nature of the problem; this is where symbolic methods fit in. When used properly, they can give us more insight into the problem we are trying to solve. Another advantage is that, while numerical methods may simply fail to compute correct results,  symbolic methods yield closed or explicit formulas. Although the term {\lq\lq symbolic method\rq\rq}  is also used in different contexts (see for example~\cite{Flajolet}), here we refer to a set of manipulation techniques aimed at performing algebraic calculations, preferably through an algorithmic approach, in order to find efficient mechanical processes that may be passed to a computer. This is usually called \lq\lq symbolic computation\rq\rq.
Despite the fact that its development took place later than that of  numerical or graphical algorithms, symbolic computation has had a comparable impact, especially in theoretical and applied statistical research. Topics range from asymptotic expansions to Edgeworth series, from likelihood functions to saddlepoint approximations.

   Since symbolic computation is exact, its accuracy is not a fundamental issue, as it is for numerical methods. Replacing  pencil and paper by a computer is not the only aim of symbolic methods.
    Computational cost is a question naturally arising when problems of complex nature are addressed. The advantage of symbolic methods is that, by approaching a problem from a different mathematical point of view, efficiency is sometimes a by-product.

   The symbolic method we are going to review arises from the so-called \lq\lq classical umbral calculus\rq\rq, a creation of Gian-Carlo Rota, who spent most of his scientific life  looking for an elegant    and as general as possible formalization of this calculus.

   It is certainly due to Rota's efforts that the heuristic method, invented and largely employed by Rev.~John Blissard between $1861$ and $1868$ (see references in \cite{Dinardo1}), now has a solid foundation and wide-spread application within the mathematical community. Despite the impressive amount of scientific literature following the introduction of this calculus, Rota, in $1994$, had the courage to turn his umbral calculus upside down, outlining what he considered to be a new and correct syntax for this matter. The classical umbral calculus we use consists essentially in a moment calculus, since its basic device is to represent a unital sequence of numbers by  symbols $\alpha,$ called umbra, i.e., to associate the sequence $1, a_1, a_2, \ldots$ to the sequence $1, \alpha, \alpha^2, \ldots$ of powers of $\alpha$ via an operator $\E$ that looks like the expectation of random variables (r.v.'s)  \cite{eleventh,SIAM}. The $i$-th element of the sequence is then called the {\it $i$-th moment\/} of $\alpha.$ This symbolic approach shares with free probability \cite{Nica,Speicher} the use of moments as a tool to characterize r.v.'s. An analog of freeness\footnote{In free probability, free r.v.'s correspond to classical independent r.v.'s.} is defined in a manner resembling independence of tensors. In contrast to free probability, where different linear operators may be applied to the same non-commutative r.v., in the symbolic method of moments just one operator is employed, but the same sequence of moments may correspond to more than one symbol. For a pair of commutative r.v.'s, freeness is equivalent to at least one of them having vanishing variance. So freeness is a pure non-commutative device,  and the symbolic approach we are going to review can be considered as the natural analog of a non-commutative field.

   \lq\lq As sometimes happens in the practice of the mathematical investigation, the subject we deal with here does not develop the original idea from which our research started in the spring of 1997, but this paper is closely related to it. In that period, Gian-Carlo Rota was visiting professor at the University of Basilicata and, during a conversation just before he left, he shared with us his keen interest in a research project: to develop a combinatorial random variable theory. The delicate question arising on the underlying  foundational side and the short time available led us to continue the discussion via email, weaving it with different activities over a period of several months. The following year, Gian-Carlo Rota held his last course in Cortona; we seized the opportunity to spend some time with him. We resumed the thread of our conversations and presented him with the doubts that gradually took hold of us. As usually, his contribution disclosed new horizons that have led us to write these pages.\rq\rq$\,$ This quotation is a part of the introduction of the paper {\sl ``Umbral nature of the Poisson random variables{}"} written by the author together with Domenico Senato and published in a volume dedicated to Gian-Carlo Rota after his death \cite{Dinardo1}. This paper was the first of a substantial list of articles devoted to the applications of classical umbral calculus in probability and statistics. In these papers, more definitions have been added, and the method has been restyled according to the needs faced when tackling some particular problems. The more these new tools have been developed, the closer the method has come to the method of moments employed in the study of random matrices, which the author believes to be a natural milestone of this theory and at the same time a good starting point for further developments and applications. This is why the version here reviewed is called {\it symbolic method of moments} in a talk \cite{talk} on new challenges and future developments of mathematical statistics given by the author in Dublin in $2011.$ Two papers have to be mentioned in support of this personal belief: a paper published in the {\sl Annals of Statistics} devoted to the computation of spectral statistics for random matrices \cite{Spectral} and a second one, published in  the {\sl Journal of Multivariate Analysis}, where the symbolic method of moments allows us to compute very general joint moments of non-central Wishart distributions \cite{Wishart}. The author chose to not include these topics in this review, since the overall setting is not yet completely settled, and a serious rethinking of the matter in terms of cumulants%
   \footnote{For the definition of cumulants of a number sequence, see Section~4 and in particular Equation (\ref{(cumRota)}).} instead of moments seems necessary.

   The first goal of this paper is to give a general overview of how the method has been developed and where it has been used with success.
While sticking to the original idea, the current version of the method
offers a much richer set of tools.
Therefore the introduction of these tools is done gradually along the paper according to the outline of applications.
The second goal is to prove its effectiveness in some applications that over the years have contributed to refining the theory. An example particularly meaningful is the computation of $k$-statistics (Section~6.1), a challenging problem since their first introduction in 1929  as unbiased estimators of cumulants \cite{Fisher}. Many authors have proposed different techniques aiming at performing this computation in a reasonable time. By using the symbolic method here reviewed, an efficient computation of $k$-statistics can be performed by means of a suitable generalization of randomized compound Poisson r.v.'s.

 The paper is organized as follows. Notation is introduced in Section~2 using the semantics of r.v.'s. In this way, the intention is to permit the reader with no prior knowledge of symbolic calculations to become comfortable with them. Section~2.1 shows how to perform these computations in dealing with moments of sampling distributions, reducing the underlying combinatorics of symmetric functions to a few relations covering a great variety of calculations. Section~2.2 is devoted to a special statistical device, called Sheppard's correction, useful in correcting estimations of moments when sample data are grouped into classes. The resulting closed-form formula gives an efficient algorithm to perform computations. In the same section, its multivariate version is sketched, showing what we believe is one of the most interesting  feature of this symbolic method: its multivariate generalization. More details are given in Section~7, where a symbolic version of the multivariate Fa\`a di Bruno formula is also introduced. The umbral calculus is particularly suited not only in dealing with number sequences, but also with polynomial sequences. Time-space harmonic polynomials are an example in which the symbolic method allows us to simplify the proofs of many properties, as well as their computation. Introduced in Section~3, these polynomials generate stochastic processes that result in a special family of martingales employed in mathematical finance.  Time-space harmonic polynomials rely on the symbolic representation of L\'evy processes, as given in Section~4.2,  obtained by using a generalization of compound Poisson processes, as given in Section~4.1. A L\'evy process is better described by its cumulant sequences than by its moments, as happens for many other r.v.'s, the Gaussian or the Poisson r.v.'s.  Section~4 is devoted to introducing umbrae whose moments are cumulants of a given sequence of numbers. Besides classical cumulants, quite recently \cite{Speicher} new families of cumulants have been introduced,  the Boolean and the free cumulants by using the algebra of multiplicative functions on suitable lattices. This link is exploited in Section~6.2, where the computational efficiency  of the symbolic method of moments is highlighted. In particular, by means of the symbolic representation of Abel polynomials, all families of cumulants can be parametrized by the same formula. This parametrization allows us to write  an algorithm giving moments in terms of classical, Boolean and free cumulants, and vice-versa. Other topics on Sheffer sequences are treated in Section~5, including Lagrange inversion and Riordan arrays, and solutions of difference equations. Section~6.1 shows how to compute unbiased estimators of cumulants by using exponential polynomials, which are special Sheffer sequences. The resulting algorithm is faster than others given in the literature, and it has been generalized to the multivariate case. In the last section, the multivariate umbral calculus is introduced, and various applications are given involving special families of multivariate polynomials.

This text is the written version of lectures given at the 67-th Seminaire
Lotharingien de Combinatoire in September 2011; joint session with XVII Incontro Italiano di Combinatoria Algebrica. Due to the length and to avoid making dull reading, the formalism is kept to the minimum, and references are given
step by step for those wishing to go into details and to understand the technical proofs of formal matters.

\section{The symbolic method of moments}
The method we are going to introduce in this section consists of shifting powers $a^i$ to indexes
$a_i$ by using a linear functional. The idea of dealing with subscripts as they were powers was extensively used by
the mathematical community since the nineteenth century, although with no rigorous foundation. The method, sometimes called
{\it Blissard's symbolic method}, is often attributed to J. Sylvester. He was the first who introduced the term {\it umbral\/} from latin to denote the shadowy techniques used to prove certain polynomial equations. Roman and Rota
have equipped these {\lq\lq tricks\rq\rq} with the right formalism in $1978,$ by using the theory of linear operators \cite{Roman}.
Despite the impressive amount of applications following this rigorous foundation (see references in \cite{Dinardo1}), in $1994$ Rota
proposed a new version of the method in a paper entitled \emph{``The classical umbral calculus{}"} \cite{SIAM}. The reason of this
rethinking is well summarized in the following quotation \cite{SIAM}: \lq\lq Although the notation of Hopf algebra
satisfied the most ardent advocate of spic-and-span rigor, the translation of {\it classical\/} umbral calculus into
the newly found rigorous language made the method altogether unwieldy and unmanageable. Not only was the eerie feeling
of witchcraft lost in the translation, but, after such a translation, the use of calculus to simplify computation and
sharpen our intuition was lost by the wayside.\rq\rq $\,$The term {\it classical\/} is strictly related to which sequence
is chosen as the {\it unity} of the calculus. Rota has always referred to the sequence $\{1\}$, and the same is done
in the version we are going to introduce, that from now on will be called {\it symbolic method of moments}.

Let ${\mathbb R}$ be the real field\footnote{The umbral calculus given in \cite{SIAM} considers a commutative integral domain whose quotient field is of characteristic zero.
For the applications given in the following, the real field  ${\mathbb R}$ is sufficient.}.
The symbolic method of moments consists of a set $\calA =\{\alpha, \gamma, \ldots\}$ of elements called {\it umbrae}, and
a linear functional $\E: {\mathbb R}[\calA] \rightarrow {\mathbb R},$ called \textit{evaluation},
such that $\E[1]=1$ and
\begin{equation}
\E[\alpha^{i} \gamma^{j} \cdots] = \E[\alpha ^{i}] E[\gamma^{j}] \cdots \,\, \hbox{(uncorrelation property)}
\label{(uncorrelation)}
\end{equation}
for non-negative integers $i,j,\ldots$.
\begin{definition}\cite{Dinardo1}
The sequence $\{1, a_1, a_2, \ldots \}\in {\mathbb R}$ is said to be {\it umbrally represented\/} by an umbra $\alpha$
if $\E[\alpha^i]=a_i$ for all positive integers~$i.$
\end{definition}
In the following, in place of $\{1, a_1, a_2, \ldots\}$ we will use the notation $\{a_i\}_{i \geq 0}$ assuming
$a_0=1.$ Special umbrae are given in Table~1.
\begin{savenotes}
\begin{table}[ht]
\caption{Special umbrae} 
\centering 
\begin{tabular}{l l} 
\hline\hline 
Umbrae & Moments for positive integers $i$ \\ [0.5ex] 
\hline 
Augmentation umbra & $\E[\varepsilon^i]=0$  \\ 
Unity umbra        & $\E[u^i]=1$  \\
Singleton umbra    & $\E[\chi^i]=\delta_{i,1},$ the Kronecker delta \\
Bell umbra         & $\E[\beta^i]={\mathfrak B}_i,$ the $i$-th {\it Bell number}\footnote{The $i$-th Bell number is the number of partitions of a finite non-empty set with $i$ elements or the $i$-th coefficient times $i!$ in the
Taylor series expansion of the function $\exp(e^t - 1).$}  \\
Bernoulli umbra    & $\E[\iota^i]=B_i,$ the $i$-th {\it Bernoulli number}\footnote{Many
characterizations of Bernoulli numbers can be found in the
literature, and each one may be used to define these numbers. Here we
refer to the sequence of numbers satisfying $B_0=1$ and
$\sum_{k=0}^i {\binom {i+1}  k} B_k = 0$ for $i=1,2,\ldots.$}  \\
Euler umbra        & $\E[\xi^i]={\mathfrak E}_i,$ the $i$-th {\it Euler number}\footnote{The Euler numbers are the coefficients of the formal power series $2 e^z/ [e^{2 z} + 1].$} \\ [1ex] 
\hline 
\end{tabular}
\label{table:uno} 
\end{table}
\end{savenotes}

Associating a measure with a sequence of numbers is familiar in probability theory, when the $i$-th term of a sequence can be considered as the $i$-th moment of a r.v., under suitable hypotheses (the so-called Hamburger moment problem \cite{Kendall}). As Rota underlines in {\it Problem 1: the algebra of probability} \cite{Rota}, all of probability theory could be done in terms of r.v.'s alone by taking an ordered commutative algebra over the reals, and endowing it with a positive linear functional.
Following this analogy, $a_i$ is called the {\it$i$-th moment\/} of the umbra $\alpha.$
\begin{definition} \label{refrv1}
A r.v.\ $X$ is said to be {\it represented\/} by an umbra $\alpha$ if its sequence of moments $\{a_i\}_{i \geq 0}$ is umbrally represented
by $\alpha.$
\end{definition}
In order to avoid misunderstandings, the expectation of a r.v.\ $X$ will be denoted by $E$, and its $i$-th moment by $E[X^i].$
\begin{example} \label{representation}
If $P(X=0)=1,$ the r.v.\ $X$ is represented by the augmentation umbra $\varepsilon.$ If $P(X=1)=1,$
the r.v.\ $X$ is represented by the unity umbra $u.$ The Poisson
r.v.\
${\rm Po}(1)$ of parameter $1$ is represented by the Bell umbra $\beta.$
More examples of umbrae representing classical r.v.'s can be found in
\cite{DiNardoOliva}.
\end{example}

Not all r.v.'s can be represented by umbrae. For example, the Cauchy r.v.\ does not
admit moments. Moreover, not all umbrae represent r.v.'s.
For example, the singleton umbra in Table~\ref{table:uno} does not represent a r.v.\ since its variance is negative
$\E[\chi^2]-\E[\chi]^2=-1,$ even though this umbra will play a fundamental role in the whole theory.
If $\{a_i\}_{i \geq 0}, \{g_i\}_{i \geq 0}$ and $\{b_i\}_{i \geq 0}$ are sequences umbrally represented by the umbrae
$\alpha, \gamma$ and $\zeta$, respectively, then the sequence $\{h_i\}_{i \geq 0}$ with
$$h_i=\sum_{k_1 + k_2 + \cdots + k_m = i} \binom {i }{ k_1, k_2, \ldots, k_m}   a_{k_{1}} \, g_{k_{2}} \cdots b_{k_{m}}$$
is umbrally represented by $\underbrace{\alpha + \gamma + \cdots + \zeta}_m,$ that is,
\begin{equation}
\E[(\underbrace{\alpha + \gamma + \cdots + \zeta}_m)^i]= \sum_{k_1+k_2+\cdots+k_m=i} \binom {i }{ k_1, k_2, \ldots, k_m}
a_{k_{1}} \, g_{k_{2}} \cdots b_{k_{m}}.
\label{(sum1)}
\end{equation}
The right-hand side of Equation~\eqref{(sum1)} represents the $i$-th moment of a summation of independent r.v.'s.

The second fundamental device of the symbolic method of moments is to represent the same sequence of
moments by different umbrae.
\begin{definition} \label{similar}
Two umbrae $\alpha$ and $\gamma$ are said to be {\it similar} if and only if $\E[\alpha^i]=\E[\gamma^i]$ for all positive integers $i.$
In symbols $\alpha \equiv \gamma.$
\end{definition}
If we replace the set $\{\alpha, \gamma, \ldots, \zeta\}$ by a set of $m$ distinct and similar umbrae $\{\alpha, \alpha^{\prime}, \ldots, \alpha^{\prime \prime}\},$ Equation~\eqref{(sum1)} gives
\begin{equation}
\E[(\underbrace{\alpha + \alpha^{\prime} + \cdots + \alpha^{\prime \prime}}_m)^i]= \sum_{\lambda \vdash i}
(m)_{\nu_{\lambda}}
d_{\lambda} a_{\lambda},
\label{(sum2)}
\end{equation}
where the summation is over all partitions\footnote{Recall that a partition of an integer $i$ is a sequence
$\lambda=(\lambda_1,\lambda_2,\ldots,\lambda_t)$, where
$\lambda_j$ are weakly decreasing integers and $\sum_{j=1}^t
\lambda_j = i$. The integers $\lambda_j$ are called {\it parts} of
$\lambda$. The {\it length} of $\lambda$ is the number of its
parts and will be indicated by $\nu_{\lambda}$. A different
notation is $\lambda=(1^{r_1},2^{r_2},\ldots)$, where $r_j$ is the
number of parts of $\lambda$ equal to $j$ and $r_1 + r_2 + \cdots
= \nu_{\lambda}$. We use the classical notation $\lambda \vdash
i$ to denote that \lq\lq $\lambda$ is a partition of $i$\rq\rq.}
 $\lambda$ of the integer $i,$
$a_{\lambda} = a_1^{r_1} \, a_{2}^{r_2} \cdots$ and
$$d_{\lambda} = \frac{i!}{(1!)^{r_1} (2!)^{r_2} \cdots r_1!\, r_2! \cdots}$$
is the number of $i$-set partitions
with block sizes given by the parts of $\lambda$. The new symbol $m {\boldsymbol .}  \alpha$ denotes the summation $\alpha + \alpha^{\prime} + \cdots + \alpha^{\prime \prime}$ and is called {\it dot-product\/} of $m$ and $\alpha.$ This new symbol is referred to as {\it auxiliary umbra}, in order to underline that it is not in $\calA.$  So Equation~\eqref{(sum2)} may be rewritten as
\begin{equation}
\E[(m {\boldsymbol .} \alpha)^i]= \sum_{\lambda \vdash i} (m)_{\nu_{\lambda}} d_{\lambda} a_{\lambda}.
\label{(sum33)}
\end{equation}
The introduction of new symbols is not only a way of lightening notation, but also to represent
sequences of moments obtained by suitable generalizations of the right-hand side of Equation (\ref{(sum33)}).
This will become clearer as we move along further. According to Definition~\ref{refrv1}, the auxiliary umbra $m {\boldsymbol .}  \alpha$ represents a summation of $m$ independent and identically distributed (i.i.d.) r.v.'s. Indeed, identically distributed r.v.'s share the same sequence of moments, if they have.

If we denote by $\calX$ the alphabet of  auxiliary umbrae, the evaluation $\E$ may be defined using the alphabet $\calA \cup \calX$, and so we can deal with auxiliary umbrae as they were elements of the base alphabet \cite{SIAM}. In the following, we still refer to the alphabet ${\calA}$ as the overall alphabet of umbrae, assuming that auxiliary umbrae are included.

Summations of i.i.d.r.v.'s appear in many sample statistics. So the first application we give in statistics concerns sampling distributions.
\subsection{Moments of sampling distributions}
It is recognized that an appropriate choice of language and notation can simplify and clarify many statistical calculations~\cite{McCullagh}. In managing algebraic expressions such as the variance of sample means, or, more generally, moments of sampling distributions, the main difficulty is the manual computation. Symbolic computation has removed many of such difficulties.
The need of efficient algorithms has increased the attention on how to speed up the computational time.
Many of these algorithms rely on the algebra of symmetric functions~\cite{Andrews}: $U$-statistics are an example
which arise naturally when looking for minimum-variance unbiased estimators. A $U$-statistic of a random sample has the form
\begin{equation}
U=\frac{1}{(n)_k}\sum \Phi(X_{j_1}, X_{j_2}, \ldots, X_{j_k}),
\label{Ustat}
\end{equation}
where $X_1, X_2, \ldots, X_n$ are $n$ independent r.v.'s, and the sum ranges
over the set of all permutations $(j_1,j_2,\ldots,j_k)$ of $k$ integers chosen in $[n]:=\{1,2,\dots,n\}.$
If $X_1,X_2, \ldots, X_n$ are identically distributed r.v.'s with common cumulative distribution
function $F(x),$ $U$ is an unbiased estimator of the population
parameter
$$\Theta(F)=\int \int \cdots \int \Phi(x_1, x_2,\ldots,x_k) \, {\rm d} F(x_1)\,  {\rm d}F(x_2)\cdots \, {\rm d}F(x_k).$$
In this case, the function $\Phi$ may be assumed to be a symmetric function of its arguments. Often, in applications, $\Phi$ is a
polynomial in the $X_j$'s, so that $U$-statistics are symmetric polynomials. By virtue of the fundamental theorem of
symmetric polynomials, $U$-statistics can be expressed in terms of elementary symmetric polynomials. The symbolic method
proposed here provides a way to find this expression \cite{CompStat}.

The starting point is to replace ${\mathbb R}$ by ${\mathbb R}[x_1, x_2, \ldots , x_n],$
where $x_1, x_2,$  $\ldots , x_n$ are indeterminates. This device allows us to deal with multivariable umbral polynomials $p \in {\mathbb R}[x_1, x_2, \ldots , x_n].$ The uncorrelation property \eqref{(uncorrelation)} is then updated to
$$
\E[x_i^{k_1} x_j^{k_2} \cdots \alpha^{k_3} \gamma^{k_4} \cdots] = x_i^{k_1} x_j^{k_2} \cdots  \E[\alpha^{k_3}] \E[\gamma^{k_4}] \cdots
$$
for any set of distinct umbrae in $\calA,$
for $i, j \in [n],$ with $i \ne j,$ and for all positive integers $k_1, k_2, k_3, k_4.$

For umbral polynomials $p,q \in {\mathbb R}[x_1, x_2, \ldots, x_n][\calA],$
we may relax the similarity equivalence and introduce another umbral
equivalence:
$$p \simeq q \quad \hbox{if and only if} \quad \E[p]=\E[q].$$
A polynomial sequence $\{p_i\}_{i \geq 0} \in {\mathbb R}[x_1, x_2, \ldots , x_n],$ with $p_0=1$ and
de\-gree$(p_i)=i,$ is represented by an umbra $\psi$, called {\it polynomial umbra\/},
if $\E[\psi^i]=p_i$ for all positive integers $i.$ The classical bases of the algebra of
symmetric polynomials can be represented by polynomial umbrae \cite{Bernoulli}. For example, the $i$-th
elementary symmetric polynomial $e_i(x_1, x_2, \ldots, x_n)$ satisfies
\begin{equation}
e_i(x_1, x_2, \ldots, x_n) \simeq \frac{(\chi_1 x_1 + \chi_2 x_2 + \cdots + \chi_n x_n)^i}{i!}, \quad \hbox{for} \,\, i \leq n,
\label{(sim1)}
\end{equation}
with $\chi_1,\chi_2, \ldots, \chi_n$ uncorrelated singleton umbrae. If the indeterminates $x_1, x_2, \ldots , x_n$
are replaced by $n$ uncorrelated umbrae $\alpha_1, \alpha_2,$ $\ldots ,\break \alpha_n$ similar to an umbra $\alpha,$ then Equivalence~\eqref{(sim1)} becomes
\begin{equation}
e_i(\alpha_1, \alpha_2, \ldots, \alpha_n) \simeq \frac{[n {\boldsymbol .}
 (\chi \alpha)]^i}{i!}, \quad \hbox{for} \,\, i \leq n.
\label{(sim1bis)}
\end{equation}
The umbral polynomial $e_i(\alpha_1, \alpha_2, \ldots, \alpha_n)$ is called
the {\it $i$-th umbral elementary symmetric polynomial}. The umbrae $\{\alpha_1, \alpha_2, \ldots, \alpha_n\}$ in \eqref{(sim1bis)} may also be replaced by some powers such as $\{\alpha_1^j, \alpha_2^j, \ldots, \alpha_n^j\}.$ Let us consider the monomial symmetric polynomial
$${\mathfrak m}_{\lambda}(x_1, x_2, \ldots, x_n) = \sum x_1^{\lambda_1} x_2^{\lambda_2} \cdots x_n^{\lambda_n}$$
with $\lambda \vdash i \leq n,$ where the notation is symbolic and must be interpreted in the way that all different images of the monomial $x_1^{\lambda_1} x_2^{\lambda_2} \cdots x_n^{\lambda_n}$ under permutations of the variables have to be summed.  If the indeterminates
$x_1, x_2, \ldots , x_n$ are replaced by $n$ uncorrelated umbrae $\alpha_1, \alpha_2, \ldots , \alpha_n$
similar to an umbra $\alpha,$ then
\begin{equation}
{\mathfrak m}_{\lambda}(\alpha_1, \alpha_2,
\ldots, \alpha_n) \simeq \frac{[n {\boldsymbol .} (\chi \alpha)]^{r_1}}{r_1!} \frac{[n {\boldsymbol .}
(\chi \alpha^2)]^{r_2}}{r_2!} \cdots,
\label{(moneq)}
\end{equation}
which is a product of umbral elementary polynomials in \eqref{(sim1bis)}.
In many statistical calculations, the so-called augmented monomial symmetric polynomials \cite{Kendall}
are involved:
\begin{multline*}
\tilde{{\mathfrak m}}_{\lambda}(x_1, x_2, \ldots, x_n)
\\
=
\sum_{j_1 \ne \ldots \ne j_{r_1} \ne
j_{r_1+1} \ne \ldots \ne j_{r_1+r_2} \ne \cdots } \!\!\! x_{j_1} \cdots
x_{j_{r_1}} \, x^2_{j_{r_1+1}} \cdots \, x^2_{j_{r_1+r_2}} \cdots.
\end{multline*}
These polynomials are obtained from ${\mathfrak m}_{\lambda}(x_1, x_2, \ldots, x_n)$ as follows:
\begin{equation}
{\mathfrak m}_{\lambda}(x_1, x_2,\ldots, x_n) = \frac{\tilde{{\mathfrak m}}_{\lambda}(x_1, x_2, \ldots, x_n)}{r_1! r_2! \cdots}.
\label{(monaug)}
\end{equation}
Then, from Equivalence~\eqref{(moneq)} and Equation~\eqref{(monaug)}, we have
\begin{equation}
\tilde{{\mathfrak m}}_{\lambda}(\alpha_1, \alpha_2, \ldots, \alpha_n) \simeq [n {\boldsymbol .} (\chi \alpha)]^{r_1} [n {\boldsymbol .}(\chi \alpha^2)]^{r_2} \cdots,
\label{(sim2)}
\end{equation}
which is again a product of umbral elementary polynomials \eqref{(sim1bis)}.
\begin{example}
{\rm If $\lambda =(1^2,3)$ and $n \geq 5$ then
\begin{align*}
& [n {\boldsymbol .} (\chi \alpha)]^{2} [n  {\boldsymbol .}  (\chi
 \alpha^3)] = (\chi_1 \alpha_1 + \cdots + \chi_n \alpha_n)^2
(\chi_1 \alpha_1^3 + \cdots + \chi_n \alpha_n^3) \\
&=  \left( \sum_{1 \leq j_1 \ne j_2 \leq n} \chi_{j_1}
\alpha_{j_1} \chi_{j_2} \alpha_{j_2} \right) \left( \sum_{1 \leq
j_3  \leq n} \chi_{j_3} \alpha_{j_3}^3 \right)
\simeq \sum_{1 \leq
j_1 \ne j_2 \ne j_3 \leq n} \alpha_{j_1} \alpha_{j_2}
\alpha_{j_3}^3.
\end{align*}
This last equivalence follows by observing that $\E[\chi_{j_1} \chi_{j_2} \chi_{j_3}]$ vanishes whenever there is at least one pair
of equal indices among $\{j_1, j_2, j_3\}.$}
\end{example}
More details on symmetric polynomials and polynomial umbrae are given in \cite{Bernoulli}. Due to Equivalence~\eqref{(sim2)}, the fundamental expectation result, at the core of unbiased estimation of moments, may be restated as follows.
\begin{theorem} \label{ttt} \cite{Bernoulli} If $\lambda= (1^{r_1}, 2^{r_2}, \ldots) \vdash i \leq n,$ then
$$\E\left\{[n {\boldsymbol .} (\chi \alpha)]^{r_1} [n {\boldsymbol .} (\chi \alpha^2)]^{r_2} \cdots \right\} = (n)_{\nu_{\lambda}} a_{\lambda}.$$
\end{theorem}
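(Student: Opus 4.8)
The plan is to reduce the statement to Equivalence~\eqref{(sim2)} together with a direct evaluation of the right-hand side there. By the very definition of the umbral equivalence $\simeq$, Equivalence~\eqref{(sim2)} gives
$$\E\left\{[n {\boldsymbol .} (\chi \alpha)]^{r_1} [n {\boldsymbol .} (\chi \alpha^2)]^{r_2} \cdots \right\} = \E\left[\tilde{{\mathfrak m}}_{\lambda}(\alpha_1, \alpha_2, \ldots, \alpha_n)\right],$$
with $\alpha_1, \ldots, \alpha_n$ uncorrelated umbrae similar to $\alpha$, so it is enough to compute the latter expectation.

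First I would unwind the definition of the augmented monomial symmetric polynomial: $\tilde{{\mathfrak m}}_{\lambda}(\alpha_1, \ldots, \alpha_n)$ is the sum of the monomials $\alpha_{j_1} \cdots \alpha_{j_{r_1}} \, \alpha^2_{j_{r_1+1}} \cdots \alpha^2_{j_{r_1+r_2}} \cdots$ taken over all tuples $(j_1, j_2, \ldots)$ of $\nu_{\lambda} = r_1 + r_2 + \cdots$ pairwise distinct indices in $[n]$. Since the indices in any one such monomial are pairwise distinct, the uncorrelation property~\eqref{(uncorrelation)} splits its evaluation into a product of one-umbra moments; as each $\alpha_j$ is similar to $\alpha$, every monomial evaluates to $a_1^{r_1} a_2^{r_2} \cdots = a_{\lambda}$, regardless of the tuple. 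It then remains to count the monomials: ordered selections of $\nu_{\lambda}$ distinct elements of $[n]$ number exactly $(n)_{\nu_{\lambda}}$, and summing $a_{\lambda}$ over all of them gives $(n)_{\nu_{\lambda}}\, a_{\lambda}$, as claimed.

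One can also argue directly from the dot-product expansion, bypassing~\eqref{(sim2)}. Writing $n {\boldsymbol .} (\chi \alpha^k) = \chi_1 \alpha_1^k + \cdots + \chi_n \alpha_n^k$ with uncorrelated singleton umbrae $\chi_1, \ldots, \chi_n$ and umbrae $\alpha_1, \ldots, \alpha_n$ similar to $\alpha$ --- the \emph{same} family being used for every power $k$, as in the Example preceding the statement --- and expanding the product of the $\nu_{\lambda}$ sums, one obtains a sum of monomials in which an index occurring with total multiplicity $m_{\ell}$ carries the factor $\chi_{\ell}^{m_{\ell}}$. Since $\E[\chi^i] = \delta_{i,1}$ and the $\chi_{\ell}$ are uncorrelated among themselves and with the $\alpha_{\ell}$, a monomial survives evaluation precisely when every index in it occurs with multiplicity one; the surviving monomials are then indexed by the $(n)_{\nu_{\lambda}}$ tuples of distinct indices, and each evaluates to $a_{\lambda}$, recovering the same total.

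The argument is essentially bookkeeping, and the one point deserving care is the role of the singleton umbra: the factors $\chi_{\ell}$ are exactly what force the indices of a surviving monomial to be pairwise distinct, turning a multinomial-type count into the falling factorial $(n)_{\nu_{\lambda}}$. Correctly keeping track of the fact that $n {\boldsymbol .} (\chi \alpha)$, $n {\boldsymbol .} (\chi \alpha^2), \ldots$ are built from one common family $\{(\chi_{\ell}, \alpha_{\ell})\}_{\ell=1}^{n}$ of similar copies --- not from independent families, one per power --- is the single place where a careless argument could go wrong; the rest is just linearity of $\E$ and uncorrelation.
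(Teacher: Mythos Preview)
Your proposal is correct and follows exactly the approach the paper indicates: the sentence preceding the theorem says ``Due to Equivalence~\eqref{(sim2)}, the fundamental expectation result \ldots\ may be restated as follows,'' and the paper gives no further proof (citing \cite{Bernoulli} instead). Your first argument---reduce via \eqref{(sim2)} to $\E[\tilde{\mathfrak m}_{\lambda}(\alpha_1,\ldots,\alpha_n)]$ and compute this by uncorrelation, similarity, and counting the $(n)_{\nu_{\lambda}}$ ordered tuples---is precisely the intended route, and your second, direct expansion via the singleton umbrae is just the proof of \eqref{(sim2)} folded into the same computation.
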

From Equation~\eqref{(sum33)} and Theorem~\ref{ttt}, the next corollary follows.
\begin{corollary} \label{3.3} If $i \leq n$, then
\begin{equation}
(m {\boldsymbol .} \alpha)^i \simeq \sum_{\lambda \vdash i} \frac{(m)_{\nu_{\lambda}}}{(n)_{\nu_{\lambda}}} \, d_{\lambda}\, [n {\boldsymbol .} (\chi \alpha)]^{r_1} [n {\boldsymbol .} (\chi \alpha^2)]^{r_2} \cdots.
\label{(sum3)}
\end{equation}
\end{corollary}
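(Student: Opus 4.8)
The plan is to derive the equivalence \eqref{(sum3)} by the defining criterion for $\simeq$: two umbral polynomials are $\simeq$-equivalent precisely when they have the same evaluation under $\E$. So I would simply compute $\E$ of each side and check that the results coincide. First, applying $\E$ to the left-hand side $(m {\boldsymbol .} \alpha)^i$ and invoking Equation~\eqref{(sum33)} gives $\E[(m {\boldsymbol .} \alpha)^i] = \sum_{\lambda \vdash i} (m)_{\nu_{\lambda}} d_{\lambda} a_{\lambda}$.

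Next I would apply $\E$ to the right-hand side. Since $\E$ is a linear functional on ${\mathbb R}[\calA]$ (with the auxiliary alphabet $\calX$ adjoined, as already arranged in the text), the evaluation of the finite sum is the sum of the evaluations, and the scalar coefficients $(m)_{\nu_{\lambda}}/(n)_{\nu_{\lambda}}$ and $d_{\lambda}$ pull out. This reduces the right-hand side to $\sum_{\lambda \vdash i} \frac{(m)_{\nu_{\lambda}}}{(n)_{\nu_{\lambda}}} d_{\lambda}\, \E\{[n {\boldsymbol .} (\chi \alpha)]^{r_1} [n {\boldsymbol .} (\chi \alpha^2)]^{r_2} \cdots\}$. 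Now Theorem~\ref{ttt}, applicable because $\lambda = (1^{r_1}, 2^{r_2}, \ldots) \vdash i \leq n$, evaluates each bracketed term to $(n)_{\nu_{\lambda}} a_{\lambda}$. The factor $(n)_{\nu_{\lambda}}$ cancels the denominator termwise, and the right-hand side collapses to $\sum_{\lambda \vdash i} (m)_{\nu_{\lambda}} d_{\lambda} a_{\lambda}$, which is exactly the value computed for the left-hand side. Hence both umbral polynomials have the same evaluation, so they are $\simeq$-equivalent, which is the assertion of the corollary.

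The only point requiring a word of care — rather than a genuine obstacle — is the legitimacy of dividing by $(n)_{\nu_{\lambda}}$ in forming the coefficients on the right. For every partition $\lambda \vdash i$ one has $\nu_{\lambda} \leq i \leq n$, so $(n)_{\nu_{\lambda}} = n(n-1)\cdots(n-\nu_{\lambda}+1)$ is a product of strictly positive integers, hence nonzero; the coefficients are therefore well defined. This is precisely the hypothesis under which Theorem~\ref{ttt} and Equivalence~\eqref{(sim2)} were stated, so no extra assumption is introduced. I would also remark that, because $\E$ fixes the base ring, the equivalence in fact holds with polynomial coefficients as well, which is what makes the subsequent statistical applications (unbiased estimators) go through.
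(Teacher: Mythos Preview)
Your proof is correct and follows exactly the route the paper indicates: the corollary is stated as an immediate consequence of Equation~\eqref{(sum33)} and Theorem~\ref{ttt}, and your argument spells out precisely that comparison of evaluations. The observation about $(n)_{\nu_\lambda}\neq 0$ under the hypothesis $i\leq n$ is the right justification for the coefficients being well defined.
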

From Equivalence \eqref{(sum3)}, if $m=n$, then
\begin{equation}
(n {\boldsymbol .} \alpha)^{i} \simeq \sum_{\lambda \vdash i} d_{\lambda} [n {\boldsymbol .} (\chi \alpha)]^{r_1} [n {\boldsymbol .} (\chi \alpha^2)]^{r_2} \cdots.
\label{(single)}
\end{equation}
Theorem~\ref{ttt} discloses a more general result:
products of moments are moments of products of umbral elementary symmetric polynomials \eqref{(sim1bis)}.

By analogy with Equation~\eqref{Ustat}, the umbral symmetric polynomial on the right-hand side of Equivalence \eqref{(sum3)},
$$
\frac{1}{(n)_{\nu_{\lambda}}} [n {\boldsymbol .} (\chi \alpha)]^{r_1} [n {\boldsymbol .} (\chi \alpha^2)]^{r_2} \cdots,
$$
is called the $U$-{\it statistic} of uncorrelated and similar umbrae
$\alpha_1,\alpha_2,\!\ldots,\break\alpha_n.$ Then Theorem~\ref{ttt} says that $U$-statistics can be expressed as
products of umbral elementary symmetric polynomials, and Corollary~\ref{3.3} says that any statistic involving
a summation of i.i.d.r.v.'s can be expressed as a linear combination of $U$-statistics.

A more general problem consists in computing population moments of sample statistics. These
are symmetric functions of not independent r.v.'s. Within the symbolic method of moments,
the dependence between r.v.'s corresponds to working with umbral monomials with
non-disjoint supports. Indeed, the support of an umbral polynomial $p$ is the set of all umbrae occurring
in $p.$ If $\mu$ and $\nu$ are umbral monomials  with non-disjoint
supports, then $\E[\mu^i \, \nu^j] \ne \E[\mu^i] \E[\nu^j]$ for all positive
integers $i$ and $j.$ In the following, unless otherwise specified,
we work with umbral monomials with non-disjoint supports and
give an example of how to use these monomials to perform computations with not independent r.v.'s.
\begin{example}\cite{CompStat} \label{3.4}
Let us consider a bivariate random sample of r.v.'s $(X_1,Y_1)$, \ldots, $(X_n, Y_n),$ that is,
$(X_1, \ldots, X_n)$ is a random sample with parent distribution $X$ not necessarily
independent from the parent distribution $Y$ of the random sample $(Y_1, \ldots, Y_n).$
In order to compute the expectation of
\begin{equation}
\left(\sum_{i=1}^n X_i\right)^2 \left(\sum_{i=1}^n Y_i\right)  \,\, \hbox{or} \,\,  \left(\sum_{i \ne j}^n X_i^2 X_j\right) \left(\sum_{i=1}^n X_i^2 Y_i\right)^2
\label{(vrbick1)}
\end{equation}
by the symbolic method of moments, the main tool is to represent the sample statistics in (\ref{(vrbick1)}) as a suitable linear combination
of umbral monomials $\mu_1$ and $\mu_2$ representing the populations  $X$ and $Y$, respectively. For the former sample statistic \eqref{(vrbick1)}, we have
$$E\left[\left(\sum_{i=1}^n X_i\right)^2\left(\sum_{i=1}^n Y_i\right)\right]
 =  \E[(n {\boldsymbol .} \mu_1)^2 \,\, (n {\boldsymbol .} \mu_2)].$$
For the latter, let us observe that $E[(\sum_{i\ne j}^n X_{i}^2 X_{j})] = \E[(n {\boldsymbol .} \chi \mu_1^2)
(n {\boldsymbol .} \chi \mu_1)]$ and $E[(\sum_{i=1}^n X_{i}^2 Y_i)] = \E [n {\boldsymbol .}(\chi \mu_1^2$ $\mu_2)],$
so that the overall expectation of the product is given by
\begin{equation}
\E[n {\boldsymbol .} (\chi_1 \mu_1^2) \, n {\boldsymbol .}(\chi_1 \mu_1) \, n {\boldsymbol .}(\chi_2 \mu_1^2 \mu_2) \, n {\boldsymbol .} (\chi_3 \mu_1^2 \mu_2)].
\label{(overall)}
\end{equation}
Both evaluations can be performed by using a suitable generalization of Theorem~\ref{ttt} involving multisets of umbral monomials.
\end{example}
Let us recall that a multiset of umbral monomials $\{\mu_1, \mu_2, \ldots, \mu_k\}$ is
\begin{equation}
M = \{\underbrace{\mu_1, \ldots, \mu_1}_{f(\mu_1)}, \underbrace{\mu_2, \ldots, \mu_2}_{f(\mu_2)}, \ldots, \underbrace{\mu_k, \ldots,
\mu_k}_{f(\mu_k)}\}.
\label{(typemult)}
\end{equation}
The length of the multiset $M$ in \eqref{(typemult)} is $|M|=f(\mu_1) + f(\mu_2) + \cdots \break + f(\mu_k).$
The {\it support\/} of the multiset $M$ is $\bar{M} = \{\mu_1,\dots,\mu_k\}.$
A {\it subdivision} of a multiset $M$ is a multiset $S$ of $l$ non-empty submultisets $M_i=(\bar{M}_i, h_i)$ of $M$ with $l \leq |M|$ that satisfy \cite{Bernoulli}
\begin{enumerate}
\item[{\it a)}] $\bigcup_{i=1}^l \bar{M}_i = \bar{M};$
\item[{\it b)}] $\sum_{i=1}^l h_i(\mu) = f(\mu)$ for every $\mu \in \bar{M}.$
\end{enumerate}
Roughly speaking, a subdivision is a multiset which plays the same role for $M$ as a partition for a set.
A subdivision of the multiset $M$ in \eqref{(typemult)} is denoted by
$$
S=\{\underbrace{M_1 \ldots, M_1}_{g(M_1)}, \underbrace{M_2,\ldots,M_2}_{g(M_2)}, \ldots,
\underbrace{M_l,\ldots,M_l}_{g(M_l)}\}.
$$
\begin{example}
For the multiset $M=\{\mu_1, \mu_1, \mu_2\},$ a subdivision is $S=\{\{\mu_1\},
\{\mu_1,\mu_2\}\}$ or  $S=\{\{\mu_1\},\{\mu_1\},\{\mu_2\}\}.$ In the latter case, we write
$S = \{\underbrace{\{\mu_1\}}_{2}, \underbrace{\{\mu_2\}}_{1}\}.$
\end{example}
A {\tt MAPLE} algorithm
to find all multiset subdivisions is described in \cite{Statcomp}. This procedure is faster
than the iterated full partitions of Andrews and Stafford \cite{Andrews}, given that it takes
into account the multiplicity of all elements of $M.$ The higher this multiplicity is,
the more the procedure is efficient. The number of set partitions of $[|M|]$ corresponding to the same subdivision $S$
is
\begin{equation}
d_S=\frac{1}{g(M_1)! \, g(M_2)! \cdots g(M_l)!}\prod_{i=1}^k \frac{f(\mu_i)!}{h_{1}(\mu_i)! \, h_{2}(\mu_i)! \cdots h_{l}(\mu_i)!},
\label{(ds)}
\end{equation}
where $h_{j}(\mu_i)$ is the multiplicity of $\mu_i$ in the submultiset $M_j.$

\begin{example}
For $M=\{\mu_1, \mu_1, \mu_2\}$ let us consider the subdivision $S=\{\{\mu_1\}, \{\mu_1,\mu_2\}\}.$
If we label the elements of $M=\{\mu_1, \mu_1, \mu_2\}$ by the elements
of $\{1,2,3\},$ the partitions of $\{1,2,3\}$ corresponding to $S=\{\{\mu_1\}, \{\mu_1,\mu_2\}\}$ are
$1|23$ and $2|13,$ so $d_S=2.$ The same result arises from computing \eqref{(ds)}, since for $M_1=\{\mu_1\}$ and $M_2=\{\mu_1,\mu_2\}$ we have
$g(M_1)=1$, $g(M_2)=1$ and $h_{1}(\mu_1)=1$, $h_{2}(\mu_1)=h_{2}(\mu_2)=1.$ Moreover
$f(\mu_1)=2$ and $f(\mu_2)=1.$
\end{example}
If we set $(n {\boldsymbol .} \mu)_M = \prod_i (n {\boldsymbol .} \mu_i)^{f(\mu_i)},$
then, from Equivalence \eqref{(single)}, we obtain
\begin{equation}
(n {\boldsymbol .} \mu)_M  \simeq  \sum_{S} d_S \, [n {\boldsymbol .} (\chi\mu)]_{S} ,
\label{(mult)}
\end{equation}
where the summation is over all subdivisions $S$ of the multiset $M,$ the integer $d_S$ is given in
\eqref{(ds)}, and
\begin{equation}
[n {\boldsymbol .} (\chi \mu)]_{S} =  \prod_{M_i \in \bar{S}}
[n {\boldsymbol .} (\chi \mu_{M_i})]^{\,g(M_i)} \quad \hbox{with} \quad \mu_{M} =  \prod_{\mu \in \bar{M}} \mu^{f(\mu)}.
\label{(notation)}
\end{equation}

Equivalence~\eqref{(mult)} allows us to generalize Theorem~\ref{ttt} to umbral monomials with non-disjoint supports.
\begin{theorem} \cite{Bernoulli} \label{ttt1}
We have
$$\E\left([n {\boldsymbol .} (\chi \mu)]_S \right) = (n)_{|S|} \prod_{j=1}^l m_{h_j(\mu_1), h_j(\mu_2),\ldots, h_j(\mu_k)}^{g(M_j)},$$
where
$$m_{h_j(\mu_1), h_j(\mu_2),\ldots, h_j(\mu_k)} = \E\left[\mu_1^{h_j(\mu_1)} \mu_2^{h_j(\mu_2)} \cdots \mu_k^{h_j(\mu_k)}\right] = \E[\mu_{M_j}].$$
\end{theorem}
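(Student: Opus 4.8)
The plan is to compute $\E\bigl([n {\boldsymbol .} (\chi \mu)]_S\bigr)$ by expanding the dot-product factors, exactly as one proves Theorem~\ref{ttt}, the only new ingredient being the bookkeeping of the subdivision data. Recall from~\eqref{(notation)} that
$$[n {\boldsymbol .} (\chi \mu)]_S = \prod_{M_i \in \bar{S}} \bigl[n {\boldsymbol .} (\chi \mu_{M_i})\bigr]^{g(M_i)}, \qquad \mu_{M_i} = \prod_{\mu \in \bar{M}_i} \mu^{h_i(\mu)}.$$
First I would pin down the expansion over a common alphabet: set $n {\boldsymbol .} (\chi \mu_{M_i}) = \sum_{t=1}^n \chi_t\,(\mu_{M_i})_t$ with $(\mu_{M_i})_t = \prod_{\mu \in \bar{M}_i} (\mu)_t^{\,h_i(\mu)}$, where $\chi_1, \dots, \chi_n$ are uncorrelated singleton umbrae and the $(\mu)_t$ form a fixed family of copies of the base umbral monomials, subject to the convention that for each fixed $t$ the tuple $((\mu_1)_t, \dots, (\mu_k)_t)$ reproduces the joint moments of $(\mu_1, \dots, \mu_k)$, that tuples carried by distinct indices are mutually uncorrelated, and that all of them are uncorrelated from the $\chi$'s. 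This common-alphabet reading of $[n {\boldsymbol .} (\chi \mu)]_S$ — the one that underlies Equivalence~\eqref{(mult)} and is illustrated by Example~\ref{3.4} — is the delicate point; once it is stated precisely, the rest is mechanical.

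Multiplying the product out turns $[n {\boldsymbol .} (\chi \mu)]_S$ into a sum over assignments of an index in $[n]$ to each of the $|S|$ dot-product factors (the factor $[n {\boldsymbol .} (\chi \mu_{M_i})]^{g(M_i)}$ contributing $g(M_i)$ of them). I would then regroup a generic term according to the value $s \in [n]$ of the assigned index and apply $\E$: by the uncorrelation property~\eqref{(uncorrelation)} the expectation factors over the indices $s$, and inside each index the singleton $\chi_s$ — having support disjoint from the base umbrae — decouples from the attached copies, with $\E[\chi_s^{c}] = \delta_{c,1}$. Consequently the term vanishes as soon as some index is assigned to two or more of the $|S|$ factors, so only injective assignments survive; in particular both sides are $0$ when $|S| > n$, matching $(n)_{|S|} = 0$.

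For an injective assignment each factor gets its own index $t$, contributing $\E[\chi_t] = 1$ together with $\E\bigl[(\mu_{M_i})_t\bigr] = \E\bigl[\mu_1^{h_i(\mu_1)} \cdots \mu_k^{h_i(\mu_k)}\bigr] = m_{h_i(\mu_1), \dots, h_i(\mu_k)}$ for each of the $g(M_i)$ factors coming from $[n {\boldsymbol .} (\chi \mu_{M_i})]^{g(M_i)}$; here it is essential that the copies sitting at one and the same index $t$ may have overlapping supports, so that their joint expectation is the full mixed moment $\E[\mu_{M_i}]$. Every injective assignment therefore contributes the same value $\prod_j m_{h_j(\mu_1), \dots, h_j(\mu_k)}^{g(M_j)}$, and since there are exactly $(n)_{|S|}$ injections of a set of size $|S|$ into $[n]$ we conclude $\E\bigl([n {\boldsymbol .} (\chi \mu)]_S\bigr) = (n)_{|S|}\prod_j m_{h_j(\mu_1), \dots, h_j(\mu_k)}^{g(M_j)}$. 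As a consistency check, specializing to $k = 1$ — so that the distinct submultisets are the blocks of $j$ copies of $\mu_1$, with multiplicities $r_j$ for $\lambda = (1^{r_1}, 2^{r_2}, \dots) \vdash i$ — collapses the right-hand side to $(n)_{\nu_\lambda}\,a_\lambda$, recovering Theorem~\ref{ttt}.
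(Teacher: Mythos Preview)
The paper does not actually give a proof of this theorem; it is quoted from \cite{Bernoulli}, and the surrounding text only illustrates the mechanism through worked examples (Example~2.6 and the continuation of Example~2.8). Your argument is correct and is precisely the direct computation those examples point to: expand each factor $n\boldsymbol{.}(\chi\mu_{M_i})$ over a common alphabet $\chi_1,\dots,\chi_n$ and matching copies $(\mu)_t$, use $\E[\chi_s^c]=\delta_{c,1}$ to kill all non-injective index assignments, and count the $(n)_{|S|}$ surviving injections, each contributing the same product of mixed moments. The one point worth stating crisply---and you do flag it---is that the \emph{same} singleton umbrae and the \emph{same} $t$-indexed copies of the base monomials are used across all factors of $[n\boldsymbol{.}(\chi\mu)]_S$; this is exactly the convention made explicit in Example~2.6, and it is what forces the cross-terms with repeated indices to vanish while allowing the surviving terms to produce genuine joint moments $\E[\mu_{M_j}]$ rather than products of marginals.
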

\begin{exampleno}{\bf 2.8 (continued).}
{\rm In order to compute $\E[(n {\boldsymbol .}\mu_1)^2 (n {\boldsymbol .} \mu_2)],$ we use Equivalence~\eqref{(mult)}.
For $M=\{\mu_1, \mu_1, \mu_2\}$ we have
\begin{multline*}
(n {\boldsymbol .}\mu_1)^2 (n {\boldsymbol .} \mu_2)  \simeq    [n \punt (\chi \mu_1^2 \mu_2)] +
2 \, [n \punt (\chi \mu_1)] \, [n \punt (\chi \mu_1 \mu_2)]\\
+ [n \punt (\chi \mu_1^2)]
   [n \punt (\chi \mu_2)] + [n \punt (\chi \mu_1)]^2 [n \punt (\chi \mu_2)],
\end{multline*}
and, from Theorem~\ref{ttt1} with $S=\{\{\mu_1,\mu_1\},\{\mu_2\} \}$,
we get
$$\E[(n {\boldsymbol .}\mu_1)^2 (n {\boldsymbol .} \mu_2)]= n \, m_{2,1} + 2 (n)_2 m_{1,0}  m_{1,1} + (n)_2  m_{2,0}  m_{0,1} +  (n)_3  m_{1,0}^2 m_{0,1},$$
with $m_{i,j} = E[X^i \,Y^j] = \E[\mu_1^i \,\mu_2^j].$
In order to compute \eqref{(overall)}, Equivalence~\eqref{(mult)} and Theorem~\ref{ttt1} have to be applied to
$$M=\{ \chi_1 \mu_1^2, \chi_1 \mu_1, \chi_2 \mu_1^2 \mu_2, \chi_3 \mu_1^2 \mu_2\}.$$
Since $M$ is a set, the subdivisions $S$ are just set partitions. In difference to the strategy proposed by
\cite{Vrbick} to solve the same problem, the singleton umbrae in $M$ allow us
to speed up the computation. For example, consider the partition
$\pi_1=\left\{ \{ \chi_1 \mu_1^2, \chi_1 \mu_1\}, \{\chi_2 \mu_1^2 \mu_2,
\chi_3 \mu_1^2 \mu_2\}\right\}$.
We have
$$[n {\boldsymbol .} (\chi \mu)]_{\pi_1} = [n {\boldsymbol .} (\chi \, \chi_1^2 \, \mu_1^3)] \, [n {\boldsymbol .} (\chi \, \chi_2 \chi_3 \, \mu_1^4 \mu_2^2)].$$
Since $\chi \,, \chi_2 \,, \chi_3$ are uncorrelated umbrae, we have
$\E[\chi \, \chi_2 \chi_3 \, \mu_1^4 \mu_2^2] = \E[\mu_1^4 \mu_2^2],$
but $\E[\chi \, \chi_1^2 \, \mu_1^3] = 0$
since $\E[\chi_1^2]=0.$ Therefore we have
$$\E[n {\boldsymbol .} (\chi \mu)]_{\pi_1}]=0.$$
This is the trick to speed up the computation: whenever, in one --- or more --- blocks of the subdivision
$S$, there are at least two umbral monomials involving
correlated singleton umbrae, the auxiliary umbra $[n {\boldsymbol .} (\chi
\mu)]_{S}$ has evaluation equal to zero. In the iteration procedure employed to build the subdivision $S,$ this allows us
to discard these blocks of the subdivision in the subsequent steps, see \cite{CompStat} for more details.}
\end{exampleno}

Further applications of Theorems~\ref{ttt} and \ref{ttt1}
can be found in \cite{CompStat}. See Table~$3$ of Appendix~$1$ for a
comparison of computation times with procedures existing in the literature.
%
\subsection{Sheppard's corrections}
In the real world, continuous variables are observed and recorded
in finite precision through a rounding or coarsening operation, i.e.,
a grouping rule. A compromise between the desire to know and the cost of knowing
is then a necessary consequence. The literature on grouped data
spans different research areas, see for example
\cite{Heitjan}. Due to the relevance of the method of moments as a tool
to learn about a distribution, great attention has been paid in the
literature to the computation of moments when data are grouped into classes.
The moments computed by means of the resulting grouped frequency
distribution are a first approximation to the estimated moments of
the parent distribution, but they suffer from the error committed
during the process of grouping.
The correction for grouping is a sum of two terms,
the first depending on the length of the grouping interval, the
second being a periodic function of the position. For a continuous
r.v., this very old problem was first discussed by
Thiele \cite{Thiele}, who studied the second term without taking
into consideration the first,
and then by Sheppard \cite{Sheppard} who studied
the first term, without taking into consideration the second.
Compelling reasons have been given in the literature
for neglecting the second term and thus for using Sheppard's corrections, that are nowadays
still employed. Quite recently the appropriateness of Sheppard's
corrections was re-examined in connection with some applications,
see, for example, \cite{Dempster} and \cite{Vardeman}.

Let us consider a r.v.\ $X$ with probability density function $f,$
representing the continuous parent distribution underlying data.
Denote by $a_i$ its $i$-th moment (also called {\it raw moment\/} to distinguish it
from the estimated one),
\begin{equation}
a_i = \int_{-\infty}^{\infty} x^i f(x) \,{\rm d}x.
\label{(momc)}
\end{equation}
Let $\tilde{a}_i$ be the $i$-th moment of the grouped distribution,
\begin{equation}
\tilde{a}_i = \frac{1}{h} \int_{-\infty}^{\infty} x^i \int_{- {\frac 1
2} h}^{{\frac 1 2} h} f(x + z) \,{\rm d}z \,{\rm d}x,
\label{(infinite)}
\end{equation}
with $h$ the length of the grouping intervals. Then the $i$-th raw moment
$a_i$ can be reconstructed via Sheppard's corrections as
\begin{equation}
a_i = \sum_{j=0}^{i} {\binom i j} \left( 2^ {1-j} -1 \right) B_j
\, h^j \, \tilde{a}_{i - j}, \label{(Sh1)}
\end{equation}
where $B_j$ is the $j$-th Bernoulli number.

The derivation of Sheppard's corrections was a popular topic in the
first half of the last century, see \cite{Hald} for a historical account.
This is because Sheppard deduced Equation~\eqref{(Sh1)} by using the
Euler--Maclaurin summation formula and by assuming that the density function had high order contact with
the $x$-axis at both ends. So there was a considerable controversy on
the set of sufficient conditions to be required in order to use formula \eqref{(Sh1)}.
If the rounding lattice is assumed to be random, these sufficient conditions
can be removed.

Grouping includes also censoring or splitting data into categories
during collection or publication, and so it does not only involve
continuous variables. The derivation of corrections for raw moments
of a discrete parent distribution followed a different path. They were first given in the Editorial of
Vol.~$1$, no.~$1$, of {\it Annals of Mathematical Statistics}
(page~111).
The method used to develop a general formula was extremely
laborious. Some years later, Craig \cite{Craig} considerably
reduced and simplified the derivation of these corrections without requiring any other
condition and stating these corrections on the average.  Except for the papers of Craig \cite{Craig} and Baten \cite{Baten},
no attention has been paid to multivariate generalizations of Sheppard's
corrections, probably due to the complexity of the resulting formulas.

The symbolic method has allowed us to find such corrections both
for continuous and discrete parent distributions in closed-form formulas,
which have been naturally extended to multivariate parent distributions \cite{DinardoShep}. The result
is an algorithm to implement all these formulas in any symbolic package \cite{DinardoShepMAPLE}.
A similar employment of the symbolic method can be performed within wavelet theory.
Indeed, the reconstruction of the full moment information $a_i$ in \eqref{(Sh1)} through the grouped moments
$\tilde{a}_i$ can be seen as some kind of multilevel analysis, see \cite{Saliani, Shen}.

If the sequence $\{\tilde{a}_i\}_{i \geq 0}$ in \eqref{(infinite)} is umbrally
represented by the umbra $\tilde{\alpha}$ and the sequence $\{a_i\}_{i \geq 0}$
in \eqref{(momc)} is umbrally represented by the umbra $\alpha,$
Sheppard's corrections \eqref{(Sh1)} can be obtained by expanding the $i$-th moment of
\begin{equation}
\alpha \equiv \tilde{\alpha} + h \left( \iota + \frac{1}{2} \right),
\label{(shc)}
\end{equation}
where $\iota$ is the Bernoulli umbra. Equivalence~\eqref{(shc)} still holds, if the moments refer to
a parent distribution over a finite interval.  When $a_i$ is the $i$-th moment of a discrete
population, and $\tilde{a}_i$ is the $i$-th moment calculated from the grouped frequencies,
Equivalence~\eqref{(shc)} needs to be modified to
\begin{equation}
\alpha \equiv \tilde{\alpha} + h \left( \iota + {\frac 1 2} \right) + {\frac h m} \left( -1 {\boldsymbol .} \iota - {\frac 1 2} \right),
\label{(iii)}
\end{equation}
where we assume that $m$ consecutive values of the discrete population are grouped
in a frequency class of width $h.$ In Equivalence~\eqref{(iii)}, $-1 {\boldsymbol .} \iota$ denotes one more auxiliary
umbra, the inverse umbra of the Bernoulli umbra.\footnote{Given two
umbrae $\alpha$ and $\gamma,$ they are said to be \textit{inverse}
to each other if $\alpha+\gamma\equiv\varepsilon.$ We denote the
inverse of the umbra $\alpha$ by $-1 {\boldsymbol .}  \alpha.$ An umbra and its inverse are uncorrelated.
The inverse of an umbra is not unique, but any two umbrae inverse to any given umbra are similar.} Sheppard's corrections
\eqref{(shc)} can be recovered from \eqref{(iii)} letting $m \rightarrow \infty$ in a suitable way.
\par
As done in Section~2.1, the generalization of Equivalences \eqref{(shc)} and \eqref{(iii)} to the multivariate setting follows by introducing suitable umbral monomials.

Let $(\mu_1,\mu_2, \ldots,\mu_k)$ be a $k$-tuple of umbral monomials with non-disjoint supports.
\begin{definition} \label{4.1}
The sequence $\{m_{t_1, \, t_2,\ldots, \, t_k}\}$ is umbrally represented by the $k$-tuple $(\mu_1, \mu_2,  \ldots,\mu_k)$
if $E[\mu_1^{t_1} \mu_2^{t_2} \cdots \mu_k^{t_k}] = m_{t_1, \, t_2, \ldots,\, t_k}$ for all positive integers $t_1, t_2, \ldots, t_k.$
\end{definition}
Following the analogy with random vectors, $m_{t_1, \, t_2, \ldots, \, t_k}$ is called the multivariate moment of the
$k$-tuple $(\mu_1, \mu_2,  \ldots,\mu_k)$  of order ${\boldsymbol t}=\left(t_1, \, t_2, \right.$ $\left.\ldots, \, t_k\right).$

According to the latter notation in \eqref{(notation)}, if $M$ is a multiset with support $\bar{M}=\{\mu_1,\mu_2, \ldots,\mu_k\}$ and
multiplicities $f(\mu_j)=t_j$ for $j=1,2,\ldots,k,$ then we may write $E[\mu_M]=m_{t_1, \, t_2, \ldots, \, t_k}.$
Moreover, assume there exists a multivariate random vector ${\boldsymbol X}=(X_1, X_2, \ldots, X_k)$
with joint density  $f_{\boldsymbol X}({\boldsymbol x})$ over ${\mathbb R}^k$ such that
\begin{equation}
m_{t_1, \, t_2, \ldots, \, t_k} = \int_{{\mathbb R}^k} {\boldsymbol x}^{\boldsymbol t} f_{\boldsymbol X}({\boldsymbol x}) \, {\rm d}{\boldsymbol x}
\label{(mulmom)}
\end{equation}
is its multivariate moment of order ${\boldsymbol t}.$ Note that, as in the univariate case, without loss of generality
we may consider joint densities with
range\footnote{The range of a joint (probability) density (function) $f_{\boldsymbol X}({\boldsymbol x})$ is the subset of ${\mathbb R}^k$ where $f_{\boldsymbol X}({\boldsymbol x})$ is different from $0.$} of bounded rectangle type.
The moments calculated from the grouped frequencies are
\begin{equation}
\tilde{m}_{t_1,\, t_2, \ldots, \, t_k} = \frac{1}{h_1 h_2 \cdots h_k} \int_{{A}_k} \int_{{\mathbb R}^k}
\prod_{j=1}^k (x_j + z_j)^{t_j} f_{\boldsymbol X}({\boldsymbol x}) \, {\rm d}{\boldsymbol x} \, {\rm d}{\boldsymbol z},
\label{(mulgr)}
\end{equation}
where
\begin{multline*}
A_k = \left\{ {\boldsymbol z}= (z_1, z_2, \ldots, z_k) \in {\mathbb R}^k
:\right.\\
\left.
 z_j \in \left(-{\frac 1 2} h_j, {\frac 1 2} h_j \right)
\text{ for all }j \in
\{1,2, \ldots,k\} \right\},
\end{multline*}
and $h_j \in {\mathbb R} - \{0\}$ are the window widths for any component. Then we have the following theorem.
\begin{theorem}
If the sequence $\{m_{t_1, \, t_2, \ldots, \, t_k}\}$ in \eqref{(mulmom)} is umbrally represented
by the $k$-tuple $(\mu_1, \mu_2,  \ldots,\mu_k)$ and the sequence $\{\tilde{m}_{t_1, \, t_2, \ldots, \, t_k}\}$ in
\eqref{(mulgr)} is umbrally represented by the $k$-tuple $(\tilde{\mu}_1, \tilde{\mu}_2,  \ldots, \tilde{\mu}_k),$ then
\begin{equation}
\mu_M \equiv  \left[\tilde{\mu} + h \left( \iota + \frac{1}{2} \right) \right]_M,
\label{(shcmul)}
\end{equation}
where $M$ is the multiset given in \eqref{(typemult)},
$$\left[\tilde{\mu} + h \left( \iota + \frac{1}{2} \right) \right]_M = \prod_{j=1}^k \left[ \tilde{\mu}_j + h_j \left( \iota_j
+ \frac{1}{2} \right) \right]^{t_j},$$
and $\{\iota_j\}$ are uncorrelated Bernoulli umbrae.
\end{theorem}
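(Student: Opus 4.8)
The plan is to derive the multivariate correction from the one-variable Sheppard correction, Equivalence~\eqref{(shc)}, applied separately in each coordinate, gluing the coordinates together afterwards by means of the uncorrelation of the Bernoulli umbrae $\iota_1,\dots,\iota_k$. Since the multiset $M$ is arbitrary, the equivalence~\eqref{(shcmul)} amounts, at the level of multivariate moments, to the numerical identity
$$
\E[\mu_M]=\E\Bigl(\bigl[\tilde{\mu}+h(\iota+\tfrac{1}{2})\bigr]_M\Bigr)
$$
for every choice of multiplicities $f(\mu_j)=t_j$. By Definition~\ref{4.1} and the remark following it, the left-hand side is $m_{t_1,\dots,t_k}=\int_{{\mathbb R}^k}\boldsymbol{x}^{\boldsymbol{t}}f_{\boldsymbol{X}}(\boldsymbol{x})\,{\rm d}\boldsymbol{x}$, so the whole task reduces to evaluating the right-hand side and recovering this integral.

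First I would expand the right-hand side. Writing $\bigl[\tilde{\mu}+h(\iota+\tfrac{1}{2})\bigr]_M=\prod_{j=1}^{k}\bigl(\tilde{\mu}_j+h_j(\iota_j+\tfrac{1}{2})\bigr)^{t_j}$, using the binomial theorem in each factor, and recalling that every $\iota_j$ is uncorrelated with the other $\iota$'s and with every $\tilde{\mu}$, the uncorrelation property~\eqref{(uncorrelation)} gives
$$
\E\Bigl(\bigl[\tilde{\mu}+h(\iota+\tfrac{1}{2})\bigr]_M\Bigr)=\sum_{\boldsymbol{s}}\ \prod_{j=1}^{k}\binom{t_j}{s_j}h_j^{s_j}\,\E\bigl[(\iota_j+\tfrac{1}{2})^{s_j}\bigr]\ \tilde{m}_{t_1-s_1,\dots,t_k-s_k},
$$
where $\boldsymbol{s}=(s_1,\dots,s_k)$ runs over $0\le s_j\le t_j$ and $\tilde{m}_{t_1-s_1,\dots,t_k-s_k}=\E\bigl[\prod_j\tilde{\mu}_j^{t_j-s_j}\bigr]$ by Definition~\ref{4.1}. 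Here I would use the one-variable fact $\E[(\iota+\tfrac{1}{2})^s]=(2^{1-s}-1)B_s$ (this is exactly what turns~\eqref{(shc)} into~\eqref{(Sh1)}); equivalently,~\eqref{(shc)} says that $h(\iota+\tfrac{1}{2})$ is the inverse umbra of the umbra $\eta_h$ representing the uniform law on $(-h/2,h/2)$, univariate grouping being precisely the passage $\tilde{\alpha}\equiv\alpha+\eta_h$.

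Next I would substitute the definition~\eqref{(mulgr)} of $\tilde{m}_{t_1-s_1,\dots,t_k-s_k}$, interchange the finite $\boldsymbol{s}$-sum with the integrals by Fubini, and factor both the normalizing constant $1/(h_1\cdots h_k)$ and the box $A_k=\prod_j(-h_j/2,h_j/2)$ as products over $j$; the right-hand side then becomes
$$
\int_{{\mathbb R}^k}\ \prod_{j=1}^{k}\Bigl[\frac{1}{h_j}\int_{-h_j/2}^{h_j/2}\sum_{s=0}^{t_j}\binom{t_j}{s}h_j^{s}(2^{1-s}-1)B_s\,(x_j+z_j)^{t_j-s}\,{\rm d}z_j\Bigr]f_{\boldsymbol{X}}(\boldsymbol{x})\,{\rm d}\boldsymbol{x}.
$$
The heart of the argument is then the one-variable pointwise identity
$$
\frac{1}{h}\int_{-h/2}^{h/2}\sum_{s=0}^{t}\binom{t}{s}h^{s}(2^{1-s}-1)B_s\,(x+z)^{t-s}\,{\rm d}z=x^{t},
$$
valid for every real $x$ and every integer $t\ge0$: it is the specialization of the univariate Sheppard correction~\eqref{(shc)} to a parent distribution concentrated at the single point $x$ (for which $a_i=x^i$ and $\tilde{a}_i=\tfrac{1}{h}\int_{-h/2}^{h/2}(x+z)^i\,{\rm d}z$), so, being an identity of polynomials in $x$, it holds with no regularity hypothesis. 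In umbral terms it reads $\E\bigl[\bigl(x+\eta_h+h(\iota+\tfrac{1}{2})\bigr)^{t}\bigr]=\E[(x+\varepsilon)^{t}]=x^{t}$, since $\eta_h$ and $h(\iota+\tfrac{1}{2})$ are inverse umbrae and $\varepsilon$ has vanishing positive moments. Applying it in each of the $k$ bracketed factors collapses them to $\prod_j x_j^{t_j}=\boldsymbol{x}^{\boldsymbol{t}}$, leaving $\int_{{\mathbb R}^k}\boldsymbol{x}^{\boldsymbol{t}}f_{\boldsymbol{X}}(\boldsymbol{x})\,{\rm d}\boldsymbol{x}=m_{\boldsymbol{t}}=\E[\mu_M]$, which is exactly what is wanted.

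The step I expect to be the real obstacle is not any single computation but the verification that grouping genuinely decouples across coordinates: one must check that the rectangular domain $A_k$ together with the product form of the smoothing kernel lets the $k$ one-dimensional corrections be carried out independently once the binomial expansion has been made, and that in each slot the coefficients $(2^{1-s}-1)B_s h^s$ enter in the \emph{inverting} direction, that is, as the moments of the inverse of the uniform umbra $\eta_h$; this is the only point at which~\eqref{(shc)} is invoked, and it is used as a black box. A slightly longer but more elementary variant avoids the pointwise identity altogether: group the coordinates one at a time, observe that the passage from ``$j-1$ coordinates grouped'' to ``$j$ coordinates grouped'' is a univariate Sheppard smoothing acting only in the $j$-th slot, invert it by~\eqref{(shc)}, and iterate $k$ times; this reproduces the sum $\sum_{\boldsymbol{s}}\prod_j\binom{t_j}{s_j}(2^{1-s_j}-1)B_{s_j}h_j^{s_j}\,\tilde{m}_{t_1-s_1,\dots,t_k-s_k}$, which one then recognizes as $\E\bigl(\bigl[\tilde{\mu}+h(\iota+\tfrac{1}{2})\bigr]_M\bigr)$.
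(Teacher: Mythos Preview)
Your argument is correct. The paper itself does not supply a proof of this theorem: it is a survey, and the statement is given without proof (the surrounding text only describes how Equivalence~\eqref{(shcmul)} is implemented algorithmically, with a pointer to \cite{DinardoShep,DinardoShepMAPLE} for details). So there is no ``paper's own proof'' to compare against here.

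That said, your route is exactly the natural one and matches the spirit of the univariate derivation the paper sketches around~\eqref{(shc)}--\eqref{(Sh1)}: expand $\prod_j(\tilde{\mu}_j+h_j(\iota_j+\tfrac{1}{2}))^{t_j}$ binomially, use the mutual uncorrelation of the $\iota_j$'s (and their uncorrelation with the $\tilde{\mu}_j$'s, while keeping the $\tilde{\mu}_j$'s correlated among themselves) to factor the evaluation, insert the definition~\eqref{(mulgr)} of the grouped moments, and then exploit the rectangular structure of $A_k$ to reduce to $k$ independent copies of the one-variable identity. Your observation that the univariate Sheppard correction~\eqref{(shc)} specializes to a pointwise polynomial identity in $x$ (equivalently, that $h(\iota+\tfrac{1}{2})$ is inverse to the uniform-on-$(-h/2,h/2)$ umbra) is precisely the right way to collapse each bracketed factor to $x_j^{t_j}$. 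The alternative ``one coordinate at a time'' variant you mention at the end is equally valid and perhaps closer to how one would discover the formula, but the direct product argument you give first is cleaner.
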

Equivalence~\eqref{(shcmul)} is implemented in \cite{DinardoShepMAPLE} by means of the following steps. In order to recover expressions of raw multivariate moments in terms of grouped moments, we need to multiply summations like
$$\sum_{s_j=1}^{i} \binom {i }{s_j} \tilde{\mu}_j^{s_j} h_j^{i - s_j} \left( 2^{1-i+s_j} - 1\right) B_{i-s_j},$$
corresponding to the $i$-th power of $\tilde{\mu}_j + h_j \left( \iota_j
+ \frac{1}{2} \right),$ and then to replace occurrences of products like $\tilde{\mu}_1^{s_1}
\tilde{\mu}_2^{s_2} \cdots \tilde{\mu}_k^{s_k}$ by $\tilde{m}_{s_1, \, s_2,
  \ldots, \, s_k}.$
If the multivariate parent distribution is discrete, Equivalence \eqref{(shcmul)} has to be updated to
$$
\mu_M \equiv \left[ \tilde{\mu} + h \left( \iota + {\frac 1 2} \right) + {\frac h m} \left( -1 {\boldsymbol .} \iota
- {\frac 1 2} \right) \right]_M
$$
where the symbol on the right-hand side denotes the product
$$ \prod_{j = 1}^k  \left[ \tilde{\mu}_j + h_j \left( \iota_j + {\frac 1 2} \right) + \frac {h_j }{ m_j} \left( -1 {\boldsymbol .} \iota_j
- {\frac 1 2} \right) \right]^{t_j},$$ and $m_j$ denotes the number of consecutive values grouped in a frequency class of width $h_j.$
\section{Compound Poisson processes}
In $1973$, Rota \cite{VIII} claimed that compound Poisson processes
are related to polynomial sequences $\{p_i(x)\}_{i \geq 0}$ of binomial type.
Such a sequence of polynomials is defined by $p_i(x)$ being
of degree $i$ for all positive integers $i,$ $p_0(x)=1$, and
\begin{equation}
p_i(x+y) = \sum_{j=0}^i  {\binom i j} p_j(x) p_{i-j}(y),
\quad \quad i=0,1,\dots.
\label{(binomprop)}
\end{equation}
This connection has ramifications into several other areas of analysis and probability: Lagrange expansions, renewal theory,
exponential family distributions and infinitely divisible processes.
Due to its relevance, different authors have tried to formalize
this connection. Stam \cite{Stam} studied polynomial sequences
of binomial type in terms of an integer-valued compound Poisson process.
He was especially interested in the asymptotic behavior
of the probability generating function $p_i(x)/p_i(1)$ for $i \rightarrow \infty.$
Partial results are available on the radius of convergence under suitable conditions,
involving the power series with coefficients $\{p_i(x)\}_{i \geq 0}$. The resulting theory relies on a complicated system
of notations.   Constantine and Savit \cite{Costantine} have derived a
generalization of Dobi\'nsky's formula by means of compound Poisson processes.
Pitman \cite{Pitman} has investigated some probabilistic aspects of Dobi\'nsky's formula.

The theory of Bell umbrae gives a natural way to relate compound Poisson processes
to polynomial sequences of binomial type. Historically, this connection was the
first application in probability of classical umbral calculus as proposed in $1994.$
More details are given in \cite{Dinardo1}.

So let us come back to the Bell umbra $\beta,$ introduced in Section~2. Instead of using
Bell numbers, we may characterize this umbra by using its factorial moments. The
factorial moments of an umbra are
\begin{equation}
a_{(i)} = \E[(\alpha)_i] = \begin{cases} 1, & \mbox{if } i = 0, \\ E[\alpha (\alpha - 1) \cdots (\alpha - i + 1)], & \mbox{if } i > 0.
\end{cases}
\label{(momfact)}
\end{equation}
\begin{theorem} \cite{Dinardo1}
The Bell umbra $\beta$ is the unique umbra (up to similarity) with all
factorial moments equal to $1.$
\end{theorem}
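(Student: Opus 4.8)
The plan is to reduce the statement to the classical change of basis between ordinary powers and falling factorials. First I would recall the two mutually inverse polynomial identities $x^n = \sum_{k=0}^n S(n,k)\,(x)_k$ and $(x)_n = \sum_{k=0}^n s(n,k)\, x^k$, where $S(n,k)$ and $s(n,k)$ denote the Stirling numbers of the second and of the (signed) first kind. These are identities in ${\mathbb R}[x]$, hence they remain valid after substituting an arbitrary umbra $\alpha$ for $x$ inside ${\mathbb R}[\calA]$; applying the linear evaluation $\E$ term by term gives $a_n = \sum_{k=0}^n S(n,k)\, a_{(k)}$ together with $a_{(n)} = \sum_{k=0}^n s(n,k)\, a_k$. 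In particular the correspondence $\{a_i\}_{i\geq 0} \leftrightarrow \{a_{(i)}\}_{i\geq 0}$ between the moment and factorial-moment sequences of an umbra is a bijection, the transition matrices being lower triangular with unit diagonal.

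For uniqueness, suppose $\alpha$ has $a_{(k)}=1$ for every $k$. Then the first identity gives $\E[\alpha^n] = \sum_{k=0}^n S(n,k) = {\mathfrak B}_n$, the last equality being the standard combinatorial fact that the $n$-th Bell number is the total number of set partitions of an $n$-element set, obtained by summing over the number of blocks. Thus $\E[\alpha^n] = \E[\beta^n]$ for all $n$, i.e.\ $\alpha \equiv \beta$. For the converse --- that $\beta$ itself has all factorial moments equal to $1$ --- note that the identity $\sum_{k=0}^n S(n,k) = {\mathfrak B}_n$ says precisely that the forward (second-kind) transform sends the constant sequence $1,1,\dots$ to the Bell sequence; inverting it, $\E[(\beta)_n] = \sum_{k=0}^n s(n,k)\,{\mathfrak B}_k = 1$. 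Alternatively one may invoke Example~\ref{representation}: $\beta$ represents the Poisson random variable ${\rm Po}(1)$, whose $i$-th factorial moment equals $1^i=1$.

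The argument is nothing more than a triangular linear-algebra inversion, so there is no genuine obstacle. The only point deserving a word of care is the legitimacy of evaluating a scalar polynomial identity at an umbra; this is immediate from the linearity of $\E$ on ${\mathbb R}[\calA]$ together with the fact that $x^n$ and $(x)_n$ are honest polynomials, and is exactly the kind of move sanctioned by the formalism of \cite{SIAM}.
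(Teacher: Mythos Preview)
Your proof is correct. The paper does not actually supply a proof of this theorem; it merely cites the result from \cite{Dinardo1}. Your argument via the mutually inverse Stirling-number change of bases $x^n=\sum_k S(n,k)(x)_k$ and $(x)_n=\sum_k s(n,k)x^k$, together with the identity $\sum_k S(n,k)=\mathfrak{B}_n$, is the natural route and matches how such facts are handled in the umbral literature (the paper itself uses exactly this Stirling expansion a few lines later, in Equation~\eqref{(xpart)}).
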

In Equation~\eqref{(sum33)}, set $\E[(m  {\boldsymbol .} \alpha)^i]=q_i(m)$ and observe that $q_i(m)$ is a polynomial of degree $i$ in $m.$ Suppose that we replace $m$ by an indeterminate\footnote{We replace ${\mathbb R}$ by ${\mathbb R}[x,y].$
The indeterminate $y$ is added because of~\eqref{(binomprop)}.} $x.$
The symbol having sequence $\{q_i(x)\}_{i \geq 0}$ as moments is denoted by $x {\boldsymbol .} \alpha$ and  called
the dot-product of $x$ and $\alpha,$ that is,
\begin{equation}
q_i(x) = \E[(x {\boldsymbol .} \alpha)^i] = \sum_{\lambda \vdash i} (x)_{\nu_{\lambda}} d_{\lambda} a_{\lambda}
\label{(3lala)}
\end{equation}
for all positive integers $i.$  A Poisson r.v.\ $\hbox{Po}(x)$ is represented by $x {\boldsymbol .} \beta,$ which
is called the {\it Bell polynomial umbra}.

By using the exponential (partial) Bell polynomials $ B_{i,k}$ (cf.\ \cite{Riordan}),
a different way to write $q_i(x)$ in \eqref{(3lala)} is
\begin{equation}
q_i(x) = \E\left[(x {\boldsymbol .} \alpha)^i \right] = \sum_{k=1}^i (x)_{k} B_{i,k}(a_1, a_2, \ldots, a_{i-k+1}).
\label{(4lala)}
\end{equation}
Equation~\eqref{(4lala)} with $\alpha$ replaced by the Bell umbra
$\beta$ returns the so-called {\it exponential polynomials}
$\Phi_i(x)$,
\begin{equation}
\E\left[(x {\boldsymbol .} \beta)^i \right] = \Phi_i(x) = \sum_{k=1}^i (x)_k S(i,k),
\label{(xpart)}
\end{equation}
where $S(i,k)$ are Stirling numbers of the second kind. According
to \cite{Berndt}, these polynomials were introduced by S. Ramanujan in his
unpublished notebooks. Later, exponential polynomials were studied by Bell  \cite{Bell}
and Touchard \cite{Touchard}.
Rota, Kahaner and Odlyzko \cite{VIII} have stated their basic properties via umbral operators. If, in~\eqref{(xpart)}, we replace $x$ by
a non-negative integer $n,$ the resulting umbra $n {\boldsymbol .} \beta$ is the sum of $n$ similar and uncorrelated Bell umbrae,
likewise in probability theory where a Poisson r.v.\ $\hbox{Po}(n)$ is the sum of $n$ i.i.d.\ Poisson r.v.'s $\hbox{Po}(1).$ More
generally, the closure under convolution of Poisson distributions $F_t$, that is, $F_s \star F_t = F_{s+t},$
is simply reformulated as\footnote{According to property {\it iv)} of Corollary 1 in \cite{Dinardo1}, the umbra $(t+s) {\boldsymbol .} \beta$ is similar to $t {\boldsymbol .} \beta + s {\boldsymbol .} \beta^{\prime},$ with $\beta$ and $\beta^{\prime}$ uncorrelated Bell umbrae. If $s \ne t$ then $s {\boldsymbol .} \beta$ and $t {\boldsymbol .} \beta$ are two distinct symbols, and so uncorrelated. Then we can use $s {\boldsymbol .} \beta$ instead of  $s {\boldsymbol .} \beta^{\prime}.$}
$$
(t+s) {\boldsymbol .} \beta \equiv t {\boldsymbol .} \beta+s {\boldsymbol .} \beta
$$
with $t,s \in {\mathbb R}.$ If, in~\eqref{(xpart)}, we replace $x$ by a generic umbra $\alpha,$
the auxiliary umbra $\alpha {\boldsymbol .} \beta$ satisfies
$$
(\alpha {\boldsymbol .} \beta)^i \simeq \Phi_i(\alpha) = \sum_{k=1}^i (\alpha)_k S(i,k).
$$
The umbra $\alpha {\boldsymbol .} \beta$ represents a random sum of independent Poisson r.v.'s
$\hbox{Po}(1)$ indexed by an integer r.v.\ $Y$ represented by $\alpha,$ that is, a randomized Poisson r.v.\ $\hbox{Po}(Y)$ with parameter $Y.$

If we swap the umbrae $\alpha$ and $\beta$, the resulting
umbra is the {\it $\alpha$-partition umbra} $\beta {\boldsymbol .} \alpha$
representing a compound Poisson r.v.\ with parameter $1.$ Let us recall that
a compound Poisson r.v.\ with parameter $1$ is a random sum $S_N = X_1 + X_2 + \cdots + X_N,$
where $N \sim \hbox{Po}(1)$ and $\{X_j\}$ are i.i.d.r.v.'s. In the $\alpha$-partition umbra  $\beta {\boldsymbol .} \alpha,$ $\beta$
represents $N$ and $\alpha$ represents one of $\{X_j\}.$ Moreover, since the Poisson r.v.\
$\hbox{Po}(x)$ is umbrally represented by the Bell polynomial umbra $x {\boldsymbol .} \beta,$
a compound Poisson r.v.\ with parameter $x$ is represented by the
{\it polynomial $\alpha$-partition} umbra $x {\boldsymbol .} \beta {\boldsymbol .}
\alpha.$ This encodes the connection between compound Poisson processes and polynomial
sequences of binomial type, which is what we have claimed at the beginning of this section.
The name ``partition umbra" has a probabilistic background.
Indeed, the parameter of a Poisson r.v.\ is usually denoted
by $x=\lambda t,$ with $t$ representing time. When the (time) interval in which $t$
ranges is partitioned into non-overlapping ones, their contributions
are stochastically independent and add up to $S_N.$ This circumstance
is umbrally expressed by the relation
\begin{equation}
(x+y) {\boldsymbol .} \beta {\boldsymbol .} \alpha \equiv x {\boldsymbol .} \beta {\boldsymbol .} \alpha + y {\boldsymbol .} \beta {\boldsymbol .} \alpha,
\label{(eq:somma)}
\end{equation}
paralleling the property~\eqref{(binomprop)} for the binomial sequences
$p_i(x) =\break \E[(x {\boldsymbol .} \beta {\boldsymbol .} \alpha)^i].$
If $\{a_i\}_{i \geq 0}$ is umbrally represented by the umbra $\alpha,$ moments of $x {\boldsymbol .} \beta {\boldsymbol .} \alpha$ are
\begin{equation}
\E[(x {\boldsymbol .} \beta {\boldsymbol .} \alpha)^i] = \sum_{k=1}^i x^k B_{i,k}(a_1,a_2,\ldots,
a_{i-k+1})= \sum_{\lambda \vdash i} d_{\lambda} \, x^{\nu_{\lambda}} a_{\lambda},
\label{(alfapartition)}
\end{equation}
where $B_{i,k}$ are the partial  Bell exponential polynomials. For $x=1$, we infer from~\eqref{(alfapartition)} that
\begin{equation}
\E[(\beta {\boldsymbol .} \alpha)^i] = \sum_{k=1}^i B_{i,k}(a_1,a_2,\ldots,
a_{i-k+1})=  {\calY}_i(a_1,a_2,\ldots,a_i),
\label{(comp)}
\end{equation}
where ${\calY}_i = {\calY}_i(a_1,a_2,\ldots,a_i)$ is the $i$-th complete Bell exponential polynomial \cite{Riordan}.
Then the $\alpha$-partition umbra represents the binomial sequence $\{{\calY}_i\}.$

The following example proves that partition umbrae are employed in representing r.v.'s with distribution different from the Poisson distribution.
\begin{example} \label{gaus} {\rm A Gaussian r.v.\ ${\mathcal N}(m,s)$ is represented by the umbra
$m + \beta  {\boldsymbol .} (s \eta),$ where $\eta$ is an umbra
satisfying $E[\eta^i]= \delta_{i,2},$ for all positive integers $i.$
Indeed, the $i$-th moment of $m + \beta  {\boldsymbol .} (s \eta)$ is
$$
\E\big\{[m + \beta  {\boldsymbol .} (s \eta)]^i\big\}=
\sum_{k=0}^{\lfloor {i / 2} \rfloor}\bigg(\frac {s^2 }{ 2}\bigg)^k \frac{(i)_{2k}}{k!} m^{i-2k},
$$
which gives the $i$-th element of a well-known sequence of orthogonal polynomials, the \emph{Hermite polynomials} $H_{i}^{(\nu)}(x)$ \cite{Riordan}, with $m=x$ and $s^{2}=-\nu.$}
\end{example}
\begin {theorem} \cite{Dinardo1} \label{recur}
The $\alpha$-partition umbra satisfies the equivalence
\begin{equation}
(\beta {\boldsymbol .} \alpha)^{i} \simeq \alpha \, (\beta {\boldsymbol .} \alpha+\alpha)^{i-1},
\quad i=1,2,\ldots,
\label{(prpart)}
\end{equation}
and conversely every umbra satisfying Equivalence~\eqref{(prpart)} is
an $\alpha$-part\-it\-ion umbra.
\end{theorem}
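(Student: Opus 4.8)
The plan is to reduce Equivalence~\eqref{(prpart)} to a coefficient comparison against the classical recursion for complete Bell exponential polynomials. First I would compute the $i$-th moment of each side. For the left-hand side, Equation~\eqref{(comp)} already gives $\E[(\beta{\boldsymbol .}\alpha)^i]={\calY}_i(a_1,\ldots,a_i)$. For the right-hand side I would note that, in the umbral polynomial $\alpha\,(\beta{\boldsymbol .}\alpha+\alpha)^{i-1}$, the explicit occurrences of $\alpha$ are mutually correlated (one and the same umbral symbol inside a monomial) but are uncorrelated with the auxiliary umbra $\beta{\boldsymbol .}\alpha$, since the latter is built from \emph{distinct} copies of $\alpha$. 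Expanding by the summation rule \eqref{(sum1)} and then invoking the uncorrelation property gives
\begin{equation*}
\E\bigl[\alpha\,(\beta{\boldsymbol .}\alpha+\alpha)^{i-1}\bigr]
=\sum_{j=0}^{i-1}\binom{i-1}{j}\,\E\bigl[(\beta{\boldsymbol .}\alpha)^{j}\bigr]\,\E\bigl[\alpha^{\,i-j}\bigr]
=\sum_{k=1}^{i}\binom{i-1}{k-1}\,a_k\,{\calY}_{i-k}(a_1,\ldots,a_{i-k}),
\end{equation*}
where I put $k=i-j$. So the forward implication will follow once I record the identity ${\calY}_i=\sum_{k=1}^{i}\binom{i-1}{k-1}a_k{\calY}_{i-k}$, which is the standard recursion for complete Bell polynomials; alternatively I would obtain it by differentiating the generating function $\sum_{i\geq 0}{\calY}_i t^i/i!=\exp\!\bigl(\sum_{i\geq 0}a_i t^i/i!-1\bigr)$ and equating coefficients of $t^{i-1}/(i-1)!$, which is precisely the analytic counterpart of \eqref{(prpart)}. (Probabilistically this is the Palm/size-biasing identity for a compound Poisson sum: pick one of the summands, peel it off, and what remains is again a compound Poisson independent of it.)

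For the converse I would argue by induction on $i$. Suppose $\gamma$ is any umbra with $\gamma^i\simeq\alpha(\gamma+\alpha)^{i-1}$ for every $i\geq 1$, where as above the free copies of $\alpha$ are uncorrelated with $\gamma$. The case $i=1$ gives $\E[\gamma]=\E[\alpha]=a_1={\calY}_1$. For the inductive step, the very same expansion yields $\E[\gamma^i]=\sum_{k=1}^{i}\binom{i-1}{k-1}a_k\,\E[\gamma^{i-k}]=\sum_{k=1}^{i}\binom{i-1}{k-1}a_k\,{\calY}_{i-k}={\calY}_i$, using the inductive hypothesis and the Bell recursion. Hence $\E[\gamma^i]={\calY}_i(a_1,\ldots,a_i)=\E[(\beta{\boldsymbol .}\alpha)^i]$ for all $i$, i.e.\ $\gamma\equiv\beta{\boldsymbol .}\alpha$, so $\gamma$ is an $\alpha$-partition umbra.

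The main obstacle I anticipate is not the algebra but the umbral bookkeeping behind the central display: one has to be careful that the \lq\lq free\rq\rq\ copies of $\alpha$ in $\alpha(\beta{\boldsymbol .}\alpha+\alpha)^{i-1}$ are genuinely disjoint from the copies of $\alpha$ absorbed into the auxiliary umbra $\beta{\boldsymbol .}\alpha$, so that \eqref{(uncorrelation)} legitimately factors the expectation, while at the same time the two free copies remain mutually correlated so that $\alpha\cdot\alpha^{\,i-1-j}=\alpha^{\,i-j}$. Pinning down these conventions (and, if desired, recording a one-line proof of the complete-Bell recursion via the generating function above) is really all that is needed; the rest is the routine coefficient matching displayed above.
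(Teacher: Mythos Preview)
Your argument is correct. The paper itself does not supply a proof of this theorem; it cites \cite{Dinardo1}, so there is no in-paper proof to compare against. That said, your route---reducing \eqref{(prpart)} to the standard recursion ${\calY}_i=\sum_{k=1}^{i}\binom{i-1}{k-1}a_k\,{\calY}_{i-k}$ for the complete Bell exponential polynomials via \eqref{(comp)}, and handling the converse by induction on $i$---is exactly the natural one and is the argument given in the original source. Your discussion of the umbral bookkeeping (the free occurrences of $\alpha$ are mutually correlated but uncorrelated with the auxiliary umbra $\beta{\boldsymbol .}\alpha$) is also the correct reading of the conventions in force here; this is precisely how similar expressions such as $\kappa_{\alpha}(\kappa_{\alpha}+\alpha)^{i-1}$ in Proposition~\ref{corcum} are meant to be parsed.
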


One more property of the $\alpha$-partition umbra involves {\it disjoint sums}.
The disjoint sum $\alpha \dot{{}+{}} \gamma$ of $\alpha$ and $\gamma$ is an auxiliary umbra representing the sequence
$\{a_i + g_i\}_{i \geq 0}$ with $\{a_i\}_{i \geq 0}$ and $\{g_i\}_{i \geq 0}$ umbrally represented by the umbra $\alpha$  and
$\gamma$, respectively.
\begin{theorem} \label{labdisj} \cite{Dinardoeurop} $\beta{\boldsymbol .} (\alpha \dot{{}+{}} \gamma) \equiv \beta {\boldsymbol .} \alpha  + \beta {\boldsymbol .} \gamma.$
\end{theorem}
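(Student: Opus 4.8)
The plan is to prove the stated similarity by comparing moments. By Definition~\ref{similar} it is enough to check that $\E\bigl[(\beta {\boldsymbol .} (\alpha \dot{{}+{}} \gamma))^i\bigr] = \E\bigl[(\beta {\boldsymbol .} \alpha + \beta {\boldsymbol .} \gamma)^i\bigr]$ for each positive integer $i$, where on the right-hand side the umbrae $\beta {\boldsymbol .} \alpha$ and $\beta {\boldsymbol .} \gamma$ are taken to be uncorrelated; this is legitimate because $\beta {\boldsymbol .} (\,\cdot\,)$ depends only on the similarity class of its argument, and uncorrelated similar representatives always exist. The main tool on the left is the closed form \eqref{(comp)}, which I would use in its set-partition guise: for an umbra $\delta$ with moments $d_1, d_2, \ldots$ one has $\E[(\beta {\boldsymbol .} \delta)^i] = {\calY}_i(d_1, \ldots, d_i) = \sum_{\pi} \prod_{B \in \pi} d_{|B|}$, the sum being over all partitions $\pi$ of the set $[i]$ into nonempty blocks $B$.

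First I would apply this with $\delta = \alpha \dot{{}+{}} \gamma$, whose $j$-th moment is $a_j + g_j$ for $j \geq 1$, obtaining $\E[(\beta {\boldsymbol .} (\alpha \dot{{}+{}} \gamma))^i] = \sum_{\pi} \prod_{B \in \pi} (a_{|B|} + g_{|B|})$. Expanding each product distributes over the $2^{|\pi|}$ ways of colouring the blocks of $\pi$ by two colours, say $a$ and $g$. A set partition of $[i]$ together with such a two-colouring carries exactly the data of a subset $T \subseteq [i]$ (the union of the $a$-coloured blocks), a set partition of $T$, and a set partition of $[i] \setminus T$; summing over this reindexing collapses the double sum to $\sum_{k=0}^{i} \binom{i}{k} {\calY}_k(a_1, \ldots, a_k)\, {\calY}_{i-k}(g_1, \ldots, g_{i-k})$, with the convention ${\calY}_0 = 1$ accounting for the monochromatic colourings $T = \varnothing$ and $T = [i]$.

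On the other side, the summation formula \eqref{(sum1)} for uncorrelated umbrae gives $\E[(\beta {\boldsymbol .} \alpha + \beta {\boldsymbol .} \gamma)^i] = \sum_{k=0}^{i} \binom{i}{k} \E[(\beta {\boldsymbol .} \alpha)^k]\, \E[(\beta {\boldsymbol .} \gamma)^{i-k}]$, and evaluating each factor by \eqref{(comp)} produces the identical sum $\sum_{k=0}^{i} \binom{i}{k} {\calY}_k(a_1, \ldots, a_k)\, {\calY}_{i-k}(g_1, \ldots, g_{i-k})$. Matching the two expressions closes the argument. The routine parts are the expansion and the recombination of binomial coefficients; the one point that deserves care is the bijection in the middle step --- verifying that a two-coloured set partition of $[i]$ is the same thing as a partition of $[i]$ into an ordered pair $\bigl(T, [i] \setminus T\bigr)$ of (possibly empty) parts each equipped with an unordered set partition --- together with the handling of the $k = 0$ and $k = i$ boundary terms. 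As a shortcut I would mention, but not adopt here since the exposition has so far stayed clear of generating-function manipulations, that passing to the exponential generating series $\hat{\delta}(z) = \sum_{i \geq 0} d_i z^i / i!$ makes everything transparent: $\beta {\boldsymbol .} \delta$ has series $\exp(\hat{\delta}(z) - 1)$, $\alpha \dot{{}+{}} \gamma$ has series $\hat{\alpha}(z) + \hat{\gamma}(z) - 1$, and $\exp\bigl((\hat{\alpha}(z) + \hat{\gamma}(z) - 1) - 1\bigr) = \exp(\hat{\alpha}(z) - 1)\exp(\hat{\gamma}(z) - 1)$ is exactly the series attached to $\beta {\boldsymbol .} \alpha + \beta {\boldsymbol .} \gamma$. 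Alternatively, one could try to verify that $\beta {\boldsymbol .} \alpha + \beta {\boldsymbol .} \gamma$ satisfies the recursive identity of Theorem~\ref{recur} with respect to the umbra $\alpha \dot{{}+{}} \gamma$ and invoke its uniqueness clause.
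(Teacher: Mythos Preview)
Your argument is correct. The set-partition/two-colouring bijection is clean and the boundary cases $k=0$ and $k=i$ are handled by the convention ${\calY}_0=1$, so there is no gap.

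As for comparison with the paper: this survey does not actually prove Theorem~\ref{labdisj}; it simply quotes the result from \cite{Dinardoeurop}. That said, the generating-function shortcut you mention at the end is precisely the argument the paper's own machinery would produce, and it is almost certainly the proof in the cited source. From Table~\ref{tablegenfun} one has $f(\alpha+\gamma,z)=f(\alpha,z)f(\gamma,z)$, from \eqref{(genalfa)} with $x=1$ one has $f(\beta{\boldsymbol .}\delta,z)=\exp[f(\delta,z)-1]$, and the disjoint sum has $f(\alpha\dot{{}+{}}\gamma,z)=f(\alpha,z)+f(\gamma,z)-1$; chaining these gives the identity in one line. Your combinatorial route is a bit longer but has the merit of being self-contained at the level of moments, without invoking the generating-function dictionary that the paper only introduces later (around Equation~\eqref{(wald)} and Table~\ref{tablegenfun}). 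Either approach is fine; the generating-function version is what a reader of this paper would naturally reconstruct.
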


If in the $\alpha$-partition umbra $\beta {\boldsymbol .} \alpha,$ we replace the umbra $\beta$ by a different
umbra $\gamma,$ we get a new auxiliary umbra, the dot-product $\gamma {\boldsymbol .} \alpha.$ More formally, let us consider
the polynomial $q_i(x)$ in~\eqref{(3lala)}, and suppose to replace $x$ by an umbra $\gamma$
with moments $\{g_i\}_{i \geq 0}.$ The polynomial
\begin{equation}
q_i(\gamma) =\sum_{\lambda \vdash i} (\gamma)_{\nu_{\lambda}} d_{\lambda} a_{\lambda}
\label{dotproduct1}
\end{equation}
is an umbral polynomial with support $\{\gamma\}.$ The auxiliary umbra $\gamma {\boldsymbol .} \alpha$ is an auxiliary umbra
with moments
\begin{equation}
\E[(\gamma {\boldsymbol .} \alpha)^i] = \E[q_i(\gamma)],
\label{dotproduct2}
\end{equation}
for all positive integers $i.$ A very special dot-product umbra is $\chi {\boldsymbol .} \alpha,$ with $\chi$ the singleton umbra.
This umbra is the keystone in dealing with sequences of cumulants. More details will be given
in Section~4. Indeed we will show  that any umbra is the partition umbra of $\chi {\boldsymbol .} \alpha,$
that is, the symbolic method of moments is like a calculus of measures on Poisson algebras.
In the following, we give an example of how to use the umbra $\chi {\boldsymbol .}
\alpha$ to disclose a not well-known connection between  partition polynomials and Kailath--Segall polynomials.
\begin{example}[\sc The Kailath--Segall formula] \label{6.3}

Let $\{X_t\}_{t \geq 0}$ be a centered L\'evy process\footnote{A centered L\'evy process is a zero mean stochastic process with independent and stationary increments~\cite{Sato}, see also Section~3.1.} with moments of all orders. The variations of the process $\left\{X_t^{(i)}\right\}_{t \geq 0}$
are defined as $X_t^{(1)} = X_t$ and
$$
X_t^{(2)} =
[X,X]_t = \sigma^2 t + \sum_{0 < s \leq t}
(\Delta X_s)^2, \, X_t^{(i)} = \sum_{0 < s \leq t}
(\Delta X_s)^i,
\;\; \hbox{for} \,\, i \geq 3,
$$
where $\Delta X_s=X_s - X_{s{\text{-}}}.$ The iterated stochastic integrals
$$
P_t^{(0)} = 1, \,\,\, P_t^{(1)}=X_t, \,\,\, P_t^{(i)}=\int_0^t P_{s\text{-}}^{(i-1)} {\rm d}X_s, \;\; \hbox{for} \,\,  i \geq 2,
$$
are related to the variations $\left\{X_t^{(i)}\right\}_{t \geq 0}$ by the
Kailath--Segall for-\break mula~\cite{KS}
\begin{equation}
P_t^{(i)} = \frac{1}{i} \left( P_t^{(i-1)} X_t^{(1)} - P_t^{(i-2)} X_t^{(2)} + \cdots +
(-1)^{i+1} P_t^{(0)} X_t^{(i)} \right).
\label{(KS)}
\end{equation}
It follows by induction that
$P_t^{(i)}$ is a polynomial in $X_t^{(1)}, X_t^{(2)}, \ldots, X_t^{(i)},$ called the $i$-th \emph{Kailath--Segall polynomial}.
\begin{theorem} \label{Annali} \cite{AnnaliImma}
If $\{\Upsilon_t\}_{t \geq 0}$ and $\{\psi_t\}_{t \geq 0}$ are two families of umbrae with
$\E[\Upsilon_t^i]= i! E\left[P_t^{(i)}\right]$  and $\E[\psi_t^i] = E\left[X_t^{(i)} \right],$ for all positive integers~$i,$ then
\begin{equation}
\Upsilon_t \equiv \beta {\boldsymbol .} [(\chi {\boldsymbol .} \chi) \psi_t] \quad \hbox{and} \quad (\chi {\boldsymbol .} \chi) \psi_t \equiv \chi {\boldsymbol .} \Upsilon_t.
\label{(KStheor)}
\end{equation}
\end{theorem}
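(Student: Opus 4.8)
The plan is to prove the two equivalences in \eqref{(KStheor)} by matching moment sequences, noting first that they imply one another. Indeed, applying the singleton dot-product $\chi{\boldsymbol .}(-)$ to $\Upsilon_t \equiv \beta{\boldsymbol .}[(\chi{\boldsymbol .}\chi)\psi_t]$ and using that $\beta{\boldsymbol .}(-)$ and $\chi{\boldsymbol .}(-)$ are mutually inverse operations — every umbra $\zeta$ satisfies $\zeta\equiv\beta{\boldsymbol .}(\chi{\boldsymbol .}\zeta)$, as recalled in Section~3, and $\beta{\boldsymbol .}(-)$ is injective on moment sequences, so $\chi{\boldsymbol .}(\beta{\boldsymbol .}\zeta)\equiv\zeta$ as well — one obtains at once $\chi{\boldsymbol .}\Upsilon_t \equiv (\chi{\boldsymbol .}\chi)\psi_t$. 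Hence it suffices to prove the first equivalence.

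For that I would set $\eta_t := (\chi{\boldsymbol .}\chi)\psi_t$ and prove $\E[(\beta{\boldsymbol .}\eta_t)^i] = i!\,E[P_t^{(i)}]$ by induction on $i$, with a recursion on each side. On the umbral side, Theorem~\ref{recur} gives $(\beta{\boldsymbol .}\eta_t)^i \simeq \eta_t\,(\beta{\boldsymbol .}\eta_t + \eta_t)^{i-1}$; expanding the binomial, using the uncorrelation of the outer factor $\eta_t$ from the inner $\beta{\boldsymbol .}\eta_t$, and applying $\E$ yields the complete Bell-polynomial recursion $\E[(\beta{\boldsymbol .}\eta_t)^i] = \sum_{k=1}^i \binom{i-1}{k-1}\,\E[\eta_t^k]\,\E[(\beta{\boldsymbol .}\eta_t)^{i-k}]$. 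The crucial intermediate computation is $\E[\eta_t^k]$: since $\chi{\boldsymbol .}\chi$ and $\psi_t$ have disjoint supports, $\E[\eta_t^k] = \E[(\chi{\boldsymbol .}\chi)^k]\,\E[\psi_t^k]$; the $k$-th moment of $\chi{\boldsymbol .}\chi$ is the $k$-th factorial moment of the singleton umbra, which equals the signed Stirling number $(-1)^{k-1}(k-1)!$; and $\E[\psi_t^k] = E[X_t^{(k)}]$ by hypothesis. Thus $\E[\eta_t^k] = (-1)^{k-1}(k-1)!\,E[X_t^{(k)}]$, and, simplifying $\binom{i-1}{k-1}(k-1)! = (i-1)!/(i-k)!$, the umbral recursion reads $\E[(\beta{\boldsymbol .}\eta_t)^i] = (i-1)!\sum_{k=1}^i \frac{(-1)^{k-1}}{(i-k)!}\,E[X_t^{(k)}]\,\E[(\beta{\boldsymbol .}\eta_t)^{i-k}]$. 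On the probabilistic side, taking expectations in the Kailath--Segall identity \eqref{(KS)} and multiplying by $i!$ gives $i!\,E[P_t^{(i)}] = (i-1)!\sum_{k=1}^i (-1)^{k-1}\,E\big[X_t^{(k)}P_t^{(i-k)}\big]$.

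Comparing the two recursions reduces the theorem, under the inductive hypothesis $\E[\Upsilon_t^j]=j!\,E[P_t^{(j)}]$ for $j<i$, to expressing the mixed expectations $E[X_t^{(k)}P_t^{(i-k)}]$ through the univariate data $E[X_t^{(j)}]$ carried by $\psi_t$ and the lower-order moments $E[P_t^{(i-k)}]$ — in effect, to controlling how the $k$-th variation correlates with the lower Kailath--Segall polynomials. This is the step I expect to be the real obstacle, and it is where the probabilistic input enters: the independence and stationarity of the increments of $\{X_t\}$ force the iterated integrals $P_t^{(\cdot)}$ to be (local) martingales and make the joint cumulants of the variation family $\{X_t^{(j)}\}_j$ linearize — each $X_t^{(j)}$ being built from the jumps of $X$, cumulants of products collapse to a single integral against the L\'evy measure — while each $P_t^{(i-k)}$ is, by \eqref{(KS)}, an explicit polynomial in $X_t^{(1)},\dots,X_t^{(i-k)}$. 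Once this reduction is carried out, the Kailath--Segall recursion coincides with the Bell-polynomial recursion for $\beta{\boldsymbol .}\eta_t$, the base case $i=1$ is $\E[\Upsilon_t]=E[X_t]=E[X_t^{(1)}]=\E[\eta_t]=\E[\beta{\boldsymbol .}\eta_t]$, and the induction closes; the factorial-moment formula for $\chi$ and the binomial bookkeeping are then routine. A parallel route is to establish first the stochastic-exponential identity $\sum_{n\ge0}P_t^{(n)}z^n = \exp\big(\sum_{k\ge1}\frac{(-1)^{k-1}}{k}z^kX_t^{(k)}\big)$ by induction from \eqref{(KS)}, and then to identify the expectation of its left-hand side with the moment generating function $\exp\big(\sum_{k\ge1}\E[\eta_t^k]z^k/k!\big)$ of $\beta{\boldsymbol .}\eta_t$ coming from \eqref{(comp)}, the same L\'evy functionals $X_t^{(k)}$ again being the point where the work concentrates.
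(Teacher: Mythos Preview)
Your route via Theorem~\ref{recur} is exactly the paper's: its proof sketch says ``Equation~\eqref{(KS)} is equivalent to $\E[\Upsilon_t^i]=\E[\psi_t(\Upsilon_t+\psi_t)^{i-1}]$'' and then invokes Theorem~\ref{recur}. (There $\psi_t$ is playing the role of your $\eta_t=(\chi{\boldsymbol .}\chi)\psi_t$; your version, with the signs and factorials made explicit, is the precise one.) Your reduction of the second equivalence to the first via $\chi{\boldsymbol .}\beta\equiv u$, your computation $\E[\eta_t^k]=(-1)^{k-1}(k-1)!\,\E[\psi_t^k]$, and your Bell recursion for $\E[(\beta{\boldsymbol .}\eta_t)^i]$ are all correct and are in fact the whole argument.

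The gap is the detour through genuine probabilistic expectations. You apply $E$ to the pathwise identity \eqref{(KS)} and then try to factorize $E[X_t^{(k)}P_t^{(i-k)}]$ using the L\'evy structure. That cannot succeed: for a centered L\'evy process every $P_t^{(i)}=\int_0^t P_{s-}^{(i-1)}\,dX_s$ is a martingale null at $0$, so $E[P_t^{(i)}]=0$ for all $i\ge 1$, while the Bell polynomial in the scalars $(-1)^{k-1}(k-1)!\,E[X_t^{(k)}]$ is not zero (already ${\calY}_2=-E[X_t^{(2)}]\ne 0$). No martingale or cumulant identity will reconcile these. The statement is really about the Kailath--Segall \emph{polynomials}: \eqref{(KS)} exhibits $P_t^{(i)}$ as a polynomial in the variations $X_t^{(1)},\dots,X_t^{(i)}$, and with $\E[\psi_t^k]$ standing for $X_t^{(k)}$ and $\E[\Upsilon_t^i]$ for $i!$ times the $i$-th such polynomial, the recursion \eqref{(KS)} \emph{is} your identity $\E[\Upsilon_t^i]=\E[\eta_t(\Upsilon_t+\eta_t)^{i-1}]$ --- the umbral $\E$ factorizes because $\Upsilon_t$ and $\psi_t$ are uncorrelated umbrae by construction, irrespective of any probabilistic dependence between the random variables they label. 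Theorem~\ref{recur} then gives $\Upsilon_t\equiv\beta{\boldsymbol .}\eta_t$. The paper's own follow-up, that ``the Kailath--Segall polynomials result in the complete Bell exponential polynomials in the variables $\{(-1)^{i-1}(i-1)!\,E[X_t^{(i)}]\}$,'' confirms this formal reading. Drop the probabilistic factorization step and your proof is complete.
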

Equivalences~\eqref{(KStheor)} give the umbral version of the Kailath--Segall
formula (the former) and its inversion (the latter).
The proof essentially relies on Theorem~\ref{recur}, since
Equation~\eqref{(KS)} is equivalent to $\E[\Upsilon_t^i] = \E\left[\psi_t (\Upsilon_t + \psi_t)^{i - 1}\right].$
By the former Equivalence~\eqref{(KStheor)}, the Kailath--Segall polynomials result in
the complete Bell exponential polynomials~\eqref{(comp)}
in the variables
$$\left\{(-1)^{i-1} (i-1)! E\left[X_t^{(i)}\right]\right\}.$$
The latter similarity in Equivalence~\eqref{(KStheor)} is a generalization of
$$\chi {\boldsymbol .} (\chi_1 x_1 + \cdots + \chi_n x_n) \equiv (\chi {\boldsymbol .} \chi) \sigma,$$
which gives the elementary symmetric polynomials in terms of power sum symmetric polynomials, umbrally represented by the umbra $\sigma,$
see \cite{Bernoulli}. That is, if we replace the jumps $\{\Delta X_s\}$ in $X_t^{(i)}$ by suitable
indeterminates $\{x_s\},$ then the Kailath--Segall polynomials reduce to the polynomials given in \cite{Taqqu1}.
These polynomials deserve further investigations in connection with stochastic integrals \cite{TaqquPeccati, RotaWallstrom}, for which we
believe that an approach via the symbolic method of moments is fruitful.
\end{example}
Theorem \ref{Annali} gives the symbolic representation of L\'evy process variations. The next subsection is devoted to the symbolic representation of
L\'evy processes. This representation extends the claimed connection between binomial sequences and compound Poisson processes to the more general class of infinite divisible distributions.

\subsection{Time-space harmonic polynomials}
L\'evy processes are stochastic processes playing a fundamental role in various fields such as physics, engineering and actuarial
science \cite{BN}. They are particularly popular in mathematical finance as models for jumps or spikes of asset prices in financial markets.

The martingale property is one of the fundamental mathematical properties which underlies many
finance models, when no knowledge of the past price process can help to predict the future asset price.
In particular, a market is said to be efficient if the price process is modeled by a martingale
(for more details see \cite{Schoutens}).

L\'evy processes are not martingales if they have an increasing or decreasing drift.
One tool to compensate this fault is to use
Wald's martingale, which has various applications \cite{Lyuu}.
If $\{X_t\}_{t \geq 0} = \{X_t\}$ is a L\'evy process,\footnote{When no misunderstanding is possible, we simply use $\{X_t\}$ to denote a stochastic process.}  then the Wald exponential martingale is
defined as
\begin{equation}
M_t =\frac{\exp\{z X_t\}}{E[\exp\{z X_t\}]}.
\label{expmart}
\end{equation}
Definition~\eqref{expmart} requires the existence of the moment generating function
$E[\exp\{z X_t\}]$ in a suitable neighborhood of the origin. A way to overcome this
gap is to look for polynomials $P(x,t),$ with the property that,
when the indeterminate $x$ is
replaced by a L\'evy process $\{X_t\},$ the resulting stochastic process is a martingale.
These polynomials have been introduced first by Neveu \cite{neveu} for
random walks and then by Sengupta \cite{GS95} for L\'evy processes. They are
called {\it time-space harmonic polynomials}.
\begin{definition}\label{TSH}
A family of polynomials $\{P(x,t)\}_{t \geq 0}$ is called {\it time-space harmonic} with
respect to a stochastic process $\{X_t\}_{t \geq 0}$ if
$$E[P(X_t,t) \,\, |\; \mathfrak{F}_{s}] =P(X_s,s),$$ for $0 \leq s \leq t,$
where $\mathfrak{F}_{s}=\sigma\left( X_\tau : 0 \leq \tau \leq s\right)$
is the natural filtration\footnote{A natural filtration
$\mathfrak{F}_{t}$ is the $\sigma$-algebra generated by the
pre-images $X_s^{-1}(B)$ for Borel subsets $B$ of ${\mathbb R}$
and times $s$ with $0 \leq s \leq t.$} associated with $\{X_t\}_{t \geq 0}.$
\end{definition}
Wald's martingale \eqref{expmart} has been recently used
in order to characterize time-space harmonic polynomials \cite{Sengupta08}, but without reaching a closed expression.

The aim of this subsection is to show how to recover a very general class of time-space harmonic polynomials
by using a symbolic representation of L\'evy processes. In particular,
our attention focuses on special properties of L\'evy processes, namely \cite{Sato}
\begin{enumerate}
\item[{\it a)}] $X_0 = 0$ a.s. (almost surely);
\item[{\it b)}] for all $n \geq 1$ and for all $0 \leq t_1 \leq t_2 \leq \ldots \leq t_n < \infty,$ the r.v.'s
$X_{t_2} - X_{t_1}, X_{t_3} - X_{t_2}, \ldots, X_{t_{n}} - X_{t_{n - 1}}$ are independent;
\item[{\it c)}] for $0 \leq s \leq t,$ $X_{t + s} - X_s$ and $X_t$ are identically distributed (stationarity).
\end{enumerate}
Due to these properties, at time $t$ the information of the process $\{X_t\}$ may be split into i.i.d.\ subprocesses:
\begin{equation}
X_t \stackrel{d}{=} \underbrace{\Delta X_{t/n} + \Delta X_{t/n} + \cdots + \Delta X_{t/n}}_n.
\label{(additive)}
\end{equation}
Property \eqref{(additive)} characterizes the so-called {\it infinitely divisible r.v.'s}, which are
in one-to-one correspondence with  L\'evy processes. This is also why L\'evy himself refers to these processes as a sub-class of additive processes. Starting from \eqref{(sum33)},
a symbolic version of a L\'evy process can be obtained by means of the same steps as employed in the construction of $q_i(x)$
at the beginning of Section~3. If the positive integer $m$ is replaced by $t \in {\mathbb R}$  in \eqref{(sum33)}, then the auxiliary umbra
$t  {\boldsymbol .} \alpha$ satisfies
\begin{equation}
\E[(t {\boldsymbol .} \alpha)^i] = q_i(t) = \sum_{\lambda \vdash i} (t)_{\nu_{\lambda}} \, d_{\lambda} \, a_{\lambda},
\label{(momentlevy)}
\end{equation}
for all positive integers $i$. The umbra $t {\boldsymbol .} \alpha$ is the {\it dot-product\/} of $t$ and $\alpha.$
Since $m {\boldsymbol .} \alpha$ is a summation of $m$ uncorrelated and similar umbrae, $t {\boldsymbol .} \alpha$
parallels the decomposition \eqref{(additive)}. Its properties are the same as the ones of L\'evy processes when
they admit finite moments. Here there are some examples.

{\it Binomial property.} The $i$-th moment $q_i(t) = E(X_t^i)$ of a
L\'evy process is a polynomial function of $t$ which satisfies
the property \eqref{(binomprop)} of binomial sequences.  In terms of umbrae,
this property is equivalent to
\begin{equation}
(t+s) {\boldsymbol .} \alpha \equiv t {\boldsymbol .} \alpha + s {\boldsymbol .}\alpha.
\label{(binomimomentumbral)}
\end{equation}
We cannot miss the analogy between Equivalences~\eqref{(eq:somma)} and \eqref{(binomimomentumbral)}.
Indeed, an umbra representing a L\'evy process is similar to an umbra representing a compound Poisson process.
This is because for any umbra $\alpha$ there exists an umbra $\gamma$ satisfying $\alpha \equiv \beta {\boldsymbol .}
\gamma,$ so that $ t {\boldsymbol .} \alpha \equiv t {\boldsymbol .} \beta {\boldsymbol .} \gamma.$ The proof
of this statement is given in Section~4.

{\it Additivity property.} If $\{W_t\}$ and $\{Z_t\}$ are two independent L\'evy processes, then the process $\{X_t\}$ with $X_t = W_t + Z_t$
is a L\'evy process. In terms of umbrae, the additivity property corresponds to
$$
t {\boldsymbol .} (\alpha + \gamma) \equiv t {\boldsymbol .} \alpha + t {\boldsymbol .} \gamma, \quad
\hbox{with $\alpha,\gamma \in \calA.$}
$$

{\it Homogeneity property.} If $\{X_t\}$ is a L\'evy process and $c \in {\mathbb R},$ then $\{c \, X_t\}$ is a L\'evy process.
This homogeneity property is equivalent to
$$
t {\boldsymbol .} (c \alpha) \equiv c (t {\boldsymbol .} \alpha), \quad \hbox{with $\alpha \in \calA.$}
$$

{\it Nested processes.} If $\{X_t\}$ is a L\'evy process, then
$\{(X_t)_s\}$ is a L\'evy process, that is, a nested L\'evy process is
again
a L\'evy process. The analog for $t {\boldsymbol .} \alpha$ is $t
{\boldsymbol .} (s {\boldsymbol .} \alpha) \equiv s {\boldsymbol .} (t
{\boldsymbol .} \alpha) \equiv (st) {\boldsymbol .} \alpha,$
with $t,s \in {\mathbb R}.$

According to Definition~\ref{TSH}, in order to verify that an umbral polynomial is a time-space harmonic polynomial,
a suitable notion of conditional evaluation has been introduced in the polynomial
ring ${\mathbb R}[x][\calA],$ see \cite{AnnaliImma}. Denote by ${\mathcal X}$ the set ${\mathcal X} = \{\alpha\}.$
\begin{definition}\label{condeval1}
The linear operator $\E(\;\cdot\; \vline \,\, \alpha):\, {\mathbb R}[x][\calA]
\;\longrightarrow\; {\mathbb R}[{\mathcal X}]$
satisfying
\begin{itemize}
\item[{\it i)}] $\E(1 \,\, \vline \,\,\alpha)=1$;
\item[{\it ii)}] $\E(x^m \alpha^n \gamma^i \varsigma^j\cdots \, \,  \vline \,\, \alpha)=x^m \alpha^n
\E(\gamma^i)\E(\varsigma^j)\cdots$ for uncorrelated umbrae
$\alpha, \gamma, \varsigma, \ldots$ and for positive integers $m,n,i,j,\ldots$
\end{itemize}
is called \emph{conditional evaluation} with respect to $\alpha.$
\end{definition}

In other words,  the conditional evaluation with respect to $\alpha$
deals with the umbra $\alpha$ as an indeterminate.  As it happens for r.v.'s, the conditional evaluation is an element of
${\mathbb R}[x][\calA]$, and the overall evaluation of $\E(p\,\, \vline \,\, \alpha)$ gives $\E(p),$ with
$p \in {\mathbb R}[x][\calA].$

The conditional evaluation needs to be carefully handled when dot products are involved. Let us observe that the conditional evaluation
with respect to $n \punt \alpha$ satisfies
$$\E[(n+1) \punt \alpha \, | \, n \punt \alpha] = \E ( n \punt \alpha + \alpha^{\prime}  \, | \, n \punt \alpha) = n \punt \alpha + \E(\alpha^{\prime}),$$
with $\alpha^{\prime}$ an umbra similar to $\alpha.$ By similar arguments, for all positive integers $n$ and $m,$ since
$\E\{ [(n + m) \punt \alpha]^i  \, | \, n \punt \alpha\} = \E \{ [ n \punt \alpha + m \punt \alpha^{\prime}]^i \, | \, n \punt \alpha \},$  we have
\begin{equation}
\E \{ [(n + m) \punt \alpha]^i  \, | \, n \punt \alpha\} = \sum_{j=0}^i \binom{i}{j} (n \punt \alpha)^j \E[(m \punt \alpha^{\prime})^{i-j}].
\label{(condeval)}
\end{equation}
Equation (\ref{(condeval)}) suggests how to define the conditional evaluation with respect to the auxiliary umbra $s \punt \alpha.$
\begin{definition}
The conditional evaluation of $t \punt \alpha$ with respect to the auxiliary umbra $s \punt \alpha$ is
$$\E[ (t \punt \alpha)^i  \, | \, s \punt \alpha] = \sum_{j=0}^i \binom{i}{j} (s \punt \alpha)^j \E\{[(t-s) \punt \alpha^{\prime}]^{i-j}\}.$$
\end{definition}

\begin{theorem} \cite{DinardoShep} \label{UTSH2}
For all non-negative integers $i,$ the family of polynomials
\begin{equation}
Q_i(x,t)=\E[(- t {\boldsymbol .} \alpha + x)^i] \in {\mathbb R}[x]
\label{(tshumbral)}
\end{equation}
is time-space harmonic with respect to $\{t {\boldsymbol .} \alpha\}_{t \geq 0},$ that is,
\begin{equation}
\E\left[Q_i(t {\boldsymbol .} \alpha,t) \, \, \vline \, \, s {\boldsymbol .} \alpha \right]
=  Q_i(s {\boldsymbol .} \alpha,s) \qquad \hbox{for}\,\, s,t \geq 0.
\label{(TSHformula)}
\end{equation}
\end{theorem}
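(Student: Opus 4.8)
The plan is to expand $Q_i$ into moments, apply the conditional evaluation termwise, and reduce \eqref{(TSHformula)} to the binomial property \eqref{(binomimomentumbral)} of the dot-product umbra. Since the evaluation $\E$ treats the indeterminate $x$ as a constant, the defining relation \eqref{(tshumbral)} unfolds to
\[
Q_i(x,t)=\sum_{k=0}^{i}\binom{i}{k}\E[(-t {\boldsymbol .} \alpha)^k]\,x^{\,i-k},
\]
a polynomial of degree $i$ in $x$ with leading coefficient $1$; in particular $Q_i(s {\boldsymbol .} \alpha,s)=\sum_{j=0}^{i}\binom{i}{j}\E[(-s {\boldsymbol .} \alpha)^{\,i-j}](s {\boldsymbol .} \alpha)^j$. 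Substituting $x=t {\boldsymbol .} \alpha$ and using linearity of $\E(\,\cdot\mid s {\boldsymbol .} \alpha)$ together with the Definition of the conditional evaluation of $t {\boldsymbol .} \alpha$ with respect to $s {\boldsymbol .} \alpha$, one gets
\[
\E[Q_i(t {\boldsymbol .} \alpha,t)\mid s {\boldsymbol .} \alpha]=\sum_{k=0}^{i}\sum_{j=0}^{i-k}\binom{i}{k}\binom{i-k}{j}\E[(-t {\boldsymbol .} \alpha)^k]\,\E\{[(t-s) {\boldsymbol .} \alpha']^{\,i-k-j}\}\,(s {\boldsymbol .} \alpha)^j .
\]

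Next I would reorganize this double sum. Using the identity $\binom{i}{k}\binom{i-k}{j}=\binom{i}{j}\binom{i-j}{k}$, the factor $\binom{i}{j}(s {\boldsymbol .} \alpha)^j$ comes out of the inner summation, leaving for each $j$ the quantity $\sum_{k=0}^{i-j}\binom{i-j}{k}\E[(-t {\boldsymbol .} \alpha)^k]\,\E\{[(t-s) {\boldsymbol .} \alpha']^{\,i-j-k}\}$. Taking the copy of $\alpha$ underlying $-t {\boldsymbol .} \alpha$ to be uncorrelated with $\alpha'$ — legitimate, since only its moments enter — this is, by Equation~\eqref{(sum1)} applied to two summands, the $(i-j)$-th moment of $-t {\boldsymbol .} \alpha+(t-s) {\boldsymbol .} \alpha'$. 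The binomial property \eqref{(binomimomentumbral)}, which holds for arbitrary real scalars because $q_i$ is a polynomial in the scalar (cf.\ \eqref{(momentlevy)}), gives $-t {\boldsymbol .} \alpha+(t-s) {\boldsymbol .} \alpha'\equiv -s {\boldsymbol .} \alpha$, so the inner sum equals $\E[(-s {\boldsymbol .} \alpha)^{\,i-j}]$. Re-assembling, $\E[Q_i(t {\boldsymbol .} \alpha,t)\mid s {\boldsymbol .} \alpha]=\sum_{j=0}^{i}\binom{i}{j}\E[(-s {\boldsymbol .} \alpha)^{\,i-j}](s {\boldsymbol .} \alpha)^j=Q_i(s {\boldsymbol .} \alpha,s)$, which is \eqref{(TSHformula)}.

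The routine parts are the binomial bookkeeping and the two-summand case of \eqref{(sum1)}. The point that needs care — and what I expect to be the only real obstacle — is keeping the three occurrences of $\alpha$ straight: the ``bound'' copy buried in the coefficients of $Q_i$, the ``state'' umbra $s {\boldsymbol .} \alpha$, and the ``increment'' $(t-s) {\boldsymbol .} \alpha'$ produced by the conditional evaluation. One must check that the conditional evaluation indeed factors the state out of each power $(t {\boldsymbol .} \alpha)^{i-k}$ exactly as in Equation~\eqref{(condeval)}, that the increment may be chosen uncorrelated from both the state and the bound copy, and that the scalar arithmetic $-t+(t-s)=-s$ is carried out under a form of \eqref{(binomimomentumbral)} valid for negative and non-integer scalars. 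Once these points are settled, the computation collapses as above. Alternatively, one may first verify \eqref{(TSHformula)} for non-negative integers $s\le t$ directly from \eqref{(condeval)} and then extend it by observing that, coefficientwise in the powers of $s {\boldsymbol .} \alpha$, both sides are polynomials in $s$ and $t$.
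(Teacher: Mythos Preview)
Your argument is correct and is exactly the computation the paper's setup is designed to support: the definition of the conditional evaluation of $t\boldsymbol{.}\alpha$ with respect to $s\boldsymbol{.}\alpha$ (the display just before Theorem~\ref{UTSH2}) was introduced precisely so that the binomial reorganization you carry out collapses to $-t\boldsymbol{.}\alpha+(t-s)\boldsymbol{.}\alpha'\equiv -s\boldsymbol{.}\alpha$. Note, however, that the paper does not actually prove Theorem~\ref{UTSH2}; it only states it with a citation to \cite{DinardoShep}, so there is no in-paper proof to compare against. Your handling of the three copies of $\alpha$ and the extension of \eqref{(binomimomentumbral)} to real (and negative) scalars via the polynomiality of $q_i$ in \eqref{(momentlevy)} is the right way to address the only genuine subtlety.
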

The umbra $-t  {\boldsymbol .} \alpha$ in~\eqref{(tshumbral)} is a special auxiliary umbra  with the remarkable property
\begin{equation}
-t {\boldsymbol .} \alpha + t {\boldsymbol .} \alpha^{\prime} \equiv \varepsilon,
\label{(inverse)}
\end{equation}
where $\alpha^{\prime} \equiv \alpha.$ Due to Equivalence~\eqref{(inverse)}, the umbra $-t {\boldsymbol .} \alpha$ is
the {\it inverse}\footnote{Since $-t {\boldsymbol .} \alpha$ and $t
  {\boldsymbol .} \alpha$ are two distinct symbols, they can be
  considered uncorrelated. When no confusion arises, we will use the symbol
$t {\boldsymbol .} \alpha$ instead of $t   {\boldsymbol .} \alpha^{\prime}.$} umbra of $t {\boldsymbol .}
\alpha$. Its moments can be obtained via Equation~\eqref{(momentlevy)} by replacing $t$ by $-t.$
The family $\{Q_i(x,t)\}_{i \geq 0}$ is a basis for the
space of time-space harmonic polynomials \cite{commstat} so that many families of
known time-space harmonic polynomials can be recovered as a linear
combination of $\{Q_i(x,t)\}_{i \geq 0}.$ For example, Hermite polynomials  are
time-space harmonic with respect to Wiener processes, and Poisson--Charlier polynomials
are time-space harmonic with respect to Poisson processes. More examples are given
in Appendix~2.
\begin{remark}
Theorem~\ref{UTSH2} may be proved when the L\'evy process admits moments up to a finite order $m.$
In this case, the polynomials $Q_i(x,t)$ in Equation~\eqref{(tshumbral)} are defined up to $i \le m.$ Then formal
power series are replaced by polynomials of degree $m$, and operations like summation and
multiplication among formal power series can be
performed in terms of polynomials \cite{Taqqu}.
\end{remark}
\begin{example}[\sc Random walks] \label{3.13}
When the parameter $t$ is replaced by a positive integer $n,$ Theorem~\ref{UTSH2} still holds.
The sequence\break $\{n {\boldsymbol .} \alpha\}_{n\geq 0}$ represents a random walk
$\{X_n\}_{n \geq 0}$ with $X_0=0, X_n=M_1+M_2+\cdots+M_n$, and $\{M_n\}_{n \geq 0}$ a sequence of i.i.d.r.v.'s.
Note that the sequence $\{M_n\}_{n \geq 0}$ satisfies $M_0=X_0=0$ and $M_n=X_n-X_{n-1},$ for all positive integers $n.$
\end{example}

To recover the connection between Wald's martingale \eqref{expmart} and the polynomials $Q_i(x,t),$ we have to introduce the \emph{generating function} of an umbra $\alpha,$ that is, the formal power series \cite{Dinardoeurop}
$$
f(\alpha, z)= 1 + \sum_{i \geq 1} a_i \frac{z^i}{i!} \in {\mathbb R}[x][[z]],
$$
whose coefficients are the moments of the umbra $\alpha.$ Formal power series allow
us to work  with generating functions \cite{Wilf} which do not have a positive radius
of convergence or have indeterminate coefficients  \cite{Stanley}. Similar umbrae $\alpha \equiv \gamma$
have the same generating function $f(\alpha,z)=f(\gamma,z).$ This property allows us to recover the
generating functions of the auxiliary umbrae. Table~\ref{tablegenfun} gives some examples.
\begin{table}[ht]
\caption{Special generating functions} 
\centering 
\begin{tabular}{l l} 
\hline\hline 
Umbrae & Generating functions  \\ [0.5ex] 
\hline 
Unity umbra $u$                        & $f(u,z)=1$  \\
Singleton umbra $\chi$                 & $f(\chi,z)=1+z$  \\
Bell umbra  $\beta$                    & $f(\beta,z)= \exp[e^z - 1]$  \\
Summation $\alpha + \gamma$            & $f(\alpha+\gamma,z) = f(\alpha,z)f(\gamma,z)$ \\ [1ex]
Dot product $n {\boldsymbol .} \alpha$ & $f(n {\boldsymbol .} \alpha, z) = f(\alpha,z)^n$ \\ [1ex]
Dot product $t {\boldsymbol .} \alpha$ & $f(t {\boldsymbol .} \alpha, z) = f(\alpha, z)^t$ \\ [1ex]
Inverse $- t {\boldsymbol .} \alpha$   & $f(- t {\boldsymbol .} \alpha, z) = 1/f(\alpha, z)^t$ \\ [1ex]
\hline 
\end{tabular}
\label{tablegenfun} 
\end{table}

In particular, the generating function of the time-space harmonic polynomial umbra $-t {\boldsymbol .} \alpha + x$ is
\begin{equation}
f(- t {\boldsymbol .} \alpha +x , z)=\frac{\exp\{xz\}}{f(\alpha,z)^t}=1 + \sum_{i \geq 1} Q_i(x,t) \frac{z^i}{i!}.
\label{(wald)}
\end{equation}
By replacing $x$ by $t  {\boldsymbol .} \alpha$ in~\eqref{(wald)}, we can compare Equation~\eqref{(wald)} with
the following series expansion for Wald's exponential martingale~\eqref{expmart}
$$
\frac{\exp\{z X_t\}}{E[\exp\{z X_t\}]} = 1 + \sum_{i \geq 1}
R_i(X_t,t)\frac{z^i}{i!}.
$$

The algebraic structure of formal power series is isomorphic to sequences endowed with the convolution product, each series corresponding to the sequence of its coefficients \cite{Stanley}. Thus, equality of two formal
power series is interpreted as equality of their corresponding
coefficients, that is,
$E[R_i(X_t,t)] = \E[Q_i(t {\boldsymbol .} \alpha,t)].$

Since $\E[Q_i(t {\boldsymbol .} \alpha,t)]=0,$ we have $E[R_i(X_t,t)] = 0$ for $i \geq 1$  and
$$1 + \sum_{i \geq 1} E[R_i(X_t,t)] \frac{z^i}{i!}=1$$
in agreement with Wald's identity \cite{neveu}. Therefore, the sequence\break $\{E[R_i(X_t,t)]\}_{t \geq 0}$
is umbrally represented by the augmentation umbra $\varepsilon,$ with $f(\varepsilon,z)=1.$  But this
is exactly what happens if in $-t {\boldsymbol .} \alpha + x$ we replace $x$ by $t {\boldsymbol .} \alpha,$ due to~\eqref{(inverse)}.

Time-space harmonic polynomials  with respect to $\{t  {\boldsymbol .} \alpha\}_{t \geq 0}$ are characterized by the following theorem.

\begin{theorem} \cite{DinardoShep}
A polynomial $P(x,t)=\sum_{j=0}^k p_j(t) \, x^j,$ of degree $k$ for all $t \geq 0,$ is  time-space harmonic
with respect to a L\'evy process $\{X_t\},$  represented by $t {\boldsymbol .} \alpha,$ if and only if
$$p_j(t) = \sum_{i=j}^k  \binom{i}{j} \, p_i(0) \, \E[(-t {\boldsymbol .} \alpha)^{i-j}], \qquad \hbox{for}\,\,  j=0,\ldots,k.$$
\end{theorem}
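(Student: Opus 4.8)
The plan is to identify the coefficient polynomials $p_j(t)$ with the coefficients of the expansion of $P$ in the basis $\{Q_i(x,t)\}_{i\ge0}$ of Theorem~\ref{UTSH2}, and then pin those coefficients down by evaluating at $t=0$. The first observation I would record is that, since $x$ is an indeterminate uncorrelated with the umbrae, $Q_i(x,t)=\E[(-t\,{\boldsymbol .}\,\alpha+x)^i]=\sum_{j=0}^{i}\binom{i}{j}\E[(-t\,{\boldsymbol .}\,\alpha)^{i-j}]x^j$; hence the coefficient of $x^j$ in $Q_i$ is $\binom{i}{j}\E[(-t\,{\boldsymbol .}\,\alpha)^{i-j}]$, and since $\E[(-0\,{\boldsymbol .}\,\alpha)^m]=\delta_{m,0}$ (Equivalence~\eqref{(inverse)} at $t=0$) we get $Q_i(x,0)=x^i$, so $Q_i$ is monic of $x$-degree $i$.

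For the direction $(\Rightarrow)$, suppose $P(x,t)=\sum_{j=0}^{k}p_j(t)x^j$ is time-space harmonic with respect to $\{t\,{\boldsymbol .}\,\alpha\}_{t\ge0}$ and has degree $k$ in $x$ for every $t\ge0$. I would invoke the cited fact \cite{commstat} that $\{Q_i(x,t)\}_{i\ge0}$ is a basis for the space of time-space harmonic polynomials to write $P(x,t)=\sum_i c_iQ_i(x,t)$ with scalars $c_i$ independent of $t$; monicity of the $Q_i$ in $x$ together with $\deg_x P=k$ forces the sum to run over $0\le i\le k$. Matching coefficients of $x^j$ gives $p_j(t)=\sum_{i=j}^{k}\binom{i}{j}c_i\,\E[(-t\,{\boldsymbol .}\,\alpha)^{i-j}]$, and evaluating at $t=0$ yields $c_j=p_j(0)$; substituting back is exactly the asserted formula.

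For $(\Leftarrow)$, assuming the asserted formula, I would set $c_i:=p_i(0)$ and $\widetilde P(x,t):=\sum_{i=0}^{k}c_iQ_i(x,t)$, which is time-space harmonic as a finite linear combination of the time-space harmonic families $Q_i$ (Theorem~\ref{UTSH2}); its coefficient of $x^j$ is $\sum_{i=j}^{k}\binom{i}{j}p_i(0)\,\E[(-t\,{\boldsymbol .}\,\alpha)^{i-j}]=p_j(t)$, so $\widetilde P=P$ and $P$ is time-space harmonic.

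The only non-formal ingredients are Theorem~\ref{UTSH2} and the basis statement, so in this route the whole argument is coefficient bookkeeping; the one point that rewards care is checking that the scalars $c_i$ in the basis expansion genuinely do not depend on $t$, since the evaluation at $t=0$ hinges on this. If one wanted to bypass the basis fact, I would instead expand time-space harmonicity directly through the conditional evaluation defined just before Theorem~\ref{UTSH2}: writing $\E[P(t\,{\boldsymbol .}\,\alpha,t)\mid s\,{\boldsymbol .}\,\alpha]=P(s\,{\boldsymbol .}\,\alpha,s)$ and comparing powers of $s\,{\boldsymbol .}\,\alpha$ turns it into $p_l(s)=\sum_{j\ge l}\binom{j}{l}p_j(t)\,\E[((t-s)\,{\boldsymbol .}\,\alpha)^{j-l}]$ for $0\le s\le t$; the case $s=0$ is a binomial convolution which, because the moment sequences of $t\,{\boldsymbol .}\,\alpha$ and $-t\,{\boldsymbol .}\,\alpha$ are inverse under binomial convolution by Equivalence~\eqref{(inverse)}, inverts through a Vandermonde identity to the stated formula, while the cases $s\ne0$ reduce to $(t-s)\,{\boldsymbol .}\,\alpha+(-t\,{\boldsymbol .}\,\alpha)\equiv-s\,{\boldsymbol .}\,\alpha$, read off from Table~\ref{tablegenfun}. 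In this second route the main obstacle is the careful handling of the conditional evaluation on dot-products and the verification of the identity $(t-s)\,{\boldsymbol .}\,\alpha+(-t\,{\boldsymbol .}\,\alpha)\equiv-s\,{\boldsymbol .}\,\alpha$.
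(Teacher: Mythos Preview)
The paper does not include a proof of this theorem; it is stated as a result from \cite{DinardoShep} and not proved in the survey. There is therefore nothing to compare your argument against directly.

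That said, your proposal is correct. The first route is the natural one given the tools the paper has set up: Theorem~\ref{UTSH2} provides the family $\{Q_i(x,t)\}$, the paper records that this family is a basis for the space of time-space harmonic polynomials \cite{commstat}, and your computation of the $x^j$-coefficient of $Q_i$ and of $Q_i(x,0)=x^i$ is straightforward binomial expansion together with $0\,{\boldsymbol .}\,\alpha\equiv\varepsilon$. The one point you rightly flag---that the coefficients $c_i$ in the basis expansion are constants rather than functions of $t$---is precisely the content of the basis statement, so you may take it as given. Your second route, unwinding the conditional evaluation and inverting the binomial convolution via~\eqref{(inverse)}, is also sound and is closer to a self-contained argument; the identity $(t-s)\,{\boldsymbol .}\,\alpha+(-t)\,{\boldsymbol .}\,\alpha\equiv(-s)\,{\boldsymbol .}\,\alpha$ follows from the multiplicativity of generating functions in Table~\ref{tablegenfun}.
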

An open problem is the problem of extending the symbolic method of moments to matrix-valued stochastic processes by using
L\'evy processes and time-space harmonic polynomials.
Matrix-value time-space harmonic polynomials include the matrix-valued counterparts of classical polynomials, as the Hermite, Laguerre or Jacobi polynomials, which have gained increasing interest in multivariate statistics. Different methods have been proposed in the literature, see \cite{Lawi} and references therein,
by using eigenvalues of random matrices or hypergeometric functions of matrix argument. It would be interesting to check  if
dealing with hypergeometric functions by means of the symbolic method leads to more feasible expressions as happens for the multivariate generating functions, see
Section~7.
\section{Cumulants}
A probability measure ${\mathbb P}$ is infinitely divisible if for any positive integer $n$ there is a
probability measure ${\mathbb P}_n$ such that $ {\mathbb P} = ({\mathbb P}_n)^{n*},$ where $({\mathbb P}_n)^{n*}$
denotes the $n$-fold convolution of ${\mathbb P}_n.$ This property parallels Equation~\eqref{(additive)} for L\'evy processes $\{X_t\}$ and can be generalized.
Indeed, if we denote by ${\mathbb P}_t$ the probability measure of $X_t$, then ${\mathbb P}_t=({\mathbb P}_1)^{t*}$
with ${\mathbb P}_1$ the probability measure of $X_1.$ Since the convolution of probability measures corresponds
to a summation of r.v.'s, the r.v.\ $X_t$ can be expressed as a summation of $t$ times the r.v.\ $X_1.$ The symbolic method
of moments allows us to generalize this decomposition by replacing $t \in {\mathbb R}$ by the umbra $\gamma.$
The resulting auxiliary umbra is the dot-product of $\alpha$ and $\gamma$ in \eqref{dotproduct1}, that represents
a summation of $\gamma$ times the umbra $\alpha.$ A special dot-product is the umbra $\chi \punt \alpha,$
where $\chi$ is the singleton umbra, since its moments are cumulants of the sequence $\{a_i\}_{i \geq 0}$
umbrally represented by $\alpha.$

Among the number sequences related to r.v.'s, cumulants play a
central role characterizing many r.v.'s occurring in classical
stochastic processes.  For example, a Poisson r.v.\ is the unique
probability distribution for which all cumulants are equal to
the parameter. A Gaussian r.v.\ is the unique probability distribution
for which all cumulants vanish beyond the second.

(Formal) cumulants $\{c_j\}$ of a sequence $\{a_i\}$ are usually defined by the identity of formal power series
\cite{Rota}
\begin{equation}
1 + \sum_{i \geq 1} a_i \frac{t^i}{i!} =
\exp{\left({\sum_{j \geq 1}  c_j \frac{t^j}{j!}}\right)},
\label{(cumRota)}
\end{equation}
which uniquely determines the non-linear functionals $\{c_j\}$ as
polynomials in $\{a_i\}.$ In this definition, any
questions concerning convergence of involved series may be
disregarded \cite{Stanley}. Moreover, many difficulties connected to the
so-called problem of cumulants\, smooth out. Here, with the problem of
cumulants, we refer to the characterization of sequences that are
cumulants of some probability distribution. The simplest example
is that the second cumulant of a probability distribution must
always be non-negative, and is zero if and only if all of the higher
cumulants are zero. Cumulants are not subject to such constraints
when they are analyzed from a symbolic point of view.
\begin{example}[\sc The variance]
The variance $\hbox{Var}(X) = E(X^2) - E(X)^2$ of a r.v.\ is the cumulant of second order, an index very useful in statistics.
It possesses remarkable properties that may be extended to cumulants of any order:
\begin{enumerate}
\item[{\it i)}] it is invariant under translation: $\hbox{Var}(X + a) = \hbox{Var}(X) $
for any constant $a;$
\item[{\it ii)}] $\hbox{Var}(X+Y) = \hbox{Var}(X) + \hbox{Var}(Y),$ if $X$ and $Y$ are independent r.v.'s;
\item[{\it iii)}]  $\hbox{Var}(X)$ is a polynomial in the moments of the r.v.\ $X$.
\end{enumerate}
\end{example}

A first approach to the theory of cumulants via the classical umbral calculus was
given by Rota and Shen in \cite{Shen2}. By using the symbolic method of moments, the formulas commonly used to express
moments of a r.v.\ in terms of cumulants, and vice-versa,
are encoded in a dot-product involving the singleton umbra \cite{Dinardoeurop}.
\begin{definition}\label{(cumulant)}
The $\alpha$-cumulant umbra $\kappa_{{\scriptscriptstyle \alpha}}$
is the auxiliary umbra satisfying $\kappa_{{\scriptscriptstyle \alpha}} \equiv \chi {\boldsymbol .} \alpha.$
\end{definition}
The three main algebraic properties of cumulants can be stated as follows:

\begin{enumerate}
\item[{\it i)}] {\it additivity property}:
$$\chi {\boldsymbol .} (\alpha + \gamma) \equiv \chi {\boldsymbol .} \alpha \dot{{}+{}} \chi {\boldsymbol .} \gamma,$$
that is, if $\{c_i(X+Y)\}$ is the sequence of cumulants of the
summation of two independent r.v.'s $X$ and $Y$,
then $c_i(X+Y)=c_i(X)+c_i(Y);$
\item[{\it ii)}] {\it homogeneity property}:
$$\chi {\boldsymbol .} (a \alpha) \equiv a (\chi {\boldsymbol .}
  \alpha), \,\,\text{for all } a \in {\mathbb R};
$$
that is, $c_i(a X)= a^i \, c_i(X)$ if $\{c_i(a X)\}$ denotes the sequence of cumulants of $a X.$
\item[{\it iii)}] {\it semi-invariance property}:
$$\chi {\boldsymbol .} (\alpha + a \, u) \equiv \chi {\boldsymbol .} \alpha \dot{{}+{}} a \chi,$$
that is, for all $a \in {\mathbb R},$ we have $c_1(X + a)  =  c_1(X) + a$ and $c_i(X + a)  =  c_i(X)$
for all positive integers $i$ greater than $1.$
\end{enumerate}

The next proposition follows from Equivalence~\eqref{(prpart)} and paves the way
to a symbolic handling of Abel sequences, as will become clearer
in Section~6.
\begin{proposition} \cite{dps} \label{corcum}
If $\kappa_{\alpha}$ is the $\alpha$-cumulant umbra, then, for all positive integers $i$,
\begin{equation}
\alpha^i \simeq \kappa_{{\scriptscriptstyle \alpha}} (\kappa_{{\scriptscriptstyle \alpha}}+\alpha)^{i-1} \quad  \hbox{and}
\quad  \kappa_{{\scriptscriptstyle \alpha}}^i \simeq \alpha (\alpha - 1 {\boldsymbol .} \alpha)^{i-1}.
\label{(ric)}
\end{equation}
\end{proposition}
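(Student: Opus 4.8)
The plan is to read the first equivalence off Theorem~\ref{recur} once $\alpha$ is recognised as a partition umbra, and then to obtain the second as the formal inversion of the first.

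For the first equivalence I would use the fact --- stated in Section~3 and proved in Section~4 --- that every umbra is the partition umbra of $\chi {\boldsymbol .} \alpha$, which by Definition~\ref{(cumulant)} reads $\alpha \equiv \beta {\boldsymbol .} \kappa_{\alpha}$. Specialising Equivalence~\eqref{(prpart)} of Theorem~\ref{recur} to the umbra $\kappa_{\alpha}$ gives $(\beta {\boldsymbol .} \kappa_{\alpha})^{i} \simeq \kappa_{\alpha}\,(\beta {\boldsymbol .} \kappa_{\alpha} + \kappa_{\alpha})^{i-1}$. Since $\beta {\boldsymbol .} \kappa_{\alpha}$ is similar to $\alpha$ and, just like $\alpha$, uncorrelated with $\kappa_{\alpha}$, both sides may be rewritten with $\alpha$ in place of $\beta {\boldsymbol .} \kappa_{\alpha}$; this is precisely $\alpha^{i} \simeq \kappa_{\alpha}\,(\kappa_{\alpha}+\alpha)^{i-1}$.

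For the second equivalence I would first expand the first one by the uncorrelation property into the classical recursion $a_{i} = \sum_{k=0}^{i-1}\binom{i-1}{k} c_{k+1}\, a_{i-1-k}$, with $a_{j}=\E[\alpha^{j}]$ and $c_{j}=\E[\kappa_{\alpha}^{j}]$, and then invert it by means of the inverse umbra. Recall that $-1 {\boldsymbol .} \alpha$ is uncorrelated with $\alpha$, satisfies $\alpha + (-1 {\boldsymbol .}\alpha)\equiv\varepsilon$, and has generating function $f(-1 {\boldsymbol .}\alpha,z)=1/f(\alpha,z)$ (Table~\ref{tablegenfun}); set $\bar a_{j}=\E[(-1 {\boldsymbol .}\alpha)^{j}]$. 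Expanding $\alpha(\alpha - 1 {\boldsymbol .}\alpha)^{i-1}$ by the binomial rule and using that $\alpha$ is uncorrelated with $-1 {\boldsymbol .}\alpha$ yields $\E[\alpha(\alpha - 1 {\boldsymbol .}\alpha)^{i-1}] = \sum_{k=0}^{i-1}\binom{i-1}{k} a_{k+1}\,\bar a_{i-1-k}$, that is, $(i-1)!$ times the coefficient of $z^{i-1}$ in $f'(\alpha,z)\,f(-1 {\boldsymbol .}\alpha,z) = f'(\alpha,z)/f(\alpha,z) = \bigl(\log f(\alpha,z)\bigr)'$. By the cumulant identity~\eqref{(cumRota)} this logarithmic derivative equals $\sum_{j\ge 1} c_{j}\,z^{j-1}/(j-1)!$, so the expression above is $c_{i} = \E[\kappa_{\alpha}^{i}]$, giving $\kappa_{\alpha}^{i} \simeq \alpha(\alpha - 1 {\boldsymbol .}\alpha)^{i-1}$; alternatively the recursion can be unrolled directly, without generating functions.

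The routine parts are the two binomial expansions and the generating-function bookkeeping. The part that needs care is the umbral side: fixing which symbols are correlated in $\kappa_{\alpha}(\kappa_{\alpha}+\alpha)^{i-1}$ and in $\alpha(\alpha - 1 {\boldsymbol .}\alpha)^{i-1}$ --- in the first the two copies of $\kappa_{\alpha}$ are correlated and $\alpha$ is uncorrelated with them, in the second all occurrences of $\alpha$ denote the same umbra and $-1 {\boldsymbol .}\alpha$ is uncorrelated with it --- and justifying that the auxiliary umbra $\beta {\boldsymbol .}\kappa_{\alpha}$ may be exchanged for a similar umbra $\alpha$ uncorrelated with $\kappa_{\alpha}$ inside these polynomials. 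This is exactly where the conventions of \cite{SIAM,Dinardoeurop} for auxiliary and inverse umbrae are used; the rest is the classical moment--cumulant inversion read symbolically.
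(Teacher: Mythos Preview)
Your proposal is correct and, for the first equivalence, it follows precisely the route the paper sketches: the text preceding the proposition says it ``follows from Equivalence~\eqref{(prpart)}'', and your argument does exactly that---apply Theorem~\ref{recur} to $\kappa_{\alpha}$ and invoke the Inversion Theorem $\alpha\equiv\beta\boldsymbol{.}\kappa_{\alpha}$. The paper places the Inversion Theorem after Proposition~\ref{corcum} for expository reasons, but there is no circularity: Theorem~\ref{inv} is proved independently (via $\beta\boldsymbol{.}\chi\equiv u$), so your use of it is legitimate.

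For the second equivalence the paper gives no argument of its own (it simply cites \cite{dps}), so there is nothing to compare directly. Your generating-function computation---reading $\E[\alpha(\alpha-1\boldsymbol{.}\alpha)^{i-1}]$ as the $(i-1)$-th exponential coefficient of $f'(\alpha,z)/f(\alpha,z)=(\log f(\alpha,z))'$ and matching it with the cumulant series via~\eqref{(cumRota)}---is clean and self-contained. An alternative in the spirit of the later Section~6 would be the First Abel Inversion Theorem~\ref{FirstAbel}, but that comes much later and your direct approach is perfectly adequate here. Your remarks on which symbols are correlated are exactly the points that need attention, and you handle them correctly.
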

The former Equivalence~\eqref{(ric)} has been assumed by Rota and Shen
as definition of the cumulant umbra \cite{Shen2}.
In terms of moments, this equivalence reads
$$a_i = \sum_{j=0}^{i-1} {\binom i j} a_j c_{i-j},$$
largely used in statistic framework \cite{Pistone, Smith}.

Since $\E[(\chi)_i]= (-1)^{i-1}(i-1)!$ for all positive integers $i$ \cite{Dinardoeurop}, the formula
expressing the cumulants $\{c_j\}$ in terms of the moments $\{a_i\}$ is recovered from Equations~\eqref{dotproduct1}
and~\eqref{dotproduct2}, that is,
\begin{equation}\label{id:cum vs mom}
c_j = \sum_{\lambda \vdash j} \mathrm{d}_{\lambda}(-1)^{\nu_{\lambda}-1}(\nu_{\lambda}-1)! \, a_{\lambda}.
\end{equation}
\begin{theorem}[\sc Inversion theorem] \cite{Dinardoeurop} \label{inv}
If  $\kappa_{\alpha}$ is the $\alpha$-cumulant umbra,  then
$\alpha \equiv \beta {\boldsymbol .} \kappa_{\alpha}.$
\end{theorem}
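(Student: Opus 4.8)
The plan is to identify $\alpha$, up to similarity, with the $\kappa_\alpha$-partition umbra $\beta {\boldsymbol .} \kappa_\alpha$, using the recursive characterization of partition umbrae supplied by Theorem~\ref{recur}.

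First I would recall that, by Definition~\ref{(cumulant)}, $\kappa_\alpha \equiv \chi {\boldsymbol .} \alpha$, and that the former equivalence in~\eqref{(ric)} of Proposition~\ref{corcum} states $\alpha^i \simeq \kappa_\alpha(\kappa_\alpha + \alpha)^{i-1}$ for every positive integer $i$ — equivalently (and so as to avoid any risk of circularity) this is just the umbral transcription of the classical recursion $a_i = \sum_{j=0}^{i-1} \binom{i}{j} a_j c_{i-j}$ displayed right after Proposition~\ref{corcum}, once $\{a_j\}$ are read as the moments of $\alpha$, $\{c_{i-j}\}$ as the moments of $\kappa_\alpha$, and $\kappa_\alpha$, $\alpha$ as uncorrelated. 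Next I would compare this with Equivalence~\eqref{(prpart)}, which, together with the converse half of Theorem~\ref{recur}, says that an umbra $\zeta$ is the $\delta$-partition umbra if and only if $\zeta^i \simeq \delta(\zeta + \delta)^{i-1}$ for all $i \geq 1$. Taking $\delta = \kappa_\alpha$ and $\zeta = \alpha$, the relation just recorded is exactly this defining recursion, so $\alpha$ must be the $\kappa_\alpha$-partition umbra; that is, $\alpha \equiv \beta {\boldsymbol .} \kappa_\alpha$.

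The point that needs care — and which I expect to be the main obstacle — is the uncorrelation bookkeeping implicit in the notation $\kappa_\alpha(\kappa_\alpha + \alpha)^{i-1}$: the umbra $\alpha$ standing next to $\kappa_\alpha$ there must be read as a fresh similar copy, uncorrelated from $\kappa_\alpha$, and one must check that the pair $(\kappa_\alpha, \alpha)$ sits in the same formal position with respect to $\E$ as the pair $(\alpha, \beta {\boldsymbol .} \alpha)$ does in~\eqref{(prpart)}, so that the uncorrelation property~\eqref{(uncorrelation)} may be applied term by term after the binomial expansion. This is the usual subtlety with auxiliary umbrae, settled by the convention — used repeatedly in the paper — that distinct symbols may be treated as uncorrelated; once this is granted the matching of the two recursions is purely formal.

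An alternative, perhaps more transparent, route goes through generating functions. By Definition~\ref{(cumulant)} and~\eqref{id:cum vs mom}, the umbra $\kappa_\alpha$ has generating function $f(\kappa_\alpha, z) = 1 + \sum_{j \geq 1} c_j z^j / j!$, while~\eqref{(comp)} and the exponential generating function of the complete Bell polynomials give $f(\beta {\boldsymbol .} \gamma, z) = \exp\bigl( f(\gamma, z) - 1 \bigr)$ for any umbra $\gamma$. Composing these, $f(\beta {\boldsymbol .} \kappa_\alpha, z) = \exp\bigl( \sum_{j \geq 1} c_j z^j / j! \bigr) = f(\alpha, z)$ by the defining identity~\eqref{(cumRota)} of cumulants; since similar umbrae are precisely those with the same generating function, $\alpha \equiv \beta {\boldsymbol .} \kappa_\alpha$.
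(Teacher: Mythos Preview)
Your proposal is correct. The paper does not present a formal proof of Theorem~\ref{inv} (it is cited from \cite{Dinardoeurop}), but immediately after the statement it offers precisely your second, generating-function argument: $f(\alpha,z) = \exp[f(\kappa_{\alpha},z)-1]$, which is Equation~\eqref{(cumRota)}. So what you call the ``alternative, perhaps more transparent, route'' is in fact the paper's own justification.

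Your first route via the converse half of Theorem~\ref{recur} is also sound, and your caution about circularity is well placed: in the paper's presentation, Proposition~\ref{corcum} is said to follow from Equivalence~\eqref{(prpart)}, and the natural way to derive the former of~\eqref{(ric)} from~\eqref{(prpart)} is to already know $\alpha \equiv \beta {\boldsymbol .} \kappa_{\alpha}$ and substitute. Your fix --- reading $\alpha^i \simeq \kappa_{\alpha}(\kappa_{\alpha}+\alpha)^{i-1}$ directly as the classical moment--cumulant recursion obtained by differentiating~\eqref{(cumRota)} --- breaks the loop cleanly and lets the converse direction of Theorem~\ref{recur} do the work. This buys a purely recursive argument that never touches generating functions, at the modest cost of the uncorrelation bookkeeping you flag; the generating-function route is shorter and is the one the survey itself adopts.
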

In particular, from~\eqref{(comp)} we have
\begin{equation}\label{id:cum vs Bell}
a_i={\calY}_i(c_1,c_2,\ldots,c_i) \quad \hbox{and} \quad a_i=\sum_{\lambda\vdash i} \mathrm{d}_{\lambda}c_{\lambda}.
\end{equation}
The inversion theorem in Theorem~\ref{inv} justifies Definition~\ref{(cumulant)} since in terms of generating functions we have
$f(\alpha,z) = \exp[f(\kappa_{\alpha},z)-1]$, corresponding to Equation~\eqref{(cumRota)}.
Moreover, since, from Inversion Theorem~\ref{inv}, any umbra $\alpha$ could be seen as the partition umbra of its cumulant
umbra $\kappa_{{\scriptscriptstyle \alpha}}$, it is possible to prove a more general result:  every polynomial sequence of binomial type is completely determined by the sequence of its (formal) cumulants. On the other hand, we have already seen in the previous section that the polynomial umbra $x {\boldsymbol .} \beta {\boldsymbol .} \kappa_{{\scriptscriptstyle \alpha}}$ admits a sequence of moments of binomial
type, see Equivalence \eqref{(eq:somma)}. Due to Inversion Theorem~\ref{inv}, any polynomial sequence of binomial
type is umbrally represented by a suitable polynomial umbra $x {\boldsymbol .} \alpha.$ This statement gives the connection
between L\'evy processes and compound Poisson processes. We will get back to this connection in Example~\ref{LevComp}.

If we swap the umbra $\alpha$ and the umbra $\chi$ in $\chi {\boldsymbol .} \alpha,$ the resulting auxiliary umbra has moments equal to the factorial moments $\{a_{(i)}\}_{i \geq 0}$ of $\alpha$ given in~\eqref{(momfact)}, that is, $E[(\alpha {\boldsymbol .} \chi)^i] = a_{(i)}.$  The umbra $\alpha {\boldsymbol .} \chi$ is called the {\it $\alpha$-factorial umbra}. Factorial moments provide very concise expressions for moments of some discrete distributions, such as the binomial distribution \cite{Dinardoeurop}, for example.
%
\subsection{Randomized compound Poisson r.v.'s}
The way to characterize the umbra representing a r.v.\ $X$ is to determine the
sequence of its moments $\{a_i\}$. When this sequence exists, this can be done by comparing
its moment generating function $E[\exp(z X)]$ with the generating
function of the umbra. Recall that, when $E[\exp(z X)]=f(z),$ for $z$ in a suitable neighborhood
of the origin,
then $f(z)$ admits an exponential expansion in terms of  moments, which are completely determined by the corresponding distribution function
(and vice-versa). In this case, the moment generating function encodes all the information of $X$, and the notion of identically distributed r.v.'s corresponds to the similarity among umbrae. In the symbolic method of moments, instead, the convergence of
$f(\alpha,z)$ is not relevant \cite{Stanley}. This means that we can define the umbra
whose moments are the same as the moments of a log-normal r.v.,\footnote{If $X$ is a standard Gaussian r.v.\
${\mathcal N}(0,1),$ then the log-normal r.v.\ is $Y=\exp(X).$} even if this r.v.\ does not admit a moment generating function.
In Section~3, generating functions of umbrae have been used to recover Wald's martingale. But generating functions
have more advantages. One of which is the encoding of symbolic operations among umbrae in terms of suitable operations among generating functions.
For example, Equation~\eqref{(cumRota)} is encoded in the equivalence $\alpha \equiv \beta \punt \kappa_{\alpha}$
of Inversion Theorem~\ref{inv}, and, more generally, the exponential of a generating function corresponds to the generating
function of the polynomial $\alpha$-partition umbra
\begin{equation}
f(x {\boldsymbol .}  \beta {\boldsymbol .} \alpha, z)=\exp \big[ x \, (f(\alpha,z)-1) \big].
\label{(genalfa)}
\end{equation}
\begin{example}[\sc L\'evy processes] \label{LevComp}
If $\{X_t\}$ is a L\'evy process, then its moment generating function is
$E[e^{z X_t}]=[\phi(z)]^t$, where $\phi(z)$ is the moment generating function of $X_1$ \cite{Sato}.
This property parallels the infinite divisibility property of ${\mathbb P}_t$ given at the beginning
of this section. In particular,
$$E[e^{z X_t}] = \exp[t \, \log\phi(z)] = \exp[t \, k(z)],$$
where $k(z)$ is the cumulant
generating function of $X_1,$ satisfying $k(0)=0.$ Comparing $\exp[t \, k(z)]$ with
$\exp[t(f(\kappa_{{\scriptscriptstyle \alpha}},z)-1)]$ given in Equation \eqref{(genalfa)}, with
$x$ replaced by $t,$ we infer that
\begin{enumerate}
\item[{\it i)}] a L\'evy process is umbrally represented by $t {\boldsymbol .} \beta {\boldsymbol .} \kappa_{{\scriptscriptstyle \alpha}}$
representing in turn a compound Poisson process of parameter $t;$
\item[{\it ii)}] the umbra $\kappa_{{\scriptscriptstyle \alpha}}$ represents the coefficients of $k(z),$
which are the cumulants of $X_1;$
\item[{\it iii)}] the compound Poisson process of parameter $t$ involves i.i.d.r.v.'s $\{X_i\}_{i \geq 0}$ whose moments
are the  cumulants of $X_1.$
\end{enumerate}
\end{example}
The composition of generating functions corresponds to an iterated dot-product. Indeed, moments of the auxiliary umbra $(\gamma {\boldsymbol .} \beta) {\boldsymbol .} \alpha$ can be obtained from~\eqref{(alfapartition)}, by replacing
$x$ by $\gamma$:
\begin{equation}
\E[\Phi_i(\gamma {\boldsymbol .} \beta)]= \sum_{\lambda \vdash i} d_{\lambda} \, g_{\nu_{\lambda}} a_{\lambda}.
\label{(momcomp)}
\end{equation}
The auxiliary umbra  $ (\gamma {\boldsymbol .} \beta)  {\boldsymbol .} \alpha $  is called the {\it composition umbra} of $\alpha$ and $\gamma,$ since its $i$-th moment corresponds to the $i$-th coefficient of the composition $f(\gamma, f(\alpha,z)-1).$
\begin{theorem} \cite{Dinardo1} \label{compass}
If $\alpha, \gamma \in {\calA}$, then
$(\gamma {\boldsymbol .} \beta)  {\boldsymbol .} \alpha  \equiv  \gamma {\boldsymbol .} (\beta  {\boldsymbol .} \alpha).$
\end{theorem}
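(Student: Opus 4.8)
The plan is to prove the equivalence by computing the generating functions $f(\cdot,z)$ of the two auxiliary umbrae $(\gamma\punt\beta)\punt\alpha$ and $\gamma\punt(\beta\punt\alpha)$ and checking that they coincide: since the coefficient of $z^{i}/i!$ in $f(\alpha,z)$ is the $i$-th moment of $\alpha$, two umbrae are similar exactly when they have the same generating function. The one extra ingredient I would need is the generating-function rule for a dot-product whose left factor is an arbitrary (auxiliary) umbra: for umbrae $\gamma$ and $\delta$,
\begin{equation}
f(\gamma\punt\delta,z)=f\bigl(\gamma,\log f(\delta,z)\bigr).\label{eq:dotGF}
\end{equation}

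To obtain \eqref{eq:dotGF} I would start from $f(x\punt\delta,z)=f(\delta,z)^{x}$ in Table~\ref{tablegenfun}. Writing $f(\delta,z)^{x}=\exp\bigl(x\log f(\delta,z)\bigr)=\sum_{k\ge 0}\frac{x^{k}}{k!}\bigl(\log f(\delta,z)\bigr)^{k}$ and noting that $f(\delta,z)$ has constant term $1$, so that $\log f(\delta,z)$ has vanishing constant term and $\bigl(\log f(\delta,z)\bigr)^{k}=O(z^{k})$, the coefficient of $z^{i}/i!$ on the right is a polynomial of degree $i$ in $x$; by \eqref{(3lala)} this polynomial is $q_{i}(x)=\E[(x\punt\delta)^{i}]$. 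Using \eqref{dotproduct1}--\eqref{dotproduct2}, $\E[(\gamma\punt\delta)^{i}]=\E[q_{i}(\gamma)]$, and since $\E$ is linear and touches only the powers of $\gamma$ (order by order in $z$) I get $\E[(\gamma\punt\delta)^{i}]=i!\,[z^{i}]\sum_{k\ge 0}\frac{\E[\gamma^{k}]}{k!}\bigl(\log f(\delta,z)\bigr)^{k}=i!\,[z^{i}]\,f\bigl(\gamma,\log f(\delta,z)\bigr)$, which is \eqref{eq:dotGF}.

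Then I would carry out the two computations. For the left-hand side, \eqref{eq:dotGF} applied twice, together with $f(\beta,w)=\exp(e^{w}-1)$ from Table~\ref{tablegenfun}, gives $f\bigl((\gamma\punt\beta)\punt\alpha,z\bigr)=f\bigl(\gamma,f(\alpha,z)-1\bigr)$, which is also the generating function of the composition umbra as already noted just before the statement. For the right-hand side, \eqref{(genalfa)} with $x=1$ gives $f(\beta\punt\alpha,z)=\exp\bigl(f(\alpha,z)-1\bigr)$, so $\log f(\beta\punt\alpha,z)=f(\alpha,z)-1$, and then \eqref{eq:dotGF} gives $f\bigl(\gamma\punt(\beta\punt\alpha),z\bigr)=f\bigl(\gamma,\log f(\beta\punt\alpha,z)\bigr)=f\bigl(\gamma,f(\alpha,z)-1\bigr)$. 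The two generating functions agree, hence all moments agree, hence the umbrae are similar.

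I expect the only delicate point to be the formal bookkeeping behind \eqref{eq:dotGF}: one must use that $f(\delta,z)$ has constant term $1$, so that $\log f(\delta,z)$ is a well-defined formal power series with vanishing constant term — this is exactly what makes each $q_{i}(x)$ a genuine polynomial and what licenses interchanging $\E$ with the sum over $k$, which at each fixed power of $z$ is finite. No convergence question arises, since $\E$ is defined on $\mathbb{R}[\calA]$ independently of any analytic behaviour of $f(\alpha,z)$. A purely combinatorial alternative — matching \eqref{(momcomp)} for the left-hand side against the expansion of $\E[(\gamma\punt(\beta\punt\alpha))^{i}]$ built from \eqref{dotproduct1}--\eqref{dotproduct2} and the expression \eqref{(comp)} for the moments of $\beta\punt\alpha$ — is also available, but it needs a Fa\`a di Bruno--type rearrangement and is considerably messier.
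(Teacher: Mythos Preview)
Your proof is correct. The paper itself does not prove this theorem; it simply cites \cite{Dinardo1} and records, just before the statement, that the moments of $(\gamma\punt\beta)\punt\alpha$ are the coefficients of the composition $f(\gamma,f(\alpha,z)-1)$. Your argument supplies exactly what is missing: a clean derivation of the dot-product generating-function rule $f(\gamma\punt\delta,z)=f(\gamma,\log f(\delta,z))$, from which both bracketings reduce to $f(\gamma,f(\alpha,z)-1)$. This is the natural route in the spirit of the paper, and your handling of the formal-power-series issues (constant term $1$, finiteness of the sum over $k$ at each order in $z$) is sound.
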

Due to Theorem~\ref{compass}, we may write the composition umbra as $\gamma {\boldsymbol .} \beta {\boldsymbol .} \alpha.$
The composition umbra $(\gamma {\boldsymbol .} \beta)  {\boldsymbol .} \alpha$ represents a compound randomized  Poisson r.v.,
i.e., a random sum indexed by a randomized Poisson r.v.\ represented by the umbra $\gamma {\boldsymbol .} \beta.$
The composition umbra $\gamma {\boldsymbol .} (\beta {\boldsymbol .} \alpha)$ generalizes the random sum of i.i.d.\
compound Poisson r.v.'s with parameter~$1,$ indexed by an integer r.v.\ $X,$ i.e., a randomized compound Poisson r.v.\
with random parameter $X$ represented by the umbra $\gamma.$

Paralleling the composition of generating functions, the {\it compositional inverse} of $\alpha$ is the auxiliary umbra $\alpha^{\cop}$ having generating function $f(\alpha^{\cop},z)$ satisfying
$$f[\alpha^{\cop},f(\alpha,z)-1]=f[\alpha,f(\alpha^{\cop},z)-1]=1+z$$
or
$
\alpha {\boldsymbol .}  \beta {\boldsymbol .}  \alpha^{\cop} \equiv \alpha^{\cop} {\boldsymbol .}  \beta {\boldsymbol .}  \alpha \equiv \chi,
$
since $f(\chi,z)=1+z.$
\begin{theorem} \cite{Niederhausen}
An umbra $\alpha$ has a compositional inverse $\alpha^{\cop}$ if and only if $\E[\alpha] = a_1 \ne 0.$
\end{theorem}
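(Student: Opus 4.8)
The plan is to translate the umbral assertion into an elementary fact about composition of formal power series, prove that, and carry it back. Put $A(z)=f(\alpha,z)-1=a_1z+a_2\,\frac{z^2}{2!}+\cdots$ and $B(z)=f(\alpha^{\cop},z)-1$; both have zero constant term, and in view of Theorem~\ref{compass} together with \eqref{(genalfa)} the defining relations $f[\alpha^{\cop},f(\alpha,z)-1]=f[\alpha,f(\alpha^{\cop},z)-1]=1+z$ say exactly that $B\circ A=A\circ B=\mathrm{id}$, equivalently $\alpha^{\cop}{\boldsymbol .}\beta{\boldsymbol .}\alpha\equiv\alpha{\boldsymbol .}\beta{\boldsymbol .}\alpha^{\cop}\equiv\chi$ (recall $f(\chi,z)=1+z$). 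The only computational input needed is the moment expansion of a composition umbra: combining \eqref{(alfapartition)}, \eqref{(momcomp)} and Theorem~\ref{compass},
\[
\E[(\gamma{\boldsymbol .}\beta{\boldsymbol .}\alpha)^i]=\sum_{\lambda\vdash i}d_{\lambda}\,\E[\gamma^{\nu_{\lambda}}]\,a_{\lambda},
\]
the point being that the highest moment $\E[\gamma^i]$ occurs here only through the partition $\lambda=(1^i)$, the unique partition of $i$ with $\nu_{\lambda}=i$, carrying the coefficient $d_{(1^i)}a_{(1^i)}=a_1^i$, while every other term involves only $\E[\gamma],\dots,\E[\gamma^{i-1}]$. \emph{Necessity} is then immediate: if $\alpha^{\cop}$ exists, the case $i=1$ (equivalently, comparing coefficients of $z$ in $f[\alpha^{\cop},f(\alpha,z)-1]=1+z$) gives $\E[\alpha^{\cop}]\,a_1=\E[\chi]=1$, so $a_1=\E[\alpha]\ne0$.

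For \emph{sufficiency}, suppose $a_1\ne0$ and construct an umbra $\alpha^{\cop}$ by prescribing its moments $b_1,b_2,\dots$ so that $\E[(\alpha^{\cop}{\boldsymbol .}\beta{\boldsymbol .}\alpha)^i]=\delta_{i,1}$ for all $i$. By the displayed formula this requirement reads $b_i\,a_1^i+R_i(b_1,\dots,b_{i-1};a_1,\dots,a_i)=\delta_{i,1}$, with $R_i$ a fixed polynomial, hence it determines $b_1=a_1^{-1}$ and recursively $b_i=a_1^{-i}(\delta_{i,1}-R_i)$ for $i\ge2$, every division by $a_1^i$ being legitimate since $a_1\ne0$. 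The resulting sequence $\{b_i\}$ defines, up to similarity, an auxiliary umbra $\alpha^{\cop}$ with $\alpha^{\cop}{\boldsymbol .}\beta{\boldsymbol .}\alpha\equiv\chi$ by construction.

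It remains to promote this one-sided inverse to a two-sided one, for which I would first record two routine facts, both proved via generating functions as in Theorem~\ref{compass}: the operation $(\gamma_1,\gamma_2)\mapsto\gamma_1{\boldsymbol .}\beta{\boldsymbol .}\gamma_2$ is associative, since $(\gamma_1{\boldsymbol .}\beta{\boldsymbol .}\gamma_2){\boldsymbol .}\beta{\boldsymbol .}\gamma_3$ and $\gamma_1{\boldsymbol .}\beta{\boldsymbol .}(\gamma_2{\boldsymbol .}\beta{\boldsymbol .}\gamma_3)$ both have generating function $f(\gamma_1,f(\gamma_2,f(\gamma_3,z)-1)-1)$; and $\chi$ is a two-sided identity, $\chi{\boldsymbol .}\beta{\boldsymbol .}\gamma\equiv\gamma\equiv\gamma{\boldsymbol .}\beta{\boldsymbol .}\chi$, because $f(\chi,z)=1+z$. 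Since $\E[\alpha^{\cop}]=a_1^{-1}\ne0$, the construction of the previous paragraph applied to $\alpha^{\cop}$ yields an umbra $\delta$ with $\delta{\boldsymbol .}\beta{\boldsymbol .}\alpha^{\cop}\equiv\chi$, and then
\[
\alpha{\boldsymbol .}\beta{\boldsymbol .}\alpha^{\cop}\equiv\delta{\boldsymbol .}\beta{\boldsymbol .}\alpha^{\cop}{\boldsymbol .}\beta{\boldsymbol .}\alpha{\boldsymbol .}\beta{\boldsymbol .}\alpha^{\cop}\equiv\delta{\boldsymbol .}\beta{\boldsymbol .}\chi{\boldsymbol .}\beta{\boldsymbol .}\alpha^{\cop}\equiv\delta{\boldsymbol .}\beta{\boldsymbol .}\alpha^{\cop}\equiv\chi,
\]
using the identity law and $\chi\equiv\delta{\boldsymbol .}\beta{\boldsymbol .}\alpha^{\cop}$, then associativity together with $\alpha^{\cop}{\boldsymbol .}\beta{\boldsymbol .}\alpha\equiv\chi$, then the identity law, then $\delta{\boldsymbol .}\beta{\boldsymbol .}\alpha^{\cop}\equiv\chi$; hence $\alpha^{\cop}$ is a genuine two-sided compositional inverse of $\alpha$. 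I expect the main obstacle to be essentially bookkeeping: nailing down the key observation that $\E[\gamma^i]$ enters $\E[(\gamma{\boldsymbol .}\beta{\boldsymbol .}\alpha)^i]$ isolated and with the invertible coefficient $a_1^i$ (which is what makes the recursion solvable), and the small monoid argument that upgrades a one-sided inverse to a two-sided one — everything else is routine coefficient comparison.
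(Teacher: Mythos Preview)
Your argument is correct. The paper does not actually prove this theorem; it merely cites it from \cite{Niederhausen} and moves on, so there is no in-paper proof to compare against. Your approach---reducing to compositional inversion of the formal power series $f(\alpha,z)-1$, solving for the moments $b_i$ of $\alpha^{\cop}$ recursively via \eqref{(momcomp)} using that the unique partition of $i$ with $\nu_\lambda=i$ is $(1^i)$ (so that $b_i$ appears with invertible coefficient $a_1^i$), and then upgrading the one-sided inverse to a two-sided one by the standard monoid trick---is exactly the standard proof of this classical fact and is carried out cleanly. One cosmetic remark: the two-sided step can be shortened by noting that a left inverse for composition of formal power series with zero constant term is automatically a right inverse (the monoid has no nontrivial idempotents), but your explicit chain of equivalences is fine.
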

A very special umbra is the compositional inverse $u^{\cop}$ of the
unity umbra $u$ which satisfies
\begin{equation}
u {\boldsymbol .}  \beta {\boldsymbol .}  u^{\cop} \equiv u^{\cop}
{\boldsymbol .}  \beta {\boldsymbol .}  u \equiv \chi.
\label{(compinvun)}
\end{equation}
The following result is very useful in performing computations with dot-products.
\begin{proposition} \cite{Dinardoeurop}
\label{(cumbell)}
$\chi  \equiv u^{\cop} {\boldsymbol .} \beta \equiv \beta {\boldsymbol .}  {u^{\cop}}$ and $\beta {\boldsymbol .}  \chi \equiv u \equiv \chi {\boldsymbol .}  \beta.$
\end{proposition}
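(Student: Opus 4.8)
The statement bundles four similarities, and the plan is to extract all of them by light bookkeeping from results already in hand rather than by any single substantial computation. The pivotal fact I would isolate first is that the unity umbra $u$ is a \emph{two-sided identity} for the dot-product, i.e.\ $u\punt\alpha\equiv\alpha\equiv\alpha\punt u$ for every umbra $\alpha$ (auxiliary umbrae included). This is read off the definition of $\gamma\punt\alpha$ in~\eqref{dotproduct1}--\eqref{dotproduct2}, equivalently from~\eqref{(3lala)} with $x$ replaced by an umbra: taking $\gamma=u$, only the partition $\lambda=(i)$ contributes since $\E[(u)_k]=(1)_k$ vanishes for $k\ge 2$, which leaves $a_i$; taking $\alpha=u$, the elementary identity $\sum_{\lambda\vdash i}(x)_{\nu_{\lambda}}d_{\lambda}=x^i$ (count the maps from an $i$-set to an $x$-set by the induced partition into fibres) gives $q_i(x)=x^i$, hence $\E[(\gamma\punt u)^i]=g_i$.

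For the first chain $\chi\equiv u^{\cop}\punt\beta\equiv\beta\punt u^{\cop}$, I would start from Equivalence~\eqref{(compinvun)}, that is $u\punt\beta\punt u^{\cop}\equiv u^{\cop}\punt\beta\punt u\equiv\chi$. By Theorem~\ref{compass} an iterated dot-product $\gamma\punt\beta\punt\alpha$ may be reparenthesised (which is why one is entitled to write it without brackets), so~\eqref{(compinvun)} reads $u\punt(\beta\punt u^{\cop})\equiv(u^{\cop}\punt\beta)\punt u\equiv\chi$; absorbing the outer $u$ on the left, respectively on the right, by the identity property above yields $\beta\punt u^{\cop}\equiv\chi$ and $u^{\cop}\punt\beta\equiv\chi$ simultaneously.

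For $\beta\punt\chi\equiv u$ I would invoke the Inversion Theorem~\ref{inv} at $\alpha=u$: since $\kappa_u\equiv\chi\punt u\equiv\chi$ by the identity property, $u\equiv\beta\punt\kappa_u\equiv\beta\punt\chi$ (alternatively, $\beta\punt\chi$ is the $\beta$-factorial umbra, and the Bell umbra has all factorial moments equal to $1$, so its moments are all $1=\E[u^i]$). For $\chi\punt\beta\equiv u$, recall that $\chi\punt\beta$ is the cumulant umbra $\kappa_{{\scriptscriptstyle \beta}}$, whose moments are the cumulants of the r.v.\ represented by $\beta$, namely $\mathrm{Po}(1)$; since the Poisson distribution is precisely the one all of whose cumulants equal its parameter, these are all $1$, whence $\E[(\chi\punt\beta)^i]=1=\E[u^i]$ and $\chi\punt\beta\equiv u$. (One may instead note that $q_i(x)=\E[(x\punt\beta)^i]=\sum_{k}S(i,k)\,x^k$ is a Touchard polynomial --- this follows from~\eqref{(alfapartition)} at $\alpha=u$ via $\beta\punt u\equiv\beta$ and $B_{i,k}(1,\dots,1)=S(i,k)$ --- and evaluate it ``at $\chi$'', where the substitution $x^k\mapsto\E[\chi^k]=\delta_{k,1}$ leaves $S(i,1)=1$.)

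Every ingredient here is elementary, so I expect no step to be genuinely hard; the one place that demands care is the bookkeeping with dot-products. One must not reparenthesise an iterated dot-product unless a $\beta$ occupies the middle slot (this is exactly the content of Theorem~\ref{compass}), and one must keep track of whether the spare $u$ is being absorbed on the left ($u\punt\delta\equiv\delta$) or on the right ($\delta\punt u\equiv\delta$). With those caveats respected, the proposition falls out in a few lines.
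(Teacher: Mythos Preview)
Your proof is correct. Note, however, that the paper does not actually supply a proof of this proposition: it is merely stated with a citation to \cite{Dinardoeurop}, so there is no ``paper's own proof'' to compare against. Your argument is self-contained within the survey, drawing only on results established earlier (the two-sided identity property of $u$ from~\eqref{dotproduct1}--\eqref{dotproduct2}, Theorem~\ref{compass}, Equivalence~\eqref{(compinvun)}, and the Inversion Theorem~\ref{inv}), and each of the four similarities is dispatched cleanly. The one cosmetic point: when you cite~\eqref{(xpart)} in the alternative for $\chi\boldsymbol{.}\beta\equiv u$, be aware that the displayed formula there reads $\sum_k (x)_k S(i,k)$, whereas the Touchard polynomial you actually use (correctly) is $\sum_k S(i,k)\,x^k$; your derivation via~\eqref{(alfapartition)} with $\alpha=u$ is the right route and bypasses this.
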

The compositional inverse of an umbra is closely related to the Lagrange inversion formula, but we will use Sheffer polynomial
sequences in Section~5 to compute its moments.

We end this section by introducing the connection between composition umbrae and convolutions of multiplicative functions.
Multiplicative functions have been used to define free and Boolean cumulants by means of the lattice of non-crossing and interval partitions, respectively \cite{Nica, Speicher}. This is an issue that we will return to again in Section~6.

\begin{example} \cite{dps} \label{algmul}
Let $\Pi_n$ be the set of all partitions of $[n]$ equipped with the usual refinement order $\leq$,
and denote the minimum and the maximum by ${\boldsymbol 0}_n$ and ${\boldsymbol 1}_n$,
respectively. Denote by $|\pi|$ the number of blocks of $\pi \in \Pi_n.$
If $\sigma\in \Pi_n$ and $\sigma \leq \pi,$ then there exists a unique sequence of non-negative
integers $(k_1, k_2, \ldots , k_n)$ with $k_1 + 2 \, k_2 + \cdots + n \, k_n = |\sigma|$
and $k_1 + k_2 + \cdots + k_n = |\pi|$ such that
$$[\sigma,\pi]= \Pi_1^{k_1} \times \Pi_1^{k_1} \times \cdots \times \Pi_n^{k_n},$$
with $[\sigma,\pi]=\{\tau \in \Pi_n: \sigma \leq \tau \leq \pi \}.$ The sequence $(k_1, k_2, \ldots , k_n)$
is called the {\it type} of the interval $[\sigma, \pi]$, and $k_i$ is the number of blocks of $\pi$ that are
the union of $i$ blocks of $\sigma.$ A function ${\mathfrak f}: \Pi_n \times \Pi_n \rightarrow {\mathbb C}$
is said to be {\it multiplicative} if ${\mathfrak f}(\sigma,\pi) = f_1^{k_1} f_2^{k_2} \cdots f_n^{k_n}$ with $\sigma \leq \pi$ and
$f_n:={\mathfrak f}({\boldsymbol 0}_n,{\boldsymbol 1}_n).$
The M\"obius function $\mu$ is multiplicative with $\mu_n=(-1)^{n-1} (n-1)!.$ Since the
sequence $\{(-1)^{n-1} (n-1)!\}$ is represented by the compositional inverse of the unity
umbra $u,$ the umbral counterpart of the M\"obius function $\mu$ is $u^{\cop}.$
The zeta function $\zeta$ is multiplicative with $\zeta_n=1.$ The umbral
counterpart of the zeta function $\zeta$ is $u.$ As the umbra $u^{\cop}$ is the
compositional inverse of $u,$ the M\"obius function corresponds to the inverse of the zeta function.
Indeed, the convolution $\star$ between two multiplicative
functions ${\mathfrak f}$ and ${\mathfrak g}$ is defined by
$$
({\mathfrak f} \star {\mathfrak g})(\sigma,\pi)
:= \sum_{\sigma\leq\tau\leq\pi} {\mathfrak f}(\sigma,\tau) \, {\mathfrak g}(\tau,\pi).
$$
The function ${\mathfrak h} = {\mathfrak f} \star {\mathfrak g}$ is also multiplicative with $h_n =
({\mathfrak f}\star {\mathfrak g})(\mathbf{0}_n, \mathbf{1}_n)$ and
\begin{equation}
h_n=\sum_{\pi\in\Pi_n}f_{\pi}\,g_{\ell(\pi)},
\label{id:h=f star g}
\end{equation}
where $f_{\pi}:= f_1^{k_1}f_2^{k_2}\cdots f_n^{k_n}$ and $k_i$ is
the number of blocks of $\pi$ of cardinality $i$.
The convolution of two multiplicative functions~\eqref{id:h=f star g} corresponds to a composition
umbra since Equation~\eqref{(momcomp)} can be indexed by set partitions \cite{Bernoulli}, that is,
$$\E[(\gamma {\boldsymbol .} \beta {\boldsymbol .} \alpha)^n] = \sum_{\pi\in\Pi_n} g_{\pi}\,a_{|\pi|}.$$
The identity with respect to the convolution $\star$ is the Delta function $\Delta$ with
$\Delta_n= \delta_{1,n}.$ Its umbral counterpart is the singleton
umbra $\chi.$ The M\"obius function $\mu$ and the zeta function $\zeta$ are inverses of each other with respect to
$\star$, that is, $\mu\star\zeta=\zeta\star\mu=\Delta$. So, this last equality parallels Equivalences~\eqref{(compinvun)}.
\end{example}
\subsection{L\'evy processes}
In Section~3.1, we have stressed the central role played by L\'evy processes in modeling
financial markets. They are preferable to Brownian motion since deviation from normality can be
explained by the jump or spike component. The symbolic representation of  L\'evy processes $\{X_t\}$ by means of the family
of auxiliary umbrae $\{t {\boldsymbol .} \alpha\}_{t \geq 0}$ takes into account the strong interplay between these processes and
infinitely divisible distributions, but does not reveal their jump-diffusion structure.
The same considerations hold when $\alpha$ is replaced by $\beta {\boldsymbol .} \kappa_{\alpha},$
which discloses the role played by cumulants of $X_1$ in the distribution of $X_t$, but does not add more information
on paths of a L\'evy process. Instead, L\'evy processes are also called {\it jump-diffusion processes} since
they result from a summation of a Brownian motion and
a compensated compound Poisson process,\footnote{A compensated Poisson process is $\{N_t - \lambda t\}$  with $\{N_t\}$ a Poisson process $\hbox{\rm Po}(\lambda t).$}
a property
that characterizes unexpected changes in the paths, if any. The following theorem gives the jump-diffusion
decomposition of a L\'evy process.
\begin{theorem} \cite{Sato} \label{decomposition}
If $\{X_t\}$ is a L\'evy process,  then
$$X_t = m \, t + s \, {\mathcal B}_t + \sum_{k=1}^{N_t} J_k - t \, a \, \lambda,$$ where
$m \in {\mathbb R}, s > 0, \{{\mathcal B}_t\}$ is a standard Brownian motion, $\{N_t\}$
is a Poisson process $\hbox{\rm Po}(\lambda t)$, and $\{J_k\}_{k \geq 1}$ are i.i.d.r.v.'s
with $E(J)=a < \infty.$
\end{theorem}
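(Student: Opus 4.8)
The plan is to obtain the decomposition from the L\'evy--Khintchine representation of $\{X_t\}$. By properties \emph{a)}--\emph{c)} recalled in Section~3.1, the characteristic function is multiplicative in $t$, so $E[e^{{\rm i} z X_t}] = e^{t\psi(z)}$, and the first step is to invoke the classical fact that the characteristic exponent of $X_1$ has the form
$$\psi(z) = {\rm i}\, b\, z - \frac{s^2 z^2}{2} + \int_{{\mathbb R}} \left( e^{{\rm i} z x} - 1 - {\rm i} z x\, \mathbf{1}_{\{|x|<1\}} \right) \nu({\rm d}x),$$
with $s \geq 0$ and a L\'evy measure $\nu$ satisfying $\int (x^2 \wedge 1)\,\nu({\rm d}x) < \infty$. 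The three summands of $\psi$ are the exponents of three mutually independent processes, and matching them against the terms in the statement determines $m$, $s$, $\lambda$ and $a$.

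Next I would realize the jump part pathwise. Attached to $\{X_t\}$ there is a Poisson random measure $N({\rm d}t,{\rm d}x)$ on $(0,\infty)\times({\mathbb R}\setminus\{0\})$ recording the jumps $\Delta X_s$, with intensity ${\rm d}t\otimes\nu({\rm d}x)$; its existence follows from the stationary independent increments, which force the jumps in any region bounded away from the origin to arrive according to a Poisson process. In the setting of the statement $\nu$ is finite, so $\lambda := \nu({\mathbb R}) < \infty$ and $\int_0^t\int x\, N({\rm d}s,{\rm d}x)$ is literally a compound Poisson process $\sum_{k=1}^{N_t} J_k$ with $N_t \sim \mathrm{Po}(\lambda t)$ and $\{J_k\}$ i.i.d.\ of law $\nu/\lambda$; its compensator is $t\int x\,\nu({\rm d}x) = t\,a\,\lambda$ with $a = E[J]$, which produces the $-t\,a\,\lambda$ correction and rewrites $b$ as $m$. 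For a general L\'evy measure the small jumps $\int_0^t\int_{|x|<1} x\,(N({\rm d}s,{\rm d}x) - {\rm d}s\,\nu({\rm d}x))$ have to be built as an $L^2$-limit of compensated finite-activity sums, which converges thanks to the isometry $E\big[\big(\int x\,\widetilde N\big)^2\big] = t\int_{|x|<1} x^2\,\nu({\rm d}x) < \infty$ together with Doob's maximal inequality.

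Finally I would subtract the (compensated) jump process from $X_t$ and check that the remainder is a continuous process with stationary independent increments; such a process is Gaussian, hence equal to $m\,t + s\,{\mathcal B}_t$ for a standard Brownian motion $\{{\mathcal B}_t\}$. Independence of $\{{\mathcal B}_t\}$ from the Poisson random measure $N$ (equivalently, from $\{N_t,J_k\}$) is deduced either from the independence of $N$ over disjoint Borel sets or, more efficiently, by checking that the joint characteristic function factors into the three exponents displayed above. The hard part is precisely this last point, combined with the $L^2$-construction of the small-jump integral: establishing rigorously on one probability space that the continuous martingale component and the purely discontinuous component are independent, and that the compensated small-jump series converges almost surely, uniformly on compact time intervals, is the technical core; the remainder is bookkeeping on the L\'evy--Khintchine exponent. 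Since this is classical, I would ultimately defer the measure-theoretic details to \cite{Sato} and retain only the identification of $m$, $s$, $\lambda$ and $a$ in terms of $\nu$.
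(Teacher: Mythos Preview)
The paper does not prove this theorem at all: it is stated with a citation to \cite{Sato} and used as a black box, so there is no ``paper's own proof'' to compare against. Your sketch is the standard L\'evy--It\^o decomposition argument (L\'evy--Khintchine exponent, Poisson random measure of jumps, compensated small-jump limit, continuous remainder identified as Gaussian), which is exactly what one finds in Sato's book; your closing remark that you would ultimately defer the measure-theoretic details to \cite{Sato} matches precisely what the paper does.
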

All sources of randomness are mutually independent. In Theo-\break rem~\ref{decomposition}, the diffusion component of
a  L\'evy process is represented by the Wiener process $m \, t + s \,  {\mathcal B}_t,$ while the jump component is represented
by the compensated compound Poisson process $\left(\sum_{k=1}^{N_t} J_k - t a \lambda\right).$ The symbolic method
reduces the jump-diffusion decomposition to a compound Poisson process. This symbolic representation is obtained
by taking into consideration the so-called L\'evy--Khintchine formula, which splits the increment $X_1$ in a summation
according to the jump-diffusion property.
\begin{theorem} [\sc L\'evy--Khintchine formula] \cite{Schoutens}
If $\{X_t\}$ is a L\'evy process, then $E[e^{z X_t}] = [\phi(z)]^t$,
with
\begin{equation}
\phi(z) = \exp\left\{ z \, m + \frac{1}{2}\, s^2 \, z^2 +
\int_{\mathbb R}\left(e^{zx} - 1 - zx{\textbf{1}}_{\{|x| \leq 1\}}\right) {\rm d}\,\nu(x)\right\}.
\label{(lkf)}
\end{equation}
\end{theorem}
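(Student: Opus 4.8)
The plan is to split the argument into two nearly independent pieces: first the multiplicative-in-$t$ structure $E[e^{zX_t}]=\phi(z)^t$, and then the explicit form of $\phi(z)=E[e^{zX_1}]$. Throughout one may read $z$ as purely imaginary (so that we are really dealing with characteristic functions, which always exist) and, if genuine exponential moments are assumed, extend afterwards by analytic continuation in the strip where the Laplace transform converges.

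For the first piece I would set $g(t)=E[e^{zX_t}]$ and use that, by the independence of increments, $X_{s+t}=X_s+(X_{s+t}-X_s)$ is a sum of independent summands, while stationarity gives $X_{s+t}-X_s\stackrel{d}{=}X_t$; hence $g(s+t)=g(s)g(t)$. Stochastic continuity of a L\'evy process together with $X_0=0$ forces $g$ to be right-continuous with $g(0)=1$, and $g$ cannot vanish since $g(t)=g(t/n)^n$ with $g(t/n)\to1$. The only right-continuous non-vanishing solutions of the Cauchy equation $g(s+t)=g(s)g(t)$ are $g(t)=g(1)^t$, which is the asserted identity with $\phi(z)=g(1)=E[e^{zX_1}]$.

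For the second piece I would feed in the jump--diffusion decomposition of Theorem~\ref{decomposition} at $t=1$: $X_1=m+s\,\mathcal B_1+\sum_{k=1}^{N_1}J_k-a\lambda$ with the four pieces mutually independent. Factoring $E[e^{zX_1}]$ over the independent components, the deterministic terms give $e^{zm}e^{-za\lambda}$, the Gaussian term gives $E[e^{zs\mathcal B_1}]=e^{\frac12 s^2z^2}$, and, conditioning the compound-Poisson term on $N_1\sim\mathrm{Po}(\lambda)$ with $\psi(z)=E[e^{zJ}]$, one gets $E[e^{z\sum_{k=1}^{N_1}J_k}]=\exp\{\lambda(\psi(z)-1)\}$. (Symbolically, this last step is just Equation~\eqref{(genalfa)} with $x=1$: if $\alpha$ represents $J$, the compound-Poisson part is the $\alpha$-partition umbra $\beta\punt\alpha$, with generating function $\exp[f(\alpha,z)-1]$.) Writing $\nu$ for the finite measure $\lambda\cdot\mathrm{law}(J)$ and collecting exponents gives $\lambda(\psi(z)-1)-za\lambda=\int_{\mathbb R}(e^{zx}-1-zx)\,\mathrm{d}\nu(x)$, using $a\lambda=\int x\,\mathrm{d}\nu$. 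Finally I would peel the large jumps out of the linear correction, $\int_{\mathbb R}(e^{zx}-1-zx)\,\mathrm{d}\nu=\int_{\mathbb R}\big(e^{zx}-1-zx\mathbf{1}_{\{|x|\le1\}}\big)\,\mathrm{d}\nu-z\!\int_{|x|>1}\!x\,\mathrm{d}\nu$, and absorb the last term into the linear coefficient $m$; this is exactly Equation~\eqref{(lkf)}.

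The real obstacle is that Theorem~\ref{decomposition} carries only finitely many jumps per unit of time, whereas a general L\'evy process may accumulate infinitely many small jumps, so $\sum_{k=1}^{N_t}J_k$ must be replaced by the jumps of size $>\varepsilon$ together with the $L^2$-limit, as $\varepsilon\downarrow0$, of the corresponding compensated (mean-zero) sums. Making this rigorous is essentially the L\'evy--It\^o construction: the jumps of $\{X_t\}$ form a Poisson random measure on $(0,\infty)\times(\mathbb R\setminus\{0\})$ with intensity $\mathrm{d}t\,\mathrm{d}\nu(x)$, where the L\'evy measure $\nu$ satisfies $\int_{\mathbb R}\min(1,x^2)\,\mathrm{d}\nu(x)<\infty$; this integrability is precisely what makes the compensated small-jump series converge in $L^2$ and makes the integrand $e^{zx}-1-zx\mathbf{1}_{\{|x|\le1\}}$ $\nu$-integrable (it is $O(x^2)$ near $0$ and controlled by $e^{zx}$ for $|x|>1$). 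Once the Poisson random measure is available, the computation above goes through verbatim with finite sums replaced by stochastic integrals against it, via the exponential formula $E\exp\{\int f\,\mathrm{d}N\}=\exp\{\int(e^{f}-1)\,\mathrm{d}t\,\mathrm{d}\nu\}$; the interchange of $\lim_{\varepsilon\downarrow0}$ with $E$ is handled by dominated convergence on the exponents. I expect this limiting/L\'evy--It\^o step to be the hard part, with everything else being bookkeeping with independent factors.
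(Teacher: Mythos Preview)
The paper does not prove this theorem; it is stated as a known result cited from \cite{Schoutens} and used as input for the symbolic representation developed in Theorem~\ref{T2}. There is therefore no proof in the paper to compare your attempt against.

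That said, your sketch is a correct outline of the standard argument. The multiplicative-in-$t$ structure via the Cauchy functional equation is exactly right, and your derivation of the exponent from the jump--diffusion decomposition is correct in the finite-activity case. You also correctly identify the genuine gap: the paper's Theorem~\ref{decomposition} only covers processes whose jump part is a compound Poisson process (finitely many jumps per unit time, $\nu(\mathbb R)<\infty$), whereas the L\'evy--Khintchine formula must accommodate $\nu$ which is merely $\sigma$-finite with $\int\min(1,x^2)\,\mathrm{d}\nu<\infty$. Filling this gap does indeed require the L\'evy--It\^o decomposition and the exponential formula for Poisson random measures, as you indicate; this is the substantive analytic content of any rigorous proof and is precisely why the paper defers to \cite{Schoutens} rather than proving it internally.
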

The triple $(m, s^2, \nu)$ is called \emph{L\'evy triple}, and $\nu$ is the \emph{L\'evy measure}.
Since in Example~\ref{LevComp} we have seen that $f(t {\boldsymbol .} \beta {\boldsymbol .} \kappa_{\alpha},z) = [\phi(z)]^t,$
with $f(\kappa_{\alpha},z) = \phi(z),$ what we need is to characterize the umbra $\kappa_{{\scriptscriptstyle \alpha}}$
according to \eqref{(lkf)}. First, let us observe that the L\'evy measure $\nu$
contains much information on $X_t.$ If $\nu({\mathbb R}) < \infty,$ almost all paths of $X_t$ have a finite number of jumps on every compact interval.
Under suitable normalization, this corresponds to the condition $\int_{\mathbb R} {\rm d}\,\nu(x) = 1,$
in agreement with $\E[1]=1.$ If $\nu$ admits all moments, then Equation~\eqref{(lkf)} may be rewritten as
\begin{equation}
\phi(z) = \exp \left(c_0 z + \frac{1}{2} s^2 z^2\right) \,
\exp\left\{\int_{\mathbb R}\left(e^{zx} - 1 - zx\right){\rm
  d} \,\nu(x)\right\} \label{LK2}
\end{equation}
where $c_0= m + \int_{\{|x| \geq 1\}} x \, {\rm d}\,\nu(x).$  Equation~\eqref{LK2} allows us to split the umbra $\kappa_{{\scriptscriptstyle \alpha}}$ in a disjoint sum, as the following theorem shows.

\begin{theorem}\label{T2}
A L\'evy process $\{X_t\}$ is umbrally represented by the family of auxiliary umbrae
$$
\{t {\boldsymbol .} \beta {\boldsymbol .} [c_0 \chi \dot{{}+{}} s \eta \dot{{}+{}}\gamma]\}_{t \geq 0},
$$
where $\gamma$ is the umbra associated to the L\'evy measure, that is,
$f(\gamma,z) = 1 + \int_{\mathbb R}\left(e^{zx} - 1 - zx\right){\rm d} \, \nu(x),$
and $\eta$ is an umbra with $f(\eta,z) = 1 + z^2/2.$
\end{theorem}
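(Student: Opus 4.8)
The plan is to deduce Theorem~\ref{T2} from the cumulant-umbra description of a L\'evy process obtained in Example~\ref{LevComp}, by recognizing the three terms in the exponent of the L\'evy--Khintchine formula~\eqref{LK2} as the generating functions of $c_0\chi$, $s\eta$ and $\gamma$. The whole proof reduces to one similarity between umbrae, after which the desired family $\{t {\boldsymbol .} \beta {\boldsymbol .} [c_0\chi \dot{{}+{}} s\eta \dot{{}+{}} \gamma]\}_{t\geq 0}$ falls out by applying a dot-product with $t {\boldsymbol .} \beta$.

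First I would recall from Example~\ref{LevComp} that $\{X_t\}$ is umbrally represented by the family $\{t {\boldsymbol .} \beta {\boldsymbol .} \kappa_{\alpha}\}_{t \geq 0}$, where $\kappa_{\alpha}$ is the umbra whose moments are the cumulants of $X_1$; equivalently, using \eqref{(genalfa)} with $x$ replaced by $t$,
$$f(t {\boldsymbol .} \beta {\boldsymbol .} \kappa_{\alpha}, z) = \exp\bigl[t\,(f(\kappa_{\alpha},z)-1)\bigr] = [\phi(z)]^t = E[e^{zX_t}],$$
which forces $f(\kappa_{\alpha},z) = 1 + \log\phi(z)$. Hence it suffices to prove the single similarity $\kappa_{\alpha} \equiv c_0\chi \dot{{}+{}} s\eta \dot{{}+{}} \gamma$: applying $\gamma \mapsto t {\boldsymbol .} \beta {\boldsymbol .} \gamma$ to both sides preserves similarity and yields the claim.

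Next I would compute the relevant generating functions term by term. Since $\chi$ has moments $\delta_{i,1}$ and $\eta$ has moments $\delta_{i,2}$, scaling gives $f(c_0\chi, z) = 1 + c_0 z$ and $f(s\eta, z) = 1 + \frac{1}{2} s^2 z^2$, while by hypothesis $f(\gamma, z) = 1 + \int_{\mathbb R}(e^{zx}-1-zx)\,{\rm d}\nu(x) = 1 + \sum_{i \geq 2}\bigl(\int_{\mathbb R} x^i\,{\rm d}\nu(x)\bigr) z^i/i!$, a bona fide formal power series once $\nu$ is assumed to admit all moments (note it has vanishing constant and linear parts, consistently with its use as a disjoint-sum summand). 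The elementary fact about disjoint sums is that they add generating functions up to the constant term: since $\rho_1 \dot{{}+{}} \rho_2 \dot{{}+{}} \rho_3$ represents $\{a_i^{(1)}+a_i^{(2)}+a_i^{(3)}\}_{i \geq 1}$ with zeroth moment kept equal to $1$, one has $f(\rho_1 \dot{{}+{}} \rho_2 \dot{{}+{}} \rho_3, z) = f(\rho_1,z) + f(\rho_2,z) + f(\rho_3,z) - 2$. Combining these,
$$f\bigl(c_0\chi \dot{{}+{}} s\eta \dot{{}+{}} \gamma, z\bigr) = 1 + c_0 z + \frac{1}{2} s^2 z^2 + \int_{\mathbb R}(e^{zx}-1-zx)\,{\rm d}\nu(x) = 1 + \log\phi(z),$$
the last equality being exactly the logarithm of~\eqref{LK2}. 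As two umbrae with the same generating function are similar, $\kappa_{\alpha} \equiv c_0\chi \dot{{}+{}} s\eta \dot{{}+{}} \gamma$, and therefore $\{t {\boldsymbol .} \beta {\boldsymbol .} [c_0\chi \dot{{}+{}} s\eta \dot{{}+{}} \gamma]\}_{t \geq 0}$ umbrally represents $\{X_t\}$.

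The step demanding the most care is the passage from~\eqref{(lkf)} to the form~\eqref{LK2}: absorbing $\int_{\{|x| \geq 1\}} x\,{\rm d}\nu(x)$ into the drift constant $c_0$ and replacing the truncated compensator $zx{\textbf{1}}_{\{|x|\leq 1\}}$ by the full $zx$. This is where the hypothesis that $\nu$ has moments of every order is used, and one must check that, under the normalization $\int_{\mathbb R}{\rm d}\nu(x)=1$ alluded to in the text, the resulting $\gamma$ genuinely has $g_0 = 1$, $g_1 = 0$, $g_i = \int_{\mathbb R} x^i\,{\rm d}\nu(x)$ for $i \geq 2$, so that all the generating-function manipulations are purely formal and independent of convergence. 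Once that bookkeeping is settled, the remaining ingredients — additivity of generating functions under disjoint sums, the effect of scalar multiples, and stability of similarity under dot-products — are routine umbral algebra.
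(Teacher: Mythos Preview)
Your proposal is correct and follows essentially the same approach as the paper: the paper does not give a detailed proof but simply remarks that ``Equation~\eqref{LK2} allows us to split the umbra $\kappa_{\scriptscriptstyle\alpha}$ in a disjoint sum,'' which is precisely the similarity $\kappa_{\alpha}\equiv c_0\chi\,\dot{+}\,s\eta\,\dot{+}\,\gamma$ that you establish by matching generating functions and then transport via $t\boldsymbol{.}\beta\boldsymbol{.}(-)$. Your additional care about the passage from~\eqref{(lkf)} to~\eqref{LK2} and the moment hypothesis on $\nu$ makes explicit what the paper leaves implicit.
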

The jump-diffusion decomposition in Theorem~\ref{decomposition} parallels the symbolic representation in Theorem~\ref{T2}, since from Theorem~\ref{labdisj} we have  $$t {\boldsymbol .} \beta {\boldsymbol .} [c_0 \chi \dot{{}+{}} s \eta \dot{{}+{}}\gamma]
\equiv t {\boldsymbol .} \beta  {\boldsymbol .} [c_0 \chi \dot{{}+{}} s \eta] + t {\boldsymbol .} \beta  {\boldsymbol .} \gamma,$$
and the umbra (see Example~\ref{gaus})
$$\beta {\boldsymbol .} [c_0 \chi \dot{{}+{}} s \eta] \equiv c_0 + \beta {\boldsymbol .}  (s \eta)$$
represents a Gaussian r.v.\ with mean $c_0$ (called {\it drift}) and variance $s^2.$ The compensated Poisson
process is instead umbrally represented by the auxiliary umbra $t  {\boldsymbol .}\beta  {\boldsymbol .} \gamma.$
Indeed, the umbra $\gamma$ can be further decomposed into a pure point part, corresponding to the moment generating function
$ \int_{\mathbb R} e^{zx} \, {\rm d} \, \nu(x),$ and a singular part, corresponding to the moment generating function
$\int_{\mathbb R} (1+z \, x) \, {\rm d} \, \nu(x).$ The singular part of the L\'evy measure is encoded in the umbra $\chi,$
which indeed does not have a probabilistic counterpart.

The singleton umbra also plays a special role with respect to the martingale property of a L\'evy process.
A centered L\'evy process is a process with $E[X_t]=0$ for all $t \geq 0$, and, in this case, almost all
paths of $X_t$ have finite variation.
\begin{theorem} \cite{Applebaum}
A L\'evy process is a martingale if and only if its drift is $c_0 = 0.$
\end{theorem}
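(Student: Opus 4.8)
The plan is to reduce the martingale property to the centering condition $E[X_t]=0$ for all $t\ge 0$, and then to read off $E[X_t]$ from the symbolic representation of Theorem~\ref{T2}.

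First I would use the defining properties of a L\'evy process. For $0\le s\le t$ write $X_t=X_s+(X_t-X_s)$; by the independence of increments the difference $X_t-X_s$ is independent of $\mathfrak{F}_s$, and by stationarity it is distributed as $X_{t-s}$, whence
$$E[X_t\mid\mathfrak{F}_s]=X_s+E[X_{t-s}].$$
Therefore $\{X_t\}$ is a martingale if and only if $E[X_r]=0$ for every $r\ge 0$, i.e.\ if and only if $\{X_t\}$ is centered in the sense recalled just before the statement. It remains to show that centering is equivalent to $c_0=0$, and for this I would compute the first moment of $X_t$.

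By Theorem~\ref{T2}, $X_t$ is represented by $t {\boldsymbol .}\beta {\boldsymbol .}\delta$ with $\delta=c_0\chi\dot{{}+{}}s\eta\dot{{}+{}}\gamma$. By~\eqref{(genalfa)} (see also Table~\ref{tablegenfun}),
$$f(t {\boldsymbol .}\beta {\boldsymbol .}\delta,z)=\exp\bigl[t\,(f(\delta,z)-1)\bigr]=1+t\,\delta_1\,z+\cdots,$$
where $\delta_1$ is the coefficient of $z$ in $f(\delta,z)$; hence $E[X_t]=t\,\delta_1$. Since the first moment of a disjoint sum is the sum of the first moments of its summands, $\delta_1$ equals the sum of the linear coefficients of $f(c_0\chi,z)$, $f(s\eta,z)$ and $f(\gamma,z)$. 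Now $\E[\chi]=1$ contributes $c_0$; $f(\eta,z)=1+z^2/2$ has vanishing linear coefficient; and $e^{zx}-1-zx$ has vanishing coefficient of $z$, so $f(\gamma,z)$ does too. Thus $\delta_1=c_0$ and $E[X_t]=c_0\,t$.

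Putting the two steps together, $\{X_t\}$ is a martingale iff $c_0 t=0$ for all $t\ge 0$ iff $c_0=0$. The one delicate point is the first step: identifying the probabilistic $E[X_t\mid\mathfrak{F}_s]$ with the decomposition of $X_t$ into independent increments presupposes that $X_1$, hence every $X_t$, has a finite mean --- which is exactly what makes $c_0=m+\int_{\{|x|\ge1\}}x\,{\rm d}\nu(x)$ well defined and is built into the form~\eqref{LK2} underlying Theorem~\ref{T2}; once first moments exist, the rest is merely extraction of linear coefficients from the generating functions in Table~\ref{tablegenfun} and~\eqref{(genalfa)}. As a cross-check one may instead differentiate $[\phi(z)]^t$ at $z=0$ using~\eqref{(lkf)}, which again yields $E[X_t]=c_0 t$.
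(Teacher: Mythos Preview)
Your argument is correct. Note, however, that the paper does not supply its own proof of this theorem: it is simply quoted from Applebaum's book \cite{Applebaum}, so there is no in-paper proof to compare against. Your approach is the natural one --- reduce the martingale property to $E[X_t]=0$ via independent stationary increments, then read off $E[X_t]=c_0\,t$ --- and the pleasant twist is that you carry out the second step using the symbolic representation of Theorem~\ref{T2} rather than by direct differentiation of the L\'evy--Khintchine exponent. Both routes of course give the same linear coefficient, and you correctly flag the integrability hypothesis that makes $c_0$ and the conditional expectation well defined.
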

A centered L\'evy process is umbrally represented by the auxiliary umbra $\{t  {\boldsymbol .} \beta  {\boldsymbol .}
(s \eta \dot{{}+{}}\gamma)\}_{t \geq 0},$ where the explicit contribution of the singleton umbra is not visible anymore.
\section{Sheffer polynomial sequences}
Many characterizations of Sheffer polynomial  sequences are given  in the
literature. We refer to the one involving generating functions.
\begin{definition}[\sc Sheffer sequences] \label{10.1}
A sequence $\{s_i(x)\}$ of polynomials is called a {\it Sheffer sequence} if and only if its generating function is given by
$$1 + \sum_{i \geq 1} s_i(x) \frac{t^i}{i!} = h(t) \exp\{ x \, g(t) \},$$
with $h(t) =  h_0 + h_1 t + h_2 t^2 + \cdots$,
$g(t) =  g_1 t + g_2 t^2 + \cdots$, and $h_0,g_1 \ne 0.$
\end{definition}
Applications of Sheffer sequences range from analysis to statistics, from combinatorics
to physics. A good account of fields where these polynomials are employed is given in \cite{DiBucchianico}.
Their importance stems from the fact that many polynomial sequences, such as Laguerre polynomials, Meixner polynomials of first and
second kind, Bernoulli polynomials, Poisson--Char\-lier polynomials, and Stirling polynomials are Sheffer polynomial sequences.
Moreover, the time-space harmonic polynomials introduced in Section~3 are special Sheffer sequences
as well.

Sheffer sequences can be considered as the core of the umbral calculus as introduced by Roman and Rota
in \cite{Roman}. There, one finds several chapters devoted to identities, properties
and recurrences involving Sheffer sequences. Following these properties, Taylor has
characterized their coefficients  by using the classical umbral calculus \cite{Taylor}.
In order to handle Sheffer polynomials by the symbolic method of moments, a special auxiliary umbra
has been introduced in \cite{Niederhausen}.
One of the main advantages of this symbolic representation is that
properties of Sheffer sequences are reduced to a few equivalences.

In order to keep the length of the paper within reasonable bounds, here we limit ourselves to presenting
some applications of the symbolic method involving Sheffer polynomial sequences.
Whenever necessary, we recall the umbral equivalences we need. For a more complete
treatment of this subject, the reader should consult  \cite{Niederhausen}.

The first application we deal with concerns the finding of solutions to linear recurrences.
In many special combinatorial problems,
the hardest part in their solution may be the discovery of an effective recursion. Once a
recursion has been established, Sheffer polynomials are often a simple and general tool for finding solutions in closed form.
The main contributions in this respect are due to Niederhausen \cite{Niederhausen1} (see references therein), by means of
finite operator calculus \cite{VIII}. Using the symbolic method of moments, computations are
simplified \cite{recur}. We give several examples below.

A second application involves the Lagrange inversion formula. This formula
gives the coefficients of the compositional inverse of a formal power series. Indeed, in Definition~\ref{10.1} the formal
power series $g(t)$ has a compositional inverse, since $g_1 \ne 0.$ There are many different proofs of this formula, many of them assuming that the involved power series are convergent and represent analytic functions on a disk. Using the symbolic representation
of Sheffer sequences, the proof reduces to a simple computation \cite{Niederhausen}.

A third and last application concerns Riordan arrays. A(n exponential) {\it Riordan array}
is a pair $(g(t),f(t))$ of (exponential) formal power series, where $g(t)$ is an
invertible series and $f(0)=0.$ The pair defines an infinite lower triangular array according to the rule
$$d_{i,k} = \hbox{$i$-th coefficient of \,\,} g(t) \frac{[f(t)]^k}{k!}, \,\,\,\, \hbox{for} \,\, 0 \leq k \leq i < \infty.$$
Riordan arrays are used in a constructive way to find the generating function of many combinatorial sums, to characterize families of orthogonal polynomials, and to compute determinants of Hankel or Toeplitz matrices. Using the symbolic method of moments,
we give a characterization of Riordan arrays. This characterization is very general since the same
holds for the coefficients of Sheffer polynomials and connection constants \cite{Niederhausen}.
 If $\{s_{i}(x)\}$ and $\{r_{i}(x)\}$ are Sheffer sequences,
the constants $b_{i,k}$ in the expression
\begin{equation}
r_{i}(x)= \displaystyle{\sum_{k=0}^{i}} b_{i,k} s_{k}(x)
\label{const}
\end{equation}
are known as {\it connection constants}.  Several changes of bases among polynomial sequences may be efficiently computed by using
connection constants. Connection constants, Riordan arrays, and coefficients of Sheffer polynomials
share the same expression in terms of {\it generalized Bell polynomials} \cite{Niederhausen}. These polynomials
are a generalization of complete Bell exponential polynomials \eqref{(comp)} and also play a fundamental
role in the multivariate version of the symbolic method.

The starting point of the symbolic representation of Sheffer sequences is the following definition.
\begin{definition} \label{shum}
A polynomial umbra $\varsigma_x$ is called a {\it Sheffer umbra for $(\alpha,\gamma)$} if and only if
$\varsigma_x \equiv \alpha+ x {\boldsymbol .} \gamma^{*},$
where $\gamma^{*} \equiv \beta {\boldsymbol .} \gamma^{\cop}$ is called the {\it adjoint umbra} of $\gamma.$
\end{definition}
Of course, to be well posed, in Definition~\ref{shum} the umbra $\gamma$ has to admit
the compositional inverse $\gamma^{\cop},$ that is, $E[\gamma] = g_{1}\ne 0.$
From Definition~\ref{shum}, the generating function of a Sheffer umbra is
$$
f(\varsigma_{x}, z) = f(\alpha,z) \, \exp\{x \, [f^{\cop}(\gamma,z)-1]\},
$$
in agreement with Definition~\ref{10.1}.
\subsection{Solving linear recurrences}
In order to show how to solve linear recurrences by using the symbolic method,
we present two examples, the former involving a uniform r.v., the latter Dyck paths
\cite{Niederhausen}.
\begin{example} \label{unif} Suppose we are asked to solve the difference
equation
\begin{equation}
q_{i}(x+1)=q_{i}(x)+q_{i-1}(x)
\label{recurr1}
\end{equation}
under the condition $\int_{0}^{1}q_{i}(x)\, {\rm d}x=1$ for all positive integers
$i$. This initial condition is equivalent to $\E[q_i(U)]=1,$ where $U$ is a uniform r.v.\
on the interval $(0,1).$ Set $q_i(x) = s_i(x) / i!.$ Then, from~\eqref{recurr1},
we have
\begin{equation}
s_{i}(x+1) = s_i(x) + i \, s_{i-1}(x).
\label{(aaa)}
\end{equation}
A polynomial umbra $\varsigma_{x}$ is a Sheffer umbra if and only if there exists an umbra $\gamma,$
equipped with compositional inverse, which satisfies $\varsigma_{\gamma+x  {\boldsymbol .} u}
\equiv\chi+ \varsigma_{x}$ \cite{Niederhausen}, that is, if and only if $s_{i}(\gamma+ x  {\boldsymbol .} u) = s_i(x) + i \,
s_{i-1}(x)$ for all positive integers $i.$ By comparing this last equality
with~\eqref{(aaa)}, we have to choose as umbra $\gamma$ the unity umbra $u.$ Since $u^{*} \equiv \chi,$ we have
$q_i(x) \simeq (x \boldsymbol{.} \chi + \alpha)^i/i!$
for $\alpha$ depending on the initial condition. By recalling that  $\int_{0}^{1} p(x) \, {\rm d} x =
\E[p(-1 {\boldsymbol .} \iota)]$ for any $p(x) \in {\mathbb R}[x]$ \cite{DinardoShep}, we infer
\begin{multline*}
\int_{0}^{1} q_{i}(x) \, {\rm d}x =1
\quad \text{if and only if}\quad
\E[q_i(-1 \boldsymbol{.} \iota)] = 1 \\
\text{if and only if}\quad  \E[s_i(- 1 \boldsymbol{.} \iota)]=i!.
\end{multline*}
Let the sequence $\{i!\}$ be represented by the umbra $\bar{u},$ called the {\it Boolean unity umbra}.\footnote{%
Its moments are called Boolean cumulants, introduced in Section~$6.$ For Boolean cumulants, the umbra $\bar{u}$ plays the same role as the unity umbra $u$ for classical cumulants.} Then, from $\E[q_i(-1 \boldsymbol{.} \iota)] = 1$, we have
$$-1 \boldsymbol{.} \iota \boldsymbol{.} \chi + \alpha \equiv \bar{u}
\quad
\text{if and only if}
\quad
\alpha \equiv \bar{u} + \iota \boldsymbol{.} \chi.$$
Solutions of the recursion~\eqref{recurr1} are moments of the Sheffer umbra\break $(\iota +\beta+x\boldsymbol{.} u)\boldsymbol{.}\chi$ divided by $i!$ {\hfill \raggedleft{$\qed$}}
\end{example}

\begin{example} \label{Dyck}
A ballot path takes up-steps (\textit{u}) and right-steps (\textit{r}), starting at the origin
and staying weakly above the diagonal. For example, \textit{ururuur}
is a ballot path (starting at the origin and) terminating at the point
$\left( 3,4\right).$
Let $D\left( i,m\right)$ be the number of ballot paths
terminating at $\left( i,m\right)$
which do not contain the pattern (substring) \textit{urru}.

It is not difficult to see that
$D(i,m)$ satisfies the recurrence relation
$$D(i,m)-D(i-1,m)=D(i,m-1)-D(i-2,m-1)+D(i-3,m-1)$$
under the initial conditions
$D(i,i) = D(i-1,i)$ and $D\left( 0,0\right) = 1.$
Set  $D(i,m)=s_{i}(m)/i!.$ The previous recurrence relation may be written as
\begin{equation}
s_{i}(m)-i \, s_{i-1}(m)=s_{i}(m-1)-\left( i\right) _{2}s_{i-2}(m-1)+\left(
i\right) _{3}s_{i-3}(m-1),
\label{ccc5}
\end{equation}
with initial condition $s_{i}(i)=i \, s_{i-1}(i).$ Replace $m$ by $x$ in Equation~\eqref{ccc5}.
The umbral counterpart of the
{\it Sheffer identity}\footnote{{\it Sheffer identity:} A sequence $\{s_i(x)\}$ of Sheffer polynomials satisfies $s_i(x+y)=\sum_{k=0}^i \binom{i}{k} s_k(x) s_{i-k}(y).$} \cite{Niederhausen} is $\varsigma_{x+y} \equiv \varsigma_{x} + y \boldsymbol{.} \gamma^{*}$, which yields $\varsigma_{x} \equiv
\varsigma_{x-1} + \gamma ^{\ast }$
for $y=-1.$ Since $s_{i}(x)-i \, s_{i-1}(x)\simeq (\varsigma _{x}-\chi )^{i},$ we may replace $\varsigma
_{x}$ by $\varsigma _{x-1}+\gamma ^{\ast }$ so that
\begin{equation}
(\varsigma _{x}-\chi )^{i}\simeq(\varsigma _{x-1}+\gamma ^{\ast }-\chi )^{i}.
\label{(eq1)}
\end{equation}
Since
\begin{equation*}
(\varsigma _{x-1}+\gamma ^{\ast }-\chi )^{i} \simeq \varsigma_{x-1}^{i}+ i \, \sum_{j=0}^{i-1}
{\binom{{i-1}}{j}}\varsigma _{x-1}^{i-j-1}\left[
\frac{(\gamma ^{\ast })^{j+1}}{j+1}-(\gamma ^{\ast })^{j}\right],
\end{equation*}
from Equivalence \eqref{(eq1)} and Equation~\eqref{ccc5}, we infer
$$f(\gamma^{*},t)=1+t+\sum_{k\geq 3}t^{k}=(1-t^{2}+t^{3})/(1-t).$$
The initial condition $s_{i}(i) = i \, s_{i-1}(i)$ is equivalent to looking for an
umbra $\alpha $ satisfying
\begin{equation}
(\alpha + i \, \boldsymbol{.}\gamma ^{\ast })^{i}\simeq i \, (\alpha + i \, \boldsymbol{.}
\gamma^{\ast })^{i-1}.  \label{ccc7bis}
\end{equation}
Assume $\alpha \equiv \bar{u}+\zeta .$ In this case the initial
condition~\eqref{ccc7bis} gives\break
$(\zeta + i \, \boldsymbol{.}\gamma ^{\ast })^{i} \simeq \varepsilon^{i}$,
and then $\zeta^i \simeq  - i \, [\gamma^{{\cop}}]^i$ for all positive
integers~$i.$
The solution of the recurrence relation \eqref{ccc5} is
\begin{flushright}
$\displaystyle{\frac{s_i(x)}{i!} \simeq \frac{(\bar{u} + \zeta + x \boldsymbol{.}
\gamma^*)^i}{i!}}. \qquad \qquad \qquad  \qquad \qed$
\end{flushright}
\end{example}

Two special classes of Sheffer polynomials are often employed in applications:
polynomial sequences of binomial type and Appell sequences. In Section~3,
we have already seen that polynomial sequences of binomial type are umbrally represented
by the polynomial umbra $x \boldsymbol{.} \beta \boldsymbol{.} \gamma,$ which is a Sheffer umbra for $(\varepsilon, \gamma).$

An {\it Appell sequence} is a sequence of polynomials $\{p_{i}(x)\}$
satisfying the identity
$$
\frac{{\rm d}}{{\rm d}x} \, p_{i}(x) = i \, p_{i-1}(x), \quad \hbox{for}\,\, i = 1,2,\ldots.
$$
This sequence of polynomials is represented by a Sheffer umbra for $(\alpha,\chi).$
The resulting umbra $\alpha + x \boldsymbol{.} u$ is called {\it Appell umbra}.
Examples of Appell polynomials are the {\it Bernoulli polynomials}
$q_i(x)=E[(\iota + x \boldsymbol{.} u)^i]$ introduced in Section~2,
the {\it Euler polynomials} $$q_i(x)=E\left[ x \boldsymbol{.} u + \left( \frac{1}{2}[\xi - u] \right)^i \right],$$
with $\xi$ the Euler umbra, and the time-space harmonic polynomials $Q_i(x,t)=E[(x + t \boldsymbol{.} \alpha)^i]$ introduced in Section~3.
\subsection{Lagrange inversion formula}
The Lagrange inversion formula gives the coefficients of the compositional inverse of a formal power
series and thus of the compositional inverse of an umbra~$\gamma.$ Following Rota, Shen and Taylor \cite{RotaTaylorShen},
the proof we propose relies on the circumstance that any sequence of binomial type can be represented by Abel polynomials $\{x \, ( x - i \,
a)^{i-1}\}$, $a \in {\mathbb R}.$ We use {\it umbral Abel polynomials} $\{x \, ( x - i \boldsymbol{.} \gamma)^{i-1}\}.$
An exhaustive treatment of umbral Abel polynomials can be found in \cite{Petrullo}.  In contrast to
\cite{RotaTaylorShen}, the employment of the singleton umbra reduces the proof of the Lagrange inversion formula to
a simple computation. We need a preliminary definition.
\begin{definition}
If $\{g_i\}$ is a sequence umbrally represented by an umbra~$\gamma,$ then its {\it derivative umbra} $\gamma_{\scriptscriptstyle D}$
has moments $g_{{\scriptscriptstyle D},i}=i \, g_{i-1}$ for all positive
integers $i.$
\end{definition}
The derivative umbra has moments obtained from a symbolic derivation with respect to $\gamma,$ that is, $i \, g_{i-1} = \E[ i \, \gamma^{i-1}]=\E[\partial_{\gamma} \gamma^{i}].$
\begin{theorem}
[\sc Abel representation of binomial sequences] \cite{Niederhausen} \label{Abel} If
$\gamma$ is an umbra equipped  with a compositional inverse, then
\begin{equation}
(x \boldsymbol{.} \gamma_{\scriptscriptstyle D}^{\ast})^{i} \simeq
x \, (x-i\boldsymbol{.} \gamma)^{i-1},\quad \hbox{for}\,\,
i=1,2,\dots. \label{(Abel)}%
\end{equation}
\end{theorem}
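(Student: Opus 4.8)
The plan is to translate the asserted umbral equivalence into a single identity between formal power series and to recognize it as an instance of the Lagrange inversion formula. First I would compute the generating function of the derivative umbra: from $g_{{\scriptscriptstyle D},i}=i\,g_{i-1}$ one gets, term by term,
\[
f(\gamma_{\scriptscriptstyle D},z)=1+\sum_{i\geq 1}i\,g_{i-1}\frac{z^i}{i!}=1+z\sum_{j\geq 0}g_j\frac{z^j}{j!}=1+z\,f(\gamma,z).
\]
Since the first moment of $\gamma_{\scriptscriptstyle D}$ equals $g_0=1\neq 0$, the compositional inverse $(\gamma_{\scriptscriptstyle D})^{\cop}$ exists; applying the generating-function rule $f(x\boldsymbol{.}\beta\boldsymbol{.}\delta,z)=\exp[x(f(\delta,z)-1)]$ of \eqref{(genalfa)} to $\delta=(\gamma_{\scriptscriptstyle D})^{\cop}$ and using $\gamma_{\scriptscriptstyle D}^{*}=\beta\boldsymbol{.}(\gamma_{\scriptscriptstyle D})^{\cop}$, I obtain $f(x\boldsymbol{.}\gamma_{\scriptscriptstyle D}^{*},z)=\exp[x\,w(z)]$ with $w(z):=f((\gamma_{\scriptscriptstyle D})^{\cop},z)-1$. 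Unwinding the defining relation of the compositional inverse and substituting the formula above for $f(\gamma_{\scriptscriptstyle D},\cdot)$ gives $f(\gamma_{\scriptscriptstyle D},w(z))=1+z$, i.e. $w(z)\,f(\gamma,w(z))=z$: thus $w$ is the formal inverse of the delta series $R(\zeta):=\zeta\,f(\gamma,\zeta)$ (legitimate since $R(0)=0$, $R'(0)=f(\gamma,0)=1$).

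Next I would expand the other side. By the binomial theorem and $f(-i\boldsymbol{.}\gamma,z)=f(\gamma,z)^{-i}$ (equivalently \eqref{(momentlevy)} with $t$ replaced by $-i$),
\[
\E\!\left[x\,(x-i\boldsymbol{.}\gamma)^{i-1}\right]=x\sum_{j=0}^{i-1}\binom{i-1}{j}x^{\,i-1-j}\,\E[(-i\boldsymbol{.}\gamma)^{j}],\qquad \E[(-i\boldsymbol{.}\gamma)^{j}]=j!\,[z^{j}]\,f(\gamma,z)^{-i}.
\]
Hence \eqref{(Abel)} is equivalent to the single power-series identity $\sum_{i\geq 0}\E[x(x-i\boldsymbol{.}\gamma)^{i-1}]\,z^{i}/i!=\exp[x\,w(z)]$, the $i=0$ summand being read as $1$. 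Extracting $[z^{i}]$ from the right-hand side by the Lagrange expansion of $\exp(x\zeta)$ around $R(\zeta)=\zeta f(\gamma,\zeta)$ produces exactly $\frac1{i!}\E[x(x-i\boldsymbol{.}\gamma)^{i-1}]$, so the two sides match coefficient by coefficient and the theorem follows.

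The delicate point is this last step: it is precisely the (formal) Lagrange inversion formula specialized to $R$, and since the explicit umbral form of that formula is obtained \emph{from} this theorem in Section~5.2, one must be careful to invoke only its classical power-series version. To stay self-contained one can instead argue that the sequences $\{\E[(x\boldsymbol{.}\gamma_{\scriptscriptstyle D}^{*})^i]\}$ and $\{\E[x(x-i\boldsymbol{.}\gamma)^{i-1}]\}$ are both of binomial type with the same associated umbra $\gamma_{\scriptscriptstyle D}$: the first by construction, being a binomial partition umbra (use \eqref{(alfapartition)}), the second by Abel's classical binomial identity, which I would prove umbrally by induction on $i$ using uncorrelated similar copies of $\gamma$ together with the partition-umbra recurrence \eqref{(prpart)}; a binomial sequence is determined by its linear coefficients, namely $\E[(-i\boldsymbol{.}\gamma)^{i-1}]$ on one side and $\E[((\gamma_{\scriptscriptstyle D})^{\cop})^{i}]$ on the other, and these coincide exactly because $f(\gamma_{\scriptscriptstyle D},z)-1=z\,f(\gamma,z)$. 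Either way the only genuinely non-formal ingredient — matching those two number sequences, equivalently Abel's identity, equivalently the specialization of Lagrange inversion — is the main obstacle, and it is also the unique place where the hypothesis $\E[\gamma]=g_{1}\neq 0$ (through the existence of $(\gamma_{\scriptscriptstyle D})^{\cop}$) is used; everything else is bookkeeping with the generating functions of Table~\ref{tablegenfun} and Theorem~\ref{compass}.
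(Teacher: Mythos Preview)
The paper does not give its own proof of this theorem; it merely cites \cite{Niederhausen} and remarks that the result contains the Transfer Formula of \cite{Roman}. Your argument via generating functions and the classical (formal) Lagrange inversion formula is correct and is the standard route to Abel-type representations of binomial sequences: the computation $f(\gamma_{\scriptscriptstyle D},z)=1+z\,f(\gamma,z)$ is right, the identification of $w(z)=f((\gamma_{\scriptscriptstyle D})^{\cop},z)-1$ as the inverse of $R(\zeta)=\zeta f(\gamma,\zeta)$ is right, and the Lagrange coefficient extraction you sketch does yield exactly $\E[x(x-i\boldsymbol{.}\gamma)^{i-1}]$. You are also right to flag the circularity issue: since the paper derives its umbral Lagrange formula (Theorem~\ref{(InvL1)}) \emph{from} Theorem~\ref{Abel}, only the classical power-series Lagrange formula is available here, and that is what you use.

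One inaccuracy in your closing remarks: the hypothesis $\E[\gamma]=g_1\neq 0$ is \emph{not} what guarantees the existence of $(\gamma_{\scriptscriptstyle D})^{\cop}$. The first moment of $\gamma_{\scriptscriptstyle D}$ is $g_{{\scriptscriptstyle D},1}=1\cdot g_0=1$, so $(\gamma_{\scriptscriptstyle D})^{\cop}$ exists for \emph{every} umbra $\gamma$, and your proof goes through without ever using $g_1\neq 0$. The paper itself notes right after Theorem~\ref{(InvL1)} that ``the umbra $\gamma_{\scriptscriptstyle D}$ has first moment equal to $1$''. So the hypothesis in the statement of Theorem~\ref{Abel} is in fact superfluous for the identity \eqref{(Abel)} as written; it matters only when one wants to recover an arbitrary binomial sequence (with possibly $p_1'(0)\neq 1$) from some $\gamma$, which is the generalization alluded to at the end of Section~5.2.
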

Theorem~\ref{Abel} includes the well-known Transfer Formula \cite{Roman}.
\begin{theorem}[\sc Lagrange inversion formula] \cite{Niederhausen} \label{(InvL1)}
$$({\gamma_{\scriptscriptstyle D}}^{{\cop}})^i \simeq (-i \boldsymbol{.} \gamma)^{i-1}, \quad \hbox{for}\,\, i=1,2,\ldots.$$
\end{theorem}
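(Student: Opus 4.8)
The plan is to read off the moments of $\gamma_{\scriptscriptstyle D}^{\cop}$ from the Abel representation of binomial sequences (Theorem~\ref{Abel}) by specializing the indeterminate $x$ there to the singleton umbra $\chi$. First I would note that $\gamma_{\scriptscriptstyle D}^{\cop}$ is always well defined: the derivative umbra has first moment $g_{{\scriptscriptstyle D},1}=1\cdot g_0=1\neq 0$, so $\gamma_{\scriptscriptstyle D}$ is compositionally invertible and $\gamma_{\scriptscriptstyle D}^{\ast}\equiv\beta{\boldsymbol .}\gamma_{\scriptscriptstyle D}^{\cop}$ makes sense. The identity that turns the computation into a one-liner is
$$\gamma_{\scriptscriptstyle D}^{\cop}\equiv\chi{\boldsymbol .}\gamma_{\scriptscriptstyle D}^{\ast},$$
which I would get by writing $\chi{\boldsymbol .}\gamma_{\scriptscriptstyle D}^{\ast}\equiv\chi{\boldsymbol .}(\beta{\boldsymbol .}\gamma_{\scriptscriptstyle D}^{\cop})\equiv(\chi{\boldsymbol .}\beta){\boldsymbol .}\gamma_{\scriptscriptstyle D}^{\cop}$ by Theorem~\ref{compass}, and then using $\chi{\boldsymbol .}\beta\equiv u$ from Proposition~\ref{(cumbell)} together with $u{\boldsymbol .}\delta\equiv\delta$; equivalently one observes directly, via Inversion Theorem~\ref{inv} and the generating-function relation \eqref{(genalfa)}, that the cumulant umbra $\chi{\boldsymbol .}(\beta{\boldsymbol .}\delta)$ of a partition umbra is $\delta$ itself.

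The core of the argument then uses the definition of the dot-product umbra in \eqref{dotproduct1}--\eqref{dotproduct2}: if $q_i(x)=\E[(x{\boldsymbol .}\gamma_{\scriptscriptstyle D}^{\ast})^i]\in{\mathbb R}[x]$, then $\E[(\chi{\boldsymbol .}\gamma_{\scriptscriptstyle D}^{\ast})^i]=\E[q_i(\chi)]$. By Theorem~\ref{Abel}, $q_i(x)$ is the polynomial obtained by $\E$-evaluating only the $\gamma$-umbrae in $x\,(x-i{\boldsymbol .}\gamma)^{i-1}$; expanding $x-i{\boldsymbol .}\gamma=x+(-i{\boldsymbol .}\gamma)$ with $-i{\boldsymbol .}\gamma$ uncorrelated from the scalar $x$ gives
$$q_i(x)=\sum_{k=0}^{i-1}\binom{i-1}{k}\,\E\!\left[(-i{\boldsymbol .}\gamma)^{k}\right]x^{\,i-k}.$$
Substituting $x\mapsto\chi$ and applying $\E$, every term with $i-k\geq 2$ drops out because $\E[\chi^{\,j}]=\delta_{j,1}$, so only the $k=i-1$ term survives and $\E[(\chi{\boldsymbol .}\gamma_{\scriptscriptstyle D}^{\ast})^i]=\E[(-i{\boldsymbol .}\gamma)^{i-1}]$. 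Combined with $\gamma_{\scriptscriptstyle D}^{\cop}\equiv\chi{\boldsymbol .}\gamma_{\scriptscriptstyle D}^{\ast}$ this is exactly $(\gamma_{\scriptscriptstyle D}^{\cop})^i\simeq(-i{\boldsymbol .}\gamma)^{i-1}$ for all $i\geq 1$.

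The only delicate point I anticipate is the legitimacy of passing from the indeterminate $x$ in Theorem~\ref{Abel} to the umbra $\chi$: one must keep the $x$-part and the $\gamma$-part strictly separated, evaluating the $\gamma$-umbrae first so as to produce a genuine polynomial in $x$, and only then replacing $x$ by $\chi$ and applying $\E$ --- this is precisely what the dot-product definition \eqref{dotproduct1}--\eqref{dotproduct2} and the uncorrelation property license. Once this bookkeeping is in place the rest is the elementary computation above. As a consistency check, since $f(-i{\boldsymbol .}\gamma,z)=f(\gamma,z)^{-i}$ one recovers $\E[(\gamma_{\scriptscriptstyle D}^{\cop})^i]=(i-1)!\,[z^{i-1}]f(\gamma,z)^{-i}$, which is the classical Lagrange inversion formula.
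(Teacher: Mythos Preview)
Your proof is correct and follows essentially the same route as the paper's own argument: substitute the singleton umbra $\chi$ for $x$ in the Abel representation (Theorem~\ref{Abel}), use $\chi{\boldsymbol .}\beta\equiv u$ from Proposition~\ref{(cumbell)} to identify $\chi{\boldsymbol .}\gamma_{\scriptscriptstyle D}^{\ast}$ with $\gamma_{\scriptscriptstyle D}^{\cop}$, then binomially expand $\chi(\chi-i{\boldsymbol .}\gamma)^{i-1}$ and let the vanishing of $\E[\chi^{j}]$ for $j\geq 2$ isolate the single surviving term. Your version is simply more explicit about the bookkeeping behind the substitution $x\mapsto\chi$ via the dot-product definition \eqref{dotproduct1}--\eqref{dotproduct2}, but the underlying idea is identical.
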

\begin{proof}
In~\eqref{(Abel)}, replace $x$ by the singleton umbra $\chi$ and observe that
$$(\chi \boldsymbol{.} \beta \boldsymbol{.} {\gamma_{\scriptscriptstyle D}}^
{{\cop}})^i \simeq ({\gamma_{\scriptscriptstyle D}}^{{\cop}})^i \simeq \chi \, ( \chi - i \, \boldsymbol{.}
\gamma)^{i-1},$$
due to $\chi \punt \beta \equiv u$ (see Proposition~\ref{(cumbell)}). The result follows by applying the binomial expansion
to $\chi \, ( \chi - i \boldsymbol{.} \gamma)^{i-1}$ and by recalling that $\chi^{k+1} \simeq 0$ for
$k=1,2,...,i-1.$
\end{proof}
As $-i \boldsymbol{.} \gamma \equiv i \boldsymbol{.} (- 1 \boldsymbol{.}  \gamma),$
moments of ${\gamma_{\scriptscriptstyle D}}^{{\cop}}$
can be computed by using the inverse of $\gamma.$
The umbra $\gamma{\scriptscriptstyle D}$ has first moment equal to $1.$
It is possible to generalize Theorem~\ref{(InvL1)} to
umbrae having first moment different from~$1,$ see \cite{Niederhausen}.
\subsection{Generalized Bell polynomials}
By Theorem~\ref{Abel}, the umbral analog of the well-known \textit{Abel identity}\footnote{The {\it Abel identity} is:
$(x+y)^{i} = \sum_{k = 0}^i \binom{i}{k} (x + k \, a)^{i-k} y(y - k \, a)^{k-1}$, $a \in {\mathbb R}.$}
is:
\begin{equation}
(x+y)^{i} \simeq\sum_{k = 0}^i \binom{i}{k} (x + k \boldsymbol{.} \gamma)^{i-k} y(y-k \boldsymbol{.} \gamma)^{k-1}.
\label{Abel1}
\end{equation}
The polynomials
$${\mathbb B}^{(\gamma)}_{i,k}(x) = \binom{i}{k} (x + k \boldsymbol{.} \gamma)^{i-k},$$
given in Equation~\eqref{Abel1}, are called {\it generalized Bell umbral polynomials} since $E[{\mathbb B}^{(\gamma)}_{i,k}(0)]=$ $ B_{i,k}\left(g_{{\scriptscriptstyle D},1}; \ldots \right.$ $\left. \ldots; g_{{\scriptscriptstyle D},i-k+1}\right),$
where $B_{i,k}$ are the partial Bell exponential polynomials given in Equation~\eqref{(alfapartition)}, and $\{g_{{\scriptscriptstyle D},i}\}$ are the moments of the umbra $\gamma_{\scriptscriptstyle D}.$
Paralleling the complete exponential Bell polynomials, the generalized complete Bell umbral polynomials are
$${\mathbb Y}^{(\gamma)}_{i}(x) = (x + \beta \boldsymbol{.} \gamma_{\scriptscriptstyle D})^{i} \simeq \displaystyle{\sum_{k = 0}^i} \binom{i}{k} (x + k \boldsymbol{.} \gamma)^{i-k} = \displaystyle{\sum_{k = 0}^i} {\mathbb B}^{(\gamma)}_{i,k}(x).$$
Generalized Bell polynomials are useful to give a unifying representation of the coefficients of Sheffer polynomials, of Riordan
arrays, and of connection constants, as we show in the following. Assume $\E[\gamma]=1.$
Generalizations to the case $\E[\gamma] \ne 1$ are given in \cite{Niederhausen}. We need a preliminary definition.
\begin{definition} \label{10.7}
If $\{g_i\}$ is a sequence umbrally represented by an umbra~$\gamma,$ then its {\it primitive umbra} $\gamma_{\scriptscriptstyle P}$
has moments $g_{{\scriptscriptstyle P},i} = g_{i+1}/(i+1)$ for all positive
integers $i.$
\end{definition}
The primitive umbra has moments obtained from a symbolic integration with respect to $\gamma,$ that is,
$$\frac{g_{i+1}}{i+1} = \E\left(\frac{\gamma^{i+1}}{i+1}\right) =
\E\left(\int \gamma^{i} {\rm d} \gamma\right).$$
For $0 \leq k \leq i,$ let $c_{i,k} $ denote the $k$-th coefficient of the $i$-th Sheffer polynomial $s_i(x).$
\begin{theorem}[\sc Coefficients of Sheffer polynomials] \cite{Niederhausen}
If $\{s_i(x)\}$ is a polynomial sequence
umbrally represented by a Sheffer umbra for the pair $(\alpha, \gamma),$ then
$c_{i,k} = \E\left[{\mathbb B}^{(\gamma_{\scriptscriptstyle P})}_{i,k}(\alpha)\right]$
for all positive integers~$i$ and $0 \leq k \leq i.$
\end{theorem}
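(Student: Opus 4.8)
The plan is to reduce the claimed identity, for each fixed $k$, to an equality of exponential generating functions in an auxiliary variable $z$. First I would record the generating function of the left-hand side. By Definition~\ref{shum},
\[
\sum_{i\ge 0}s_i(x)\,\frac{z^i}{i!}=f(\varsigma_x,z)=f(\alpha,z)\,\exp\bigl\{x\,[f^{\cop}(\gamma,z)-1]\bigr\};
\]
writing $s_i(x)=\sum_{k=0}^{i}c_{i,k}\,x^k$ and comparing the coefficients of $x^k$ on the two sides gives the closed form
\[
\sum_{i\ge k}c_{i,k}\,\frac{z^i}{i!}=\frac{1}{k!}\,f(\alpha,z)\,\bigl[f^{\cop}(\gamma,z)-1\bigr]^{k}.
\]

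Next I would compute the generating function of the right-hand side. Since ${\mathbb B}^{(\gamma_{\scriptscriptstyle P})}_{i,k}(\alpha)=\binom ik(\alpha+k{\boldsymbol .}\gamma_{\scriptscriptstyle P})^{i-k}$ and $\E$ is linear, $\E[{\mathbb B}^{(\gamma_{\scriptscriptstyle P})}_{i,k}(\alpha)]=\binom ik\,\E[(\alpha+k{\boldsymbol .}\gamma_{\scriptscriptstyle P})^{i-k}]$. The pair $(\alpha,\gamma)$ consists of distinct, hence uncorrelated, umbrae, and $\gamma_{\scriptscriptstyle P}$ has the same support as $\gamma$, so the umbra $\alpha+k{\boldsymbol .}\gamma_{\scriptscriptstyle P}$ has generating function $f(\alpha,z)f(\gamma_{\scriptscriptstyle P},z)^{k}$; shifting the summation index by $k$,
\[
\sum_{i\ge k}\E\bigl[{\mathbb B}^{(\gamma_{\scriptscriptstyle P})}_{i,k}(\alpha)\bigr]\,\frac{z^i}{i!}=\frac{z^{k}}{k!}\,f(\alpha,z)\,f(\gamma_{\scriptscriptstyle P},z)^{k}=\frac{1}{k!}\,f(\alpha,z)\,\bigl[z\,f(\gamma_{\scriptscriptstyle P},z)\bigr]^{k}.
\]

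Comparing the two displays, the statement collapses to the single umbral identity that $z\,f(\gamma_{\scriptscriptstyle P},z)$ equals the exponent series of the Sheffer generating function, i.e.\ the series $g(t)$ of Definition~\ref{10.1}. Here the normalisation $\E[\gamma]=1$ does the work: by Definition~\ref{10.7} the primitive umbra has moments $g_{{\scriptscriptstyle P},i}=g_{i+1}/(i+1)$, so
\[
z\,f(\gamma_{\scriptscriptstyle P},z)=z+\sum_{i\ge 1}\frac{g_{i+1}}{(i+1)!}\,z^{i+1}=\sum_{i\ge 1}\frac{g_i}{i!}\,z^i=f(\gamma,z)-1,
\]
the middle equality using $g_1=\E[\gamma]=1$. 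The main obstacle is then to confirm that this $f(\gamma,z)-1$ is indeed the required exponent series, which amounts to tracing the defining relation $\gamma^{*}\equiv\beta{\boldsymbol .}\gamma^{\cop}$ of the adjoint umbra together with the compositional-inverse identity $f^{\cop}[\gamma,f(\gamma,z)-1]=1+z$ carefully enough to see the precise role played by $\gamma$ (rather than by its adjoint), once more invoking $g_1=1$, which forces $\E[\gamma^{\cop}]=1$. Everything else is routine bookkeeping: the mild uncorrelation hypotheses, the rigour of the coefficient-extractions inside ${\mathbb R}[x][[z]]$, and the convention $x(x-0{\boldsymbol .}\gamma_{\scriptscriptstyle P})^{-1}=1$ for the $k=0$ term. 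A more directly umbral alternative would be to substitute $y=x{\boldsymbol .}\gamma^{*}$ into the Abel identity~\eqref{Abel1} written with $\gamma_{\scriptscriptstyle P}$ in place of $\gamma$, and to rewrite the factors $w(w-k{\boldsymbol .}\gamma_{\scriptscriptstyle P})^{k-1}$ via Theorem~\ref{Abel} as $(w{\boldsymbol .}\gamma^{*})^{k}$; but that route forces one to keep track of the correlations among the several copies of $\gamma_{\scriptscriptstyle P}$, which the generating-function argument sidesteps.
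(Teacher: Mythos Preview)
Your reduction via generating functions is clean and the two displays
\[
\sum_{i\ge k}c_{i,k}\,\frac{z^i}{i!}=\frac{1}{k!}\,f(\alpha,z)\bigl[f^{\cop}(\gamma,z)-1\bigr]^{k},
\qquad
\sum_{i\ge k}\E\bigl[{\mathbb B}^{(\gamma_{\scriptscriptstyle P})}_{i,k}(\alpha)\bigr]\,\frac{z^i}{i!}=\frac{1}{k!}\,f(\alpha,z)\bigl[zf(\gamma_{\scriptscriptstyle P},z)\bigr]^{k}
\]
are both correct, as is the identity $zf(\gamma_{\scriptscriptstyle P},z)=f(\gamma,z)-1$ coming from Definition~\ref{10.7} and $g_1=1$. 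But the ``main obstacle'' you flag is not a bookkeeping matter that can be resolved by tracing definitions: the remaining step demands $f(\gamma,z)-1=f^{\cop}(\gamma,z)-1$, i.e.\ $\gamma\equiv\gamma^{\cop}$, and that is false in general. Take $\gamma\equiv u$: then $f(\gamma,z)=e^{z}$ while $f^{\cop}(\gamma,z)=1+\log(1+z)$. Neither the adjoint relation $\gamma^{*}\equiv\beta\boldsymbol{.}\gamma^{\cop}$ nor the compositional-inverse identity you invoke can manufacture this equality; the normalisation $g_1=1$ only yields $\E[\gamma^{\cop}]=1$, not similarity of $\gamma$ and $\gamma^{\cop}$.

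What your computation actually reveals is a slip in the survey's statement (which the paper records without proof, citing \cite{Niederhausen}): the formula $c_{i,k}=\E[{\mathbb B}^{(\gamma_{\scriptscriptstyle P})}_{i,k}(\alpha)]$ matches the Sheffer umbra for the pair $(\alpha,\gamma^{\cop})$, whose generating function is $f(\alpha,z)\exp\{x[f(\gamma,z)-1]\}$. Equivalently, for the Sheffer umbra for $(\alpha,\gamma)$ as in Definition~\ref{shum} the correct superscript is $(\gamma^{\cop})_{\scriptscriptstyle P}$, well-defined since $\E[\gamma^{\cop}]=1$; with that amendment your argument closes immediately via $zf\bigl((\gamma^{\cop})_{\scriptscriptstyle P},z\bigr)=f(\gamma^{\cop},z)-1$. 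The umbral Abel route you sketch at the end hits the same wall: Theorem~\ref{Abel} applied with $\gamma_{\scriptscriptstyle P}$ (using $(\gamma_{\scriptscriptstyle P})_{\scriptscriptstyle D}\equiv\gamma$) gives $(\alpha+y)^{i}\simeq\sum_{k}{\mathbb B}^{(\gamma_{\scriptscriptstyle P})}_{i,k}(\alpha)\,(y\boldsymbol{.}\gamma^{*})^{k}$, which expands powers of $\alpha+y$ in the basis $(y\boldsymbol{.}\gamma^{*})^{k}$, not powers of $\alpha+x\boldsymbol{.}\gamma^{*}$ in the basis $x^{k}$.
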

\begin{theorem}[\sc Riordan array] \cite{Niederhausen}
The elements of an exponential Riordan array for the pair $(f(\alpha,t),f^{\cop}(\gamma,t)-1)$
are umbrally represented by $d_{i,k} =  \E\left[{\mathbb B}^{(\gamma_{\scriptscriptstyle P})}_{i,k}(\alpha)\right]$
for all positive integers $i$ and $0 \leq k \leq i.$
\end{theorem}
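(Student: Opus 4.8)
The plan is to reduce the statement to the immediately preceding theorem on the coefficients of Sheffer polynomials, by recognizing that the entries of the exponential Riordan array for $(f(\alpha,t),f^{\cop}(\gamma,t)-1)$ are precisely the coefficients $c_{i,k}$ of the Sheffer polynomial sequence $\{s_i(x)\}$ attached to the pair $(\alpha,\gamma)$. Once this identification is made, the formula $c_{i,k}=\E[{\mathbb B}^{(\gamma_{\scriptscriptstyle P})}_{i,k}(\alpha)]$ transfers verbatim.

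First I would verify that $(f(\alpha,t),f^{\cop}(\gamma,t)-1)$ is a legitimate exponential Riordan array: $f(\alpha,t)$ is invertible since $f(\alpha,0)=1$, and $f^{\cop}(\gamma,t)-1=f(\gamma^{\cop},t)-1$ has vanishing constant term, with nonzero linear coefficient because $\E[\gamma]=1$ forces $\E[\gamma^{\cop}]=1\ne 0$. Next, starting from the generating function of the Sheffer umbra $\varsigma_x$ for $(\alpha,\gamma)$ recorded after Definition~\ref{shum},
\[
f(\varsigma_x,t)=f(\alpha,t)\,\exp\{x\,[f^{\cop}(\gamma,t)-1]\}
\]
(here $f^{\cop}(\gamma,t)$ is the generating function of the compositional inverse umbra $\gamma^{\cop}$, as follows from \eqref{(genalfa)} applied to $x\boldsymbol{.}\gamma^{*}\equiv x\boldsymbol{.}\beta\boldsymbol{.}\gamma^{\cop}$), I would expand the exponential as a power series in $x$:
\[
1+\sum_{i\ge 1}s_i(x)\frac{t^i}{i!}=\sum_{k\ge 0}x^k\left(f(\alpha,t)\,\frac{[f^{\cop}(\gamma,t)-1]^k}{k!}\right).
\]
Writing $s_i(x)=\sum_{k=0}^i c_{i,k}x^k$ and comparing the coefficient of $x^k t^i/i!$ on both sides gives that $c_{i,k}$ equals the $i$-th (exponential) coefficient of $f(\alpha,t)\,[f^{\cop}(\gamma,t)-1]^k/k!$, which is exactly the $(i,k)$ entry $d_{i,k}$ of the Riordan array for $(f(\alpha,t),f^{\cop}(\gamma,t)-1)$. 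Finally, by the preceding theorem on the coefficients of Sheffer polynomials, $c_{i,k}=\E[{\mathbb B}^{(\gamma_{\scriptscriptstyle P})}_{i,k}(\alpha)]$, and combining this with $d_{i,k}=c_{i,k}$ yields the claim for all positive integers $i$ and $0\le k\le i$.

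If a self-contained derivation is wanted instead of citing the Sheffer-coefficient theorem, the identity $c_{i,k}=\E[{\mathbb B}^{(\gamma_{\scriptscriptstyle P})}_{i,k}(\alpha)]$ can be reproved by applying the umbral Abel identity~\eqref{Abel1} — equivalently Theorem~\ref{Abel} — to $\varsigma_x\equiv\alpha+x\boldsymbol{.}\gamma^{*}$, after observing that the derivative of the primitive umbra returns $\gamma$, i.e. $(\gamma_{\scriptscriptstyle P})_{\scriptscriptstyle D}\equiv\gamma$, so that $\gamma^{*}\equiv\beta\boldsymbol{.}(\gamma_{\scriptscriptstyle P})_{\scriptscriptstyle D}^{\cop}$ is exactly the shape to which Theorem~\ref{Abel} applies with $\gamma$ replaced by $\gamma_{\scriptscriptstyle P}$. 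The main obstacle here is bookkeeping rather than depth: one must keep straight that $f^{\cop}(\gamma,t)$ is the generating function of $\gamma^{\cop}$ and not the reciprocal $1/f(\gamma,t)$, that the Riordan-array "$i$-th coefficient" is taken in the exponential sense $[t^i/i!]$, and that the hypothesis $\E[\gamma]=1$ is precisely what makes the primitive umbra $\gamma_{\scriptscriptstyle P}$ and the identity $(\gamma_{\scriptscriptstyle P})_{\scriptscriptstyle D}\equiv\gamma$ meaningful (the case $\E[\gamma]\ne 1$ being handled by the generalization referred to in the text). With these conventions pinned down, the proof is a one-line identification of two families of coefficients.
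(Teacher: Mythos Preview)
Your proposal is correct. Note, however, that the paper itself is a survey and does not supply a proof of this theorem: it simply states the result with a citation to \cite{Niederhausen}, immediately after the companion theorem on Sheffer coefficients. Your reduction --- identifying $d_{i,k}$ with the coefficient $c_{i,k}$ of $x^k$ in $s_i(x)$ via the expansion of $f(\varsigma_x,t)=f(\alpha,t)\exp\{x[f^{\cop}(\gamma,t)-1]\}$ in powers of $x$, and then invoking the preceding theorem --- is exactly the intended logical link between the two statements and is the argument one would expect to find in the cited source. The optional self-contained route you sketch through the Abel representation and the identity $(\gamma_{\scriptscriptstyle P})_{\scriptscriptstyle D}\equiv\gamma$ is also sound and mirrors how the Sheffer-coefficient theorem itself is proved in \cite{Niederhausen}.
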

So, the theory of Riordan arrays and that of Sheffer sequences are the two sides of the same coin.
\begin{theorem}[\sc Connection constants] \cite{Niederhausen}
If $\{s_{i}(x)\}$ is a polynomial sequence umbrally represented by a Sheffer umbra for $(\alpha,\gamma)$ and $\{r_{i}(x)\}$ is a polynomial sequence umbrally represented by a Sheffer umbra for $(\xi,\zeta),$ then $b_{i,k} = \E\left[{\mathbb B}^{(\phi_{\scriptscriptstyle P})}_{i,k}(\varrho)\right]$ with $\{b_{i,k}\}$ given in \eqref{const} and
$\varrho \equiv  (-1 \boldsymbol{.} \alpha \boldsymbol{.} \beta \boldsymbol{.} \gamma + \xi \boldsymbol{.} \beta \boldsymbol{.} \zeta) \boldsymbol{.} \zeta^*,$ $\phi \equiv \zeta \boldsymbol{.} \gamma^{*}.$
\end{theorem}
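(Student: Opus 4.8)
The plan is to reduce the assertion to the formula already proved for the coefficients of a Sheffer polynomial sequence (equivalently, to the Riordan array theorem), by showing that the array $(b_{i,k})$ of connection constants is itself the coefficient array of a Sheffer sequence, namely of the polynomials $B_{i}(y):=\sum_{k\geq 0}b_{i,k}\,y^{k}$, for the pair $(\varrho,\phi)$ in the statement. Everything is computed with exponential generating functions, using the dictionary (Definition~\ref{shum}, Table~\ref{tablegenfun}, Theorem~\ref{compass} and~\eqref{(momcomp)}): an ordinary sum $\alpha+\gamma$ corresponds to a product of generating functions, the inverse $-1\boldsymbol{.}\alpha$ to a reciprocal, a composition umbra $\delta\boldsymbol{.}\beta\boldsymbol{.}\alpha$ to the functional composition $f(\delta,f(\alpha,\cdot)-1)$, and the adjoint $\gamma^{*}\equiv\beta\boldsymbol{.}\gamma^{\cop}$ contributes $f(\gamma^{\cop},\cdot)-1$ in the exponent. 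Put $F_{\alpha}:=f(\alpha,\cdot)-1$; since $\E[\gamma],\E[\zeta]\neq 0$ the series $F_{\gamma},F_{\zeta}$ are invertible under composition, and $F_{\gamma}^{\langle -1\rangle}=f(\gamma^{\cop},\cdot)-1$.

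First I would record, from Definition~\ref{shum}, the two generating functions
$$\sum_{k\geq 0}s_{k}(x)\frac{z^{k}}{k!}=f(\alpha,z)\,\exp\{x\,F_{\gamma}^{\langle -1\rangle}(z)\},\qquad \sum_{i\geq 0}r_{i}(x)\frac{z^{i}}{i!}=f(\xi,z)\,\exp\{x\,F_{\zeta}^{\langle -1\rangle}(z)\}.$$
Feeding \eqref{const} into the second identity, using that $\{s_{k}(x)\}$ is a basis to read off, as the coefficient of $s_{k}(x)$, the series $\beta_{k}(z):=\sum_{i}b_{i,k}z^{i}/i!$, and substituting in the first identity $z\mapsto w:=F_{\gamma}\circ F_{\zeta}^{\langle -1\rangle}$ (the substitution that makes the two $x$-exponentials agree, $F_{\gamma}^{\langle -1\rangle}\circ w=F_{\zeta}^{\langle -1\rangle}$), I obtain $\beta_{k}(z)=\frac{f(\xi,z)}{f(\alpha,w(z))}\frac{w(z)^{k}}{k!}$, hence
$$\sum_{i\geq 0}B_{i}(y)\,\frac{z^{i}}{i!}=\frac{f(\xi,z)}{f(\alpha,w(z))}\,\exp\{y\,w(z)\}.$$
Because $w(0)=0$, $w'(0)=\E[\gamma]/\E[\zeta]\neq 0$ and the factor in front equals $1$ at $z=0$, the sequence $\{B_{i}(y)\}$ is Sheffer (Definition~\ref{10.1}); equivalently, $(b_{i,k})$ is the exponential Riordan array for the pair $\bigl(f(\xi,z)/f(\alpha,w(z)),\,w(z)\bigr)$.

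Next I would identify that pair with $(f(\varrho,z),f^{\cop}(\phi,z)-1)$. For the exponent, $w^{\langle -1\rangle}=F_{\zeta}\circ F_{\gamma}^{\langle -1\rangle}$, so $f(\phi,z)=1+w^{\langle -1\rangle}(z)=f\bigl(\zeta,F_{\gamma}^{\langle -1\rangle}(z)\bigr)=f(\zeta\boldsymbol{.}\beta\boldsymbol{.}\gamma^{\cop},z)$, whence $\phi\equiv\zeta\boldsymbol{.}\beta\boldsymbol{.}\gamma^{\cop}\equiv\zeta\boldsymbol{.}\gamma^{*}$ by Theorem~\ref{compass}. For the front factor, computing the generating function of $(-1\boldsymbol{.}\alpha\boldsymbol{.}\beta\boldsymbol{.}\gamma+\xi\boldsymbol{.}\beta\boldsymbol{.}\zeta)\boldsymbol{.}\zeta^{*}$ with the dictionary gives
$$f\bigl((-1\boldsymbol{.}\alpha\boldsymbol{.}\beta\boldsymbol{.}\gamma+\xi\boldsymbol{.}\beta\boldsymbol{.}\zeta)\boldsymbol{.}\zeta^{*},z\bigr)=\frac{f\bigl(\xi,F_{\zeta}(F_{\zeta}^{\langle -1\rangle}(z))\bigr)}{f\bigl(\alpha,F_{\gamma}(F_{\zeta}^{\langle -1\rangle}(z))\bigr)}=\frac{f(\xi,z)}{f(\alpha,w(z))},$$
the cancellation $F_{\zeta}\circ F_{\zeta}^{\langle -1\rangle}=\mathrm{id}$ collapsing the numerator while $F_{\gamma}\circ F_{\zeta}^{\langle -1\rangle}=w$ survives in the denominator; hence $\varrho\equiv(-1\boldsymbol{.}\alpha\boldsymbol{.}\beta\boldsymbol{.}\gamma+\xi\boldsymbol{.}\beta\boldsymbol{.}\zeta)\boldsymbol{.}\zeta^{*}$ and $\{B_{i}(y)\}$ is Sheffer for $(\varrho,\phi)$. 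Applying the theorem on coefficients of Sheffer polynomials to $\{B_{i}(y)\}$ (equivalently, the Riordan array theorem to the pair just identified) yields $b_{i,k}=\E\bigl[{\mathbb B}^{(\phi_{\scriptscriptstyle P})}_{i,k}(\varrho)\bigr]$, which is the assertion.

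The main obstacle is this identification: translating each composition and compositional inversion back into the right umbral operation without swapping the ``inner'' and ``outer'' umbrae of a composition umbra, and keeping the adjoint $\gamma^{*}$ distinct from the bare compositional inverse $\gamma^{\cop}$; in particular the opaque expression for $\varrho$ has to be recognised as precisely the umbra whose generating function realises the quotient $f(\xi,z)/f(\alpha,w(z))$, the essential point being that the substitution $z\mapsto F_{\zeta}^{\langle -1\rangle}(z)$ kills $F_{\zeta}$ but not $F_{\gamma}$. Finally, the cited theorem on Sheffer coefficients is stated under the normalisation $\E[\gamma]=1$; since $\E[\phi]=\E[\zeta]/\E[\gamma]$, assuming $\E[\gamma]=\E[\zeta]=1$ gives $\E[\phi]=1$ and the theorem applies verbatim, otherwise one invokes its stated generalisation to first moment $\neq 1$.
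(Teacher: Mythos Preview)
Your argument is correct. The paper itself does not include a proof of this theorem; it merely states the result and cites \cite{Niederhausen}. Your approach --- showing that the generating series of the connection constants $\sum_{i}B_{i}(y)z^{i}/i!$ has the Sheffer form $f(\varrho,z)\exp\{y\,(f^{\cop}(\phi,z)-1)\}$ via the substitution $w=F_{\gamma}\circ F_{\zeta}^{\langle-1\rangle}$, and then invoking the already-stated theorem on coefficients of Sheffer polynomials --- is the natural one and matches the philosophy of the cited source, where the connection-constant theorem is obtained exactly by recognising $(b_{i,k})$ as a Riordan array / Sheffer coefficient array for a suitable pair. Your bookkeeping with the umbral dictionary is accurate: the identification $\phi\equiv\zeta\boldsymbol{.}\beta\boldsymbol{.}\gamma^{\cop}\equiv\zeta\boldsymbol{.}\gamma^{*}$ follows from the composition-umbra rule and the definition of the adjoint, and the computation of $f(\varrho,z)$ via $f(\psi,F_{\zeta}^{\langle-1\rangle}(z))$ with $\psi=-1\boldsymbol{.}\alpha\boldsymbol{.}\beta\boldsymbol{.}\gamma+\xi\boldsymbol{.}\beta\boldsymbol{.}\zeta$ correctly produces the quotient $f(\xi,z)/f(\alpha,w(z))$. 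Your closing remark about the normalisation $\E[\phi]=\E[\zeta]/\E[\gamma]$ is also to the point.
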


\section{Fast symbolic computation}
There are many packages devoted to numerical/graphical statistical toolsets but not doing algebraic/symbolic computations. Quite often, the packages filling this gap are not open source. One of the softwares largely used in the statistical community is {\tt R}.\footnote{Available at {\small ({\tt http://www.r-project.org/})}} {\tt R} is a much stronger numerical programming environment, and the procedures including symbolic software are not yet specifically oriented for statistical calculations. The availability of a widely spread open source symbolic platform will be of great interest, especially with interface capabilities to external programs. The conceptual aspects related to symbolic methods involve more strictly algebraic and combinatorial issues, and the topics introduced in the previous sections are an example.

What we regard as symbolic computation is now evolving towards a universal algebraic language which aims at combining syntactic elegance and computational efficiency. The classical umbral calculus is no doubt an old topic. However, the version here reviewed has not only the virtue of improving the syntactic elegance of the umbral theory but also to offer a different and deeper viewpoint, for example on the role played by cumulants in working with compound Poisson processes.

In this section, we show how this different and deeper viewpoint allows us to reach computational efficiency as by-product. These examples should push statisticians to climb the steep learning curve of a new method and mathematicians to believe what Terence Tao claimed in his blog: \lq\lq A different choice of foundations can lead to a different way of thinking about the subject, and thus to ask a different set of questions and to discover a different set of proofs and solutions. Thus it is often of value to understand multiple foundational perspectives at once, to get a truly stereoscopic view of the subject.\rq\rq

Two examples are proposed here. The first concerns efficient algorithms aiming at the computation of experimental measurements of cumulants, known in the literature as {\it polykays}. The sampling behavior of polykays is much simpler than sample moments
\cite{McCullagh}, but their employment was not so widespread in the statistical community due to the past computational complexity in recovering their expression. Only recently, the symbolic method of moments has contributed in speeding up the procedures for their computations \cite{Statcomp}. The second example shows how to recover classical, Boolean, and free cumulants from moments and vice-versa,
by using only one family of umbral polynomials. Quite surprisingly this algorithm is an application of the symbolic representation of Sheffer sequences making any of the existing procedures obsolete. This part has a consistent theoretical background devoted to the symbolic representation of classical, Boolean and free cumulants, their generalization to Abel-type cumulants, and some open problems which are still under investigation.
\subsection{$k$-statistics and polykays}
In dealing with computations,
the approach used to compute $U$-statistics and estimators of multivariate moments
is not always efficient. So, further manipulations are required.
As an example, we show how to generate $k$-statistics, which are $U$-statistics for cumulants,
by using a suitable compound Poisson r.v. The $i$-th $k$-statistic $k_i$ is the unique symmetric unbiased estimator of the cumulant
$c_i$ of a given statistical distribution \cite{Kendall}, that is, $E[k_i] = c_i.$
\par
A frequently asked question is: why is it so relevant to get efficiency in these calculations? Usually higher order objects require enormous amounts of data
in order to get a good accuracy. Nevertheless, there are different areas, such as astronomy \cite{Prasad}, astrophysics \cite{Ferreira}, biophysics \cite{Muller}, and neuroscience \cite{Staude}, in which there is need to compute high order $k$-statistics in order to detect a Gaussian population. Undoubtedly,
a challenging target is to have efficient procedures to deal with the involved huge amount of algebraic and symbolic
computations.
\par
The theory of $k$-statistics has a long history beginning with Fisher
\cite{Fisher}, who rediscovered the theory of half-invariants of Thiele
\cite{Thiele1}. Fisher introduced $k$-statistics (single and multivariate) as new symmetric
functions of a random sample, with the aim of estimating cumulants without
using moment estimators.  Dressel \cite{Dressel} developed a theory of more
general functions,
revisited later by Tukey \cite{Tukey1}, who called them
{\it poly\-kays}. The whole subject is described by Stuart and Ord
\cite{Kendall} in great detail. In the 1980s, tensor notation was
used by  Speed \cite{Speed}
and extended to polykays and single $k$-statistics. This extension
uncovers the coefficients defining
polykays to be values of the M\"obius function on the lattice of
set partitions. As a consequence, Speed used the set-theoretic
approach to symmetric functions introduced by Doubilet \cite{Doubilet}. In
the same period, McCullagh \cite{McCullagh} simplified the
tensor notation of Kaplan \cite{Kaplan} by introducing the notion of generalized
cumulants. Symbolic operators for expectation and the derivation
of unbiased estimators for multiple sums were introduced by
Andrews {\it et al.} \cite{Andrews}.

A classical way to recover $k$-statistics is to use Theorem~\ref{ttt}
in~\eqref{id:cum vs mom}. The resulting $U$-statistics are expressed in terms of
augmented symmetric polynomials. Then, to get $k$-statistics, we have to express
augmented symmetric polynomials in terms of power sum symmetric polynomials.
\begin{theorem} \cite{Bernoulli} \label{kstat} For $i \leq n$, we have\footnote{For the notation, see Subsection~2.1.}
\begin{equation}
(\chi {\boldsymbol .} \alpha)^i \simeq \sum_{\lambda \vdash i} \frac{(\chi
{\boldsymbol .}  \chi)^{\nu_{\lambda}}} {(n)_{\nu_{\lambda}}} \, d_{\lambda}
\, \sum_{\pi \in \Pi_{\nu_{\lambda}}} (\chi {\boldsymbol .}  \chi)^{ {\boldsymbol .}
\pi} (n {\boldsymbol .}  \alpha)_{S_{\pi}}, \label{(kstatfin1)}
\end{equation}
where $S_{\pi}$ is the subdivision of the multiset $P_\lambda = \{\alpha^{(r_1)}, {\alpha^2}^{(r_2)}, \ldots\}$ corresponding to the partition $\pi \in \Pi_{\nu_{\lambda}}.$
\end{theorem}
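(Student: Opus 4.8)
The plan is to follow exactly the two-stage route sketched just above the statement: first write the cumulant power $(\chi\boldsymbol{.}\alpha)^i$ in terms of augmented monomial symmetric polynomials in a sample $\alpha_1,\dots,\alpha_n$ of uncorrelated umbrae similar to $\alpha$, and then pass from the augmented monomial basis to the power-sum basis, the second step being a M\"obius inversion over the partition lattice.

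\emph{Stage 1: cumulants to augmented symmetric polynomials.} From the dot-product formulas \eqref{dotproduct1}--\eqref{dotproduct2} with $\gamma=\chi$ one has $(\chi\boldsymbol{.}\alpha)^i\simeq\sum_{\lambda\vdash i}(\chi)_{\nu_\lambda}\,d_\lambda\,a_\lambda$, which is \eqref{id:cum vs mom} after inserting $\E[(\chi)_m]=(-1)^{m-1}(m-1)!$. Since $\E[(\chi\boldsymbol{.}\chi)^m]$ equals the same value (expanding $q_m$ from \eqref{dotproduct1} with inner umbra $\chi$, only the partition $(1^m)$ survives because $\E[\chi^j]=\delta_{j,1}$), I replace the coefficient $(-1)^{\nu_\lambda-1}(\nu_\lambda-1)!$ by an uncorrelated copy of $(\chi\boldsymbol{.}\chi)^{\nu_\lambda}$. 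Next I eliminate $a_\lambda=a_1^{r_1}a_2^{r_2}\cdots$: Theorem~\ref{ttt} together with Equivalence~\eqref{(sim2)} (applicable since $\nu_\lambda\le i\le n$) gives $\E[\tilde{\mathfrak m}_\lambda(\alpha_1,\dots,\alpha_n)]=(n)_{\nu_\lambda}a_\lambda$, so $a_\lambda\simeq\tilde{\mathfrak m}_\lambda/(n)_{\nu_\lambda}$ may be substituted under the evaluation, the $(\chi\boldsymbol{.}\chi)$-copy being uncorrelated with the $\alpha_j$'s. This produces the intermediate equivalence $(\chi\boldsymbol{.}\alpha)^i\simeq\sum_{\lambda\vdash i}\frac{d_\lambda}{(n)_{\nu_\lambda}}(\chi\boldsymbol{.}\chi)^{\nu_\lambda}\,\tilde{\mathfrak m}_\lambda(\alpha_1,\dots,\alpha_n)$, i.e.\ the ``classical'' expression of $U$-statistics through augmented symmetric polynomials.

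\emph{Stage 2: augmented symmetric polynomials to power sums.} Label the $\nu_\lambda$ parts of $\lambda$ by $[\nu_\lambda]$ and recall $p_r\simeq n\boldsymbol{.}\alpha^r$. The classical transition from the augmented monomial basis to the power-sum basis of symmetric functions — equivalently, M\"obius inversion over $\Pi_{\nu_\lambda}$, or the inversion of Equivalence~\eqref{(mult)} for the multiset $P_\lambda=\{\alpha^{(r_1)},(\alpha^2)^{(r_2)},\dots\}$ — gives $\tilde{\mathfrak m}_\lambda\simeq\sum_{\pi\in\Pi_{\nu_\lambda}}\big(\prod_{B\in\pi}(-1)^{|B|-1}(|B|-1)!\big)\prod_{B\in\pi}n\boldsymbol{.}\alpha^{\sum_{t\in B}\lambda_t}$. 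Here $\prod_{B\in\pi}(-1)^{|B|-1}(|B|-1)!$ is the evaluation of $(\chi\boldsymbol{.}\chi)^{\boldsymbol{.}\pi}$ (one fresh, pairwise uncorrelated copy of $\chi\boldsymbol{.}\chi$ per block), and $\prod_{B\in\pi}n\boldsymbol{.}\alpha^{\sum_{t\in B}\lambda_t}$ is, by the notation of \eqref{(notation)}, precisely $(n\boldsymbol{.}\alpha)_{S_\pi}$ with $S_\pi$ the subdivision of $P_\lambda$ induced by $\pi$. Substituting into the intermediate equivalence and collecting terms yields \eqref{(kstatfin1)}.

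The computations in Stage~1 and the moment identities for $\chi\boldsymbol{.}\chi$ and $n\boldsymbol{.}\alpha^r$ are immediate from the results already recorded; the delicate point is the bookkeeping in Stage~2. One must handle repeated parts of $\lambda$ and blocks of equal content so that the set-partition multiplicities $d_S$ and the factors $g(M_i)$ of \eqref{(ds)} reproduce the standard M\"obius coefficients, and one must keep the three singleton-type families — the sample $\alpha_1,\dots,\alpha_n$, the outer copy of $\chi\boldsymbol{.}\chi$, and the per-block copies of $\chi\boldsymbol{.}\chi$ — pairwise uncorrelated, so that every evaluation factors as claimed.
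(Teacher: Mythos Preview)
Your proposal is correct and follows precisely the two-stage route that the paper sketches in the paragraph immediately preceding the theorem (the paper itself does not supply a proof, only the citation to \cite{Bernoulli} together with that outline). Stage~1 is exactly the combination of \eqref{id:cum vs mom} with Theorem~\ref{ttt} indicated there, and Stage~2 is the standard M\"obius inversion on $\Pi_{\nu_\lambda}$ expressing augmented monomial symmetric polynomials through power sums, with the identification $\E[(\chi\boldsymbol{.}\chi)^{\boldsymbol{.}\pi}]=\mu(\hat{0},\pi)$ and $(n\boldsymbol{.}\alpha)_{S_\pi}=p_{\lambda(\pi)}$; the caveat you flag about repeated parts is handled automatically because the theorem sums over set partitions $\pi\in\Pi_{\nu_\lambda}$ rather than over distinct subdivisions.
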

A significant computational cost is required to find set partitions or multiset subdivisions,
although these procedures are already optimized \cite{CompStat}.
By using the symbolic method, an improvement in the performance is achievable.
The main idea is to recover $k$-statistics as cumulants of compound Poisson r.v.'s
replacing the augmented symmetric polynomials by exponential polynomials~\eqref{(xpart)}.
The implementation is then sped up since there is no need to compute set partitions.
\par
To this aim, we introduce the {\it multiplicative inverse} of an umbra.
\begin{definition} \label{multumbrae}
The {\it multiplicative inverse} of an umbra $\alpha$ is the umbra~$\gamma$ satisfying
$\alpha \gamma \equiv u.$
\end{definition}
In dealing with a saturated umbral calculus, the multiplicative
inverse of an umbra is not unique, but any two multiplicatively
inverse umbrae of the umbra $\alpha$ are similar. From Definition~\ref{multumbrae}, we have
$a_n g_n = 1$ for all non-negative integers $n,$ i.e., $g_n =
1 / a_n,$ where $\{a_n\}$ and $\{g_n\}$ are umbrally represented by the umbrae $\alpha$ and
$\gamma$, respectively. In the following, the multiplicative
inverse of an umbra $\alpha$ will be denoted by the symbol $1/\alpha.$
The following proposition shows how to express cumulants of an umbra by suitable polynomials linked to
moments of compound Poisson r.v.'s.
\begin{theorem} \cite{Bernoulli} \label{th1}
If $c_i(y)=E[( n {\boldsymbol .}  \chi {\boldsymbol .}  y {\boldsymbol .}  \beta {\boldsymbol .}
\alpha)^i],$ then
$$(\chi {\boldsymbol .}  \alpha)^i \simeq c_i \left( \frac{\chi {\boldsymbol .}  \chi}{n
{\boldsymbol .}  \chi} \right),\quad \hbox{for}\,\, i=1,2,\ldots. $$
\end{theorem}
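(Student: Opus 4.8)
The plan is to make the polynomial $c_i(y)$ completely explicit, to substitute the auxiliary umbra $\tfrac{\chi {\boldsymbol .}\chi}{n {\boldsymbol .}\chi}$ for the indeterminate $y$, and then to recognise the outcome via the moment--cumulant formula~\eqref{id:cum vs mom}. Throughout, $c_i(y)$ is regarded as a polynomial in $y$ with scalar coefficients, auxiliary umbrae are treated as elements of the base alphabet, and $\simeq$ denotes equality of evaluations.

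\emph{Step 1 (the polynomial $c_i(y)$).} By Theorem~\ref{compass} the umbra $y {\boldsymbol .}\beta {\boldsymbol .}\alpha$ is well defined, and by~\eqref{(genalfa)} it has generating function $\exp[y(f(\alpha,z)-1)]$. Hence its cumulant umbra $\chi {\boldsymbol .}(y {\boldsymbol .}\beta {\boldsymbol .}\alpha)$ has generating function $1+y(f(\alpha,z)-1)$ — here one uses $f(\mu,z)=\exp[f(\chi {\boldsymbol .}\mu,z)-1]$, the generating-function form of Inversion Theorem~\ref{inv} — and therefore has $i$-th moment $y\,a_i$ for all $i\ge 1$. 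Feeding these moments into the dot-product formula~\eqref{(sum33)} with $m=n$ gives
$$c_i(y)=\E[(n {\boldsymbol .}\chi {\boldsymbol .} y {\boldsymbol .}\beta {\boldsymbol .}\alpha)^i]=\sum_{\lambda\vdash i}(n)_{\nu_{\lambda}}\,d_{\lambda}\,y^{\nu_{\lambda}}\,a_{\lambda}.$$

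\emph{Step 2 (powers of the substituted umbra).} Taking $\alpha=\chi$ in~\eqref{id:cum vs mom} (equivalently, using $\E[(\chi)_j]=(-1)^{j-1}(j-1)!$) shows that $\chi {\boldsymbol .}\chi$ has $j$-th moment $(-1)^{j-1}(j-1)!$; applying~\eqref{(sum33)} with $\alpha=\chi$, where only the partition $(1^j)$ contributes, shows that $n {\boldsymbol .}\chi$ has $j$-th moment $(n)_j$. For $1\le j\le i\le n$ one has $(n)_j\ne 0$, so by Definition~\ref{multumbrae} the multiplicative inverse $1/(n {\boldsymbol .}\chi)$ has $j$-th moment $1/(n)_j$. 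Being distinct symbols, $\chi {\boldsymbol .}\chi$ and $1/(n {\boldsymbol .}\chi)$ are uncorrelated, so
$$\E\!\left[\left(\frac{\chi {\boldsymbol .}\chi}{n {\boldsymbol .}\chi}\right)^{j}\right]=\E[(\chi {\boldsymbol .}\chi)^j]\cdot\E\!\left[\left(\frac{1}{n {\boldsymbol .}\chi}\right)^{j}\right]=\frac{(-1)^{j-1}(j-1)!}{(n)_{j}},\qquad 1\le j\le i.$$

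\emph{Step 3 (conclusion).} Substituting $y=\tfrac{\chi {\boldsymbol .}\chi}{n {\boldsymbol .}\chi}$ in the polynomial of Step~1 and applying $\E$ term by term, Step~2 yields
$$\E\!\left[c_i\!\left(\frac{\chi {\boldsymbol .}\chi}{n {\boldsymbol .}\chi}\right)\right]=\sum_{\lambda\vdash i}(n)_{\nu_{\lambda}}\,d_{\lambda}\,a_{\lambda}\cdot\frac{(-1)^{\nu_{\lambda}-1}(\nu_{\lambda}-1)!}{(n)_{\nu_{\lambda}}}=\sum_{\lambda\vdash i}d_{\lambda}(-1)^{\nu_{\lambda}-1}(\nu_{\lambda}-1)!\,a_{\lambda}=c_i,$$
the last equality being~\eqref{id:cum vs mom}. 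Since also $\E[(\chi {\boldsymbol .}\alpha)^i]=c_i$, this proves $(\chi {\boldsymbol .}\alpha)^i\simeq c_i\!\left(\frac{\chi {\boldsymbol .}\chi}{n {\boldsymbol .}\chi}\right)$.

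I expect the only genuinely delicate point to be the bookkeeping in Step~2: one must insist that the singleton umbrae occurring inside $\chi {\boldsymbol .}\chi$ be kept distinct from those inside $n {\boldsymbol .}\chi$, so that the evaluation factorises through the uncorrelation property, and one must observe that the hypothesis $i\le n$ is precisely what guarantees $(n)_{\nu_{\lambda}}\ne 0$ for every $\lambda\vdash i$, legitimising both the multiplicative inverse $1/(n {\boldsymbol .}\chi)$ and the cancellation in Step~3. Everything else is a routine application of the generating-function dictionary (Table~\ref{tablegenfun}, \eqref{(genalfa)}, Inversion Theorem~\ref{inv}) together with~\eqref{(sum33)} and~\eqref{id:cum vs mom}.
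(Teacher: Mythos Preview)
The paper does not include a proof of this theorem; it is merely stated with a citation to \cite{Bernoulli}. Your argument is correct and self-contained: Step~1 correctly identifies the moments of $\chi\boldsymbol{.}(y\boldsymbol{.}\beta\boldsymbol{.}\alpha)$ as $y\,a_i$ via the generating-function dictionary, which with~\eqref{(sum33)} yields the explicit polynomial $c_i(y)=\sum_{\lambda\vdash i}(n)_{\nu_\lambda}d_\lambda\, y^{\nu_\lambda}a_\lambda$; Step~2 correctly computes the $j$-th moment of the auxiliary umbra $\tfrac{\chi\boldsymbol{.}\chi}{n\boldsymbol{.}\chi}$ as $(-1)^{j-1}(j-1)!/(n)_j$; and Step~3 is then an immediate substitution collapsing to~\eqref{id:cum vs mom}. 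Your remark that the hypothesis $i\le n$ (implicit here, explicit in the neighbouring Theorem~\ref{kstat}) is exactly what guarantees $(n)_{\nu_\lambda}\ne 0$ for every $\lambda\vdash i$ is to the point: without it the terms with $\nu_\lambda>n$ would vanish in $c_i(y)$ before substitution but survive in~\eqref{id:cum vs mom}, and the claimed equivalence would fail.
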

Thus, moments of the $\alpha$-cumulant umbra can be computed by evaluating
the umbral polynomials $c_i(y)$, after having replaced the indeterminate $y$ by the umbra $\chi {\boldsymbol .}  \chi/ n {\boldsymbol .}  \chi.$ The polynomials $c_i(y)$ are moments of a summation of $n$ uncorrelated
cumulant umbrae of $y {\boldsymbol .}  \beta {\boldsymbol .} \alpha,$
the latter in its turn being a polynomial $\alpha$-partition
umbra representing a compound Poisson r.v.\ of parameter $y.$ The last step is to express the polynomials $c_i(y)$ in terms of power sum symmetric polynomials $n \boldsymbol{.} \alpha^r$ for $r \leq n.$
\begin{theorem} \cite{Bernoulli} \label{thm3}
If
$$p_n(y)=\sum_{k=1}^n (-1)^{k-1} (k-1)! \, S(n,k) \, y^k,$$
where $S(n,k)$ are the Stirling numbers of the second kind, then
\begin{equation}
(\chi\boldsymbol{.}\alpha)^i \simeq \sum_{\lambda \vdash i} d_{\lambda} \,
p_{\lambda} \, \left( \frac{\chi\boldsymbol{.}\chi}{n\boldsymbol{.}\chi} \right)
(n\boldsymbol{.}\alpha)^{r_1} (n\boldsymbol{.}\alpha^2)^{r_2} \cdots ,
\label{(7)}
\end{equation}
with $p_{\lambda}(y)=[p_1(y)]^{r_1} [p_2(y)]^{r_2} \cdots.$
\end{theorem}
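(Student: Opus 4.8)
The statement is the ``last step'' promised after Theorem~\ref{th1}: it remains only to rewrite the polynomials $c_i(y)=\E\!\left[(n{\boldsymbol .}\chi{\boldsymbol .}y{\boldsymbol .}\beta{\boldsymbol .}\alpha)^i\right]$ occurring there in terms of the power-sum umbrae $n{\boldsymbol .}\alpha^r$. So the plan is first to prove, as an identity between polynomials in $y$, that
\[
c_i(y)\;=\;\sum_{\lambda\vdash i} d_\lambda\, p_\lambda(y)\;\E\!\left[(n{\boldsymbol .}\alpha)^{r_1}(n{\boldsymbol .}\alpha^2)^{r_2}\cdots\right],
\]
and then to substitute $y\mapsto\chi{\boldsymbol .}\chi/n{\boldsymbol .}\chi$ and apply Theorem~\ref{th1}. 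In the substituted expression the singleton umbrae occurring in $\chi{\boldsymbol .}\chi/n{\boldsymbol .}\chi$ have support disjoint from the copies of $\alpha$ inside the power sums, so by the uncorrelation property~\eqref{(uncorrelation)} the evaluation factorises and one lands exactly on the right-hand side of~\eqref{(7)}.

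To compute $c_i(y)$, observe that $y{\boldsymbol .}\beta{\boldsymbol .}\alpha$ is the polynomial $\alpha$-partition umbra, so by~\eqref{(genalfa)} and Inversion Theorem~\ref{inv} its $\alpha$-cumulant umbra $\chi{\boldsymbol .}(y{\boldsymbol .}\beta{\boldsymbol .}\alpha)$ has generating function $1+y(f(\alpha,z)-1)$, i.e.\ moments $\{y\,a_r\}_{r\ge1}$ (the cumulants of a compound Poisson of parameter $y$). Summing $n$ uncorrelated similar copies and using~\eqref{(sum33)} gives $c_i(y)=\sum_{\mu\vdash i}(n)_{\nu_\mu}\,d_\mu\,y^{\nu_\mu}\,a_\mu$. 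For the other side, expanding the power-sum product $\prod_s\big(\sum_{l=1}^n\alpha_l^{\,s}\big)^{r_s}$ and collecting terms according to which of the indices $l$ coincide expresses it as a sum of augmented monomial symmetric polynomials $\tilde{{\mathfrak m}}_{\mu}(\alpha_1,\dots,\alpha_n)$, each of which has evaluation $(n)_{\nu_\mu}a_\mu$ by Theorem~\ref{ttt}; hence $\E\!\left[(n{\boldsymbol .}\alpha)^{r_1}(n{\boldsymbol .}\alpha^2)^{r_2}\cdots\right]=\sum_{\mu\vdash i}R_{\lambda\mu}\,(n)_{\nu_\mu}\,a_\mu$, where $R_{\lambda\mu}$ counts the set partitions of a set carrying $r_s$ marked elements of weight $s$ whose blocks have weights forming $\mu$. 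Since $\alpha$ is arbitrary one may compare coefficients of each $a_\mu$, and the desired identity for $c_i(y)$ becomes the purely combinatorial identity
\[
\sum_{\lambda\vdash i} d_\lambda\,R_{\lambda\mu}\,p_\lambda(y)\;=\;d_\mu\,y^{\nu_\mu}\qquad(\mu\vdash i).
\]

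To prove this I would reinterpret the left-hand side. The weight $d_\lambda R_{\lambda\mu}$ counts pairs $(\sigma,\pi)$ of set partitions of $[i]$ with $\sigma\le\pi$, $\sigma$ of type $\lambda$, and $\pi$ of type $\mu$ (the blocks of $\sigma$ playing the role of the weighted elements and $\pi$ their regrouping), each contributing $\prod_{C\in\sigma}p_{|C|}(y)$. Summing first over the refinements $\sigma\le\pi$, block by block of $\pi$, the contribution of a block $B$ of $\pi$ is $\sum_{\sigma'\vdash B}\prod_{C\in\sigma'}p_{|C|}(y)={\calY}_{|B|}\big(p_1(y),p_2(y),\dots\big)$ by~\eqref{(comp)}. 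The conceptual heart is that this complete Bell polynomial equals $y$: the elementary identity $\sum_{n\ge1}S(n,k)z^n/n!=(e^z-1)^k/k!$ gives $\sum_{n\ge1}p_n(y)\,z^n/n!=\log\!\big(1+y(e^z-1)\big)$, so the $p_n(y)$ are the formal cumulants of the constant sequence with $a_r=y$ for all $r\ge1$ (equivalently, the moments of $\chi{\boldsymbol .}\chi{\boldsymbol .}y{\boldsymbol .}\beta$), whence~\eqref{id:cum vs Bell} forces ${\calY}_m(p_1(y),p_2(y),\dots)=y$ for every $m\ge1$. Therefore $\sum_{\lambda}d_\lambda R_{\lambda\mu}p_\lambda(y)=\sum_{\pi\text{ of type }\mu}y^{|\pi|}=d_\mu y^{\nu_\mu}$, which finishes the argument after substituting $y\mapsto\chi{\boldsymbol .}\chi/n{\boldsymbol .}\chi$ and invoking Theorem~\ref{th1}.

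I expect the main obstacle to be exactly the bookkeeping of the last paragraph: pinning down the coefficients $R_{\lambda\mu}$ and the product $d_\lambda R_{\lambda\mu}$ and recognising the block-wise sums over the interval $[\sigma,\pi]$ as complete Bell polynomials in the $p_n(y)$. Once that reorganisation is in place the equality ${\calY}_m(p_1(y),\dots)=y$ drops out of the generating function $\log(1+y(e^z-1))$, and the remaining steps (the substitution and the appeal to Theorem~\ref{th1}) are formal.
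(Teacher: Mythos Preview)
The paper does not actually give a proof of this theorem here (it is a survey, and the result is cited from~\cite{Bernoulli}); the surrounding text only indicates the intended route, namely to apply Theorem~\ref{th1} and then rewrite $c_i(y)$ in terms of the power sums $n\boldsymbol{.}\alpha^r$, which is exactly what you do. Your argument is correct: the crucial step is your observation that the $p_n(y)$ are the formal cumulants of the constant moment sequence $\{y\}_{r\ge1}$ (equivalently, the moments of $\chi\boldsymbol{.}\chi\boldsymbol{.}y\boldsymbol{.}\beta$), so that ${\calY}_m\big(p_1(y),\dots,p_m(y)\big)=y$ for every $m\ge1$ by~\eqref{id:cum vs Bell}, which makes the partition-lattice sum $\sum_{\sigma\le\pi}\prod_{C\in\sigma}p_{|C|}(y)$ collapse block-by-block to $y^{\nu_\mu}$; the identification $d_\lambda R_{\lambda\mu}=\#\{(\sigma,\pi):\sigma\le\pi,\ \mathrm{type}(\sigma)=\lambda,\ \mathrm{type}(\pi)=\mu\}$ and the final substitution via uncorrelation are both sound.
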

Comparing Equivalence~\eqref{(7)} with Equivalence~\eqref{(kstatfin1)}, the reduction of
complexity is evident since no set partitions are involved.
\par
Theorem~\ref{thm3} has been generalized to polykays.
Polykays are symmetric statistics $k_{r, \ldots,\, t}$ satisfying
$$E[k_{r, \ldots,\, t}]= c_r \cdots c_t,$$
where $c_r, \ldots, c_t$ are cumulants.
For simplicity, in the following we just deal with two subindices $k_{r,t},$ the generalization to more than two being straightforward.
As a product of uncorrelated cumulants, $(\chi {\boldsymbol .}  \alpha)^r (\chi^{\prime} {\boldsymbol .}  \alpha^{\prime})^t$ is the umbral counterpart of the polykay $k_{r,t},$ with $\chi, \chi^{\prime}$ uncorrelated singleton umbrae, and $\alpha, \alpha^{\prime}$ uncorrelated and similar umbrae. For polykays, the polynomial
$$p_{r,t}(y) = \sum_{(\lambda \vdash r, \, \eta \vdash t)} y^{\nu_{\lambda}+\nu_{\eta}} \,
(n)_{\nu_{\lambda} + \nu_{\eta}} \, d_{\lambda + \eta} \, a_{\lambda + \eta}$$
plays the same role as the polynomial $c_i(y)$ in Theorem~\ref{th1}.
The following theorem is the analog of
Theorem~\ref{thm3} for polykays.
\begin{theorem} \cite{Bernoulli} \label{thm4}
If $q_{r,t}$ is the umbral polynomial obtained via $p_{r,t}(y)$ by
replacing $y^{\nu_{\lambda}+\nu_{\eta}}$ by
$$
\frac{(\chi\boldsymbol{.}\chi)^{\nu_{\lambda}}(\chi^{\prime}\boldsymbol{.}\chi^{\prime})^{\nu_{\eta}}}{(n\boldsymbol{.}\chi)^{\nu_{\lambda}+\nu_{\eta}}}
\frac{d_{\lambda} d_{\eta}}{d_{\lambda + \eta}},$$
then $(\chi\boldsymbol{.}\alpha)^r (\chi^{\prime}\boldsymbol{.}\alpha^{\prime})^t \simeq q_{r,t}.$
\end{theorem}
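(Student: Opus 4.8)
The plan is to run, for the bivariate quantity $(\chi\boldsymbol{.}\alpha)^r(\chi'\boldsymbol{.}\alpha')^t$, exactly the two-stage argument that produced Theorems~\ref{th1} and~\ref{thm3} for the single cumulant power $(\chi\boldsymbol{.}\alpha)^i$, while keeping track of the extra book-keeping introduced by the two colours $r$ and $t$. Since $\chi,\chi'$ are uncorrelated and $\alpha\equiv\alpha'$, the left-hand side has evaluation $\E[(\chi\boldsymbol{.}\alpha)^r]\,\E[(\chi'\boldsymbol{.}\alpha')^t]=c_r\,c_t$, so the content of the statement is that the umbral polynomial $q_{r,t}$, which is built out of the power sums $n\boldsymbol{.}\alpha^j$ together with the singleton-generated umbrae $\chi\boldsymbol{.}\chi$, $\chi'\boldsymbol{.}\chi'$, $n\boldsymbol{.}\chi$, is another representative of that product; the proof is therefore a chain of equivalences ending in an evaluation check.

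\emph{Stage 1 (bivariate analogue of Theorem~\ref{th1}).} First I would recognise $p_{r,t}(y)$ as a joint moment of a pair of randomized compound Poisson umbrae sharing one block structure. From~\eqref{(alfapartition)}--\eqref{(genalfa)}, $\chi\boldsymbol{.}(y\boldsymbol{.}\beta\boldsymbol{.}\alpha)$ has moments $\{y\,a_i\}$, so $n\boldsymbol{.}\chi\boldsymbol{.}y\boldsymbol{.}\beta\boldsymbol{.}\alpha$ has $i$-th moment $c_i(y)=\sum_{\lambda\vdash i}(n)_{\nu_\lambda}d_\lambda\,y^{\nu_\lambda}a_\lambda$. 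Taking $\mu=n\boldsymbol{.}\chi\boldsymbol{.}y\boldsymbol{.}\beta\boldsymbol{.}\alpha$ and $\mu'=n\boldsymbol{.}\chi'\boldsymbol{.}y\boldsymbol{.}\beta\boldsymbol{.}\alpha'$ built on the \emph{same} $n$ inner summands (the outer singletons $\chi,\chi'$ distinct, but the $n$ partition umbrae shared), the mixed evaluation $\E[\mu^r(\mu')^t]$ is a sum over pairs $(\lambda\vdash r,\eta\vdash t)$ in which the $r$ and the $t$ factors are jointly distributed over the common $n$ summands; this is precisely $\sum_{\lambda\vdash r,\eta\vdash t} y^{\nu_\lambda+\nu_\eta}(n)_{\nu_\lambda+\nu_\eta}\,d_{\lambda+\eta}\,a_{\lambda+\eta}=p_{r,t}(y)$, the single falling factorial $(n)_{\nu_\lambda+\nu_\eta}$ and the merged partition $\lambda+\eta$ arising because both colours land in the same $n$ summands. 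Paralleling Theorem~\ref{th1}, I then pass from this evaluation identity to an equivalence, replacing $y^{\nu_\lambda+\nu_\eta}$ term by term by $(\chi\boldsymbol{.}\chi)^{\nu_\lambda}(\chi'\boldsymbol{.}\chi')^{\nu_\eta}/(n\boldsymbol{.}\chi)^{\nu_\lambda+\nu_\eta}$ --- the two colours pulling $\chi\boldsymbol{.}\chi$ and $\chi'\boldsymbol{.}\chi'$ separately while the single sample supplies one $n\boldsymbol{.}\chi$ --- up to the factor $d_\lambda d_\eta/d_{\lambda+\eta}$, which records the passage between $2$-coloured and uncoloured set partitions: a type-$(\lambda+\eta)$ partition of $[r+t]$ is colour-preserving, restricting to a type-$\lambda$ partition of $[r]$ and a type-$\eta$ partition of $[t]$, in $d_\lambda d_\eta$ of its $d_{\lambda+\eta}$ occurrences.

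\emph{Stage 2 (bivariate analogue of Theorem~\ref{thm3}) and the evaluation check.} The conversion of Theorem~\ref{thm3} replaces, for a single $\lambda=(1^{r_1},2^{r_2},\dots)$, the block $(n)_{\nu_\lambda}y^{\nu_\lambda}a_\lambda$ by $p_\lambda(\chi\boldsymbol{.}\chi/n\boldsymbol{.}\chi)\,(n\boldsymbol{.}\alpha)^{r_1}(n\boldsymbol{.}\alpha^2)^{r_2}\cdots$ with $p_j(y)=\sum_k(-1)^{k-1}(k-1)!\,S(j,k)\,y^k$; I would apply the \emph{same} substitution to the merged partition $\lambda+\eta$ in the output of Stage~1, which is legitimate because the replacement is a statement about how a product of power sums represents the moment product $a_{\lambda+\eta}$ and is insensitive to the origin of the exponents. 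Carrying this out term by term produces $q_{r,t}$. Finally I would close the loop by evaluating: using $\E[(1/(n\boldsymbol{.}\chi))^j]=1/(n)_j$ (Definition~\ref{multumbrae}), $\E[(\chi\boldsymbol{.}\chi)^j]=(-1)^{j-1}(j-1)!$, the mutual uncorrelation of $\chi\boldsymbol{.}\chi$, $\chi'\boldsymbol{.}\chi'$ and $1/(n\boldsymbol{.}\chi)$, and the factorisation $a_{\lambda+\eta}=a_\lambda a_\eta$, the double sum splits and, by~\eqref{id:cum vs mom}, yields $c_r\,c_t$, which is also $\E[(\chi\boldsymbol{.}\alpha)^r(\chi'\boldsymbol{.}\alpha')^t]$; this confirms $(\chi\boldsymbol{.}\alpha)^r(\chi'\boldsymbol{.}\alpha')^t\simeq q_{r,t}$.

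I expect the main obstacle to be Stage~1: setting up the bivariate compound Poisson umbra so that the two colours genuinely share the $n$ summands, and proving cleanly that this forces the single falling factorial $(n)_{\nu_\lambda+\nu_\eta}$ together with the weight $d_\lambda d_\eta/d_{\lambda+\eta}$. Once the $2$-coloured partition identity is in place, Stage~2 is essentially a repetition of the argument behind Theorem~\ref{thm3} and the evaluation check is routine.
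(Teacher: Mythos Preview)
The paper does not actually prove this theorem; it is stated with a citation to \cite{Bernoulli} (Di~Nardo--Guarino--Senato, \emph{Bernoulli} 2008), so there is no in-paper argument to compare against directly. Your plan is nevertheless aligned with the surrounding narrative, which presents Theorem~\ref{thm4} as the polykay analogue of Theorem~\ref{th1}, with the power-sum expansion handled separately afterwards via the auxiliary umbra $\rho_{y,k}$.

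That last point is where your outline overreaches. Your Stage~2 conversion to power sums $(n\boldsymbol{.}\alpha)^{r_1}(n\boldsymbol{.}\alpha^2)^{r_2}\cdots$ is not part of Theorem~\ref{thm4} as stated: $q_{r,t}$ is defined purely by the substitution in $p_{r,t}(y)$ and still carries the moment products $a_{\lambda+\eta}$. The power-sum rewriting is a \emph{subsequent} device in the paper, not an ingredient of the proof. Once you drop it, only your final evaluation check is needed, and it is already complete: since $\simeq$ means equality of evaluations, compute $\E[q_{r,t}]$ term by term using $\E[(\chi\boldsymbol{.}\chi)^j]=(-1)^{j-1}(j-1)!$, $\E[(n\boldsymbol{.}\chi)^j]=(n)_j$ so that $\E[1/(n\boldsymbol{.}\chi)^j]=1/(n)_j$, uncorrelation of $\chi\boldsymbol{.}\chi$, $\chi'\boldsymbol{.}\chi'$, $1/(n\boldsymbol{.}\chi)$, and the multiplicativity $a_{\lambda+\eta}=a_\lambda a_\eta$. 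The factors $(n)_{\nu_\lambda+\nu_\eta}$ and $d_{\lambda+\eta}$ cancel, the double sum factors, and~\eqref{id:cum vs mom} gives $c_r c_t=\E[(\chi\boldsymbol{.}\alpha)^r(\chi'\boldsymbol{.}\alpha')^t]$. Your Stage~1 reading of $p_{r,t}(y)$ as a joint moment of coupled compound Poisson umbrae is good motivation and matches how the paper describes the construction, but it is not logically required for the proof.
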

Now, let us consider a polynomial umbra $\rho_{y,k}$ whose first $k$ moments are all equal to
$y,$ that is, $\rho^i_{y,k} \simeq (\chi\boldsymbol{.} y\boldsymbol{.} \beta)^i$ for $i=0,1,2,\ldots,k,$
and all the remaining moments are zero. If $k=\max\{r,t\},$ then
$$[n\boldsymbol{.}(\rho_{y,k} \, \alpha)]^{r+t} \simeq \sum_{\lambda \vdash (r+t)} \, d_{\xi} \,
(\chi\boldsymbol{.}\rho_{y,k})_{\xi} \, (n\boldsymbol{.}\alpha)^{s_1} \, (n\boldsymbol{.}\alpha^2)^{s_2} \cdots,$$
which in turn gives the polynomials $p_{r,t}(y)$ in terms of power sum symmetric polynomials, as
$[n\boldsymbol{.}(\rho_{y,k} \, \alpha)]^{r+t} \simeq p_{r,t}(y).$ This device together with Theorem~\ref{thm4} speeds up the computation of polykays.

The resulting algorithm can be fruitfully employed also in the estimation of products of moments. Indeed, unbiased estimators of products of moments
can be computed by using polykays, as we show in the following.
Let us consider a set $\{\alpha_1,\alpha_2,\ldots,\alpha_n\}$ of $n$
uncorrelated umbrae similar to $\alpha.$ Set $\alpha^{ \boldsymbol{.}
  \pi} = \alpha_{i_1}^{|B_1|} \alpha_{i_2}^{|B_2|} \cdots
\alpha_{i_k}^{|B_k|},$
where $\pi=\{B_1,B_2, \ldots, B_k\}$ is a partition of
$[n]$ and $i_1, i_2, \ldots,i_k$ are distinct integers chosen in $[n].$ We have
$E[\alpha^{\boldsymbol{.} \pi}]=a_{\lambda},$ where $\lambda$ is the type of the partition $\pi.$ The formula
giving products of moments in terms of polykays is \cite{Bernoulli}
\begin{equation}
\alpha^{\boldsymbol{.} \pi} \simeq \displaystyle{\sum_{{\substack{\tau \in \Pi_n \\ \tau \leq \pi}}}} (\chi \boldsymbol{.} \alpha)^{\boldsymbol{.} \tau}.
\label{prima}
\end{equation}
By using the M\"obius inversion
\begin{equation}
(\chi \boldsymbol{.} \alpha)^{\boldsymbol{.} \pi} \simeq \displaystyle{\sum_{{\substack{\tau \in \Pi_n \\ \tau \leq \pi}}}} \mu(\tau, \pi) \,
\alpha^{\boldsymbol{.} \pi},
\label{seconda}
\end{equation}
we get polykays in terms of products of moments. Equivalences~\eqref{prima} and~\eqref{seconda}
have also a different meaning. Taking the evaluation of both sides, formulas connecting classical cumulants and moments are obtained in terms of multiplicative functions. These formulas are
$$a_{\pi} = \displaystyle{\sum_{\tau \leq \pi}} c_{\tau}
\text{ for all $\pi$}, \quad
\text{if and only if}\quad
c_{\pi} = \displaystyle{\sum_{\tau \leq \pi}} \mu(\tau,\pi)
a_{\tau}\text{ for all $\pi$},$$
where $a_{\pi}=a_{|B_1|} a_{|B_2|} \cdots a_{|B_k|}$ (the same for $c_{\pi})$, and
$$\mu(\tau,\pi) = (-1)^{n-r} (2!)^{r_3} (3!)^{r_4} \cdots ((n-1)!)^{r_n}$$
is the M\"obius function for the classical partition lattice,
with $n=|\tau|$, $r=|\pi|$, and $\lambda(\tau, \pi) = (1^{r_1},2^{r_2}, \ldots,
n^{r_n})$ the class of $[\tau,\pi].$

Since all these formulas can be generalized to
correlated umbral monomials (see Definition~\ref{4.1}), the next section
is devoted to the multivariate umbral calculus and its applications in probability and statistics.
Since any umbra is a composition umbra, the key to generalize this symbolic calculus to $k$-tuples
of umbral monomials is the multivariate Fa\`a di Bruno formula,
which is related to the composition of multivariate formal power series.
For the fast algorithms computing multivariate $k$-statistics and multivariate polykays by using the symbolic method, the reader is referred to \cite{Statcomp}. In Appendix~1, Table~\ref{table:3} and~\ref{table:4}, we show the advantage of these fast algorithms compared to
existing procedures.
%

\subsection{Computing cumulants through generalized Abel polynomials}
\subsubsection{Classical, Boolean, and free cumulants}
In Section~4, cumulants have been introduced as a sequence of numbers providing an alternative to
moments of a distribution. L\'evy processes are an example because their symbolic
representation involves cumulants of $X_1.$ Since cumulants linearize the convolution of probability
measures in classical, Boolean, and free probability, they have one more advantage: cumulants are a tool
to recognize independent r.v.'s. The linearity of classical convolutions
corresponds to tensor independent r.v.'s \cite{Feller}. The linearity of Boolean convolutions
corresponds to Boolean independent r.v.'s for which the factorization rule holds, but without the commutative
property of the resulting moments. The Boolean convolution \cite{Speicher}
was constructed starting from the notion of partial cumulants, which were extensively
used in the context of stochastic differential equations. The linearity of free convolutions corresponds
to free r.v.'s \cite{Voiculescu}.
The latter belong to the non-commutative probability theory introduced
by Voiculescu \cite{Voiculescu1} with a view to tackle problems in the theory of
operator algebras and random matrix theory.   The combinatorics underlying
classical, Boolean, and free cumulants is closely related to the algebra of multiplicative
functions (see Example~\ref{algmul}). Roughly speaking, one can work with classical, Boolean, and free cumulants
using the lattice of set, interval, and non-crossing partitions,\footnote{The combinatorics underlying non-crossing partitions has been
first studied by Kreweras \cite{Kreweras} and Poupard \cite{Poupard}. Within free probability,
non-crossing partitions are extensively used by Speicher \cite{Speicher}.} respectively.

Since the linearity of cumulants gives a quick access to test whether a given probability measure
is a convolution, the availability of a fast procedure for computing cumulants ---
classical, Boolean, and free --- from the sequence of moments turns out to be useful.
The symbolic method of moments replaces the combinatorics of partition lattices with
the algebra of umbral Abel polynomials given in Theorem~\ref{Abel}. Indeed, all these families of cumulants
share the same parametrization in terms of umbral Abel polynomials. The result is an
algorithm providing just one formula for computing all types of cumulants in terms of
moments and vice-versa.

The aim of this section is to introduce this parametrization, to describe the resulting
algorithm and some problems that this parametrization lives open. Moreover
we define an Abel-type cumulant and explain why the connection among classical, Boolean, and
free cumulants cannot be encoded in a straight way in the formal power series language, as
pointed out also in \cite{Nica}.

Let us consider again the umbral Abel polynomials $\{x (x - i \boldsymbol{.} \alpha)^{i-1}\}.$
\begin{definition} \label{6.6}
The umbral polynomial ${\mathfrak a}_{i}^{(m)} (x, \alpha) = x \, ( x + m \boldsymbol{.} \alpha)^{i-1},$ with $m \in {\mathbb Z},$
is called the {\it generalized umbral Abel polynomial}.
\end{definition}
We have already used generalized umbral Abel polynomials in Section~4 for classical cumulants.
Indeed, from the latter~\eqref{(ric)}, moments of the $\alpha$-cumulant umbra are umbrally equivalent to
${\mathfrak a}_{i}^{(-1)} (\alpha, \alpha)$
\begin{equation}
\kappa_{\scriptscriptstyle \alpha}^i \simeq  \alpha (\alpha - 1 \boldsymbol{.} \alpha)^{i-1}
\simeq {\mathfrak a}_{i}^{(-1)} (\alpha, \alpha).
\label{(cummom)}
\end{equation}
The former~\eqref{(ric)} inverts Equivalence~\eqref{(cummom)} and can be restated as
$$
\alpha^i \simeq  \kappa_{\scriptscriptstyle \alpha} (\kappa_{\scriptscriptstyle \alpha} + \alpha)^{i-1}
\simeq {\mathfrak a}_{i}^{(1)} (\kappa_{\scriptscriptstyle \alpha}, \alpha).
$$
The connection between these two types of generalized umbral Abel polynomials holds in greater generality as the following proposition shows.
\begin{theorem}[\sc First Abel inversion theorem] \cite{tesiPetrullo}
\label{FirstAbel}
\begin{multline}
\alpha^i \simeq {\mathfrak a}_{i}^{(m)} (\gamma, \gamma) \simeq \gamma \,
(\gamma + m \boldsymbol{.} \gamma)^{i-1} \\
\text{if and only if}\quad
\gamma^i \simeq {\mathfrak a}_{i}^{(-m)} (\alpha, \gamma)
\simeq \alpha \, (\alpha - m \boldsymbol{.} \gamma)^{i-1}.
\end{multline}
\end{theorem}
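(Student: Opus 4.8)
The plan is to exploit the fact that the generalized umbral Abel polynomial ${\mathfrak a}_i^{(m)}(x,\alpha) = x(x+m\boldsymbol{.}\alpha)^{i-1}$ is essentially a disguised Abel-representation of a binomial sequence, and to reduce both halves of the biconditional to Theorem~\ref{Abel} (the Abel representation of binomial sequences). First I would rewrite the hypothesis $\alpha^i \simeq \gamma(\gamma+m\boldsymbol{.}\gamma)^{i-1}$ in terms of a partition-umbra identity: since $\gamma(\gamma+m\boldsymbol{.}\gamma)^{i-1}$ has the shape of the right-hand side of Equivalence~\eqref{(prpart)} (with the roles played through the dot-product $m\boldsymbol{.}\gamma$), I would recognize that $\alpha$ is similar to a suitable dot-product/partition umbra built from $\gamma$. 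Concretely, using the derivative umbra and Theorem~\ref{Abel}, one has $(x\boldsymbol{.}(m\boldsymbol{.}\gamma)_{\scriptscriptstyle D}^{\ast})^i \simeq x(x - i\boldsymbol{.}(m\boldsymbol{.}\gamma))^{i-1}$, and after absorbing the scalar $m$ via the homogeneity of the dot-product ($i\boldsymbol{.}(m\boldsymbol{.}\gamma) \equiv (im)\boldsymbol{.}\gamma$, up to sign) this matches ${\mathfrak a}_i^{(m)}$ for the appropriate index shift. The upshot of this first step is a clean symbolic identity expressing $\alpha$ (or rather the umbra whose $i$-th moment is ${\mathfrak a}_i^{(m)}(\gamma,\gamma)$) as $\gamma$ composed with an explicit adjoint-type umbra depending only on $\gamma$ and $m$.

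The second step is the inversion itself. Having written $\alpha \equiv \Lambda(\gamma)$ for an explicit operator $\Lambda$ given by composition with $\beta$ and a compositional inverse (as in Definition~\ref{shum} and the Lagrange-type Theorem~\ref{(InvL1)}), I would observe that $\Lambda$ is invertible in exactly the same family — replacing $m$ by $-m$ inverts it, because compositional inversion of the underlying power series $f(\gamma,z)$ reverses the sign of the exponent governing the Abel shift. This is the point where Theorem~\ref{(InvL1)} (Lagrange inversion formula in umbral form) does the real work: it tells us that the coefficients of $({\gamma_{\scriptscriptstyle D}}^{\cop})^i$ are $(-i\boldsymbol{.}\gamma)^{i-1}$, which is precisely the sign-flipped Abel polynomial. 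Chaining this with the first step yields $\gamma^i \simeq \alpha(\alpha - m\boldsymbol{.}\gamma)^{i-1} \simeq {\mathfrak a}_i^{(-m)}(\alpha,\gamma)$. The reverse implication is then obtained by symmetry: apply the forward implication with $(\alpha,\gamma,m)$ replaced by $(\gamma,\alpha,-m)$ — but here one must be slightly careful, since the second argument of ${\mathfrak a}_i^{(-m)}$ is still $\gamma$, not $\alpha$, so the symmetry is not literal and a short separate verification is needed.

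I expect the main obstacle to be exactly that asymmetry in the second slot of the Abel polynomial: in ${\mathfrak a}_i^{(m)}(\gamma,\gamma)$ both slots are $\gamma$, whereas in ${\mathfrak a}_i^{(-m)}(\alpha,\gamma)$ the ``base point'' is $\alpha$ but the ``shift umbra'' is still $\gamma$. So the inversion is not a formal palindrome, and one genuinely has to track which umbra controls the dot-product shift through the composition-inversion step. I would handle this by passing to generating functions: writing everything via $f(\cdot,z)$ and the composition/inversion rules recorded in Table~\ref{tablegenfun}, the statement becomes a clean functional identity $f(\alpha,z) = f(\gamma, w(z))$ with $w$ the appropriate branch solving an Abel-type fixed-point equation, and the biconditional reduces to the (standard) fact that this relation between $f(\alpha,z)$ and $f(\gamma,z)$ is involutive under $m \mapsto -m$. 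The remaining bookkeeping — matching the factor $i$ in the exponent of the dot-product shift, and confirming $\E[\gamma]\ne 0$ is not needed here because the Abel shift $m\boldsymbol{.}\gamma$ never requires a compositional inverse of $\gamma$ itself — is routine and can be relegated to the cited source \cite{tesiPetrullo}.
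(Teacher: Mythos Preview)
The paper does not supply a proof of this theorem; it is stated as a survey result with a citation to \cite{tesiPetrullo}, so there is no in-paper argument to compare against. I therefore comment on the soundness of your plan on its own.

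Your principal strategy --- reducing to Theorem~\ref{Abel} and the Lagrange inversion formula --- has a structural mismatch. Theorem~\ref{Abel} produces polynomials $x(x - i\boldsymbol{.}\gamma)^{i-1}$ whose shift $i\boldsymbol{.}\gamma$ depends on the degree $i$; this $i$-dependence is exactly what encodes compositional inversion and binomial-type sequences. By contrast, ${\mathfrak a}_i^{(m)}(\gamma,\gamma) = \gamma(\gamma + m\boldsymbol{.}\gamma)^{i-1}$ carries a \emph{fixed} shift $m\boldsymbol{.}\gamma$, independent of $i$. Your claim that one can ``absorb the scalar $m$ via homogeneity'' and match these ``for the appropriate index shift'' glosses over the fact that $i\boldsymbol{.}(m\boldsymbol{.}\gamma)$ is not $m\boldsymbol{.}\gamma$; no rescaling removes the $i$. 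In the paper's own taxonomy the $i$-dependent shift governs \emph{free} cumulants (Theorem~\ref{Abpar}), whereas the constant shifts $m=1,2$ govern classical and Boolean cumulants --- genuinely different columns of the matrix~\eqref{(matrix)}. Equivalence~\eqref{(prpart)} does not help either: it characterises $\beta\boldsymbol{.}\gamma$, not $m\boldsymbol{.}\gamma$, and there is no reason these should be similar.

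What does work --- and you relegate it to ``bookkeeping'' --- is the direct generating-function argument. Expanding $\gamma(\gamma + m\boldsymbol{.}\gamma)^{i-1}$ binomially (with $\gamma$ and $m\boldsymbol{.}\gamma$ uncorrelated) and summing over $i$ gives
\[
f'(\alpha,z)=f'(\gamma,z)\,f(\gamma,z)^{m}.
\]
Doing the same for $\alpha(\alpha - m\boldsymbol{.}\gamma)^{i-1}$ gives $f'(\gamma,z)=f'(\alpha,z)\,f(\gamma,z)^{-m}$, and these two identities are trivially equivalent. So the biconditional is a one-line computation once cast in terms of derivatives of generating functions, with no compositional inverses or adjoint umbrae needed at all. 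Promote this from fallback to main argument and drop the appeal to Theorem~\ref{Abel}.
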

A natural question arising here is the following: does the sequence \break $\{{\mathfrak a}_{i}^{(-m)} (\alpha, \alpha)\},$
resulting from~\eqref{(cummom)} for positive integers $m,$ have the three main algebraic properties of cumulants?

Let us consider the umbra $\bar{\alpha} \equiv \alpha \, \bar{u}$ whose moments are $\E[\bar{\alpha}^i] = i! \, a_i,$
and the sequence $\{{\mathfrak a}_{i}^{(-2)} (\bar{\alpha},
\bar{\alpha})\}.$ By the First Abel Inversion Theorem, given in Theorem~\ref{FirstAbel}, we have
\begin{multline}
\bar{\eta}_{\alpha}^i \simeq  {\mathfrak a}_{i}^{(-2)} (\bar{\alpha},
\bar{\alpha})  \simeq \bar{\alpha} \, (\bar{\alpha} - 2 \boldsymbol{.}
\bar{\alpha})^{i-1} \\
\text{if and only if}\quad
\bar{\alpha}^i \simeq {\mathfrak a}_{i}^{(2)} (\bar{\eta}_{\alpha}, \bar{\alpha}) \simeq \bar{\eta}_{\alpha} \, (\bar{\eta}_{\alpha} +2 \boldsymbol{.} \bar{\alpha})^{i-1}.
\label{(parBool)}
\end{multline}
The umbra $\bar{\eta}_{{\scriptscriptstyle \alpha}}$ is called the {\it $\bar{\alpha}$-Boolean cumulant umbra}. The three main algebraic properties of cumulants can be stated as follows:
\begin{enumerate}
\item[{\it i)}] ({\it additivity})  $\bar{\eta}_{\scriptscriptstyle \xi} \equiv \bar{\eta}_{\scriptscriptstyle \alpha} \stackrel{\boldsymbol{.}}{+}\bar{\eta}_{\scriptscriptstyle \gamma}$
if and only if $ -1 \boldsymbol{.} \bar{\xi} \equiv -1 \boldsymbol{.} \bar{\alpha} \stackrel{\boldsymbol{.}}{+} -1
\boldsymbol{.} \bar{\gamma};$
\item[{\it ii)}] ({\it homogeneity}) $\bar{\eta}_{c {\scriptscriptstyle \alpha}} \equiv c \bar{\eta}_{{\scriptscriptstyle \alpha}}, \, c \in {\mathbb R};$
\item[{\it iii)}] ({\it semi-invariance})
$\bar{\eta}_{\scriptscriptstyle \xi} \equiv \bar{\eta}_{\scriptscriptstyle \alpha} \stackrel{\boldsymbol{.}}{+} c \, \chi$ if and only if $ -1 \boldsymbol{.} \bar{\xi} \equiv -1 \boldsymbol{.} \bar{\alpha} \stackrel{\boldsymbol{.}}{+} -1
\boldsymbol{.} (c \bar{u}).$
\end{enumerate}
Its moments are $\{\E[{\mathfrak a}_{i}^{(-2)} (\bar{\alpha}, \bar{\alpha})]\}$ whose expansion in terms of moments is
$$
\E[\bar{\alpha} (\bar{\alpha} - 2 \boldsymbol{.} \bar{\alpha})^{i-1}] = \sum_{\lambda \vdash i} d_{\lambda} (-1)^{\nu_{\lambda} - 1} \nu_{\lambda}! \, \bar{a}_{\lambda}, \quad \hbox{\rm with} \,\, \bar{a}_{\lambda}= r_1! \, a_1^{r_1}\, r_2! \, a_2^{r_2} \cdots
$$
by Definition~\ref{6.6}.

\begin{theorem} \cite{dps} If $\bar{\eta}_{{\scriptscriptstyle \alpha}}$ is the {\it $\bar{\alpha}$-Boolean cumulant umbra}, then
\begin{equation}
\bar{\eta}_{{\scriptscriptstyle \alpha}} \equiv \bar{u}^{\cop} \boldsymbol{.} \beta \boldsymbol{.} \bar{\alpha} \quad \hbox{and}\quad \bar{\alpha} \equiv \bar{u} \boldsymbol{.} \beta \boldsymbol{.} \bar{\eta}_{{\scriptscriptstyle \alpha}}.
\label{boocum}
\end{equation}
\end{theorem}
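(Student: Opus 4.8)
The plan is to pass to exponential generating functions, where both claimed similarities reduce to short identities of formal power series, and to isolate as a separate step the only piece that is not pure formal manipulation. Recall that similar umbrae are exactly those with equal generating function. Since $\bar u$ represents $\{i!\}_{i\geq 0}$, we have $f(\bar u,z)=\sum_{i\geq 0}z^i=1/(1-z)$, hence $f(\bar u,z)-1=z/(1-z)$; inverting the formal series $z\mapsto z/(1-z)$ gives $w\mapsto w/(1+w)$, so
$$f(\bar u^{\cop},w)=1+\frac{w}{1+w}=\frac{1+2w}{1+w}.$$
From Table~\ref{tablegenfun} and Equation~\eqref{(genalfa)} I will also use $f(\gamma\boldsymbol{.}\beta\boldsymbol{.}\delta,z)=f(\gamma,f(\delta,z)-1)$ for a composition umbra, together with $f(n\boldsymbol{.}\alpha,z)=f(\alpha,z)^n$ and $f(-t\boldsymbol{.}\alpha,z)=f(\alpha,z)^{-t}$.

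The crux is to compute the generating function of $\bar\eta_\alpha$ from its Abel-polynomial description in~\eqref{(parBool)}. Put $g(z)=f(\bar\alpha,z)$. In $\bar\eta_\alpha^{i}\simeq\bar\alpha(\bar\alpha-2\boldsymbol{.}\bar\alpha)^{i-1}$, treat the repeated factor $\bar\alpha$ as one umbra (generating function $g$) and $-2\boldsymbol{.}\bar\alpha$ as an auxiliary umbra uncorrelated with it (generating function $g^{-2}$). Expanding $\E[\bar\eta_\alpha^{i}]=\E[\bar\alpha(\bar\alpha-2\boldsymbol{.}\bar\alpha)^{i-1}]$ by the binomial theorem and forming the generating series, the uncorrelation turns the sum into a Cauchy product and yields $\sum_{i\geq 1}\E[\bar\eta_\alpha^{i}]\,\tfrac{z^{i-1}}{(i-1)!}=g'(z)\,g(z)^{-2}$; the left-hand side is $\tfrac{d}{dz}f(\bar\eta_\alpha,z)$ and $g'g^{-2}=\tfrac{d}{dz}\bigl(2-1/g\bigr)$, so integrating and comparing constant terms ($f(\bar\eta_\alpha,0)=g(0)=1$) gives $f(\bar\eta_\alpha,z)=2-1/g(z)$. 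Equivalently, since $g=1+\sum_i a_i z^i$ is the \emph{ordinary} generating function of the moments $\{a_i\}$ of $\alpha$, this is the classical moment--Boolean-cumulant relation $M=1/(1-H)$ with $M=g$ and $H=f(\bar\eta_\alpha,z)-1$; it is also the generating-function content of the cited result~\cite{dps}. I expect this identification to be the main obstacle, since it is the one place where the Abel-polynomial definition has to be genuinely unwound rather than manipulated formally.

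Granting $f(\bar\eta_\alpha,z)=2-1/g(z)$, both equivalences are immediate. For the first, $f(\bar u^{\cop}\boldsymbol{.}\beta\boldsymbol{.}\bar\alpha,z)=f(\bar u^{\cop},g(z)-1)=\dfrac{1+2(g(z)-1)}{1+(g(z)-1)}=\dfrac{2g(z)-1}{g(z)}=2-\dfrac{1}{g(z)}=f(\bar\eta_\alpha,z)$, so $\bar\eta_\alpha\equiv\bar u^{\cop}\boldsymbol{.}\beta\boldsymbol{.}\bar\alpha$. For the second, $f(\bar u\boldsymbol{.}\beta\boldsymbol{.}\bar\eta_\alpha,z)=f(\bar u,f(\bar\eta_\alpha,z)-1)=\dfrac{1}{\,1-(f(\bar\eta_\alpha,z)-1)\,}=\dfrac{1}{\,2-f(\bar\eta_\alpha,z)\,}=\dfrac{1}{1/g(z)}=g(z)=f(\bar\alpha,z)$, so $\bar\alpha\equiv\bar u\boldsymbol{.}\beta\boldsymbol{.}\bar\eta_\alpha$. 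Alternatively, once the first similarity is known the second follows umbrally with essentially no computation: apply $\bar u\boldsymbol{.}\beta\boldsymbol{.}(\,\cdot\,)$ to both sides, reassociate using Theorem~\ref{compass}, and use $\bar u\boldsymbol{.}\beta\boldsymbol{.}\bar u^{\cop}\equiv\chi$ (definition of the compositional inverse), $\chi\boldsymbol{.}\beta\equiv u$ (Proposition~\ref{(cumbell)}) and $u\boldsymbol{.}\bar\alpha\equiv\bar\alpha$ — exactly paralleling how $\alpha\equiv\beta\boldsymbol{.}\kappa_\alpha$ of Theorem~\ref{inv} pairs with $\kappa_\alpha\equiv\chi\boldsymbol{.}\alpha\equiv u^{\cop}\boldsymbol{.}\beta\boldsymbol{.}\alpha$ in the classical setting.
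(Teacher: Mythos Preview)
Your proof is correct. The paper does not actually supply a proof of this theorem: it is stated with a citation to \cite{dps} and then immediately compared with the classical-cumulant equivalences~\eqref{(class1)}, without any argument given here. So there is no ``paper's own proof'' to compare against; your generating-function argument stands on its own and is perfectly in keeping with the methods the paper uses elsewhere.

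A few small remarks on presentation. Your phrasing ``treat the repeated factor $\bar\alpha$ as one umbra \ldots\ and $-2\boldsymbol{.}\bar\alpha$ as an auxiliary umbra uncorrelated with it'' is slightly informal: what you are really using is that the leading $\bar\alpha$ and the $\bar\alpha$ inside the bracket are the \emph{same} umbra, so after the binomial expansion the factor $\bar\alpha^{k+1}$ appears, and this is uncorrelated with $(-2\boldsymbol{.}\bar\alpha)^{i-1-k}$. Your Cauchy-product identification $\sum_{i\ge 1}\E[\bar\eta_\alpha^{\,i}]\,z^{i-1}/(i-1)! = g'(z)\,g(z)^{-2}$ then follows exactly as you say, and the integration to $f(\bar\eta_\alpha,z)=2-1/g(z)$ is clean. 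The alternative umbral derivation of the second equivalence from the first (via Theorem~\ref{compass} and Proposition~\ref{(cumbell)}) is a nice touch and mirrors precisely how the paper frames the classical case in~\eqref{(class1)}.
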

Comparing Equivalences~\eqref{boocum} with the equivalences characterizing classical cumulants,
\begin{equation}
\kappa_{{\alpha}}  \equiv \chi \boldsymbol{.} \alpha \equiv u^{\cop} \boldsymbol{.} \beta \boldsymbol{.} \alpha \qquad \hbox{and} \qquad  \alpha \equiv \beta \boldsymbol{.} \kappa_{{\alpha}}  \equiv u \boldsymbol{.} \beta \boldsymbol{.} \kappa_{{\alpha}},
\label{(class1)}
\end{equation}
the analogy is evident. The same analogy has a counterpart within formal power series, as the following remark shows.
\begin{remark}
Let us observe that the ordinary formal power series
$$f(z) = 1 + \sum_{i\geq 1} f_i z^i$$
is the generating function of the umbra $\bar{\alpha},$ that is, $f(\bar{\alpha},z)=f(z).$
If we consider the umbra
$\gamma$ satisfying $f(\bar{\gamma},z)= g(z) = 1 + \sum_{i \geq 1} g_i
z^i$, then the auxiliary umbra $\bar{\alpha}\boldsymbol{.} \beta
\boldsymbol{.} \bar{\gamma}$ has generating function $f[g(t)-1].$ This umbra is the
counterpart of the convolution of multiplicative functions on the lattice of {\it interval partitions}.
Indeed, let us consider the lattice of interval partitions $(\mathcal{I}_i, \leq).$
A partition $\pi$ of $\Pi_i$ is called an {\it interval partition} if
each block $B_j$ of $\pi$ is an interval $[a_j,b_j]$ of $[i]$, that is,
$B_j=[a_j,b_j]=\{x \in [j]\,|\,a_j\leq x\leq b_j\},$ where $a_j,b_j\in
[i]$. The type of each interval $[\sigma,\tau]$ in
$\mathcal{I}_i$ is the same as the one given in Example~\ref{algmul}. A convolution
$\diamond$ is defined on the algebra of multiplicative
functions by
\begin{equation}\label{def:conv''}
({\mathfrak g} \diamond {\mathfrak f})(\sigma,\pi)
:= \sum_{\scriptscriptstyle{\sigma\leq\tau\leq\pi \atop
\tau\in\mathcal{I}_i}} {\mathfrak g}(\sigma,\tau) {\mathfrak f}(\tau,\pi),
\end{equation}
with ${\mathfrak f}$ and ${\mathfrak g}$ multiplicative functions.
Then ${\mathfrak h} = {\mathfrak g}\diamond {\mathfrak f}$ is a multiplicative function and
$$
h_i = \sum_{\tau\in\mathcal{I}_i} f_{\ell(\tau)} g_{\tau}.\,
$$

The convolution~\eqref{def:conv''} of two multiplicative functions  corresponds to a composition
umbra since $E[(\bar{\alpha} {\boldsymbol .} \beta {\boldsymbol .} \bar{\gamma})^i] = \sum_{\pi\in\mathcal{I}_i} a_{|\pi|} \, g_{\pi}.$
\end{remark}
\begin{remark}
There is an interesting parallelism between the umbra having all classical cumulants equal to $1$
and the umbra having all Boolean cumulants equal to $1.$ The Bell umbra $\beta$ is the unique umbra
(up to similarity) having $\{1\}_{i\geq 1}$ as sequence of classical cumulants. Its $i$-th
moment is the $i$-th {\it Bell number}, that is, the number of partitions of
an $i$-set. The umbra $\alpha$ satisfying $\bar{\alpha}\equiv(2\bar{u})_{\scriptscriptstyle D}$
is the unique umbra (up to similarity) having $\{1\}_{i\geq 1}$ as sequence of
Boolean cumulants. The $i$-th moment of this umbra is $2^{i-1},$ that is, the number of
interval partitions $|\mathcal{I}_i|.$
\end{remark}
Free cumulants are closely related to the Lagrange inversion formula.
They were introduced by Voiculescu \cite{Voiculescu} as semi-invariants via the R-transform. Indeed, let us consider a non-commutative r.v.\ $X$, i.e., an element of a unital non-commutative algebra ${\mathbb A}$. Suppose $\phi : {\mathbb A} \rightarrow {\mathbb C}$ is a unital linear functional. The $i$-th moment of $X$ is the complex number $m_i=\phi(X^i)$, while
its generating function is the formal power series $M(z)= 1 + \sum_{i \geq 1} m_i z^i$. The {\it non-crossing} (or {\it free}) {\it cumulants of\/} $X$ are the
coefficients $r_i$ of the ordinary power series $R(z) = 1 + \sum_{i
  \geq 1} r_i z^i$ which satisfies the identity
\begin{equation}
M(z)=R[z M(z)].
\label{(eqq)}
\end{equation}
If we set $M(z)=f(\bar{\alpha},z)$ and $R(z)=f(\K_{\scriptscriptstyle   \alpha},z)$, then $f(\bar{\alpha}_{{\scriptscriptstyle D}}, z) = 1 +
z M(z)$, and Equation~\eqref{(eqq)} gives
$\bar{\alpha} \equiv \K_{\scriptscriptstyle \alpha} \boldsymbol{.} \beta \boldsymbol{.} \bar{\alpha}_{{\scriptscriptstyle D}}$,
which justifies the following definition.

\begin{definition} \label{(def3)}
The umbra $\K_{\scriptscriptstyle \alpha}$ satisfying $(-1 \boldsymbol{.} \K_{\alpha})_{\scriptscriptstyle D} \equiv \bar{\alpha}_{\scriptscriptstyle D}^{\cop}$ is called the {\it free cumulant umbra
of $\alpha.$}
\end{definition}
Denote the auxiliary umbra ${(\ab)}_{\scriptscriptstyle P}$ by $\LL_{\bar{\alpha}}.$ We call this umbra
{\it Lagrange involution of $\alpha$}. Then, from Definition~\ref{(def3)}, we have
$$\K_{\scriptscriptstyle \alpha}\equiv-1 \boldsymbol{.} \LL_{\bar{\alpha}},$$
which is an alternative definition of the free cumulant umbra. The three main algebraic properties of cumulants can be stated
as follows:
\begin{enumerate}
\item[{\it i)}] ({\it additivity}) $\K_{\scriptscriptstyle {\xi}} \equiv \K_{\scriptscriptstyle {\alpha}}
\stackrel{\boldsymbol{.}}{+} \K_{\scriptscriptstyle {\gamma}} \,\,
\text{if and only if} \,\, -1 \boldsymbol{.} \LL_{\bar{\xi}} \equiv -1 \boldsymbol{.} \LL_{\bar{\alpha}} \stackrel{\boldsymbol{.}}{+}
-1 \boldsymbol{.} \LL_{\bar{\gamma}};$
\item[{\it ii)}] ({\it homogeneity}) $\K_{c {\scriptscriptstyle  \alpha}}\equiv c\K_{{\scriptscriptstyle  \alpha}}, \, c \in {\mathbb R};$
\item[{\it iii)}] ({\it semi-invariance}) $\K_{\scriptscriptstyle {\xi}} \equiv \K_{\scriptscriptstyle {\alpha}}
\stackrel{\boldsymbol{.}}{+} c \chi \,\, \text{if and only if} \,\, -1 \boldsymbol{.} \LL_{\bar{\xi}} \equiv\break -1 \boldsymbol{.} \LL_{\bar{\alpha}} \stackrel{\boldsymbol{.}}{+} -1 \boldsymbol{.} \LL_{c\bar{u}}.$
\end{enumerate}
From Definition~\ref{(def3)}
we have
$$\K_{\scriptscriptstyle \alpha} \equiv \bar{\alpha} \boldsymbol{.} \beta \boldsymbol{.} \bar{\alpha}_{\scriptscriptstyle D}^{\cop}\quad \text{and}
\quad \bar{\alpha} \equiv  \K_{\scriptscriptstyle \alpha} \boldsymbol{.} \beta \boldsymbol{.} (-1 \boldsymbol{.}  \K_{\scriptscriptstyle \alpha})_{\scriptscriptstyle D}^{\cop}.$$ If we compare these two equivalences with those corresponding to the $\alpha$-cumulant umbra~\eqref{(class1)} and the $\alpha$-Boolean cumulant umbra~\eqref{boocum}, respectively,
the difference is quite obvious. Similarly, while the characterization
of classical cumulants and Boolean cumulants is closely related to the composition of
generating functions, this is not true for free cumulants. To be more precise, there
does not exist a generating function $U(t)$ such that, if $R(t)$ is the free cumulant
generating function of $M(t)$, then $M(t)=U[R(t)-1],$ see also \cite{dps}.
The same difference arises for convolutions of multiplicative functions
on the lattice of non-crossing partitions, as the following remark shows.
\begin{remark}
A \textit{non-crossing partition} $\pi=\{B_1,B_2,\ldots,B_s\}$ of the set $[i]$ is a partition such that, if $1 \leq h<l<k<m\leq i$,
with $h,k\in B_j$, and $l,m\in B_{j\,'}$, then $j=j\,'$. Let $\mathcal{NC}_i$ denote the set of all non-crossing
partitions of $[i]$. Since $\mathcal{NC}_i \subseteq \Pi_i$, we consider the induced
subposet $(\mathcal{NC}_i,\leq)$ of $(\Pi_i,\leq)$. If $\pi\in\mathcal{NC}_i$, then the decomposition of the intervals
$[\mathbf{0}_i,\pi]= \{\tau\in\mathcal{NC}_i\,|\,\tau\leq\pi\}$ is the same as the one given in Example~\ref{algmul}, while the intervals
$[\pi,\mathbf{1}_i]=\{\tau\in \mathcal{NC}_i\,|\,\pi\leq\tau\}$ have a quite different decomposition, which does not depend on
$\ell(\pi)$ (see \cite{Spe1}). The convolution $\ast$ defined on the multiplicative functions  is given by
$$
({\mathfrak g} \ast {\mathfrak f})(\sigma,\pi)
:= \sum_{\scriptscriptstyle{\sigma\leq\tau\leq\pi \atop
\tau\in\mathcal{NC}_i}}{\mathfrak g}(\sigma,\tau) \, {\mathfrak f}(\tau,\pi).
$$
Hence, if ${\mathfrak h}={\mathfrak g} \ast {\mathfrak f}$, then
$h_i=\sum_{\tau\in\mathcal{NC}_i}g_{\tau}\, {\mathfrak f}(\tau,\mathbf{1}_i).$
Let us observe that the generating function of $f(\LL_{\bar{\alpha}},z)$
is the {\it Fourier transform} $(\mathcal{F}\mathfrak{f})(z)$ of multiplicative functions \cite{Speicher1}.
Indeed, $\LL_{\bar{\omega}}\equiv \LL_{\bar{\gamma}} + \LL_{\bar{\alpha}}$
corresponds to $[\mathcal{F}(\mathfrak{g}\ast
  \mathfrak{f})](z)=(\mathcal{F}\mathfrak{g})(z)(\mathcal{F}\mathfrak{f})(z)$
for all unital multiplicative functions $\mathfrak{f}$
and $\mathfrak{g}$ on the lattice of non-crossing partitions.
\end{remark}
\begin{remark}
The umbra having all free cumulants equal to $1$ has a special meaning. Indeed, the umbra
$\bar{\vartheta}\equiv\bar{u}\boldsymbol{.}{(-1\boldsymbol{.}\bar{u})_{\scriptscriptstyle
D}}^{*}$ is the unique umbra satisfying $\K_{\bar{\vartheta}}\equiv\bar{u}.$ Since
$\vartheta^i \simeq \mathcal{C}_i,$ where $\mathcal{C}_i$ denotes the $i$-th Catalan number,
the umbra $\vartheta$ is called {\it Catalan umbra}. The parallelism with the Bell umbra relies
on the fact that $\mathcal{C}_i = |\mathcal{NC}_i|,$ so the moments of $\vartheta$ give
the numbers of non-crossing partitions \cite{Speicher}.
\end{remark}
\begin{example}[\sc Wigner semicircle distribution] In free
probability, the Wigner semicircle r.v.\footnote{
The Wigner semicircle r.v.\ has probability density function given by a semicircle of radius $R$ centered at $(0, 0)$ and suitably normalized, that is,  $f(x)= 2 \sqrt{R^2-x^2\,}/(\pi R^2) \,$ for $x \in [-R,R]$
and $f(x)=0$ otherwise.} is analogous to the Gaussian r.v.\ in classical probability theory. Indeed, its free
cumulants of degree higher than $2$ are zero as for classical cumulants
of a Gaussian r.v. By using Equivalence~\eqref{(momcomp)}, it is straightforward to prove that
the Wigner semicircle r.v.\ is represented by the umbra $\bar{\vartheta} {\boldsymbol .} \beta {\boldsymbol .}
\bar{\eta},$ where $\vartheta$ is the Catalan umbra and $\eta$ is the umbra introduced in Example~\ref{gaus}.
\end{example}

\begin{theorem}[\sc Abel parametrization] \cite{dps} \label{Abpar}
If $\K_{\scriptscriptstyle\bar{\alpha}}$ is the free
cumulant umbra of $\alpha$, then
\begin{equation}  \bar{\alpha}^i \simeq \K_{\scriptscriptstyle\bar{\alpha}}
(\K_{\scriptscriptstyle\bar{\alpha}} +
i {\boldsymbol .} \K_{\scriptscriptstyle\bar{\alpha}})^{i-1} \quad \hbox{and}
\quad \K_{\scriptscriptstyle\bar{\alpha}}^i \simeq \bar{\alpha}
(\bar{\alpha} - i {\boldsymbol .} \bar{\alpha})^{i-1}.
\label{parfree1}
\end{equation}
\end{theorem}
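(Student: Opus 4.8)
The plan is to obtain both equivalences directly from the Abel representation of binomial sequences (Theorem~\ref{Abel}) once the free cumulant umbra is written in dot-product form. Unpacking Definition~\ref{(def3)} (as recorded in the two equivalences displayed immediately after it), $\K_{\scriptscriptstyle\alpha}$ is characterized by $\K_{\scriptscriptstyle\alpha} \equiv \bar{\alpha} {\boldsymbol .} \beta {\boldsymbol .} \bar{\alpha}_{\scriptscriptstyle D}^{\cop}$ and $\bar{\alpha} \equiv \K_{\scriptscriptstyle\alpha} {\boldsymbol .} \beta {\boldsymbol .} (-1 {\boldsymbol .} \K_{\scriptscriptstyle\alpha})_{\scriptscriptstyle D}^{\cop}$. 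The key observation is that each right-hand side has the shape $\delta {\boldsymbol .} (\gamma_{\scriptscriptstyle D})^{*}$ evaluated at an umbra $\delta$, where $(\gamma_{\scriptscriptstyle D})^{*} \equiv \beta {\boldsymbol .} \gamma_{\scriptscriptstyle D}^{\cop}$ is the adjoint of the derivative umbra $\gamma_{\scriptscriptstyle D}$ (here $\gamma = \bar{\alpha}$, resp.\ $\gamma = -1 {\boldsymbol .} \K_{\scriptscriptstyle\alpha}$), and that associativity of nested dot-products, Theorem~\ref{compass}, lets one pass between $\delta {\boldsymbol .} \beta {\boldsymbol .} \gamma_{\scriptscriptstyle D}^{\cop}$ and $\delta {\boldsymbol .} (\gamma_{\scriptscriptstyle D})^{*}$. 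Note also that $\gamma_{\scriptscriptstyle D}$ always has first moment $1$, so its compositional inverse --- and hence its adjoint --- is unconditionally defined; this is what makes the free cumulant umbra, and the reasoning below, available for every $\alpha$.

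For the second equivalence, apply Theorem~\ref{Abel} with $\gamma = \bar{\alpha}$, which gives $\bigl(x {\boldsymbol .} (\bar{\alpha}_{\scriptscriptstyle D})^{*}\bigr)^{i} \simeq x\,(x - i {\boldsymbol .} \bar{\alpha})^{i-1}$. Substituting the indeterminate $x$ by $\bar{\alpha}$ (a similar, uncorrelated copy, as is already implicit in the composition umbra), the left-hand side becomes $\bigl(\bar{\alpha} {\boldsymbol .} \beta {\boldsymbol .} \bar{\alpha}_{\scriptscriptstyle D}^{\cop}\bigr)^{i} \simeq \K_{\scriptscriptstyle\alpha}^{i}$ by the first characterization above, while the right-hand side is $\bar{\alpha}\,(\bar{\alpha} - i {\boldsymbol .} \bar{\alpha})^{i-1}$; hence $\K_{\scriptscriptstyle\alpha}^{i} \simeq \bar{\alpha}\,(\bar{\alpha} - i {\boldsymbol .} \bar{\alpha})^{i-1}$. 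For the first equivalence, apply Theorem~\ref{Abel} with $\gamma = -1 {\boldsymbol .} \K_{\scriptscriptstyle\alpha}$, obtaining $\bigl(x {\boldsymbol .} ((-1 {\boldsymbol .} \K_{\scriptscriptstyle\alpha})_{\scriptscriptstyle D})^{*}\bigr)^{i} \simeq x\,\bigl(x - i {\boldsymbol .} (-1 {\boldsymbol .} \K_{\scriptscriptstyle\alpha})\bigr)^{i-1}$; since $-i {\boldsymbol .} (-1 {\boldsymbol .} \K_{\scriptscriptstyle\alpha}) \equiv i {\boldsymbol .} \K_{\scriptscriptstyle\alpha}$ (from $-i {\boldsymbol .} \gamma \equiv i {\boldsymbol .} (-1 {\boldsymbol .} \gamma)$ together with the fact that an inverse of an inverse is similar to the original), substituting $x$ by $\K_{\scriptscriptstyle\alpha}$ turns the left-hand side into $\bigl(\K_{\scriptscriptstyle\alpha} {\boldsymbol .} \beta {\boldsymbol .} (-1 {\boldsymbol .} \K_{\scriptscriptstyle\alpha})_{\scriptscriptstyle D}^{\cop}\bigr)^{i} \simeq \bar{\alpha}^{i}$ by the second characterization, and the right-hand side into $\K_{\scriptscriptstyle\alpha}\,(\K_{\scriptscriptstyle\alpha} + i {\boldsymbol .} \K_{\scriptscriptstyle\alpha})^{i-1}$; hence $\bar{\alpha}^{i} \simeq \K_{\scriptscriptstyle\alpha}\,(\K_{\scriptscriptstyle\alpha} + i {\boldsymbol .} \K_{\scriptscriptstyle\alpha})^{i-1}$.

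The step I expect to require the most care is the replacement of the indeterminate $x$ by an umbra in the Abel identity: the substituted copy must be uncorrelated from the umbrae building $(\gamma_{\scriptscriptstyle D})^{*}$, so one works with a distinct umbra similar to $\gamma$ and invokes the fact that evaluations depend only on similarity classes --- this is precisely the convention already built into the composition umbra $\bar{\alpha} {\boldsymbol .} \beta {\boldsymbol .} \bar{\alpha}_{\scriptscriptstyle D}^{\cop}$, so no new ingredient is needed, but it should be spelled out carefully. A related point is that one must apply Theorem~\ref{Abel} twice, once in each direction: the First Abel Inversion Theorem~\ref{FirstAbel} does not bridge the two equivalences here because its shift parameter $m$ is fixed, whereas in Theorem~\ref{Abpar} it equals the varying degree $i$ --- which is exactly the feature separating the free case from the classical ($m$ replaced by $-1$) and Boolean ($m$ replaced by $-2$) parametrizations. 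Finally, one should sanity-check the generating-function picture, $f(\bar{\alpha},z) = R[z\,f(\bar{\alpha},z)]$ with $R(z) = f(\K_{\scriptscriptstyle\alpha},z)$, to confirm that the two dot-product characterizations used above indeed encode Equation~\eqref{(eqq)}.
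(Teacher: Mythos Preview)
Your argument is correct. The survey itself does not give a proof of Theorem~\ref{Abpar}, deferring to \cite{dps}, but your route --- specializing the Abel representation of binomial sequences (Theorem~\ref{Abel}) by substituting an umbra for the indeterminate $x$, exactly as in the proof of the Lagrange inversion formula (Theorem~\ref{(InvL1)}), combined with the two dot-product characterizations of $\K_{\scriptscriptstyle\alpha}$ displayed after Definition~\ref{(def3)} --- is precisely the approach of the cited source and the one the surrounding machinery is built to deliver.
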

In terms of generalized Abel polynomials,
Equivalences~\eqref{parfree1} satisfy $\bar{\alpha}^i \simeq {\mathfrak a}_{i}^{(i)}
(\K_{\scriptscriptstyle\bar{\alpha}},
\K_{\scriptscriptstyle\bar{\alpha}})$
and $\K_{\scriptscriptstyle\bar{\alpha}}^i \simeq {\mathfrak
  a}_{i}^{(-i)} (\bar{\alpha}, \bar{\alpha}).$
\begin{example} Complete Bell polynomials
$a_i={\calY}_i(c_1,c_2,\ldots,c_i)$ given in \eqref{id:cum vs Bell} return moments in terms of classical cumulants. For free cumulants, the same role is played by the volume polynomials introduced by Pitman and Stanley \cite{Pitman1}. Let us recall that the $i$-th volume polynomial $V_i(x)$ is the following homogeneous polynomial of degree $i$:
\begin{equation}\label{def:vol pol 1}
V_i(x)=\frac{1}{i!}\sum_{\pbs \, \in \, \hbox{\footnotesize park}(i)}\!\!\!\!x_{\pbs},
\end{equation}
where $x_{\pbs} = x_{p_1} \cdots x_{p_i}$ and $\hbox{park}(i)$ is the set of all {\it parking
functions} of length $i.$\footnote{A \emph{parking function} of length $i$ is a sequence
$\pbs=(p_1, \ldots,p_i)$ of $i$ positive integers, whose non-decreasing arrangement
$\pbs^\uparrow=(p_{j_1}, \ldots,p_{j_i})$ satisfies $p_{j_k}\leq k.$}
The volume polynomial~\eqref{def:vol pol 1} in the uncorrelated umbrae $\alpha_1, \ldots, \alpha_i$
similar to $\alpha$ satisfy $i!\,V_i(\alpha_1, \ldots,\alpha_i) \, \simeq \, \alpha(\alpha+ i \boldsymbol{.}\alpha)^{i-
1},$ see \cite{Petrullo}. In particular, if $\{r_i\}$ denotes the sequence of free cumulants, then
$a_i = i!\, V_i(r_1, \ldots,r_i),$ in parallel with~\eqref{id:cum vs Bell}.
\end{example}
\subsubsection{Abel-type cumulants}
A more general class of cumulants can be defined by using the generalized Abel polynomials. To
the best of our knowledge, a previous attempt to give a unifying approach to cumulants was
given in \cite{Anshelevich}, but the Boolean case does not seem to fit in.
Set ${\mathfrak a}_i^{(-m)}(\alpha) = {\mathfrak a}_i^{(-m)}(\alpha, \alpha).$
\begin{definition}
An {\it Abel-type cumulant\/} $c_{i,m}(\alpha)$ of $\alpha$ is
$$c_{i,m}(\alpha) \simeq {\mathfrak a}_i^{(-m)}(\alpha) \simeq \alpha (\alpha - m \boldsymbol{.} \alpha)^{i-1}$$
if $i$ and $m$ are positive integers.
\end{definition}
Let us consider the infinite matrix
\begin{equation}
C_{\alpha} = \left( \begin{array}{ccc}
{\mathfrak a}_1^{(-1)}(\alpha) &  {\mathfrak a}_1^{(-2)}(\alpha)   & \ldots \\
{\mathfrak a}_2^{(-1)}(\alpha) &  {\mathfrak a}_2^{(-2)}(\alpha)   & \ldots \\
\vdots & \vdots & \\
{\mathfrak a}_n^{(-1)}(\alpha) &  {\mathfrak a}_n^{(-2)}(\alpha)   & \ldots \\
\vdots & \vdots & \\
\end{array} \right).
\label{(matrix)}
\end{equation}
Classical cumulants can be recovered from the first column, and Bool\-ean cumulants from the second column.
The sequence corresponding to the main diagonal of the matrix~\eqref{(matrix)} picks out free cumulants.

\begin{theorem} [\sc Homogeneity property]
$${\mathfrak a}_i^{(-m)}(c \alpha) \simeq c^i {\mathfrak
  a}_i^{(-m)}(\alpha), \quad c \in {\mathbb R}.$$
\end{theorem}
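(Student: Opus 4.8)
The claim ${\mathfrak a}_i^{(-m)}(c\alpha)\simeq c^i\,{\mathfrak a}_i^{(-m)}(\alpha)$ unwinds, by the definition of $\simeq$, to the numerical identity $\E\bigl[{\mathfrak a}_i^{(-m)}(c\alpha)\bigr]=c^i\,\E\bigl[{\mathfrak a}_i^{(-m)}(\alpha)\bigr]$. The plan is to show that $\E\bigl[{\mathfrak a}_i^{(-m)}(\alpha)\bigr]$ is a \emph{weighted-homogeneous polynomial of weight $i$} in the moments $a_1,a_2,\dots$ of $\alpha$, where $a_j$ is assigned weight $j$; granting this, passing from $\alpha$ to $c\alpha$ replaces each $a_j$ by $c^j a_j$, and weighted homogeneity of weight $i$ then multiplies the whole expression by $c^i$, which is exactly the assertion.

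To establish the weighted homogeneity I would start from Definition~\ref{6.6}, which gives ${\mathfrak a}_i^{(-m)}(\alpha)\simeq\alpha\,(\alpha-m\boldsymbol{.}\alpha)^{i-1}$, and expand the $(i-1)$-th power by the binomial theorem together with the multinomial expansion~\eqref{(sum1)} of powers of the dot-product. The umbral polynomial $\alpha\,(\alpha-m\boldsymbol{.}\alpha)^{i-1}$ is homogeneous of degree $i$ in $\alpha$ and its similar copies: the outer factor contributes one umbral factor and the $(i-1)$-th power contributes $i-1$ more, while forming $m\boldsymbol{.}\alpha$ (or, in the case of $-m\boldsymbol{.}\alpha$, the corresponding inverse umbra) merely replaces $\alpha$ by a sum of similar copies and does not change degrees. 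Applying $\E$ turns each degree-$i$ monomial in the copies into a product of moments of $\alpha$ whose weights sum to $i$, so $\E\bigl[{\mathfrak a}_i^{(-m)}(\alpha)\bigr]$ is a sum of such weight-$i$ products. Equivalently, Equation~\eqref{(sum33)} already displays $\E\bigl[(m\boldsymbol{.}\alpha)^i\bigr]=\sum_{\lambda\vdash i}(m)_{\nu_{\lambda}}d_{\lambda}a_{\lambda}$, in which every $a_{\lambda}=a_1^{r_1}a_2^{r_2}\cdots$ has weight $i$; the inverse $-m\boldsymbol{.}\alpha$ has the same expansion with $(m)_{\nu_{\lambda}}$ replaced by $(-m)_{\nu_{\lambda}}$, still of weight $i$.

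Combining these two points yields $\E\bigl[{\mathfrak a}_i^{(-m)}(c\alpha)\bigr]=c^i\,\E\bigl[{\mathfrak a}_i^{(-m)}(\alpha)\bigr]$, i.e. ${\mathfrak a}_i^{(-m)}(c\alpha)\simeq c^i\,{\mathfrak a}_i^{(-m)}(\alpha)$; the case $c=0$ is immediate, since both sides vanish for $i\ge1$. I expect the only genuine obstacle to be making the weighted-homogeneity bookkeeping fully rigorous — in particular, checking that the auxiliary dot-product umbra $m\boldsymbol{.}\alpha$, the inverse umbra obtained from it, and their products with the outer copy of $\alpha$ are all represented by weight-$i$-homogeneous polynomials in the $a_j$, with no collapse of the grading. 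This follows from Equation~\eqref{(sum33)} together with the uncorrelation property~\eqref{(uncorrelation)} — the evaluation of a monomial in the copies depends only on which symbols of $\calA$ occur in it — after which the argument is purely formal.
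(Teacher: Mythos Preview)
The paper states this theorem without proof, so there is no ``paper's own proof'' to compare against. Your argument is correct: once you know that $\E\bigl[{\mathfrak a}_i^{(-m)}(\alpha)\bigr]$ is a weighted-homogeneous polynomial of weight $i$ in the moments $a_1,a_2,\dots$, the substitution $a_j\mapsto c^j a_j$ does the job. The cleanest way to see this homogeneity is actually Equation~\eqref{alg} (with $\rho=-m\boldsymbol{.}u$, so $\E[(\rho)_{\nu_\lambda-1}]=(-m)_{\nu_\lambda-1}$), which expresses $\E\bigl[\alpha(\alpha-m\boldsymbol{.}\alpha)^{i-1}\bigr]$ directly as $\sum_{\lambda\vdash i}(-m)_{\nu_\lambda-1}\,d_\lambda\,a_\lambda$; every $a_\lambda$ with $\lambda\vdash i$ has weight $i$, and you are done in one line. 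Your binomial/multinomial expansion reaches the same conclusion but with more bookkeeping.

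There is also a shorter, purely umbral route that you may prefer. Using the homogeneity of the dot-product stated in Section~3.1, namely $t\boldsymbol{.}(c\alpha)\equiv c\,(t\boldsymbol{.}\alpha)$ for $t\in\mathbb{R}$, one has $-m\boldsymbol{.}(c\alpha)\equiv c\,(-m\boldsymbol{.}\alpha)$. Since this auxiliary umbra is uncorrelated with the outer copy of $c\alpha$, similar umbrae may be exchanged under $\simeq$, giving
\[
(c\alpha)\bigl(c\alpha - m\boldsymbol{.}(c\alpha)\bigr)^{i-1}
\;\simeq\;
(c\alpha)\bigl(c\alpha - c\,(m\boldsymbol{.}\alpha)\bigr)^{i-1}
\;=\;
c^{\,i}\,\alpha(\alpha - m\boldsymbol{.}\alpha)^{i-1}.
\]
This bypasses the moment expansion entirely and is likely the argument the author has in mind, given that the analogous homogeneity statements for classical, Boolean, and free cumulants are each recorded as one-liners earlier in Section~6.2.
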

To prove the semi-invariance property, a suitable normalization of
${\mathfrak a}_i^{(-m)}(  \alpha)$ is necessary. For example, for Boolean and free
cumulants, the normalization coefficient is equal to $\{i!\},$ umbrally represented by the Boolean unity $\bar{u}.$
For classical cumulants, no normalization is needed.

Each sequence of cumulants in~\eqref{(matrix)} --- chosen along
columns or along diagonals --- linearizes a suitable convolution of umbrae (i.e., of
moments). More precisely, it is possible to define the convolution $\alpha \stackrel{\scriptscriptstyle (m)}{+} \gamma$ induced by an
integer $m$ as
\begin{equation}
{\mathfrak a}_i^{(-m)}(\alpha \stackrel{\scriptscriptstyle (m)}{+} \gamma) \simeq {\mathfrak a}_i^{(-m)}(\alpha) + {\mathfrak a}_i^{(-m)}(\gamma).
\label{(convolution)}
\end{equation}
These convolutions are commutative. It is possible to prove
the existence of the convolution~\eqref{(convolution)}
induced by any integer $m$ for any pair $\alpha, \gamma$ of umbrae \cite{dps}.
It will be interesting to investigate if further sequences of cumulants known in
the literature can be
detected in the matrix~\eqref{(matrix)}, as for example the binary
cumulants \cite{Sturmfels, Torney}, the tree-cumulants \cite{Zwiernik}
and, more generally, the L-cumulants \cite{Zwiernik1}; or if there
are more sequences of cumulants that can be extracted from the matrix~\eqref{(matrix)}.
\subsubsection{An algorithm to compute cumulants from moments and vice-versa}
A procedure to compute free cumulants from moments has been given in \cite{Bryc} by using nested
derivatives of~\eqref{(eqq)}. The symbolic parame\-trization of cumulants via Abel polynomials
suggests a completely different approach to perform such a computation \cite{DiNardoOliva}.
The main advantage is that the resulting algorithm does not only allow us to compute free cumulants,
but also classical and Boolean cumulants.

The algorithm relies on an efficient expansion of the
umbral  polynomial $\gamma (\gamma + \rho {\boldsymbol .}
\gamma)^{i-1}$ for all positive integers $i,$ with $\rho,\gamma \in {\calA},$ that is,
\begin{equation}
\E\left[ \gamma (\gamma + \rho {\boldsymbol .} \gamma)^{i-1} \right] = \sum_{\lambda
\vdash i} \E[(\rho)_ {\nu_{\lambda}-1}] \, d_{\lambda} \, g_{\lambda},
\label{alg}
\end{equation}
where $\{ g_i\}$ is umbrally represented by the umbra $\gamma.$ To compute the right-hand side of~\eqref{alg}, we need
the factorial moments of $\rho.$ Recall that, if we just know the moments $\{\E[\rho^i]\}$, the factorial moments can be
computed by using the well-known change of basis
$$\E[(\rho )_i] = \sum_{k=1}^i s(i,k) \E[\rho^k],$$
where $\{s(i,k)\}$ are the {\it Stirling numbers of the first kind}.

\medskip
Expansion~\eqref{alg} can be fruitfully used to compute cumulants and moments
as follows:

\medskip
{\it i)} for classical cumulants in terms of moments: we use $\kappa_{{\scriptscriptstyle \alpha}}^i
\simeq\break \alpha (\alpha - 1 {\boldsymbol .} \alpha)^{i-1}$ in \eqref{(ric)}; then in Equation~\eqref{alg} choose
$\rho = -1 {\boldsymbol .} u,$ with $\E[(-1 {\boldsymbol .} u)_i] = (-1)_i = (-1)^i i!,$ and $\gamma= \alpha;$

\smallskip
{\it ii)} for moments in terms of classical cumulants: we use $\alpha^i \simeq\break \kappa_{{\scriptscriptstyle \alpha}}
(\kappa_{{\scriptscriptstyle \alpha}}+\alpha)^{i-1}$ in \eqref{(ric)}; then in Equation~\eqref{alg} choose
$\rho = \beta,$ with $\E[(\beta)_i] = 1,$ and $\gamma=\kappa_{\scriptscriptstyle \alpha};$

\smallskip
{\it iii)} for Boolean cumulants in terms of moments: we use $\bar{\eta}_{\alpha}^i \simeq\break \bar{\alpha} \, (\bar{\alpha}
- 2 \boldsymbol{.} \bar{\alpha})^{i-1}$ in \eqref{(parBool)}; then in Equation~\eqref{alg} choose
$ \rho =-2  {\boldsymbol .} u,$ with $\E[(-2  {\boldsymbol .} u)_i] =  (-1)^i (i+1)!,$ and $\gamma = \bar{\alpha};$

\smallskip
{\it iv)} for moments in terms of Boolean cumulants: we use $\bar{\alpha}^i \simeq\break \bar{\eta}_{\alpha} \, (\bar{\eta}_{\alpha} +2
\boldsymbol{.} \bar{\alpha})^{i-1}$ in~\eqref{(parBool)}; then in Equation~\eqref{alg} choose
$\rho = 2 {\boldsymbol .} \bar{u} {\boldsymbol .} \beta,$ with $\E[(2 {\boldsymbol .} \bar{u} {\boldsymbol .} \beta)_i]
= (i+1)!,$ and $\gamma = \bar{\eta}_ {\scriptscriptstyle\alpha};$

\smallskip
{\it v)} for free cumulants in terms of moments: we use $\K_{\scriptscriptstyle\bar{\alpha}}^i \simeq\break \bar{\alpha}
(\bar{\alpha} - i {\boldsymbol .} \bar{\alpha})^{i-1}$ in~\eqref{parfree1}; then in Equation~\eqref{alg} choose
$\rho =-i {\boldsymbol .} u,$ with $\E[(-i {\boldsymbol .} u)_j]=(-i)_j,$ and $\gamma = \bar{\alpha};$

\smallskip
{\it vi)} for moments in terms of free cumulants: we use $\bar{\alpha}^i \simeq\break \K_{\scriptscriptstyle\bar{\alpha}}
(\K_{\scriptscriptstyle\bar{\alpha}} + i {\boldsymbol .} \K_{\scriptscriptstyle\bar{\alpha}})^{i-1}$ in~\eqref{parfree1};
then in Equation~\eqref{alg} choose $\rho = i {\boldsymbol .} u,$ with $\E[(i {\boldsymbol .} u)_j]=(i)_j,$ and $\gamma =
\K_{\scriptscriptstyle\bar{\alpha}}$.

\medskip
The symbolic  algorithm allows us to compute also Abel-type cumulants by choosing  $\rho = -m {\boldsymbol .} u,$ with $\E[(-m {\boldsymbol .} u)_i]=(-m)_i,$ and $\gamma = \alpha$ in~\eqref{alg}.

\section{Multivariate Fa\`a di Bruno formula}
The multivariate Fa\`a di Bruno formula is a chain rule to compute higher derivatives of multivariable
functions. This formula has been recently
addressed by the following two approaches: combinatorial methods are
used by Constantine and Savits \cite{Const},  and (only in the
bivariate case) by Noschese and Ricci \cite{Ricci}; analytical methods
involving Taylor series are proposed by Leipnik and
Pearce \cite{Leipnik}. We refer to this last paper for a detailed
list of references on this subject and for a detailed account of
its applications. For statistical applications, a good account is the paper of Savits \cite{Savits}.
A comprehensive survey of univariate and multivariate
series approximation in statistics is given in \cite{Kolassa}.

Computing the multivariate Fa\`a di Bruno formula by means of a
symbolic software can be done by recursively applying
the chain rule. Despite its conceptual simplicity, applications
of the chain rule become impractical already for small values, because
the number of additive terms becomes enormous and their computation unwieldy.
Moreover, the output is often untidy and
further manipulations are required to simplify the result.
Consequently, a {\lq\lq compressed\rq\rq} version of the multivariate Fa\`a di Bruno
formula is more attractive, as the
number of variables or the order of derivatives increase.
By using the symbolic method, the {\lq\lq compressed\rq\rq}
version of the multivariate Fa\`a di Bruno formula is nothing else but
a suitable generalization of the well-known multinomial theorem:
\begin{equation}
(x_1 +  \cdots + x_k)^i = \sum_{j_1 +  \cdots + j_k=i}
{\binom i  {j_1,  \ldots, j_k}} x_1^{j_1}   \cdots
x_k^{j_k},
\label{(multthm)}
\end{equation}
where the indeterminates are replaced by auxiliary umbrae.
Suitable choices of these auxiliary umbrae give rise to an efficient computation of the following compositions: univariate
with multivariate, multivariate with univariate, multivariate with the same multivariate, multivariate with different multivariates
in an arbitrary number of components \cite{MultFaa}.

Various attempts to construct a multivariate generalization of the umbral calculus can be found in the literature, see, for example, \cite{Parrish} and \cite{Brown}.

Here, the starting point is the notion of multi-index. If, in the univariate case, the main device is to
replace $a_n$ by $\alpha^n$ via the linear functional $\E,$ in the multivariate case, the main device is to replace
sequences like $\{g_{i_1, \ldots, i_k}\},$ where $\ibs=(i_1, \ldots, i_k) \in \mathbb{N}_0^k$ is a {\it multi-index}, by
a product of powers $\mu_1^{i_1} \cdots \mu_n^{i_k},$ where $\left\{\mu_1, \ldots, \mu_k\right\}$ are umbral monomials in ${\mathbb R}[\calA]$ with not necessarily disjoint support. The $\ibs$-th power of $\mubs=(\mu_1, \ldots,\mu_k)$ is defined by
$\mubs^{\ibs} = \mu_1^{i_1} \cdots \mu_k^{i_k}.$

Products of powers of umbral monomials have already been employed in Theorem~\ref{ttt1} and in Definition~\ref{4.1}, with umbrae indexed by multisets.
Indeed there is a connection between the combinatorics of multisets and the Fa\`a di Bruno formula, as highlighted in \cite{Hardy}.
But, in facing the problem to set up an algorithm to efficiently compute the multivariate Fa\`a di Bruno formula, multi-index notations
have simplified formulae and procedures.
\begin{definition}
A sequence $\{g_{\ibs}\}_{\ibs \in \mathbb{N}_0^k} \in {\mathbb R},$ with
$g_{\ibs} = g_{i_1, \ldots, i_k}$ and $g_{\bf 0} = 1,$ is
umbrally represented by the $k$-tuple $\mubs$ if
$$\E[\mubs^{\ibs}] = g_{\ibs}, \qquad \hbox{for all $\ibs \in \mathbb{N}_0^k.$}$$
\end{definition}
The elements $\{g_{\ibs}\}_{\ibs \in \mathbb{N}_0^k}$ are called {\it
multivariate moments} of $\mubs.$
\begin{definition}
Two $k$-tuples $\nubs$ and $\mubs$ of umbral monomials are said to be {\it similar}, if they
represent the same sequence of multivariate moments. In symbols: $\nubs \equiv \mubs.$
\end{definition}
\begin{definition}
Two $k$-tuples $\nubs$ and $\mubs$ of umbral monomials are said to be {\it uncorrelated\/}
if, for all $\ibs, \jbs \in \mathbb{N}_0^k,$ we have $E[\nubs^{\ibs} \mubs^{\jbs}] = E[\nubs^{\ibs}]
E[\mubs^{\jbs}].$
\end{definition}
If the sequence $\{g_{\ibs}\}_{\ibs \in \mathbb{N}_0^k}$ is umbrally represented by
the $k$-tuple $\mubs$, then the multivariate formal power series
$$
f(\mubs, \tbs) = 1 + \sum_{\ibs \geq 1} g_{\ibs}\frac{\tbs^{\ibs}}{\ibs!}
$$
is the multivariate moment generating function of $\mubs.$
\begin{definition} \label{refrv}
A random vector ${\Xbs}$ is said to be represented by a $k$-tuple $\mubs$ of umbral monomials, if its sequence of multivariate moments $\{g_{\ibs}\}$ is umbrally represented by $\mubs.$
\end{definition}
Thus, a summation of $m$ uncorrelated $k$-tuples similar to $\mubs$ represents a summation of $m$ i.i.d.\ random vectors and can be
denoted by $m \punt \mubs.$ In order to compute its multivariate moments, we need the notion of \textit{composition of a
multi-index} and the notion of \textit{partition of a multi-index}.
\begin{definition} [\sc Composition of a multi-index] A composition $\lambdabs$ of a multi-index $\ibs.$ In symbols $\lambdabs \models \ibs,$ is a matrix $\lambdabs = (\lambda_{ij})$ of non-negative integers, with no zero columns, satisfying $\lambda_{r1}+\lambda_{r2}+\cdots=i_r$ for $r=1,2,\ldots,k.$
\end{definition}
The number of columns of $\lambdabs$ is called the {\it length} of $\lambdabs$ and is denoted by $l(\lambdabs).$
\begin{definition} [\sc Partition of a multi-index] \label{partitiondef} A partition of
a multi-index $\ibs$ is a composition $\lambdabs,$ whose columns are in lexicographic order. In symbols $\lambdabs \mmodels \ibs.$
\end{definition}
As for integer partitions, the notation $\lambdabs = (\lambdabs_{1}^{r_1}, \lambdabs_{2}^{r_2}, \ldots)$
means that in the matrix $\lambdabs$ there are $r_1$ columns equal to $\lambdabs_{1},$ $r_2$ columns equal to $\lambdabs_{2}$, and so on, with $\lambdabs_{1} < \lambdabs_{2} < \cdots$ in lexicographic order. The integer $r_i$ is the multiplicity of $\lambdabs_i.$ We set
${\mathfrak m}(\lambdabs)=(r_1, r_2,\ldots).$
\begin{proposition} \cite{Statcomp} \label{summult} If $\ibs \in {\mathbb N}^k_0,$ then
$$\E[(m \boldsymbol{.} \mubs)^{\ibs}] = \sum_{\lambdabs \mmodels \ibs}
\frac{\ibs!}{{\mathfrak m}(\lambdabs)! \,  \lambdabs!} (m)_{l(\lambdabs)} E[\mubs^{\lambdabs_1}]^{r_1} E[\mubs^{\lambdabs_2}]^{r_2} \cdots,$$
where the sum is over all partitions $\lambdabs$ of the multi-index $\ibs.$
\end{proposition}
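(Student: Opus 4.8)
The plan is to reduce the statement to the componentwise multinomial theorem~\eqref{(multthm)}, followed by a counting step that regroups the resulting monomials according to partitions of the multi-index $\ibs$; the whole argument parallels the univariate passage from~\eqref{(sum1)} to~\eqref{(sum2)}. First I would write $m \boldsymbol{.} \mubs = \mubs^{(1)} + \cdots + \mubs^{(m)}$ with $\mubs^{(1)}, \ldots, \mubs^{(m)}$ pairwise uncorrelated $k$-tuples of umbral monomials, each similar to $\mubs.$ By definition of the $\ibs$-th power of a $k$-tuple,
$$(m \boldsymbol{.} \mubs)^{\ibs} = \prod_{r=1}^{k} \bigl( \mu_r^{(1)} + \cdots + \mu_r^{(m)} \bigr)^{i_r},$$
and expanding each of the $k$ factors by~\eqref{(multthm)} and multiplying the expansions out yields
$$(m \boldsymbol{.} \mubs)^{\ibs} = \sum_{L} \frac{\ibs!}{L!} \prod_{j=1}^{m} (\mubs^{(j)})^{L_j},$$
where $L = (\ell_{rj})$ ranges over the $k \times m$ matrices of non-negative integers whose $r$-th row sums to $i_r,$ the symbol $L_j$ denotes the $j$-th column of $L,$ and $L! = \prod_{r,j} \ell_{rj}!.$

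Next I would apply the evaluation $\E.$ Since $\mubs^{(1)}, \ldots, \mubs^{(m)}$ are pairwise uncorrelated and each similar to $\mubs,$ the uncorrelation property gives $\E\bigl[ \prod_{j} (\mubs^{(j)})^{L_j} \bigr] = \prod_{j} \E[\mubs^{L_j}],$ a zero column contributing the factor $\E[1] = 1.$ Hence only the nonzero columns of $L$ are relevant. I would then group the matrices $L$ by the partition $\lambdabs \mmodels \ibs$ obtained by arranging the nonzero columns of $L$ in lexicographic order: if $\lambdabs$ has length $l(\lambdabs)$ and distinct columns $\lambdabs_1 < \lambdabs_2 < \cdots$ with multiplicities ${\mathfrak m}(\lambdabs) = (r_1, r_2, \ldots),$ then each such $L$ contributes $\ibs!/\lambdabs!$ times $\E[\mubs^{\lambdabs_1}]^{r_1} \E[\mubs^{\lambdabs_2}]^{r_2} \cdots,$ and both factors depend only on $\lambdabs$ and not on the particular $L.$

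The remaining point is to count the matrices $L$ producing a fixed $\lambdabs.$ Such an $L$ is obtained by choosing an ordered list of $l(\lambdabs)$ distinct indices from $[m]$ to carry the nonzero columns --- $(m)_{l(\lambdabs)}$ choices --- placing the columns of $\lambdabs$ on them in order, and setting the other columns to zero; two such choices produce the same $L$ exactly when they differ by a permutation interchanging equal columns of $\lambdabs,$ of which there are ${\mathfrak m}(\lambdabs)!.$ So the number of matrices $L$ attached to $\lambdabs$ is $(m)_{l(\lambdabs)}/{\mathfrak m}(\lambdabs)!.$ Substituting this count into the grouped sum gives
$$\E[(m \boldsymbol{.} \mubs)^{\ibs}] = \sum_{\lambdabs \mmodels \ibs} \frac{\ibs!}{{\mathfrak m}(\lambdabs)! \, \lambdabs!} \, (m)_{l(\lambdabs)} \, \E[\mubs^{\lambdabs_1}]^{r_1} \E[\mubs^{\lambdabs_2}]^{r_2} \cdots,$$
which is the asserted formula; terms with $l(\lambdabs) > m$ automatically vanish since $(m)_{l(\lambdabs)} = 0,$ consistently with the fact that a sum of $m$ copies cannot activate more than $m$ of them. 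The one delicate part --- and the main obstacle --- is precisely this last bookkeeping: one must check that the multinomial factors $\ibs!/L!,$ the selection of active indices, and the correction ${\mathfrak m}(\lambdabs)!$ for repeated columns combine into exactly $\ibs! \, (m)_{l(\lambdabs)} / ({\mathfrak m}(\lambdabs)! \, \lambdabs!),$ with no spurious extra factor; everything else is the componentwise multinomial expansion together with the uncorrelation and similarity of the $\mubs^{(j)}.$
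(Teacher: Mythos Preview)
Your argument is correct: the componentwise multinomial expansion over $k\times m$ matrices $L$, followed by evaluation via uncorrelation and similarity, and then the regrouping by the multi-index partition $\lambdabs$ with the count $(m)_{l(\lambdabs)}/{\mathfrak m}(\lambdabs)!$, yields exactly the stated formula; this is the multivariate analogue of the passage from~\eqref{(sum1)} to~\eqref{(sum33)} that you explicitly invoke. The paper itself does not supply a proof of this proposition --- it is quoted from~\cite{Statcomp} --- so there is no in-text argument to compare against, but your derivation is precisely the expected one and matches in spirit the univariate computation the paper does spell out.
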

Multivariate moments of the auxiliary umbra $(\alpha \boldsymbol{.} \beta \boldsymbol{.} \mubs)^{\ibs}$ can be computed
from Proposition \ref{summult}:
\begin{equation}
\E[(\alpha \boldsymbol{.} \beta \boldsymbol{.} \mubs)^{\ibs}] = \sum_{\lambdabs
\mmodels \ibs} \frac{\ibs!}{m(\lambdabs)! \, \lambdabs!} \,
a_{l(\lambdabs)} E[\mubs^{\lambdabs_1}]^{r_1} E[\mubs^{\lambdabs_2}]^{r_2} \cdots. \label{eq:17}
\end{equation}
To obtain Equation \eqref{eq:17}, the arguments are the same as the ones employed to get Equation \eqref{(momcomp)}.
The umbra $\alpha \boldsymbol{.} \beta \boldsymbol{.} \mubs$ represents a multivariate randomized
compound Poisson r.v., that is, a random sum $S_N = {\boldsymbol X}_1 + \cdots + {\boldsymbol X}_N$
of i.i.d.\ random vectors $\{{\boldsymbol X}_i\}$ indexed by a Poisson r.v.\ $N.$
The right-hand side of Equivalence~\eqref{eq:17} gives moments of $S_N$ if we replace the $\ibs$-th multivariate moment of $\mubs$ by
the $\ibs$-th multivariate moment of $\{{\boldsymbol X}_i\}$ and the moments of $\alpha$ by those of $N.$ This result is the same as
Theorem~4.1 of \cite{Const}, obtained with a different proof and different methods.

A way to compute partitions of a multi-index $\ibs$ is by employing
subdivisions of the multiset $M,$ when $\ibs$ is its vector of
multiplicities.
\begin{example}
The multiset $M=\{\mu_1, \mu_1, \mu_2\}$ corresponds to the multi-index $(2,1),$ and the subdivisions
$$\{\{\mu_1, \mu_1, \mu_2\}\}, \{\{\mu_1, \mu_1\},\{\mu_2\}\}, \{\{\mu_1, \mu_2\}, \{\mu_1\}\},
\{\{\mu_1\}, \{\mu_1\}, \{\mu_2\}\}$$
correspond to the multi-index partitions
$${\binom 2 1}, { \binom {2,0} {0,1}}, {\binom {1,1} {1,0}}, {\binom {1,1,0}  {0,0,1}}.$$
\end{example}
By using the auxiliary umbra $\alpha \boldsymbol{.} \beta \boldsymbol{.} \mubs,$
we can summarize the
different types of composition as follows:
\par \smallskip
{\it a)} ({\it univariate composed with multivariate}) The
composition of a univariate formal power series $f(\alpha,z)$ with a
multivariate formal power series $f(\mubs,\zbs)$ is defined by
$$f(\alpha \boldsymbol{.} \beta \boldsymbol{.} \mubs,\zbs) \, = \,f[\alpha, f(\mubs,\zbs)-1].$$
Equivalence~\eqref{eq:17} gives its $\ibs$-th coefficient.
\par \smallskip
{\it b)} ({\it multivariate composed with univariate}) The
composition of a multivariate formal power series $f(\mubs,\zbs)$
with a univariate formal power series $f(\gamma,z)$ is defined by
$$f[(\mu_1 + \cdots + \mu_k) \boldsymbol{.} \beta \boldsymbol{.} \gamma, z] \, = \,  f[\mubs, (f(\gamma,z)-1, \ldots, f(\gamma,z)-1)].$$
The $i$-th coefficient of $f[(\mu_1 + \cdots + \mu_k) \boldsymbol{.} \beta
\boldsymbol{.} \gamma, z]$ can be computed by replacing $\gamma$ by $(\mu_1 + \cdots +
\mu_k)$ in~\eqref{(momcomp)} and $\alpha$ by $\gamma.$
\par \smallskip
{\it c)} ({\it multivariate composed with multivariate}) The
composition of a multivariate formal power series $f(\mubs,\zbs)$
with a multivariate formal power series $f(\nubs,\zbs)$ is defined by
$$f[(\mu_1 + \cdots + \mu_k) \boldsymbol{.} \beta \boldsymbol{.} \nubs, \zbs] \, = \,
f[\mubs, (f(\nubs,\zbs)-1, \ldots, f(\nubs,\zbs)-1)].$$
The $\ibs$-th coefficient of $f[(\mu_1 + \cdots + \mu_k) \boldsymbol{.} \beta\boldsymbol{.} \nubs, \zbs]$
can be computed by evaluating Equivalence~\eqref{eq:17} after having replaced $\mubs$ by $\nubs$ and
the umbra $\alpha$ by $(\mu_1 + \cdots + \mu_k).$
\par \smallskip
{\it d)} ({\it multivariate composed with multivariate}) The
composition of a multivariate formal power series $f(\mubs,\zbs)$
with a multivariate formal power series $f(\nubs,\zbs_{(m)}),$ with a different
number of indeterminates, is defined by
$$f[(\mu_1 + \cdots + \mu_k) \boldsymbol{.} \beta \boldsymbol{.} \nubs, \zbs_{(m)}] \, = \,
f[\mubs, (f(\nubs,\zbs_{(m)})-1, \ldots, f(\nubs,\zbs_{(m)})-1)].$$

{\it e)} ({\it multivariate composed with different multivariates}) The
composition of a multivariate formal power series $f(\mubs,\zbs)$
with different multivariate formal power series $f(\nubs_i,\zbs_{(m)}),$ with a different
number of indeterminates, is defined by
\begin{multline}
f[\mu_1 \boldsymbol{.} \beta \boldsymbol{.} \nubs_1 + \cdots + \mu_k
    \boldsymbol{.} \beta \boldsymbol{.} \nubs_k, \zbs_{(m)}] \,\\
 = \, f[\mubs, (f(\nubs_1,\zbs_{(m)})-1, \ldots, f(\nubs_k,\zbs_{(m)})-1)].
\label{(mulmul)}
\end{multline}
The $\ibs$-th coefficient of the compositions {\it d)} and {\it e)} can be obtained by using {\it generalized Bell polynomials}.
\begin{definition}
The {\it generalized Bell polynomial\/} of order $\ibs$ is
$$B_{\ibs}^{(\nubs_1,\ldots,\nubs_k)}(x_1, \ldots, x_k) = (x_1 \boldsymbol{.} \beta \boldsymbol{.} \nubs_1 + \cdots + x_k \boldsymbol{.} \beta \boldsymbol{.} \nubs_k)^{\ibs}.$$
\end{definition}
The formal power series whose coefficients are generalized Bell polynomials is
$\exp\left\{\sum_{i=1}^k x_i [f(\nubs_i,\zbs_{(m)}) - 1] \right\}.$ These polynomials give the multivariate
Fa\`a di Bruno formula for all cases listed before:
\par \smallskip
{\it a)} ({\it univariate composed with multivariate})
$B_{\ibs}^{\nubs}(\alpha) = (\alpha \boldsymbol{.} \beta \boldsymbol{.} \mubs)^{\ibs};$
\par \smallskip
{\it b)} ({\it multivariate composed with univariate}) \\
\centerline{$B_{\ibs}^{\gamma}(\mu_1 + \cdots + \mu_k)=[(\mu_1 + \cdots + \mu_k) \boldsymbol{.} \beta \boldsymbol{.} \gamma]^{\ibs};$}
\par \smallskip
{\it c);d)} ({\it multivariate composed with multivariate}) \\
\centerline{$B_{\ibs}^{\nubs}(\mu_1 + \cdots + \mu_k) = [(\mu_1 + \cdots + \mu_k) \boldsymbol{.} \beta \boldsymbol{.} \nubs]^{\ibs};$}
\par \smallskip
{\it e)} ({\it multivariate composed with different multivariates})  \\
\centerline{$B_{\ibs}^{(\nubs_1,\ldots,\nubs_k)}(\mu_1, \ldots, \mu_k) \simeq (\mu_1 \boldsymbol{.} \beta \boldsymbol{.}
\nubs_1 + \cdots + \mu_k \boldsymbol{.} \beta \boldsymbol{.} \nubs_k)^{\ibs}.$}
A {\tt MAPLE} algorithm for computing the evaluation of $B_{\ibs}^{(\nubs_1,\ldots,\nubs_k)}(\mu_1,$ $ \ldots,\break \mu_k)$
is given in \cite{MultFaa}. The main idea is to apply the multivariate version of \eqref{(multthm)} to get
\begin{multline}
{\mathbb E}\left[B_{\ibs}^{(\nubs_1,\ldots,\nubs_k)}(\mu_1, \ldots, \mu_k) \right] \\
 =  \sum_{(\ibs_1, \ldots, \ibs_k): \sum_{j=1}^k \ibs_j = \ibs}  \binom{\kbs}{\ibs_1, \ldots, \ibs_k} {\mathbb E} \left[ (\mu_1 \punt \beta \punt \nubs_1)^{\ibs_1} \cdots (\mu_k \punt \beta \punt \nubs_k)^{\ibs_k} \right],
\label{(expansion)}
\end{multline}
to expand powers $(\mu_i \boldsymbol{.} \beta \boldsymbol{.} \nubs_i)^{k_i},$ to multiply the resulting expansions by
$(\mu_1 \punt \beta \punt \nubs_1)^{\ibs_1} \cdots (\mu_d \punt \beta \punt \nubs_d)^{\ibs_d}$,
and then to apply evaluation lowering powers to indexes.
\subsection{Multivariate cumulants}
Given the sequence $\{g_{\ibs}\}_{\ibs \in \mathbb{N}_0^n},$ its {\it multivariate cumulants} are the coefficients
of the formal power series in the left-hand side of the following equation
$$ 1 +  \sum_{\ibs \geq 1} g_{\ibs}\frac{\tbs^{\ibs}}{\ibs!} =
\exp \left( \sum_{\jbs \geq 1} c_{\jbs}\frac{\tbs^{\jbs}}{\ibs!} \right).$$
\begin{proposition}
The auxiliary umbra $\chi \boldsymbol{.} \mubs$ represents the {\it multivariate cumulants} of $\mubs.$
\end{proposition}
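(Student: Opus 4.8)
The plan is to pass to multivariate exponential generating functions and to show that dotting the singleton umbra $\chi$ into the $k$-tuple $\mubs$ inverts the defining relation of the multivariate cumulants, exactly as $\chi\boldsymbol{.}\alpha$ inverts Equation~\eqref{(cumRota)} in the univariate theory of Section~4.

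First I would give the $k$-tuple $\chi\boldsymbol{.}\mubs$ a precise meaning by extending the dot-product $m\boldsymbol{.}\mubs$ from positive integers $m$ to arbitrary umbral multipliers, just as Equations~\eqref{dotproduct1}--\eqref{dotproduct2} extend $m\boldsymbol{.}\alpha$ to $\gamma\boldsymbol{.}\alpha$. By Proposition~\ref{summult}, for each multi-index $\ibs$ the evaluation $\E[(m\boldsymbol{.}\mubs)^{\ibs}]$ is a polynomial $Q_{\ibs}(m)$ in $m$ (the dependence on $m$ enters only through the falling factorials $(m)_{l(\lambdabs)}$), so one may set $\E[(\gamma\boldsymbol{.}\mubs)^{\ibs}]:=\E[Q_{\ibs}(\gamma)]$ for an arbitrary umbra $\gamma.$ Since $m\boldsymbol{.}\mubs$ is a sum of $m$ uncorrelated $k$-tuples similar to $\mubs,$ its multivariate moment generating function factors, $f(m\boldsymbol{.}\mubs,\tbs)=f(\mubs,\tbs)^{m}=\exp\bigl[m\log f(\mubs,\tbs)\bigr],$ where $\log f(\mubs,\tbs)\in\mathbb{R}[[\tbs]]$ has vanishing constant term because $f(\mubs,\mathbf{0})=1.$ Equating $\sum_{\ibs}Q_{\ibs}(x)\tbs^{\ibs}/\ibs!$ with $\exp[x\log f(\mubs,\tbs)]$ — an identity of polynomials in $x,$ since it holds for every positive integer $x$ — expanding the exponential and reading off the coefficient of $\tbs^{\ibs}$ gives $Q_{\ibs}(x)=\sum_{j\ge 0}\tfrac{\ibs!}{j!}\bigl[\tbs^{\ibs}\bigr]\!\bigl((\log f(\mubs,\tbs))^{j}\bigr)x^{j}.$ Substituting $\gamma$ for $x,$ evaluating, and summing up then yields the multivariate analogue of the generating-function rule underlying Table~\ref{tablegenfun},
\[
f(\gamma\boldsymbol{.}\mubs,\tbs)=\sum_{j\ge 0}\E[\gamma^{j}]\,\frac{\bigl(\log f(\mubs,\tbs)\bigr)^{j}}{j!}=f\bigl(\gamma,\log f(\mubs,\tbs)\bigr).
\]

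Then I would specialize $\gamma=\chi.$ Because $f(\chi,z)=1+z$ by Table~\ref{tablegenfun}, the displayed identity becomes $f(\chi\boldsymbol{.}\mubs,\tbs)=1+\log f(\mubs,\tbs),$ equivalently $f(\mubs,\tbs)=\exp\bigl[f(\chi\boldsymbol{.}\mubs,\tbs)-1\bigr]$ — the $k$-tuple version of the Inversion Theorem~\ref{inv}. On the other hand, $f(\mubs,\tbs)=1+\sum_{\ibs\ge 1}g_{\ibs}\tbs^{\ibs}/\ibs!$ equals, by the definition of the multivariate cumulants, $\exp\bigl(\sum_{\jbs\ge 1}c_{\jbs}\tbs^{\jbs}/\jbs!\bigr).$ Since $\exp$ is a bijection from the formal power series in $\tbs$ with zero constant term onto those with constant term $1,$ the two exponents coincide, so that
\[
\sum_{\jbs\ge 1}\E\bigl[(\chi\boldsymbol{.}\mubs)^{\jbs}\bigr]\frac{\tbs^{\jbs}}{\jbs!}=f(\chi\boldsymbol{.}\mubs,\tbs)-1=\log f(\mubs,\tbs)=\sum_{\jbs\ge 1}c_{\jbs}\frac{\tbs^{\jbs}}{\jbs!}.
\]
Comparing the coefficients of $\tbs^{\jbs}$ gives $\E[(\chi\boldsymbol{.}\mubs)^{\jbs}]=c_{\jbs}$ for every $\jbs\in\mathbb{N}_0^k,$ which is exactly the claim.

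The hard part will be the generating-function lemma: one must first fix the meaning of the umbral dot-product $\gamma\boldsymbol{.}\mubs$ for a univariate umbra $\gamma$ acting on a $k$-tuple (the excerpt only introduces $m\boldsymbol{.}\mubs$ for integers, through Proposition~\ref{summult}), and then check its compatibility with composition of generating functions; once this is in place the rest is the univariate computation of Section~4 transported through multi-index notation. A route avoiding generating functions would be to match the explicit expansion coming from Proposition~\ref{summult} together with $\E[(\chi)_{l}]=(-1)^{l-1}(l-1)!,$ namely
\[
\E\bigl[(\chi\boldsymbol{.}\mubs)^{\ibs}\bigr]=\sum_{\lambdabs\,\mmodels\,\ibs}\frac{\ibs!}{\mathfrak{m}(\lambdabs)!\,\lambdabs!}\,(-1)^{l(\lambdabs)-1}\bigl(l(\lambdabs)-1\bigr)!\;\E[\mubs^{\lambdabs_{1}}]^{r_{1}}\E[\mubs^{\lambdabs_{2}}]^{r_{2}}\cdots,
\]
against the classical moment--cumulant inversion formula for multivariate cumulants indexed by multiset partitions.
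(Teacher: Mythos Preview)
Your proof is correct, and like the paper's it ultimately identifies $f(\chi\boldsymbol{.}\mubs,\tbs)$ with $1+\log f(\mubs,\tbs)$; but the route is different. The paper does not build a new generating-function lemma for $\gamma\boldsymbol{.}\mubs$: instead it uses the composition rule already set up in item \emph{a)} of Section~7, namely $f(\alpha\boldsymbol{.}\beta\boldsymbol{.}\mubs,\zbs)=f[\alpha,\,f(\mubs,\zbs)-1]$, together with the umbral identity $\chi\boldsymbol{.}\mubs\equiv(\chi\boldsymbol{.}\chi)\boldsymbol{.}\beta\boldsymbol{.}\mubs$ (a consequence of $\chi\boldsymbol{.}\beta\equiv u$ from Proposition~\ref{(cumbell)}) and the computation $f(\chi\boldsymbol{.}\chi,t)=1+\log(1+t)$. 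Plugging $\alpha=\chi\boldsymbol{.}\chi$ into the composition rule then gives $f(\chi\boldsymbol{.}\mubs,\zbs)=1+\log f(\mubs,\zbs)$ in one line.

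Your detour through the formula $f(\gamma\boldsymbol{.}\mubs,\tbs)=f(\gamma,\log f(\mubs,\tbs))$ is perfectly valid and more self-contained: it avoids invoking the Bell umbra $\beta$ and the identity $(\chi\boldsymbol{.}\chi)\boldsymbol{.}\beta\equiv\chi$, at the cost of re-deriving a composition law that the paper already has available in the $\alpha\boldsymbol{.}\beta\boldsymbol{.}\mubs$ form. The paper's argument is shorter precisely because the multivariate composition machinery (Equation~\eqref{eq:17} and item \emph{a)}) was the point of the preceding subsection; your argument would be the natural one if that infrastructure were absent.
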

\begin{proof}
The result follows from {\it a)}, by choosing as umbra $\alpha$ the umbra $\chi \boldsymbol{.} \chi,$
and observing that  $\chi \boldsymbol{.} \mubs \equiv (\chi \boldsymbol{.} \chi) \boldsymbol{.} \beta \boldsymbol{.} \mubs$
and
\begin{equation*}
f(\chi \boldsymbol{.} \chi,t)=1 + \log(t+1).\qedhere
\end{equation*}
\end{proof}
The multivariate analog of the well-known semi-invariance property of cumulants
is $\chi \boldsymbol{.} (\mubs + \nubs) \equiv \chi \boldsymbol{.} \mubs \dot{{}+{}} \chi \boldsymbol{.} \nubs,$
where $\mubs$ and $\nubs$ are uncorrelated $n$-tuples of umbral monomials, and
the \textit{disjoint sum} $\mubs \dot{{}+{}} \nubs$ is defined by the multivariate moments
$\E[(\mubs \dot{{}+{}} \nubs)^{\ibs}] = \E[\mubs^{\ibs}] + \E[\nubs^{\ibs}].$
Thus, multivariate cumulants linearize convolutions of uncorrelated $n$-tuples
of umbral monomials.
By using Equation~\eqref{eq:17} with $\alpha$ replaced by $\chi \boldsymbol{.} \chi,$
that is, by expanding the $\ibs$ multivariate moments of $(\chi \boldsymbol{.} \chi) \boldsymbol{.} \beta \boldsymbol{.} \mubs \equiv
\chi \boldsymbol{.} \mubs,$ we get multivariate cumulants in terms of multivariate moments. Still using Equation~\eqref{eq:17}
with $\alpha$ replaced by $u$ and $\mubs$ replaced by $\chi \punt \mubs,$ that is, expanding the $\ibs$-th multivariate moment of $u \boldsymbol{.} \beta \boldsymbol{.} (\chi \punt \mubs) \equiv \mubs,$ we get multivariate moments in terms of multivariate cumulants.
The same equations are given in \cite{McCullagh} using tensor notations. Umbrae indexed by multisets are employed in \cite{Bernoulli}.
\subsection{Multivariate L\'evy processes}
Let $\Xbs = \left\{\Xbs_t\right\}$ be a L\'evy process on ${\mathbb R}^k,$ with
$\Xbs_t = \left(X_1^{(t)}, \ldots, X_k^{(t)}\right),$ where $\left\{X_j^{(t)}\right\}_{j=1}^k$
are univariate r.v.'s all defined on the same probability space for all $t \geq 0. $

In Section~3, we have shown that every L\'evy process $X=\{X_t\}$ on ${\mathbb R}$ is
umbrally represented by the family of auxiliary umbrae $\{t \boldsymbol{.} \alpha\}_{t \geq 0}$
defined by $\E\left[(t \boldsymbol{.} \alpha)^i\right] = E\left[X_t^i\right],$ for all
non-negative integers $i.$ This result can be generalized to the multivariate case as follows.
\begin{definition}\label{7.11}
A L\'evy process $\Xbs=\{\Xbs_t\}_{t \geq 0}$ on ${\mathbb R}^k$ is umbrally represented by
$\{t \boldsymbol{.} \mubs\}_{t \geq 0},$ if $\mubs$
is a $k$-tuple of umbral monomials representing the $k$-tuple of random vectors $\Xbs_1 = (X_1^{(1)},\ldots, X_k^{(1)})$ with the property that
$$E[\mubs^{\ibs}] = E\left[\left\{X_1^{(1)}\right\}^{i_1} \cdots \left\{X_k^{(1)}\right\}^{i_k} \right],$$
for all $\ibs = (i_1, \ldots, i_k) \in \mathbf{N}_0^k.$
\end{definition}
The following theorem gives the multivariate
analog of the L\'evy--Khintchine formula \eqref{(lkf)}.
\begin{theorem}[\sc Multivariate L\'evy--Khintchine formula] \cite{Sato} \label{lkfm}
If $\{\Xbs_t\}$ is a multivariate L\'evy process, then $E\left[e^{\Xbs_t \zbs'}\right]=[\phi_{\Xbs_1}(\zbs)]^t$ with%
\footnote{Here, the notation $\zbs^{\prime}$ denotes the transpose of the row vector $\zbs.$}
\begin{multline}
\phi_{\Xbs_1}(\zbs) =
\exp\left\{\left[\frac{1}{2} \zbs \Sigma \zbs^{\prime}
+ \mbs\zbs^{\prime}\right.\right.\\
\left.\left. + \int_{{\mathbb R}^k}\left(e^{ \zbs\xbs^{\prime}} - 1 -  \, \zbs \, \xbs^{\prime}
\ind_{\{|\xbs|\leq 1\}}(\xbs) \right)\nu(d\xbs)\right]\right\},
\label{mgf}
\end{multline}
where $\mbs \in {\mathbb R}^k$ and $\Sigma$ is a symmetric, positive-definite $k \times k$ matrix.
\end{theorem}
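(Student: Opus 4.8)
The plan is to deduce Theorem~\ref{lkfm} from the one--dimensional theory by projecting $\Xbs$ onto directions $\zbs\in{\mathbb R}^k$, exactly as the univariate L\'evy--Khintchine formula of Section~4.2 drives the symbolic representation in Theorem~\ref{T2}. First I would isolate the power structure $E[e^{\Xbs_t\zbs'}]=[\phi_{\Xbs_1}(\zbs)]^t$, and only afterwards identify the exponent $\phi_{\Xbs_1}$; the formula is quoted from \cite{Sato}, so the role of the proof here is to show how it reduces to material already in the paper.

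For the power structure, fix $\zbs$ and set $\phi_t(\zbs):=E[e^{\Xbs_t\zbs'}]$, assuming, as the paper does, that the relevant exponential moment is finite near the origin. Properties~{\it a)}--{\it c)} of L\'evy processes give $\phi_{s+t}(\zbs)=\phi_s(\zbs)\,\phi_t(\zbs)$, whence $\phi_{p/q}(\zbs)=\phi_1(\zbs)^{p/q}$ for rationals, provided $\phi_t(\zbs)\neq 0$; and $\phi_t(\zbs)\neq 0$ because $\phi_t(\zbs)=\phi_{t/q}(\zbs)^q$ for every $q$, together with $\phi_0(\zbs)=1$ and $t\mapsto\phi_t(\zbs)$ continuous (stochastic continuity of the paths). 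Right--continuity of the paths then upgrades the identity from rational to all $t\geq 0$, giving $\phi_t(\zbs)=\phi_1(\zbs)^t$. This part is routine and needs no new idea.

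To identify $\phi_1$: for each fixed $\zbs$, the real process $\{\Xbs_t\zbs'\}_{t\geq 0}$ inherits {\it a)}--{\it c)} from $\Xbs$, hence is a one--dimensional L\'evy process, and the univariate L\'evy--Khintchine formula supplies a triple $(m_{\zbs},s_{\zbs}^2,\nu_{\zbs})$ with $\log\phi_1(\zbs)=m_{\zbs}+\tfrac12 s_{\zbs}^2+\int_{\mathbb R}(e^{y}-1-y\,\ind_{\{|y|\leq 1\}})\,\nu_{\zbs}(\mathrm{d}y)$. The substance of the proof is that this family of triples is generated by a single multivariate triple: $s_{\zbs}^2$ is a nonnegative quadratic form in $\zbs$, so $s_{\zbs}^2=\zbs\Sigma\zbs'$ for a unique symmetric, positive--semidefinite $\Sigma$ by polarization; and the jumps of $\{\Xbs_t\zbs'\}$ are the images of the jumps of $\{\Xbs_t\}$ under $\xbs\mapsto\zbs\xbs'$, so each $\nu_{\zbs}$ is the pushforward of one measure $\nu$ on ${\mathbb R}^k\setminus\{\boldsymbol 0\}$. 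Concretely I would invoke the $k$--dimensional L\'evy--It\^o decomposition (the vector analogue of Theorem~\ref{decomposition}): $\Xbs_t=\mbs\,t+\mathbf{B}_t+\int_{|\xbs|\leq 1}\xbs\,\widetilde N_t(\mathrm{d}\xbs)+\int_{|\xbs|>1}\xbs\,N_t(\mathrm{d}\xbs)$, with $\mathbf{B}$ a $k$--dimensional Brownian motion of covariance $\Sigma$, $N$ a Poisson random measure on $[0,\infty)\times({\mathbb R}^k\setminus\{\boldsymbol 0\})$ of intensity $\mathrm{d}t\otimes\nu$, $\widetilde N$ its compensation, and $\int\min(1,|\xbs|^2)\,\nu(\mathrm{d}\xbs)<\infty$; computing $E[e^{\Xbs_1\zbs'}]$ and using the independence of the four summands yields exactly \eqref{mgf}. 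The truncation indicator $\ind_{\{|\xbs|\leq 1\}}(\xbs)$ (rather than $\ind_{\{|\zbs\xbs'|\leq 1\}}$) and the vector $\mbs$ are precisely what keeps all integrals convergent; changing the truncation region only changes $\mbs$. Combining with the first step gives $E[e^{\Xbs_t\zbs'}]=\phi_1(\zbs)^t=[\phi_{\Xbs_1}(\zbs)]^t$.

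The main obstacle is the assembly step: manufacturing the single measure $\nu$ on ${\mathbb R}^k$ (and the single matrix $\Sigma$) out of the one--dimensional projected data, i.e.\ the multivariate L\'evy--It\^o decomposition itself. If one prefers not to assume it, the alternative is the classical limiting argument, $\log\phi_1(\zbs)=\lim_n n\bigl(\phi_{1/n}(\zbs)-1\bigr)$, which realises $\Xbs_1$ as a vague limit of compound Poisson laws: the measures $n\,{\mathbb P}_{\Xbs_{1/n}}$ restricted to $\{|\xbs|>\varepsilon\}$ are tight and converge to $\nu$, while the Gaussian block emerges from the residual mass near $\boldsymbol 0$, and making the small--jump compensation uniform in $n$ before passing to the limit is the delicate point. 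In the survey I would carry out the projection reduction explicitly and refer to \cite{Sato} for this one--dimensional--to--multivariate bookkeeping.
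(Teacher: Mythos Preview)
The paper does not prove this theorem at all: it is stated with the citation \cite{Sato} and no proof follows, exactly as the univariate L\'evy--Khintchine formula (Theorem~4.10) is quoted earlier. So there is no ``paper's own proof'' to compare your proposal against; your sketch is a reasonable outline of the standard argument in \cite{Sato}, but for the purposes of this survey the result is simply imported.
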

The triple $(\mbs, \Sigma, \nu)$ is called L\'evy triple, $\Sigma$ is called \emph{covariance matrix}, and
$\nu$ is the \emph{L\'evy measure} on ${\mathbb R}^k.$ As in the univariate case, if we set
$\mbs_2 = \int_{{\mathbb R}^k} \xbs \, \ind_{\{|\xbs| > 1\}}(\xbs) \, \nu(d\xbs)$ and
$\mbs_1 = \mbs + \mbs_2$ in Equation~\eqref{mgf}, then
\begin{align}
\phi_{\Xbs_1}(\zbs) & = \exp\left\{\left[\frac{1}{2}\zbs\Sigma \zbs^{\prime} +
\mbs_1\zbs^{\prime} + \int_{{\mathbb R}^k}(e^{\zbs\xbs^{\prime}} - 1 -\zbs\xbs^{\prime})\nu(d\xbs)\right]\right\}.
\label{LK}
\end{align}
Due to the above equation, a multivariate L\'evy process is represented by a family of auxiliary umbrae, as given
in the following theorem.
\begin{theorem} \label{lkfm1} A multivariate L\'evy process $\{\Xbs_t\}_{t \geq 0}$ is represented by the family of auxiliary umbrae
\begin{equation}
 \left\{t \boldsymbol{.} \beta \boldsymbol{.} \left(\etabs C^{\prime} \dot{{}+{}} \chi \boldsymbol{.} \mbs_1 \dot{{}+{}}
\gammabs\right)\right\}_{t \geq 0},\label{levymulti}
\end{equation}
where $\etabs = (\eta_1, \ldots,\eta_k)$ is a $k$-tuple of uncorrelated umbrae satisfying
$f(\etabs,\zbs) = 1 +\zbs\zbs^{\prime}/2,$ the matrix $C$ is the square root of the covariance matrix $\Sigma$ in Equation~\eqref{LK},
and $\gammabs$ is a $k$-tuple of umbral monomials satisfying
$$
f(\gammabs, \zbs) = 1 + \int_{{\mathbb R}^k}
\left(e^{\zbs\xbs^{\prime}} - 1 - \zbs\xbs^{\prime}\right) \nu(d\xbs).
$$
\end{theorem}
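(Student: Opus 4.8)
The plan is to follow the proof of Theorem~\ref{T2}, working entirely with multivariate formal power series in $\zbs$: I will compute the multivariate moment generating function of the proposed family $\{t \boldsymbol{.} \beta \boldsymbol{.} (\etabs C' \dot{{}+{}} \chi \boldsymbol{.} \mbs_1 \dot{{}+{}} \gammabs)\}_{t \geq 0}$ and check that it equals $[\phi_{\Xbs_1}(\zbs)]^t$ as displayed in Equation~\eqref{LK}, which by Theorem~\ref{lkfm} is $E[e^{\Xbs_t \zbs'}]$. Equality of generating functions means equality of all coefficients, hence of all multivariate moments, which is precisely the requirement of Definition~\ref{7.11}. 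As in the passage from \eqref{(lkf)} to \eqref{LK2}, I assume $\nu$ admits all moments, so that the $k$-tuple $\gammabs$ with the generating function stated in the theorem is a well-defined formal object; convergence never enters.

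First I would set up the multivariate generating-function dictionary. From the multivariate analogue of \eqref{(genalfa)} --- obtained from \eqref{eq:17} exactly as \eqref{(momcomp)} gives \eqref{(genalfa)} in one variable --- one has $f(t \boldsymbol{.} \beta \boldsymbol{.} \mubs, \zbs) = \exp\{t\,[f(\mubs,\zbs)-1]\}$ for every $k$-tuple $\mubs$. From the multivariate inversion of Section~7.1, where $\chi \boldsymbol{.} \mubs$ is shown to carry the multivariate cumulants of $\mubs$, one gets $f(\chi \boldsymbol{.} \mubs, \zbs) = 1 + \log f(\mubs, \zbs)$. Since the disjoint sum adds multivariate moments, a threefold disjoint sum satisfies $f(\nubs_1 \dot{{}+{}} \nubs_2 \dot{{}+{}} \nubs_3, \zbs) - 1 = \sum_{r=1}^{3}[f(\nubs_r, \zbs)-1]$. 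Finally, for a linear recombination of a $k$-tuple, $f(\etabs C', \zbs) = f(\etabs, \zbs C)$, because $(\etabs C')\zbs' = \etabs (\zbs C)'$, so that the multivariate moments of $\etabs C'$ are the polynomial expansions obtained by the substitution $\zbs \mapsto \zbs C$ in the generating function of $\etabs$.

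Next I would assemble the three contributions. Using $CC' = \Sigma$ and $f(\etabs, \zbs) = 1 + \tfrac12 \zbs\zbs'$, together with $f(\mbs_1, \zbs) = e^{\mbs_1\zbs'}$ for the deterministic $k$-tuple $\mbs_1$, the dictionary gives
\[
f(\etabs C', \zbs) = 1 + \tfrac12 (\zbs C)(\zbs C)' = 1 + \tfrac12\, \zbs \Sigma \zbs', \qquad
f(\chi \boldsymbol{.} \mbs_1, \zbs) = 1 + \mbs_1 \zbs',
\]
whence, by the disjoint-sum rule and the given $f(\gammabs,\zbs)$,
\[
f\!\left(\etabs C' \dot{{}+{}} \chi \boldsymbol{.} \mbs_1 \dot{{}+{}} \gammabs,\, \zbs\right) - 1
= \tfrac12 \zbs \Sigma \zbs' + \mbs_1 \zbs' + \int_{{\mathbb R}^k}\!\left(e^{\zbs\xbs'} - 1 - \zbs\xbs'\right)\nu(d\xbs).
\]
Applying $f(t \boldsymbol{.} \beta \boldsymbol{.}(\cdot), \zbs) = \exp\{t[f(\cdot,\zbs)-1]\}$ and comparing with \eqref{LK} gives $f(t \boldsymbol{.} \beta \boldsymbol{.}(\etabs C' \dot{{}+{}} \chi \boldsymbol{.} \mbs_1 \dot{{}+{}} \gammabs), \zbs) = [\phi_{\Xbs_1}(\zbs)]^t = E[e^{\Xbs_t\zbs'}]$, which is the assertion. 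As a by-product, the multivariate form of Theorem~\ref{labdisj} splits the umbra into a diffusion part $t \boldsymbol{.} \beta \boldsymbol{.}(\etabs C' \dot{{}+{}} \chi \boldsymbol{.} \mbs_1)$ and a jump part $t \boldsymbol{.} \beta \boldsymbol{.} \gammabs$, mirroring Theorem~\ref{decomposition}.

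The hard part will not be the final assembly, which is mechanical once the dictionary is fixed, but establishing that dictionary rigorously in several variables: in particular the recombination rule $f(\etabs C', \zbs) = f(\etabs, \zbs C)$, where one must verify that the substitution $\zbs \mapsto \zbs C$ commutes with the linear functional $\E$ so that the multivariate moments of $\etabs C'$ are indeed the substituted coefficients, and the multivariate exponential and logarithm identities, which have to be derived from \eqref{eq:17} and the Section~7.1 inversion rather than quoted from the one-dimensional theory.
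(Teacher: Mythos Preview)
Your proposal is correct and follows essentially the same approach the paper takes: the paper does not supply an explicit proof of Theorem~\ref{lkfm1}, but presents it as the multivariate analogue of Theorem~\ref{T2}, with the surrounding commentary indicating that the $k$-tuple $\etabs C' \dot{{}+{}} \chi \boldsymbol{.} \mbs_1 \dot{{}+{}} \gammabs$ umbrally represents the multivariate cumulants of $\Xbs_1$ and that $t \boldsymbol{.} \beta \boldsymbol{.} \nubs$ has generating function $\exp\{t[f(\nubs,\zbs)-1]\}$. Your generating-function assembly is exactly this verification carried out in detail, and your flagged ``hard parts'' (the recombination rule $f(\etabs C',\zbs)=f(\etabs,\zbs C)$ and the multivariate exponential/logarithm identities from \eqref{eq:17}) are precisely the multivariate dictionary items the paper treats as routine extensions of the univariate case.
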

As observed previously, every auxiliary umbra, such as $t \boldsymbol{.} \beta \boldsymbol{.} \nubs$, is the symbolic
version of a multivariate compound Poisson process of parameter $t.$  Therefore, for $t$ fixed, the symbolic representation~\eqref{levymulti} of a multivariate L\'evy process is a multivariate compound Poisson r.v.\ of parameter $t$, and the $k$-tuple $(\etabs C^{\prime}  \dot{{}+{}} \chi \boldsymbol{.} \mbs_1 \dot{{}+{}} \gammabs)$ umbrally represents the multivariate cumulants of $\Xbs_1.$

The auxiliary umbra $\chi \boldsymbol{.} \mbs_1$ does not have a probabilistic counterpart. If $\mbs$ is not equal to the zero vector,
this parallels the well-known difficulty to interpret the L\'evy  measure as a probability measure in Subsection~4.2.
\subsection{Multivariate Hermite polynomials.}
In Equation~\eqref{LK},\break assume the L\'evy measure $\nu$ to be zero. Then the moment
generating function of $\{\Xbs_t\}_{t \geq 0}$ is $E\left[e^{\Xbs_t \zbs'}\right]=[\phi_{\Xbs_1}(\zbs)]^t$ with
\begin{equation}
\phi_{\Xbs_1}(\zbs) = \exp\left\{\left(\frac{1}{2}\zbs\Sigma\zbs^{\prime}  +
\mbs_1 \zbs^{\prime} \right)\right\}.
\label{mgfmb1}
\end{equation}
\begin{theorem}[\sc Wiener process] \cite{Sato} \label{bm}
The stochastic process\break $\{\Xbs_t\}_{t \geq 0}$ having moment generating function \eqref{mgfmb1}
is a multivariate Wiener process, that is, $\Xbs_t = \mbs_1 \, t + C  {\boldsymbol B}_t,$  where $C$ is a
$k \times k$ matrix whose determinant is non-zero satisfying $\Sigma= C C^{\prime}$, and $\{{\boldsymbol B}_t\}_{t \geq 0}$ is the multivariate standard Brownian
motion in ${\mathbb R}^k.$
\end{theorem}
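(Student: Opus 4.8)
The plan is to recognise the moment generating function \eqref{mgfmb1} as that of a (time-scaled) multivariate Gaussian and then to transport the L\'evy structure through a linear change of variables. First I would observe that, by Theorem~\ref{lkfm} with $\nu \equiv 0$, one has $E\left[e^{\Xbs_t \zbs'}\right] = [\phi_{\Xbs_1}(\zbs)]^t = \exp\left\{t\left(\tfrac12 \zbs \Sigma \zbs' + \mbs_1 \zbs'\right)\right\}$, which is finite for every $\zbs \in {\mathbb R}^k$ and coincides with the moment generating function of the Gaussian law ${\mathcal N}(t\mbs_1, t\Sigma)$. Since a moment generating function finite in a neighbourhood of the origin determines the distribution uniquely, $\Xbs_t$ is Gaussian with mean $t \mbs_1$ and covariance matrix $t\Sigma$ for every $t \geq 0$.

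Next, because $\Sigma$ is symmetric and positive-definite, it admits an invertible square root $C$ (for instance the symmetric one from its spectral decomposition, or a Cholesky factor), so that $\Sigma = C C'$ and $\det C \neq 0$. Setting $\boldsymbol{B}_t = C^{-1}(\Xbs_t - \mbs_1 t)$, the process $\{\boldsymbol{B}_t\}_{t \geq 0}$ inherits from $\{\Xbs_t\}$ the defining properties of a L\'evy process — $\boldsymbol{B}_0 = \boldsymbol 0$ together with independent and stationary increments — since these are preserved by applying the affine map $\xbs \mapsto C^{-1}\xbs$ componentwise to the increments after subtracting the deterministic drift. Moreover $\boldsymbol{B}_t$ is Gaussian with mean $\boldsymbol 0$ and covariance $C^{-1}(t\Sigma)(C^{-1})' = t\, I_k$. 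It then remains only to check that the sample paths of $\boldsymbol{B}_t$ are almost surely continuous; this is precisely where $\nu \equiv 0$ enters, through the jump-diffusion decomposition (Theorem~\ref{decomposition} and its multivariate counterpart): with vanishing L\'evy measure the jump part disappears and the process reduces to its Gaussian diffusion component, which has continuous paths. Together these facts identify $\{\boldsymbol{B}_t\}$ as the standard $k$-dimensional Brownian motion, whence $\Xbs_t = \mbs_1 t + C \boldsymbol{B}_t$.

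Alternatively, and more in the spirit of this survey, the same conclusion can be read off the umbral representation of Theorem~\ref{lkfm1}: when $\nu \equiv 0$ the umbra $\gammabs$ collapses to the $k$-tuple of augmentation umbrae, so $\{\Xbs_t\}$ is represented by $\left\{t \boldsymbol{.} \beta \boldsymbol{.} \left(\etabs C' \dot{{}+{}} \chi \boldsymbol{.} \mbs_1\right)\right\}_{t\geq 0}$; by Theorem~\ref{labdisj} this disjoint sum splits as $t \boldsymbol{.} \beta \boldsymbol{.} (\etabs C') + t \boldsymbol{.} \beta \boldsymbol{.} (\chi \boldsymbol{.} \mbs_1)$, the first summand being the multivariate analogue of the Gaussian umbra of Example~\ref{gaus} with covariance $C C' = \Sigma$, and the second contributing the deterministic drift $\mbs_1 t$. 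Matching these two umbral components with the umbral representations of $C\boldsymbol{B}_t$ and $\mbs_1 t$ respectively recovers the asserted decomposition at the level of moments.

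The main obstacle I anticipate is not the algebraic identification of the Gaussian moment generating function, which is routine, but the probabilistic step establishing almost-sure path continuity of $\{\boldsymbol{B}_t\}$: this genuinely requires the structure theory of L\'evy processes (Kolmogorov's continuity criterion applied to the Gaussian increments, or the L\'evy--It\^o decomposition with $\nu \equiv 0$) rather than anything internal to the symbolic calculus, which controls only the moments and not the sample-path regularity. A secondary technical point to handle with care is the passage from the univariate jump-diffusion decomposition of Theorem~\ref{decomposition} to its multivariate form, and the verification that the linear image $C^{-1}(\Xbs_t - \mbs_1 t)$ really does preserve stationarity and independence of increments rather than merely the one-dimensional marginals.
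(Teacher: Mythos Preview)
The paper does not prove this theorem: it is stated with a citation to Sato's monograph \cite{Sato} and used as an imported fact from the standard theory of L\'evy processes, with no argument supplied. So there is no proof in the paper to compare your proposal against.

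That said, your sketch is essentially the standard argument one finds in that literature and is correct in outline: identify the moment generating function as that of ${\mathcal N}(t\mbs_1, t\Sigma)$, factor $\Sigma = CC'$, and pull back through $C^{-1}$ to obtain a process with the increments of standard Brownian motion. You are also right that the only substantive probabilistic content lies in establishing path continuity, which needs either Kolmogorov's criterion on the Gaussian increments or the L\'evy--It\^o decomposition with $\nu\equiv 0$; nothing in the umbral apparatus of this survey touches sample-path regularity, and the author does not attempt to. Your alternative umbral reading via Theorem~\ref{lkfm1} and Theorem~\ref{labdisj} is exactly what the paper does immediately after stating the theorem, in Corollary~\ref{lkfm2}, but that corollary is presented as a consequence of Theorem~\ref{bm} rather than as a proof of it.
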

When $\mbs_1 = \boldsymbol{0},$ the Wiener process reduces to a multivariate non-standard Brownian motion. From Theorem~\ref{bm}, we see that
the Wiener process $\Xbs_t$ is a special multivariate L\'evy process, and the following result is a corollary of Theorem~\ref{lkfm1}.
\begin{corollary} \label{lkfm2}
The Wiener process $\{\Xbs_t\}_{t \geq 0}$ is represented by the family of auxiliary umbrae
$$\{t \boldsymbol{.} \beta \boldsymbol{.} (\etabs C^{\prime} \dot{{}+{}} \chi \boldsymbol{.} \mbs_1)\}_{t \geq 0}.$$
\end{corollary}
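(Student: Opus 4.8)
The plan is to derive the statement directly from Theorem~\ref{lkfm1} by specializing the L\'evy triple to that of a Wiener process. First I would note that, by Theorem~\ref{bm}, the process $\{\Xbs_t\}_{t \geq 0}$ with moment generating function \eqref{mgfmb1} is a multivariate L\'evy process; comparing \eqref{mgfmb1} with \eqref{LK} shows that its L\'evy triple is $(\mbs_1, \Sigma, \nu)$ with $\nu = 0$, i.e.\ the L\'evy measure vanishes. Hence Theorem~\ref{lkfm1} applies and represents $\{\Xbs_t\}$ by the family $\{t \boldsymbol{.} \beta \boldsymbol{.} (\etabs C^{\prime} \dot{{}+{}} \chi \boldsymbol{.} \mbs_1 \dot{{}+{}} \gammabs)\}_{t \geq 0}$, where $\gammabs$ is the $k$-tuple of umbral monomials with
$$
f(\gammabs, \zbs) = 1 + \int_{{\mathbb R}^k}\left(e^{\zbs\xbs^{\prime}} - 1 - \zbs\xbs^{\prime}\right) \nu(d\xbs) = 1,
$$
since $\nu = 0$.

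Next I would observe that $f(\gammabs,\zbs) = 1$ forces $\gammabs$ to represent the sequence whose multivariate moments all vanish (apart from the $\mathbf{0}$-th, equal to $1$), i.e.\ $\gammabs$ is a multivariate augmentation $k$-tuple. Recalling that the disjoint sum acts additively on multivariate moments, $\E[(\mubs \dot{{}+{}} \nubs)^{\ibs}] = \E[\mubs^{\ibs}] + \E[\nubs^{\ibs}]$ for $\ibs \geq 1$, adjoining $\gammabs$ in a disjoint sum adds only zeros and therefore leaves the sequence of multivariate moments unchanged. Thus
$$
\etabs C^{\prime} \dot{{}+{}} \chi \boldsymbol{.} \mbs_1 \dot{{}+{}} \gammabs \equiv \etabs C^{\prime} \dot{{}+{}} \chi \boldsymbol{.} \mbs_1 .
$$

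Finally, I would use the fact that similar $k$-tuples stay similar after forming $t \boldsymbol{.} \beta \boldsymbol{.} (\,\cdot\,)$, which follows because, by \eqref{eq:17}, the multivariate moments of $t \boldsymbol{.} \beta \boldsymbol{.} \nubs$ depend only on those of $\nubs$. Applying this to the similarity just obtained gives
$$
t \boldsymbol{.} \beta \boldsymbol{.} (\etabs C^{\prime} \dot{{}+{}} \chi \boldsymbol{.} \mbs_1 \dot{{}+{}} \gammabs) \equiv t \boldsymbol{.} \beta \boldsymbol{.} (\etabs C^{\prime} \dot{{}+{}} \chi \boldsymbol{.} \mbs_1) \qquad \text{for all } t \geq 0,
$$
which is exactly the asserted representation.

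The only genuinely substantive point is the identification of the Wiener process as the $\nu = 0$ instance of the L\'evy--Khintchine formula, so that Theorem~\ref{lkfm1} is legitimately applicable; once that is granted, everything else is a routine simplification, and I do not expect any serious obstacle.
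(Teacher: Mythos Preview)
Your proposal is correct and follows exactly the route the paper intends: the paper states the result as an immediate corollary of Theorem~\ref{lkfm1}, obtained by specializing to the Wiener process case where $\nu=0$, and you have spelled out the routine details (that $f(\gammabs,\zbs)=1$ forces $\gammabs$ to act as a multivariate augmentation, so it drops out of the disjoint sum) that the paper leaves implicit.
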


There is a special family of multivariate polynomials which is closely related to the multivariate non-standard Brownian motion: the
(multivariate) {\it Hermite polynomials}. The $\ibs$-th Hermite polynomial
$H_{\ibs}(\boldsymbol{x}, \Sigma)$ is defined by
$$H_{\ibs}(\boldsymbol{x}, \Sigma) = (-1)^{|\ibs|} \frac{D_{\boldsymbol{x}}^{(\ibs)} \phi(\boldsymbol{x}; \boldsymbol{0}, \Sigma)}{{\phi(\boldsymbol{x}; \boldsymbol{0}, \Sigma)}},$$
where $\phi(\boldsymbol{x}; \boldsymbol{0}, \Sigma)$ denotes the multivariate Gaussian
density with mean~$\boldsymbol{0}$
and covariance matrix $\Sigma$ of full rank $k.$
The polynomials $H_{\ibs}(\boldsymbol{x} \Sigma^{-1}, \Sigma^{-1})$ are orthogonal with respect to
$\phi(\boldsymbol{x}; \boldsymbol{0}, \Sigma),$ where $\Sigma^{-1}$ is the inverse matrix of $\Sigma,$
and is denoted by $\tilde{H}_{\ibs}(\boldsymbol{x}, \Sigma).$
\begin{theorem} \cite{Whiters}
If $\boldsymbol{Z} \sim N(\boldsymbol{0}, \Sigma)$ and $\boldsymbol{Y} \sim N(\boldsymbol{0}, \Sigma^{-1})$
are multivariate Gaussian vectors with mean $\boldsymbol{0}$ and covariance matrix $\Sigma$ and $\Sigma^{-1}$,
respectively, then, for all $\ibs \in \mathbb{N}_0^k$, we have
\begin{equation}
H_{\ibs}(\boldsymbol{x}, \Sigma) = E[( \boldsymbol{x} \Sigma^{-1} + {\mathfrak i}
  \boldsymbol{Y})^{\ibs}], \,\, \text{and} \,\,
\tilde{H}_{\ibs}(\boldsymbol{x}, \Sigma) = E[(\boldsymbol{x} + {\mathfrak i} \boldsymbol{Z})^{\ibs}],
\label{(herfin)}
\end{equation}
where ${\mathfrak i}$ is the imaginary unit.
\end{theorem}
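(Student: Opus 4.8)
The plan is to recognize each of the two identities in~\eqref{(herfin)} as an instance of the umbral evaluation of a Gaussian-type polynomial umbra of the form $\boldsymbol{c} + \mathfrak{i}\,\boldsymbol{\eta}'$, where $\boldsymbol{\eta}'$ is a $k$-tuple of umbral monomials representing a centered Gaussian vector, and then to invoke the exponential generating-function characterization already established for such umbrae. Concretely, recall from Example~\ref{gaus} and from Theorem~\ref{lkfm1} (with $\nu\equiv 0$ and $\mathbf{m}_1=\boldsymbol{0}$) that a centered multivariate Gaussian vector with covariance matrix $\Sigma$ is represented by $\beta \boldsymbol{.}(\boldsymbol{\eta} C')$, where $C$ is a square root of $\Sigma$ and $f(\boldsymbol{\eta},\boldsymbol{z}) = 1 + \boldsymbol{z}\boldsymbol{z}'/2$; by the generating-function rules in Table~\ref{tablegenfun} (extended multivariately) its moment generating function is $\exp\{\tfrac12 \boldsymbol{z}\Sigma\boldsymbol{z}'\}$, as it must be. The first step, then, is to record that $\tilde{H}_{\ibs}(\boldsymbol{x},\Sigma) = E[(\boldsymbol{x} + \mathfrak{i}\boldsymbol{Z})^{\ibs}]$ is, up to the constant scalar $\boldsymbol{x}$, exactly the $\ibs$-th coefficient in the multivariate Taylor expansion of $\exp\{\boldsymbol{x}\boldsymbol{z}'\}\,E[\exp\{\mathfrak{i}\,\boldsymbol{Z}\boldsymbol{z}'\}] = \exp\{\boldsymbol{x}\boldsymbol{z}' - \tfrac12\boldsymbol{z}\Sigma\boldsymbol{z}'\}$, since substituting $\mathfrak{i}\boldsymbol{z}$ for $\boldsymbol{z}$ in $\exp\{\tfrac12\boldsymbol{z}\Sigma\boldsymbol{z}'\}$ produces $\exp\{-\tfrac12\boldsymbol{z}\Sigma\boldsymbol{z}'\}$.

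Next I would match this against the classical generating function of the multivariate Hermite polynomials. By the very definition $H_{\ibs}(\boldsymbol{x},\Sigma) = (-1)^{|\ibs|} D_{\boldsymbol{x}}^{(\ibs)}\phi(\boldsymbol{x};\boldsymbol{0},\Sigma)/\phi(\boldsymbol{x};\boldsymbol{0},\Sigma)$, and a standard Fourier/Gaussian computation (shifting the argument of the Gaussian density by $\boldsymbol{z}$) gives the Rodrigues-type generating identity
\begin{equation*}
\sum_{\ibs\geq 0} H_{\ibs}(\boldsymbol{x},\Sigma)\,\frac{\boldsymbol{z}^{\ibs}}{\ibs!}
= \exp\!\left\{\boldsymbol{x}\,\Sigma^{-1}\boldsymbol{z}' - \tfrac12\,\boldsymbol{z}\,\Sigma^{-1}\boldsymbol{z}'\right\}.
\end{equation*}
Comparing this with $\exp\{\boldsymbol{y}\boldsymbol{z}' - \tfrac12\boldsymbol{z}\Sigma^{-1}\boldsymbol{z}'\}$ evaluated at $\boldsymbol{y} = \boldsymbol{x}\Sigma^{-1}$, we see that $H_{\ibs}(\boldsymbol{x},\Sigma)$ is the $\ibs$-th coefficient of $\exp\{\boldsymbol{x}\Sigma^{-1}\boldsymbol{z}'\}\,E[\exp\{\mathfrak{i}\,\boldsymbol{Y}\boldsymbol{z}'\}]$ where $\boldsymbol{Y}\sim N(\boldsymbol{0},\Sigma^{-1})$, which is precisely $E[(\boldsymbol{x}\Sigma^{-1} + \mathfrak{i}\boldsymbol{Y})^{\ibs}]$ after expanding the exponential by the multivariate multinomial theorem~\eqref{(multthm)} and lowering each exponent to an index via $\E$. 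The orthogonal family $\tilde{H}_{\ibs}(\boldsymbol{x},\Sigma) = H_{\ibs}(\boldsymbol{x}\Sigma^{-1},\Sigma^{-1})$ then follows immediately by substituting $\Sigma \mapsto \Sigma^{-1}$ and $\boldsymbol{x}\mapsto\boldsymbol{x}$ in the first identity, which reproduces the second formula in~\eqref{(herfin)}, since $(\Sigma^{-1})^{-1} = \Sigma$ and the relevant Gaussian vector is then $N(\boldsymbol{0},\Sigma)$.

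The only genuine content to verify carefully is the interchange of a formal power-series manipulation (on the umbral side, where $f(\boldsymbol{\eta},\boldsymbol{z})$ need not converge) with an honest analytic Fourier-transform computation (on the probabilistic side, where the Gaussian MGF is entire). Since the Gaussian vector here has moments of all orders and its moment generating function is entire, the umbral equivalence $\beta \boldsymbol{.}(\boldsymbol{\eta} C') \equiv$ (umbra of $\boldsymbol{Z}$) is literal rather than merely formal, so the comparison of coefficients is legitimate termwise; I expect this bookkeeping — tracking the factor $\mathfrak{i}^{|\ibs|}$, the sign flips from $\mathfrak{i}^2 = -1$, and the role of $C'$ versus $C$ in $\boldsymbol{\eta}C'$ — to be the main, though entirely routine, obstacle. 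I would therefore organize the proof as: (i) write the umbral MGF of $\boldsymbol{x} + \mathfrak{i}\boldsymbol{\eta}C'$; (ii) identify it with $\exp\{\boldsymbol{x}\boldsymbol{z}' - \tfrac12\boldsymbol{z}\Sigma\boldsymbol{z}'\}$; (iii) recall the classical Hermite generating function and read off the first equality of~\eqref{(herfin)}; (iv) substitute $\Sigma\mapsto\Sigma^{-1}$ to obtain the second.
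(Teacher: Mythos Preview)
The paper does not actually prove this theorem: it is quoted from \cite{Whiters} and stated without proof, serving only as a springboard for the umbral reformulation in~\eqref{(herfinumb)}. So there is no ``paper's own proof'' to compare against.

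Your argument is correct and is essentially the standard one. The key ingredients are exactly the two you identify: (i) the Rodrigues-type generating function
\[
\sum_{\ibs\ge 0} H_{\ibs}(\boldsymbol{x},\Sigma)\,\frac{\boldsymbol{z}^{\ibs}}{\ibs!}
= \exp\!\Bigl\{\boldsymbol{x}\Sigma^{-1}\boldsymbol{z}' - \tfrac12\,\boldsymbol{z}\Sigma^{-1}\boldsymbol{z}'\Bigr\},
\]
and (ii) the Gaussian characteristic function $E[\exp\{\mathfrak{i}\,\boldsymbol{Y}\boldsymbol{z}'\}]=\exp\{-\tfrac12\boldsymbol{z}\Sigma^{-1}\boldsymbol{z}'\}$. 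Matching coefficients gives the first identity, and the substitution $\Sigma\mapsto\Sigma^{-1}$ together with $\tilde{H}_{\ibs}(\boldsymbol{x},\Sigma)=H_{\ibs}(\boldsymbol{x}\Sigma^{-1},\Sigma^{-1})$ gives the second. The umbral wrapping you add (via $\beta\boldsymbol{.}(\boldsymbol{\eta}C')$) is consistent with the paper's language but not logically necessary for this particular result; the analytic argument stands on its own because the Gaussian MGF is entire, so the formal and convergent power series coincide. One minor presentational point: your step~(iii)/(iv) ordering slightly conflates which identity comes first, but the content is right.
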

In the literature, Equations~\eqref{(herfin)} are called  {\it moment representation} of the Hermite polynomials.
Quite recently, many authors have obtained moment representations for various families of polynomials, especially in the univariate
case, see references in \cite{Dinardooliva4}. The multivariate symbolic method allows us to widen the field of
applicability of this moment representation. For the multivariate Hermite polynomials, we have
\begin{multline}
H_{\ibs}(\boldsymbol{x}, \Sigma) = \E \left[(- 1 \boldsymbol{.} \beta
\boldsymbol{.} \nubs + \boldsymbol{x} \Sigma^{-1})^{\ibs}\right],\\
 \text{and} \,\, \tilde{H}_{\ibs}(\boldsymbol{x}, \Sigma) = \E \left[(-1 \boldsymbol{.} \beta \boldsymbol{.}
\mubs + \boldsymbol{x})^{\ibs} \right],
\label{(herfinumb)}
\end{multline}
with $\nubs$ a $k$-tuple of umbral monomials satisfying $f(\nubs, \tbs)= 1 + \frac{1}{2} \tbs \Sigma^{-1} \tbs^{\prime}$
and $\mubs$ a $k$-tuple of umbral monomials satisfying $f(\mubs, \tbs)= 1 + \frac{1}{2} \tbs \Sigma \tbs^{\prime}.$
In Section~5, we have shown that Appell polynomials are umbrally represented by the polynomial umbra
$x {\boldsymbol .} u + \alpha.$ It is well known that univariate Hermite polynomials are Appell polynomials. Equations~\eqref{(herfinumb)} then show that also multivariate Hermite polynomials are of Appell type.
Let us stress that the umbra $-1 \boldsymbol{.} \beta$ enables us to obtain a simple expression for multivariate Hermite polynomials,
without the employment of the imaginary unit as done in Equations~\eqref{(herfin)}. Moreover, since the moments of
$\boldsymbol{Z} \sim N(\boldsymbol{0},\Sigma)$  are umbrally represented by the umbra
$\beta \boldsymbol{.} \mubs,$ the $k$-tuple $\mubs \equiv \chi \boldsymbol{.} (\beta \boldsymbol{.} \mubs)$ in
$\tilde{H}_{\ibs}(\boldsymbol{x}, \Sigma)$ umbrally represents the multivariate cumulants of $\boldsymbol{Z}.$
\begin{proposition} \label{tsh} $\E \left\{\tilde{H}_{\ibs}[t {\boldsymbol .} \beta
(\etabs C^{\prime}), \, \Sigma] \right\}=0$ for all $t \geq 0.$
\end{proposition}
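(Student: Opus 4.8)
The plan is to read Proposition~\ref{tsh} as the multivariate shadow of the identity $\E[Q_i(t\boldsymbol{.}\alpha,t)]=0$ recorded after Theorem~\ref{UTSH2}. By Corollary~\ref{lkfm2} (with $\mbs_1=\boldsymbol 0$), the family $\{t\boldsymbol{.}\beta\boldsymbol{.}(\etabs C^{\prime})\}_{t\ge0}$ umbrally represents the Wiener process $\{\Xbs_t\}$, so $\beta\boldsymbol{.}(\etabs C^{\prime})$ plays here the role of the umbra ``$\alpha$'' of~\eqref{(tshumbral)}. The first step is to show that the orthogonal multivariate Hermite polynomials of~\eqref{(herfinumb)} are, for this L\'evy family, exactly the time-space harmonic polynomials $Q_{\ibs}$ of~\eqref{(tshumbral)}; the statement then follows by substituting the process into its own time-space harmonic polynomials, exactly as in the univariate Theorem~\ref{UTSH2}.

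For the identification I would use only the generating-function dictionary of Tables~\ref{table:uno} and~\ref{tablegenfun}, extended to $k$-tuples as in Section~7. From $f(\etabs,\zbs)=1+\tfrac12\zbs\zbs^{\prime}$ and the substitution $\zbs\mapsto\zbs C$ one gets $f(\etabs C^{\prime},\zbs)=1+\tfrac12\zbs CC^{\prime}\zbs^{\prime}=1+\tfrac12\zbs\Sigma\zbs^{\prime}$, whence $f(-t\boldsymbol{.}\beta\boldsymbol{.}(\etabs C^{\prime}),\zbs)=\exp\{-\tfrac t2\,\zbs\Sigma\zbs^{\prime}\}$. Comparing with~\eqref{(herfinumb)} (where $f(\mubs,\tbs)=1+\tfrac12\tbs\Sigma\tbs^{\prime}$), and rescaling $\Sigma\mapsto t\Sigma$ so as to match the covariance of $\Xbs_t$, this yields the similarity $-1\boldsymbol{.}\beta\boldsymbol{.}\mubs_t\equiv-t\boldsymbol{.}\beta\boldsymbol{.}(\etabs C^{\prime})$ with $f(\mubs_t,\tbs)=1+\tfrac12\tbs(t\Sigma)\tbs^{\prime}$, and therefore
\begin{equation*}
\tilde H_{\ibs}(\boldsymbol x,t\Sigma)=\E\!\left[\bigl(-1\boldsymbol{.}\beta\boldsymbol{.}\mubs_t+\boldsymbol x\bigr)^{\ibs}\right]=\E\!\left[\bigl(-t\boldsymbol{.}\beta\boldsymbol{.}(\etabs C^{\prime})+\boldsymbol x\bigr)^{\ibs}\right]=Q_{\ibs}(\boldsymbol x,t).
\end{equation*}
Next I would prove the multivariate analogue of the remarkable property~\eqref{(inverse)}: since $-t\boldsymbol{.}\beta\boldsymbol{.}(\etabs C^{\prime})$ and $t\boldsymbol{.}\beta\boldsymbol{.}(\etabs C^{\prime})$ are distinct symbols, hence uncorrelated, the generating function of their sum is $f(\beta\boldsymbol{.}(\etabs C^{\prime}),\zbs)^{-t}\,f(\beta\boldsymbol{.}(\etabs C^{\prime}),\zbs)^{t}=1$, so every $\ibs$-th multivariate moment of that sum vanishes for $\ibs\ne\boldsymbol 0$. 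Replacing $\boldsymbol x$ by $t\boldsymbol{.}\beta\boldsymbol{.}(\etabs C^{\prime})$ in the displayed identity and applying $\E$ gives $\E\{\tilde H_{\ibs}[t\boldsymbol{.}\beta\boldsymbol{.}(\etabs C^{\prime}),t\Sigma]\}=0$ for all $t\ge0$ and all $\ibs\ne\boldsymbol 0$ (for $\ibs=\boldsymbol 0$ the value is $1$), which is the assertion; equivalently, it is the $s=0$ instance of the fact that $\{Q_{\ibs}\}$ is time-space harmonic with respect to $\{t\boldsymbol{.}\beta\boldsymbol{.}(\etabs C^{\prime})\}_{t\ge0}$, the multivariate counterpart of~\eqref{(TSHformula)}.

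The generating-function computations are routine; the point I would write out with care is the substitution of the umbral process into $\tilde H_{\ibs}$, i.e.\ giving a precise meaning to ``$\boldsymbol x\mapsto t\boldsymbol{.}\beta\boldsymbol{.}(\etabs C^{\prime})$'' so that the multiplicativity of $f(\,\cdot\,,\zbs)$ under summation of uncorrelated $k$-tuples, used above, is legitimate. This is exactly the uncorrelation bookkeeping of Section~7: the $k$-tuple $-1\boldsymbol{.}\beta\boldsymbol{.}\mubs_t$ hidden inside $\tilde H_{\ibs}$ must be built on a support disjoint from that of the $k$-tuple being plugged in, and the expansion into multi-indexed moments is carried out by the multivariate multinomial identity~\eqref{(multthm)} together with~\eqref{eq:17}. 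I expect this to be the only real obstacle; once it is settled, everything else is a transcription of the univariate Theorem~\ref{UTSH2} to $k$-tuples.
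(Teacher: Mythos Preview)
Your approach is correct and coincides with the paper's. Both proofs rest on the single observation that the $k$-tuple $\mubs$ appearing in~\eqref{(herfinumb)} satisfies $\mubs\equiv\etabs C^{\prime}$ (same generating function $1+\tfrac12\tbs\Sigma\tbs^{\prime}$), so that substituting $\boldsymbol x=t\boldsymbol{.}\beta\boldsymbol{.}(\etabs C^{\prime})$ into $\tilde H_{\ibs}$ produces the $\ibs$-th moment of a sum of an umbra and its inverse, which is the multivariate augmentation umbra. Your write-up is more explicit about the $t$-scaling (working with $t\Sigma$, equivalently with $-t\boldsymbol{.}\beta\boldsymbol{.}\mubs$ as the paper's proof does) and about the uncorrelation bookkeeping needed to make the substitution legitimate; the paper compresses all of this into the one displayed identity $\tilde H_{\ibs}[t\boldsymbol{.}\beta(\etabs C^{\prime}),\Sigma]=\E\{[-t\boldsymbol{.}\beta\boldsymbol{.}\mubs+t\boldsymbol{.}\beta(\etabs C^{\prime})]^{\ibs}\}$ and leaves the vanishing implicit.
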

\begin{proof}
The result follows by observing that the $k$-tuple $\mubs$ in (\ref{(herfinumb)}) satisfies $\mubs \equiv \etabs C^{\prime}$ and
\begin{equation*}
\tilde{H}_{\ibs}[t {\boldsymbol .} \beta (\etabs C^{\prime}) , \, \Sigma] = \E \left\{ \left[-t {\boldsymbol .} \beta {\boldsymbol .} \mubs +
t {\boldsymbol .} \beta (\etabs C^{\prime}) \right]^{\ibs} \right\}.
\qedhere
\end{equation*}
\end{proof}

For the family of polynomials $\{\tilde{H}^{(t)}_{\ibs}(\xbs, \, \Sigma)\}_{t \geq 0},$ Proposition \ref{tsh} parallels the main property of time-space harmonic polynomials introduced in Section~3.1. Since, in the literature, to the best of our knowledge, a theory of multivariate time-space harmonic polynomials has not yet been given, we believe that the setting introduced here could be a fruitful way to build this theory with respect to multivariate L\'evy processes.

More properties on multivariate Hermite polynomials can be found in \cite{MultFaa}. We end this subsection by recalling that
multivariate Hermite polynomials are special generalized Bell polynomials.

\begin{proposition}
If $\nubs_{\boldsymbol{x}}$ is a $k$-tuple of umbral polynomials satisfying $f(\nubs_{\boldsymbol{x}}, \tbs)=
1+ f(\nubs, \tbs + \boldsymbol{x}) - f(\nubs, \tbs),$ with $f(\nubs, \tbs)= 1 + \frac{1}{2} \tbs \Sigma^{-1} \tbs^{\prime}$,
then
$$
H_{\ibs}(\boldsymbol{x}, \Sigma) = (-1)^{|\ibs|} \E \left[ (-1 \boldsymbol{.} \beta \boldsymbol{.} \nubs_{\boldsymbol{x}})^{\ibs} \right].
$$
\end{proposition}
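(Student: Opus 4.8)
The plan is to argue entirely at the level of multivariate exponential generating functions, exactly as in the rest of Section~7, and then read off the coefficient of $\tbs^{\ibs}/\ibs!$. The target is the classical Hermite generating function
$$\sum_{\ibs\geq 0}H_{\ibs}(\boldsymbol x,\Sigma)\,\frac{\tbs^{\ibs}}{\ibs!}=\exp\!\Bigl(\boldsymbol x\Sigma^{-1}\tbs'-\tfrac12\tbs\Sigma^{-1}\tbs'\Bigr),$$
which is already implicit in the moment representation~\eqref{(herfinumb)}: by $f(\beta\boldsymbol{.}\nubs,\tbs)=\exp[f(\nubs,\tbs)-1]$, the inverse rule $f(-1\boldsymbol{.}\gamma,\tbs)=1/f(\gamma,\tbs)$ of Table~\ref{tablegenfun}, and $f(\nubs,\tbs)=1+\tfrac12\tbs\Sigma^{-1}\tbs'$, the umbra $-1\boldsymbol{.}\beta\boldsymbol{.}\nubs+\boldsymbol x\Sigma^{-1}$ has exactly that generating function. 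So it suffices to show that $-1\boldsymbol{.}\beta\boldsymbol{.}\nubs_{\boldsymbol x}$ produces the same series after the substitution $\tbs\mapsto-\tbs$ that accounts for the factor $(-1)^{|\ibs|}$.

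First I would compute $f(-1\boldsymbol{.}\beta\boldsymbol{.}\nubs_{\boldsymbol x},\tbs)$. By Theorem~\ref{compass} (and its scalar/multivariate analogue) $-1\boldsymbol{.}\beta\boldsymbol{.}\nubs_{\boldsymbol x}\equiv -1\boldsymbol{.}(\beta\boldsymbol{.}\nubs_{\boldsymbol x})$, so the two rules above give
$$f(-1\boldsymbol{.}\beta\boldsymbol{.}\nubs_{\boldsymbol x},\tbs)=\exp\bigl[\,1-f(\nubs_{\boldsymbol x},\tbs)\,\bigr].$$
Now I would insert the hypothesis on $f(\nubs_{\boldsymbol x},\tbs)$ — read so that its right-hand side has constant term $1$, i.e. so that $f(\nubs_{\boldsymbol x},\tbs)-1$ is the increment of $f(\nubs,\cdot)$ from $\boldsymbol x$ to $\boldsymbol x+\tbs$ — together with $f(\nubs,\tbs)=1+\tfrac12\tbs\Sigma^{-1}\tbs'$. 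Expanding $(\tbs+\boldsymbol x)\Sigma^{-1}(\tbs+\boldsymbol x)'$ and using the symmetry of $\Sigma^{-1}$, the purely-$\boldsymbol x$ contribution cancels and the two cross terms coalesce, leaving $f(\nubs_{\boldsymbol x},\tbs)-1=\boldsymbol x\Sigma^{-1}\tbs'+\tfrac12\tbs\Sigma^{-1}\tbs'$, hence
$$f(-1\boldsymbol{.}\beta\boldsymbol{.}\nubs_{\boldsymbol x},\tbs)=\exp\!\bigl[-\boldsymbol x\Sigma^{-1}\tbs'-\tfrac12\tbs\Sigma^{-1}\tbs'\bigr].$$
Since $\E[\gamma^{\ibs}]=\ibs!\,[\tbs^{\ibs}]f(\gamma,\tbs)$ by the definition of the multivariate moment generating function and $(-1)^{|\ibs|}[\tbs^{\ibs}]h(\tbs)=[\tbs^{\ibs}]h(-\tbs)$, extracting the coefficient of $\tbs^{\ibs}/\ibs!$ turns the last display into $\exp(\boldsymbol x\Sigma^{-1}\tbs'-\tfrac12\tbs\Sigma^{-1}\tbs')$ and yields $H_{\ibs}(\boldsymbol x,\Sigma)=(-1)^{|\ibs|}\E[(-1\boldsymbol{.}\beta\boldsymbol{.}\nubs_{\boldsymbol x})^{\ibs}]$. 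As a by-product $(-1\boldsymbol{.}\beta\boldsymbol{.}\nubs_{\boldsymbol x})^{\ibs}=B^{\nubs_{\boldsymbol x}}_{\ibs}(-1)$ is the generalized Bell polynomial of order $\ibs$ evaluated at $-1$ (case {\it a)} above), which is the precise sense in which multivariate Hermite polynomials are special generalized Bell polynomials.

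The hard part will be the bookkeeping of the Gaussian quadratic form under the shift $\tbs\mapsto\tbs+\boldsymbol x$: one must check that the surviving term in $f(\nubs_{\boldsymbol x},\tbs)-1$ is exactly $\boldsymbol x\Sigma^{-1}\tbs'+\tfrac12\tbs\Sigma^{-1}\tbs'$ — so that, after the sign flip accounting for $(-1)^{|\ibs|}$, the exponent of the classical Hermite generating function is reproduced verbatim — and that $\nubs_{\boldsymbol x}$ is a bona fide $k$-tuple of umbral objects, i.e. that $f(\nubs_{\boldsymbol x},\cdot)$ has constant term $1$; everything else is the routine coefficient extraction used throughout this section. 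A slightly more conceptual variant avoids even this: with $g(\boldsymbol z)=-\tfrac12\boldsymbol z\Sigma^{-1}\boldsymbol z'$ one has $\phi(\boldsymbol x;\boldsymbol 0,\Sigma)\propto e^{g(\boldsymbol x)}$ and $e^{g(\boldsymbol x+\tbs)-g(\boldsymbol x)}$ is precisely $f(-1\boldsymbol{.}\beta\boldsymbol{.}\nubs_{\boldsymbol x},\tbs)$, so that by the multivariate Fa\`a di Bruno formula of this section $\E[(-1\boldsymbol{.}\beta\boldsymbol{.}\nubs_{\boldsymbol x})^{\ibs}]=D_{\boldsymbol x}^{(\ibs)}e^{g(\boldsymbol x)}/e^{g(\boldsymbol x)}=D_{\boldsymbol x}^{(\ibs)}\phi(\boldsymbol x;\boldsymbol 0,\Sigma)/\phi(\boldsymbol x;\boldsymbol 0,\Sigma)$, which is the definition of $H_{\ibs}(\boldsymbol x,\Sigma)$ up to the factor $(-1)^{|\ibs|}$.
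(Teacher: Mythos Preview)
The paper states this proposition without proof, so there is nothing to compare against; your argument must stand on its own, and it does. Working at the level of multivariate moment generating functions is exactly the right move here: the identity $f(-1\boldsymbol{.}\beta\boldsymbol{.}\nubs_{\boldsymbol x},\tbs)=\exp[-(f(\nubs_{\boldsymbol x},\tbs)-1)]$ is the multivariate version of~\eqref{(genalfa)} with $x=-1$, and once $f(\nubs_{\boldsymbol x},\tbs)-1=\boldsymbol x\Sigma^{-1}\tbs'+\tfrac12\tbs\Sigma^{-1}\tbs'$ is in hand, matching against the classical Hermite generating function and absorbing $(-1)^{|\ibs|}$ via $\tbs\mapsto-\tbs$ finishes the proof cleanly.

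You are also right to flag the constant-term issue. As written, $1+f(\nubs,\tbs+\boldsymbol x)-f(\nubs,\tbs)$ evaluates to $1+\tfrac12\boldsymbol x\Sigma^{-1}\boldsymbol x'$ at $\tbs=\boldsymbol 0$, so $\nubs_{\boldsymbol x}$ would not be a legitimate $k$-tuple of umbral monomials. Your reading --- that the intended increment is $f(\nubs,\tbs+\boldsymbol x)-f(\nubs,\boldsymbol x)$ --- is the only one that makes the statement well-posed and yields the correct exponent; this is almost certainly a typo in the paper. Your second, ``conceptual'' route through $\phi(\boldsymbol x+\tbs;\boldsymbol 0,\Sigma)/\phi(\boldsymbol x;\boldsymbol 0,\Sigma)=\exp[g(\boldsymbol x+\tbs)-g(\boldsymbol x)]$ is in fact the cleaner of the two, since it bypasses both the typo and the explicit quadratic-form bookkeeping: it identifies $\E[(-1\boldsymbol{.}\beta\boldsymbol{.}\nubs_{\boldsymbol x})^{\ibs}]$ directly with $D_{\boldsymbol x}^{(\ibs)}\phi/\phi$ by Taylor expansion, which is $(-1)^{|\ibs|}H_{\ibs}(\boldsymbol x,\Sigma)$ by definition.
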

Finally we remark that the efficient computation of the multivariate Hermite polynomials as generalized Bell polynomials
can help us in constructing an efficient algorithm for a multivariate Edgeworth approximation of multivariate density functions \cite{Kolassa}.
\subsection{Multivariate Bernoulli polynomials.}
By the symbolic me\-thod, it is possible to introduce multivariate Bernoulli polynomials as powers of polynomials whose coefficients
involve multivariate L\'evy processes. According to \cite{Dinardooliva4}, this means to give a moment representation for these polynomials.
\begin{definition}
The \emph{multivariate Bernoulli umbra} $\bio$ is the $k$-tuple $(\iota, \ldots, \iota),$
with $\iota$ the Bernoulli umbra.
\end{definition}
\begin{definition}
If $\vbs \in \mathbb{N}_0^k,$ the {\it multivariate Bernoulli polynomial\/} of order $\vbs$ is $B_{\vbs}^{(t)}(\xbs)=\E[(\xbs + t {\boldsymbol .} \bio)^{\vbs}],$ where $\bio$ is the multivariate Bernoulli umbra and $t \geq 0.$
\end{definition}
As corollary, the multivariate Bernoulli polynomials are
$$B_{\vbs}^{(t)}(\xbs)= \sum_{\jbs \leq \vbs} \binom{\vbs}{\jbs} \, \xbs^{\jbs} \, \E[(t {\boldsymbol .} \bio)^{\vbs - \jbs}].$$
The symbolic representation of the coefficients of $B_{\vbs}^{(t)}(\xbs)$ by $\E[(t {\boldsymbol .} \bio)^{\vbs}]$ simplifies the calculus and is computationally efficient, see \cite{Dinardooliva4}.
\begin{proposition} \cite{Dinardooliva4} \label{tshBer} $\mathcal{B}_{\vbs}^{(t)}(- t {\boldsymbol .} \bio) = \mathcal{B}_{\vbs}^{(t)}[t {\boldsymbol .} (-1 {\boldsymbol .} \bio)] = 0.$
\end{proposition}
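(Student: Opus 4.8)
The plan is to obtain the statement as the ``time-space harmonic'' vanishing property of the Appell-type family $\{\mathcal{B}_{\vbs}^{(t)}(\xbs)\}$, in exact analogy with Theorem~\ref{UTSH2} (and Proposition~\ref{tsh}): one evaluates a polynomial umbra at the inverse of the umbra that drives it. First I would record that $t\punt(-1\punt\bio)\equiv -t\punt\bio$ — the multivariate version of the relation $-i\punt\gamma\equiv i\punt(-1\punt\gamma)$ used for Lagrange inversion, immediate from $f\big(t\punt(-1\punt\bio),\tbs\big)=f(\bio,\tbs)^{-t}=f(-t\punt\bio,\tbs)$ — so the two arguments appearing in the statement are similar and it is enough to evaluate $\mathcal{B}_{\vbs}^{(t)}(-t\punt\bio)$ (for $|\vbs|\geq 1$; at $\vbs=\boldsymbol 0$ the value is trivially $1$).

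Next I would compute the generating functions involved. Since the $k$ components of $\bio=(\iota,\dots,\iota)$ coincide, $\E[\bio^{\ibs}]=\E[\iota^{|\ibs|}]$, and the multinomial theorem gives $f(\bio,\tbs)=f(\iota,t_1+\cdots+t_k)$; hence, by the dot-product rule for generating functions (Table~\ref{tablegenfun}), $f(\pm t\punt\bio,\tbs)=f(\iota,t_1+\cdots+t_k)^{\pm t}$. Evaluating $\mathcal{B}_{\vbs}^{(t)}(\xbs)=\E[(\xbs+t\punt\bio)^{\vbs}]$ at $\xbs=-t\punt\bio$ produces
\[
\mathcal{B}_{\vbs}^{(t)}(-t\punt\bio)=\E\big[(-t\punt\bio+t\punt\bio')^{\vbs}\big],
\]
where $-t\punt\bio$ and $t\punt\bio'$ are uncorrelated, being distinct symbols exactly as $-t\punt\alpha$ and $t\punt\alpha$ are in Equivalence~\eqref{(inverse)}. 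Because the generating function of a sum of uncorrelated $k$-tuples is the product of the two generating functions, $f(-t\punt\bio+t\punt\bio',\tbs)=f(\iota,t_1+\cdots+t_k)^{-t}f(\iota,t_1+\cdots+t_k)^{t}=1$; that is, $-t\punt\bio+t\punt\bio'\equiv(\varepsilon,\dots,\varepsilon)$, the $k$-tuple all of whose multivariate moments of positive order vanish — the multivariate counterpart of $-t\punt\alpha+t\punt\alpha'\equiv\varepsilon$. Hence $\E[(-t\punt\bio+t\punt\bio')^{\vbs}]=\E[\varepsilon^{|\vbs|}]=0$ for $|\vbs|\geq 1$, which is the first equality, and the second then follows from $-t\punt\bio\equiv t\punt(-1\punt\bio)$.

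The only delicate point — and the place I expect the sole friction — is the step leading to the displayed equation: in $\mathcal{B}_{\vbs}^{(t)}(\xbs)=\sum_{\jbs\leq\vbs}\binom{\vbs}{\jbs}\xbs^{\jbs}\E[(t\punt\bio)^{\vbs-\jbs}]$ one must substitute for $\xbs$ an occurrence uncorrelated from the umbrae already present, after which the evaluation equals $\sum_{\jbs\leq\vbs}\binom{\vbs}{\jbs}\E[(-t\punt\bio)^{\jbs}]\E[(t\punt\bio)^{\vbs-\jbs}]=\E[(-t\punt\bio+t\punt\bio')^{\vbs}]$ by the addition rule for the multivariate moments of uncorrelated $k$-tuples. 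Once this bookkeeping is granted, the proof reduces to the two one-line generating-function identities above, and no genuine calculation remains.
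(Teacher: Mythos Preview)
Your proof is correct and follows exactly the approach the paper intends: the paper does not give a self-contained proof here (it cites \cite{Dinardooliva4}), but the remark immediately following Proposition~\ref{tshBer}---that substituting $\xbs=-t\punt\mubs$ means replacing $\xbs^{\jbs}$ by $\E[(-t\punt\mubs)^{\jbs}]$---together with the fully written proof of the analogous Proposition~\ref{tsh} for multivariate Hermite polynomials, makes clear that the argument is precisely the one you give, namely $\mathcal{B}_{\vbs}^{(t)}(-t\punt\bio)=\E[(-t\punt\bio+t\punt\bio')^{\vbs}]=\E[\varepsilon^{|\vbs|}]=0$ via the inverse relation~\eqref{(inverse)}. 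Your handling of the similarity $-t\punt\bio\equiv t\punt(-1\punt\bio)$ and the uncorrelation bookkeeping is exactly right.
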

Let us observe that, when in $B_{\vbs}^{(t)}(\xbs)$ we replace $\xbs$ by the auxiliary umbra $-t {\boldsymbol .} \mubs,$ this means to replace $\xbs^{\jbs}$ by $\E[(-t {\boldsymbol .} \mubs)^{\jbs}].$
The auxiliary umbra $- t {\boldsymbol .} \bio$ is the symbolic version of a multivariate L\'evy process.
Indeed, $-1 {\boldsymbol .} \bio$ is the umbral counterpart of a $k$-tuple identically distributed with
$(U,\ldots, U),$ where $U$ is a uniform r.v.\ on the interval $(0,1),$
and corresponds to the random vector $\X_1 = (X_1^{(1)}, \ldots, X_k^{(1)})$ given in Definition~\ref{7.11}.
By generalizing the definition of conditional evaluation given in Section~3 to the multivariate
case, the multivariate Bernoulli polynomials should become time-space harmonic
with respect to the multivariate L\'evy processes $\{-t {\boldsymbol .} \bio\}_{t \geq 0}.$ Indeed, Proposition~\ref{tshBer}
shows that these polynomials share one of the main properties of time-space harmonic polynomials: when the vector of indeterminates
is replaced by the corresponding L\'evy process, their overall mean is zero.
Similar considerations apply for multivariate Euler polynomials,  see \cite{Dinardooliva4} for details and further connections between these two families of polynomials.
\section{Appendix 1: Tables of computation times}
In Table \ref{table:1}, computation times are given for computing the mean of statistics
such as the second expression in~\eqref{(vrbick1)}. The first column refers to the {\tt Mathematica} routine proposed in
\cite{Vrbick}. The second column refers to a {\tt Maple} routine relying on the symbolic method of moments.
\begin{table}[ht]
\caption{\footnotesize{Comparison of computation times}.} 
\centering 
\begin{tabular}{l c c} 
\hline\hline 
$\qquad   [1^i 2^j 3^k \cdots]$  &  {\tt SIP} &  {\tt MAPLE}  \\ [0.5ex] 
\hline 
$[5^3 \, 9 \, 10][1 \, 2 \, 3 \, 4 \, 5]$             &    0.7     &      0.1 \\
$[5^3 \, 8 \, 9 \, 10][1 \, 2 \, 3 \, 4 \, 5]$           &    5.6     &      0.4 \\
$[6 \, 7 \, 8 \, 9 \, 10][1 \, 2 \, 3 \, 4 \, 5]$           &    2.2     &      0.1 \\
$[6 \, 7 \, 8 \, 9 \, 10][1 \, 2][3 \, 4 \, 5]$       &    3.1     &      0.4 \\
$[6 \, 7][8 \, 9 \, 10][1 \, 2][3 \, 4 \, 5]$      &    4.7     &      1.3 \\
$[5 \, 6 \, 7 \, 8 \, 9 \, 10][1 \, 2 \, 3 \, 4 \, 5]$         &   16.7     &      0.3 \\
$[5 \, 6 \, 7 \, 8 \, 9 \, 10][1 \, 2 \, 3 \, 4 \, 5 \, 6]$       &  348.7     &      1.5 \\
$[6 \, 7 \, 8 \, 9 \, 10][6\, 7][3 \, 4 \, 5][1\, 2]$   & 125.6     &     16.4 \\   \hline
\hline 
\end{tabular}
\label{table:1} 
\end{table}
\smallskip

Table \ref{table:2} lists computation times (in seconds) observed by
using the polynomial expansion in~\eqref{alg} and Bryc's procedure \cite{Bryc},
both implemented in {\tt MAPLE}, release 7, for the conversion from free
cumulants to moments.\footnote{The output is in the same
form as the one given by Bryc's procedure.}
\begin{table}[ht]
\caption{\footnotesize{Comparison of computation times needed to compute free
cumulants in terms of moments. Tasks performed on Intel (R)
Pentium~(R), CPU $3.00$ GHz, $512$ MB RAM}.} 
\centering 
\begin{tabular}{l c c} 
\hline\hline 
$i$ & {\tt MAPLE} (umbral) & {\tt MAPLE} (Bryc)\\
\hline
15 & 0.015 & 0.016\\
18 & 0.031 & 0.062\\
21 & 0.078 & 0.141\\
24 & 0.172 & 0.266\\
27 & 0.375 & 0.703\\
\hline
\end{tabular}
\label{table:2} 
\end{table}
\bigskip

Table \ref{table:3} shows computation times for three different software\break packages.
The first one has been implemented in
{\tt Mathema\-tica} and\break refers to the procedures in \cite{Andrews}, here called
{\tt AS} algorithms, see also\break {\tt http://fisher.utstat.toronto.edu/david/SCSI/chap.3.nb.}
The second one is the package {\tt Math\-Statica} from \cite{MathStatica}.
The third package, {\tt Fast} algorithms, has been implemented
in {\tt Maple 10.x} by using the results of Section~6. The source code
is given in \cite{Dinardofast}. Let us remark that, for all
procedures, the results are in the same output form
and have been performed by the authors on the same platform. To
the best of our knowledge, there is no
{\tt R} implementation for $k$-statistics and polykays.
\begin{table}[ht]
\caption{\footnotesize{Computation times in sec.\ for
$k$-statistics and polykays. Missing computation times mean \lq\lq
greater than $20$ hours\rq\rq}.} 
\centering 
\begin{tabular}{c c c c c c} 
\hline\hline 
$k_{t, \ldots,\, l}$ & {\tt AS} Algorithms & {\tt MathStatica} &
{\tt Fast\/} algorithms  \\ [0.5ex] 
\hline 
$k_5$    &  0.06    & 0.01    &  0.01  \\
$k_7$    &  0.31    & 0.02    &  0.01  \\
$k_9$    &  1.44    & 0.04    &  0.01  \\
$k_{11}$ &  8.36    & 0.14    &  0.01  \\
$k_{14}$ & 396.39   & 0.64    &  0.02  \\
$k_{16}$ & 57982.40 & 2.03    &  0.08  \\
$k_{18}$ &  -       & 6.90    &  0.16  \\
$k_{20}$ &   -      & 25.15   &  0.33  \\
$k_{22}$ &   -      & 81.70   &  0.80  \\
$k_{24}$ &   -      & 359.40  &  1.62  \\
$k_{26}$ &   -      & 1581.05 &  2.51  \\
$k_{28}$ &   -      & 6505.45 &  4.83  \\
$k_{3,2}$&  0.06    & 0.02    &  0.01  \\
$k_{4,4}$&  0.67    & 0.06    &  0.02  \\
$k_{5,3}$&  0.69    & 0.08    &  0.02  \\
$k_{7,5}$&  34.23   & 0.79    &  0.11  \\
$k_{7,7}$&  435.67  & 2.52    &  0.26  \\
$k_{9,9}$&    -     & 27.41   &  2.26  \\
$k_{10,8}$&   -     & 30.24   &  2.98  \\
$k_{4,4,4}$& 34.17  & 0.64    &  0.08  \\\hline 
\end{tabular}
\label{table:3} 
\end{table}
\medskip

Table \ref{table:4} is the same as Table~\ref{table:3}, but for multivariate $k$-statistics and
multivariate polykays. Missing computation times in the second column are due to the fact that no procedures are devoted to multivariate polykays in {\tt MathStatica} (release~$1$).
\begin{table}[ht]
\caption{\footnotesize{Computation times in sec.\ for
multivariate $k$-statistics and multivariate polykays. In the first column, missing computation
times mean \lq\lq greater than $20$ hours\rq\rq}.} 
\centering 
\begin{tabular}{c c c c} 
\hline\hline 
$k_{t_1 \ldots\, t_r;\, l_1 \ldots l_m}$ & {\tt AS} Algorithms &
{\tt MathStatica} & {\tt Fast\/}-algorithms \\ [0.5ex] 
$k_{3 \, 2}$        &   0.20   &  0.02 & 0.01  \\
$k_{4 \, 4}$        &  18.20   &  0.14 & 0.02  \\
$k_{5 \, 5}$        &  269.19  &  0.56 & 0.08  \\
$k_{6 \, 5}$        & 1023.33  &  1.02 & 0.16  \\
$k_{6 \, 6}$        &    -     &  2.26 & 0.33  \\
$k_{7 \, 6}$        &    -     &  4.06 & 0.59  \\
$k_{7 \, 7}$        &    -     &  8.66 & 1.23  \\
$k_{8 \, 6}$        &    -     &  7.81 & 1.16  \\
$k_{8 \, 7}$        &    -     & 15.89 & 2.59  \\
$k_{8 \, 8}$        &    -     & 30.88 & 5.53  \\
$k_{3 \, 3 \, 3}$   & 1211.05  & 0.92  & 0.44  \\
$k_{4 \, 3 \, 3}$   &    -     & 2.09  & 0.34  \\
$k_{4 \, 4 \, 3}$   &    -     & 4.98  & 1.02  \\
$k_{4 \, 4 \, 4}$   &    -     & 13.97 & 2.78   \\
$k_{1 \, 1;\, 1 \, 1}$ & 0.05    &   -   & 0.01 \\
$k_{2 \, 1;\, 1 \, 1}$ & 0.20    &   -   & 0.01 \\
$k_{2 \, 2;\, 1 \, 1}$ & 1.30    &   -   & 0.03 \\
$k_{2 \, 2;\, 2 \, 1}$ & 6.50    &   -   & 0.06 \\
$k_{2 \, 2;\, 2 \, 2}$ & 34.31   &   -   & 0.11 \\
$k_{2 \, 1;\, 2 \, 1;\, 2 \, 1}$   & 81.13  &  - &  0.17 \\
$k_{2 \, 2;\, 1 \, 1;\, 1 \, 1}$   & 30.34  &  - &  0.14 \\
$k_{2 \, 2;\, 2 \, 1;\, 1 \, 1}$   & 127.50 &  - &  0.22 \\
$k_{2 \, 2;\, 2 \, 1;\, 2 \, 1}$   & 406.78 &  - &  0.47 \\
$k_{2 \, 2;\, 2 \, 2;\, 1 \, 1}$   & 467.88 &  - &  0.55 \\
$k_{2 \, 2;\, 2 \, 2;\, 2 \, 1}$   & 1402.55&  - &  1.14 \\
$k_{2 \, 2;\, 2 \, 2;\, 2 \, 2}$   & 3787.41&  - &  2.96 \\
\hline
\end{tabular}
\label{table:4} 
\end{table}
\section{Appendix 2: Families of time-space harmonic polynomials}
\begin{center}
{\sl Hermite polynomials}
\end{center}
\smallskip

The generalized Hermite polynomials $\left\{H_k^{(s^2 t)}(x)\right\}$ \cite{Roman}  are
time-space harmonic polynomials with respect to the Wiener process with zero drift and variance $s^2,$ whose umbral counterpart is the family of auxiliary umbrae
$\{t \boldsymbol{.} \beta \boldsymbol{.} (s \eta)\}_{t \geq 0}.$
Indeed,
$$H_k^{(s^2 t)}(x) = \E[(x - t \boldsymbol{.} \beta \boldsymbol{.} (s \eta))^k], \qquad k = 1,2,\ldots,$$
with $s \in {\mathbb R}^+$ and $\eta$ the umbra introduced in Example~\ref{gaus}.

\medskip
\begin{center}
{\sl Poisson--Charlier polynomials}
\end{center}
\smallskip

The Poisson--Charlier polynomials $\left\{\tilde{C}_k(x, \lambda t)\right\}$ \cite{Roman} are
time-space harmonic with respect to the Poisson process $\{N_t\}_{t \geq 0}$ with intensity parameter $\lambda > 0,$ whose umbral counterpart is the family of auxiliary umbrae
$\{(t \lambda) \boldsymbol{.} \beta\}_{t \geq 0}.$ Indeed,
$$\tilde{C}_k(x, \lambda t) = \sum_{j = 1}^k \, s(k,j) \, \E[(x - (t \lambda) \boldsymbol{.} \beta)^j], \qquad k = 1,2,\ldots,$$
where $\{s(k,j)\}$ are the Stirling numbers of the first kind.

\medskip
\begin{center}
{\sl L\'evy--Sheffer systems}
\end{center}
\smallskip

The L\'evy--Sheffer system \cite{schoutens1} is a sequence of polynomials\break $\{V_k(x,t)\}_{t \geq 0}$
time-space harmonic with respect to the L\'evy process umbrally represented by
$\{-t \boldsymbol{.} \alpha \boldsymbol{.} \beta \boldsymbol{.} \gamma^{{\scriptscriptstyle <-1>}}\}_{t \geq 0},$ since
$$V_k(x,t) = \sum_{i=0}^k  \E[(x - t \boldsymbol{.} \alpha \boldsymbol{.} \beta \boldsymbol{.} \gamma^{{\scriptscriptstyle <-1>}})^i] \,
B_{k,i}(g_1, \ldots, g_{k-i+1}), \ \, k = 1,2,\ldots,$$
where $B_{k,i}$ are partial Bell exponential polynomials and $\{g_i\}$ is the sequence umbrally represented by $\gamma.$

\medskip
\begin{center}
{\sl Laguerre polynomials}
\end{center}
\smallskip

The Laguerre polynomials $\{\mathcal{L}_k^{t-k}(x)\}_{t \geq 0}$ \cite{Roman} are time-space harmonic with respect to the gamma process $\{G_t (1, 1)\}_{t \geq 0},$ with scale parameter~$1$ and shape parameter $1,$ whose umbral counterpart is the family of auxiliary umbrae $\{t \boldsymbol{.} \bar{u}\}_{t \geq 0}.$ Indeed,
$$(-1)^k \, k! \, \mathcal{L}_k^{t-k}(x) = \E[(x - t \boldsymbol{.} \bar{u})^k], \qquad k = 1,2,\ldots,$$
where $\bar{u}$ is the Boolean unity.

\medskip
\begin{center}
{\sl Actuarial polynomials}
\end{center}
\smallskip

The actuarial polynomials $\{g_k(x, \lambda t)\}_{t \geq 0}$ \cite{Roman} are time-space harmonic with respect to the gamma process $\{G_t (\lambda, 1)\}_{t \geq 0},$
with scale parameter $\lambda >0$ and shape parameter $1,$
whose umbral counterpart is the family of auxiliary umbrae $\{(\lambda t) \boldsymbol{.} \bar{u}\}_{t \geq 0}.$  Indeed,
$$g_k(x, \lambda t) = \sum_{i = 1}^k \E[(x - (\lambda t) \boldsymbol{.} \bar{u})^i] \, B_{k,i}(m_1, \ldots, m_{k - i - 1}), \qquad k = 1,2,\ldots,$$
where $B_{k,i}$ are partial Bell exponential polynomials and $m_i =\break \E[(\chi \boldsymbol{.} (-\chi))^i],$ for $i = 1, \ldots, k.$

\medskip
\begin{center}
{\sl Meixner polynomials}
\end{center}
\smallskip

The Meixner polynomials of the first kind $\{M_k(x,t,p)\}_{t \geq 0}$ \cite{Roman} are time-space harmonic with respect to the Pascal process
$\{Pa(p,t)\}_{t \geq 0},$ whose umbral counterpart is the family of auxiliary umbrae\break
$\{t \boldsymbol{.} \bar{u} \boldsymbol{.} d \boldsymbol{.} \beta\}_{t \geq 0}.$
Indeed,
$$(-1)^k \, (t)_k \, M_k(x,t,p) = \sum_{i = 1}^k \E[(x - t \boldsymbol{.} \bar{u} \boldsymbol{.} d \boldsymbol{.} \beta)^i] \, B_{k,i}(m_1, \ldots, m_{k - i - 1}),$$
where $B_{k,i}$ are partial Bell exponential polynomials and for $i = 1, \ldots, k$
$$m_i = \E\left[\left(\chi \boldsymbol{.} \left(-1 \boldsymbol{.} \chi + \frac{\chi}{p} \right)\right)^i \right].$$

\medskip
\begin{center}
{\sl Bernoulli polynomials}
\end{center}
\smallskip

The Bernoulli polynomials $\{B_k(x,n)\}$ \cite{Roman} are time-space harmonic with respect to the random walk $\{n \boldsymbol{.} (-1 \boldsymbol{.}
\iota)\}_{n \geq 0}.$ Indeed,
$$B_k(x,n) = \E\left[\left(x - n \boldsymbol{.} \left(-1 \boldsymbol{.} \iota \right)\right)^k \right], \qquad k = 1,2,\ldots.$$
The $n$-th increment $M_n$ in Example~\ref{3.13} is represented by the umbra $-1 \boldsymbol{.} \iota$ corresponding to a uniform r.v.\ on the interval $[0,1].$

\medskip
\begin{center}
{\sl Euler polynomials}
\end{center}
\smallskip

The Euler polynomials $\{\mathcal{E}_k (x, n)\}$ \cite{Roman} are time-space harmonic with respect to the random walk
$\{n \boldsymbol{.} \left[ \frac{1}{2} \left( - 1 \boldsymbol{.} \xi + u \right) \right]\}_{n \geq 0}.$ Indeed,
$$\mathcal{E}_k (x, n) = \E\left[\left(x - n \boldsymbol{.} \left[ \frac{1}{2} \left( - 1 \boldsymbol{.} \xi + u \right) \right]\right)^k \right], \qquad k = 1,2,\ldots.$$
The $n$-th increment $M_n$ in Example~\ref{3.13} is represented by the umbra $ \frac{1}{2} \left( - 1 \boldsymbol{.} \xi \right.$ $\left. + \, u \right)$ corresponding to a Bernoulli r.v.\ with parameter
$1/2.$

\medskip
\begin{center}
{\sl Krawtchouk polynomials}
\end{center}
\smallskip

The Krawtchouk polynomials $\{\mathcal{K}_k (x, p, n)\}$ \cite{Roman} are time-space harmonic with respect to the random walk $\{n \boldsymbol{.} (-1 \boldsymbol{.} \upsilon)\}_{n \geq 0}.$
Indeed,
$$\frac{n!}{(n - k)!} \, \mathcal{K}_k (x, p, n) = \sum_{i = 1}^k \E[(x + n \boldsymbol{.} \upsilon)^i] \, B_{k,i}(m_1, \ldots, m_{k - i + 1}),$$ where $B_{k,i}$ are partial Bell exponential polynomials,
$$m_i = \E\left[\left(\chi \boldsymbol{.} \left(-1 \boldsymbol{.} \chi - \frac{\chi}{d} \right)\right)^i\right]$$
for $i = 1, \ldots, k,$ and $p/q=d,$ $p+q=1.$ The $n$-th increment $M_n$ in Example~\ref{3.13} is represented by the umbra $-1 \boldsymbol{.} \upsilon$ corresponding to a Bernoulli r.v.\ with parameter $p.$

\medskip
\begin{center}
{\sl Pseudo--Narumi polynomials}
\end{center}
\smallskip

The pseudo--Narumi polynomials $\{N_k (x, a n)\},$ with $a \in
\mathbb{N}$ \cite{Roman},\break
are time-space harmonic with respect to the
random walk\break
$\{(an). (-1 \boldsymbol{.} \iota)\}_{n \geq 0},$ with $a \boldsymbol{.} (-1 \boldsymbol{.} \iota)$ the umbral counterpart of a sum
of $a \in \mathbb{N}$ r.v.'s with uniform distribution on the interval $[0,1].$ Indeed,
$$k! \, N_k (x, a n) = \sum_{i = 1}^k \E[(x - (a n) \boldsymbol{.} (-1 \boldsymbol{.} \iota))^i] B_{k,i}(m_1, \ldots, m_{k - i +1}),$$
where $B_{k,i}$ are partial Bell exponential polynomials and $m_i =\break \E[(u^{\scriptscriptstyle <-1>})^i],$
for $i = 1, \ldots, k.$
\section{Acknowledgments}
I would like to thank Domenico Senato and Henry Crapo for helpful discussions and suggestions during the writing of this survey.
I would like to also thank Pasquale Petrullo and Rosaria Simone for carefully reading the paper, leading to an improvement of its technical
quality. A special thank to Christian Krattenthaler whose remarks and criticisms have stimulated the author in looking for a simplified
exposure devoted to readers coming in new to the matter.
{}
\end{document}